\documentclass[11pt, reqno]{amsart}

\textwidth15cm

\usepackage[margin=2cm]{geometry}
\usepackage{amssymb}
\usepackage[OT2, T1]{fontenc}
\usepackage[usenames,dvipsnames]{color}
\usepackage{amsmath}
\usepackage{amsthm}
\usepackage{amsfonts}
\usepackage{mathabx}
\usepackage[mathscr]{eucal}
\usepackage[absolute]{textpos} 
\usepackage{colortbl}
\usepackage{array}
\usepackage{geometry}
\usepackage{hyperref}
\usepackage{epigraph}
\usepackage{amscd}
\usepackage[all,cmtip]{xy}
\usepackage{tikz-cd}
\usepackage{enumitem}
\usepackage{dynkin-diagrams}
\usepackage{cleveref}
\usepackage{mathrsfs}
\usepackage[new]{old-arrows}

\newtheoremstyle{thms}{0.2em}{0.2em}{\itshape}{}{\bfseries}{.}{ }{}
\theoremstyle{thms}

\usepackage[american]{babel}

\usepackage[citestyle=alphabetic,bibstyle=alphabetic]{biblatex}

\addbibresource{ref.bib}

\newtheoremstyle{thms}{0.2em}{0.2em}{\itshape}{}{\bfseries}{.}{ }{}

\theoremstyle{plain}

\theoremstyle{definition}

\newtheorem{theorem}{Theorem}[section]
\newtheorem{lemma}[theorem]{Lemma}
\newtheorem{corollary}[theorem]{Corollary}
\newtheorem{definition}[theorem]{Definition}
\newtheorem{proposition}[theorem]{Proposition}
\newtheorem{remark}[theorem]{Remark}

\newtheorem{construction}[theorem]{Construction}
\newtheorem{definition-proposition}[theorem]{Definition-Proposition}
\newtheorem{claim}[theorem]{Claim}
\newtheorem{example}[theorem]{Example}

\makeatletter
\newcommand*{\relrelbarsep}{.386ex}
\newcommand*{\relrelbar}{%
  \mathrel{%
    \mathpalette\@relrelbar\relrelbarsep
  }%
}
\newcommand*{\@relrelbar}[2]{%
  \raise#2\hbox to 0pt{$\m@th#1\relbar$\hss}%
  \lower#2\hbox{$\m@th#1\relbar$}%
}
\providecommand*{\rightrightarrowsfill@}{%
  \arrowfill@\relrelbar\relrelbar\rightrightarrows
}
\providecommand*{\leftleftarrowsfill@}{%
  \arrowfill@\leftleftarrows\relrelbar\relrelbar
}
\providecommand*{\xrightrightarrows}[2][]{%
  \ext@arrow 0359\rightrightarrowsfill@{#1}{#2}%
}
\providecommand*{\xleftleftarrows}[2][]{%
  \ext@arrow 3095\leftleftarrowsfill@{#1}{#2}%
}
\makeatother

\makeatletter
\newcommand*{\da@rightarrow}{\mathchar"0\hexnumber@\symAMSa 4B }
\newcommand*{\da@leftarrow}{\mathchar"0\hexnumber@\symAMSa 4C }
\newcommand*{\xdashrightarrow}[2][]{%
  \mathrel{%
    \mathpalette{\da@xarrow{#1}{#2}{}\da@rightarrow{\,}{}}{}%
  }%
}
\newcommand{\xdashleftarrow}[2][]{%
  \mathrel{%
    \mathpalette{\da@xarrow{#1}{#2}\da@leftarrow{}{}{\,}}{}%
  }%
}
\newcommand*{\da@xarrow}[7]{%
  \sbox0{$\ifx#7\scriptstyle\scriptscriptstyle\else\scriptstyle\fi#5#1#6\m@th$}%
  \sbox2{$\ifx#7\scriptstyle\scriptscriptstyle\else\scriptstyle\fi#5#2#6\m@th$}%
  \sbox4{$#7\dabar@\m@th$}%
  \dimen@=\wd0 %
  \ifdim\wd2 >\dimen@
    \dimen@=\wd2 %
  \fi
  \count@=2 %
  \def\da@bars{\dabar@\dabar@}%
  \@whiledim\count@\wd4<\dimen@\do{%
    \advance\count@\@ne
    \expandafter\def\expandafter\da@bars\expandafter{%
      \da@bars
      \dabar@ 
    }%
  }%
  \mathrel{#3}%
  \mathrel{%
    \mathop{\da@bars}\limits
    \ifx\\#1\\%
    \else
      _{\copy0}%
    \fi
    \ifx\\#2\\%
    \else
      ^{\copy2}%
    \fi
  }%
  \mathrel{#4}%
}
\makeatother

\DeclareMathOperator{\Ad}{\mathrm{Ad}}			
\DeclareMathOperator{\ad}{\mathrm{ad}}
\DeclareMathOperator{\Aut}{\mathrm{Aut}}		
\DeclareMathOperator{\Bl}{Bl} 
\DeclareMathOperator{\Dom}{Dom} 
\DeclareMathOperator{\Dist}{Dist} 
\DeclareMathOperator{\Diag}{Diag}
\DeclareMathOperator{\End}{End} 
\DeclareMathOperator{\fppf}{fppf}
\DeclareMathOperator{\GL}{GL}	
\DeclareMathOperator{\Grass}{Grass}	
\DeclareMathOperator{\Id}{Id} 
\DeclareMathOperator{\Hilb}{Hilb}
\DeclareMathOperator{\Hom}{Hom}

\DeclareMathOperator{\Mod}{Mod}
\DeclareMathOperator{\Multi}{Multi}
\DeclareMathOperator{\Norm}{Norm}	
\DeclareMathOperator{\Out}{Out}	
\DeclareMathOperator{\PGL}{PGL}		

\DeclareMathOperator{\Gal}{Gal}	
\DeclareMathOperator{\pr}{pr}	
\DeclareMathOperator{\Pic}{Pic}    
  
\DeclareMathOperator{\red}{red} 
\DeclareMathOperator{\Res}{Res}   

\DeclareMathOperator{\SL}{SL}			
\DeclareMathOperator{\Supp}{Supp} 
\DeclareMathOperator{\Spec}{Spec}		
\DeclareMathOperator{\SU}{SU}
\DeclareMathOperator{\Sch}{Sch}
\DeclareMathOperator{\Tr}{Tr}	       

\title{Wonderful Embedding for group schemes in Bruhat--Tits theory}
\author{Shang Li}
\address{Tsinghua University, Yau Mathematical Sciences Center, Beijing, China}
\email{shangli@tsinghua.edu.cn}
\date{\today}

\begin{document}

\setcounter{tocdepth}{1}

\maketitle

\begin{abstract}
    For a reductive group $G$ over a discretely valued Henselian field $k$, using valuations of root datum and concave functions, the Bruhat--Tits theory defines an important class of open bounded subgroups of $G(k)$ which are essential objects in representation theory and arithmetic geometry. Moreover, these subgroups are uniquely determined by smooth affine group schemes whose generic fibers are $G$ over the ring of integers of $k$. To study these group schemes, when $G$ is adjoint and quasi-split, we systematically construct wonderful embedding for these group schemes which are uniquely determined by a big cell structure. The way that we construct our wonderful embedding is different from classical methods in the sense that we avoid embedding a group scheme into an ambient space and taking closure. We use an intrinsic and functorial method which is a variant of Artin--Weil method of birational group laws. Beyond the quasi-split case, our wonderful embedding is constructed by étale descent. Moreover our wonderful embedding behaves in a similar way to the classical wonderful compactification of $G$. Our results can serve as a bridge between the theory of wonderful compactifications and the Bruhat--Tits theory. 
\end{abstract}

\tableofcontents

\pagestyle{plain}

\section{Introduction}

Let $G$ be a connected quasi-split semisimple group of adjoint type over a strictly Henselian discretely valued field $k$ with the ring of integers $\mathfrak{o}$, the maximal ideal $\mathfrak{m}$ and the perfect residue field $\kappa$. Let $\overline{G}$ be the wonderful compactification of $G$ after De~Concini and Procesi. This geometric object $\overline{G}$ plays an important role in Lie theory, e.g., \cite{springerintersectioncohomology}, enumerative geometry, e.g., \cite{completesymmetricvarieties}, representation theory, e.g., \cite{geometryofsecondadjointness}, arithmetic geometry, e.g., \cite{XuhuaHelocalmodel} and even theoretical physics, e.g., \cite{theoreticalphysics}, see also the ICM report \cite{SpringerICM} for many other developments of wonderful compactification. Moreover, $\overline{G}$ itself has some remarkable geometric properties and many studies have focused on the geometry of $\overline{G}$. Meanwhile, some variants of wonderful compactifications are invented, for instance, the wonderful embedding for loop groups \cite{wonderfulembeddingloop} and the wonderful compactification for quantum groups \cite{wonderfulcompactificationquatum}. As a generalization of wonderful compactification, for a general reductive group (not necessarily adjoint), we have the theory of equivariant toroidal embeddings which are classified by combinatorial data. 

In order to facilitate applying various useful results of wonderful compactification (and also of toroidal embedding) into Bruhat--Tits theory and arithmetic geometry, in this paper, we establish a theory of wonderful embedding for concave function group schemes in the Bruhat--Tits theory. 

To state our main results, we need to fix some notations. Let $S$ be a maximal $k$-split torus which is contained in a Borel $k$-subgroup $B\subset G$. Let $T$ be the centralizer of $S$ in $G$. Then $T$ is a maximal $k$-torus contained in $B$. Let $B^-$ be the opposite Borel such that $B\bigcap B^-=T$, and let $U$ and $U^-$ be the unipotent radicals of $B$ and $B^-$. Let $\Phi\coloneq \Phi(G,S)$ be the relative root system, and let $\hat{\Phi}=\Phi\bigcup \{0\}$. The choice of $B$ gives a set of simple roots $\Delta\subset \Phi$.

Let $\mathcal{B}(G)$ be the Bruhat--Tits building of $G(k)$, and let $\mathcal{A}(S)$ be the apartment corresponding to $S$. To a point $x\in\mathcal{A}(S)$ and a concave function $f:\hat{\Phi}\rightarrow \mathbb{R}$, we can associate with an open bounded subgroup $G(k)_{x,f}\subset G(k)$ which is a central object of the Bruhat--Tits theory. These groups include parahoric subgroups, Moy--Prasad groups and Schneider--Stuhler groups as special cases. A fundamental algebro-geometric result of \cite{Bruhattits2} and \cite{Yusmoothmodel} is that $G(k)_{x,f}$ admits (necessarily uniquely) an integral model $\mathcal{G}_{x,f}$ over $\mathfrak{o}$. The group scheme $\mathcal{G}_{x,f}$ is rarely reductive, i.e., the unipotent radical $\mathscr{R}_u((\mathcal{G}_{x,f})_{\kappa})$ of the special fiber $(\mathcal{G}_{x,f})_{\kappa}$ is rarely trivial. We denote by $\mathsf{G}_{x,f}$ the maximal reductive quotient of the special fiber $(\mathcal{G}_{x,f})_{\kappa}$.

Let $\mathscr{S}$ be the schematic closure of $S$ in $\mathcal{G}_{x,f}$, and let $\mathrm{S}$ be the isomorphic image of the special fiber $\mathscr{S}_{\kappa}$ in the maximal reductive quotient $\mathsf{G}_{x,f}$. We will adopt the natural identifications of the character lattices and of cocharacter lattices:
\begin{equation}
   \begin{aligned}
        X^*(S)\cong X^*(\mathscr{S})\cong X^*(\mathrm{S});\;
    X_*(S)\cong X_*(\mathscr{S})\cong X_*(\mathrm{S}).
   \end{aligned} 
\end{equation}

\noindent Let $\Phi_{x,f}\coloneq \{a\in \Phi\vert f(a)+f(-a)=0\;\text{and}\; f(a)\in \Gamma_a'\}$ which is identified with the root system of $\mathsf{G}_{x,f}$ with respect to the maximal torus $\mathrm{S}$ under the above identifications, where $\Gamma'_a$ is the set of values of the root subgroup $U_a(k)$ with respect to $x$ (cf. \Cref{subsectionsetofvalues}). Let $\Delta_{x,f}\coloneq \Delta\bigcap \Phi_{x,f}\subset \Phi_{x,f}$ which is a set of simple roots. Moreover, the negative Weyl chamber $\mathfrak{C}\subset X_*(S)_{\mathbb{R}}$ defined by the simple roots $\Delta$ is a cone inside the negative Weyl chamber $\mathfrak{C}_{x,f}\subset X_*(\mathrm{S})_{\mathbb{R}}$ defined by $\Delta_{x,f}$.

One of the main goals of this paper is to establish the following result. For simplicity, in this introduction, we only state it under the assumption that $G$ splits over $k$. The theorem is valid for quasi-split groups after replacing $\mathbb{A}_{1,\mathfrak{o}}$ with certain Weil restriction of it, see \Cref{eqembeddingofbigcell}.

\begin{theorem}\label{introtheorem1}
     There is a unique smooth quasi-projective integral model $\overline{\mathcal{G}_{x,f}}$ over $\mathfrak{o}$ of $\overline{G}$ satisfying 
    \begin{itemize}
    
    \item[(i)] the $(G\times_k G)$-equivariant open immersion $G\hookrightarrow \overline{G}$ extends to a $(\mathcal{G}_{x,f}\times_{\mathfrak{o}} \mathcal{G}_{x,f})$-equivariant open immersion $\mathcal{G}_{x,f}\hookrightarrow \overline{\mathcal{G}_{x,f}}$; 
    
    \item[(ii)] the canonical $k$-open immersion $\overline{\Omega}\coloneq U^-\times_k \prod_{\Delta}\mathbb{A}_{1,k}\times_k U^+\hookrightarrow \overline{G}$ extends to an $\mathfrak{o}$-open immersion 
    $$\overline{\Omega_{x,f}}\coloneq \mathcal{U}^-\times_{\mathfrak{o}}\prod_{\Delta}\mathbb{A}_{1,\mathfrak{o}}^{( f(0))}\times_{\mathfrak{o}}\mathcal{U}^+\longhookrightarrow\overline{\mathcal{G}_{x,f}},$$
    where $\mathcal{U}^-$ and $\mathcal{U}^+$ are the schematic closures of $U^-$ and $U^+$ in $\mathcal{G}_{x,f}$ and $\mathbb{A}_{1,\mathfrak{o}}^{(f(0))}$ is the unique integral model of $\mathbb{A}_{1,k}$ over $\mathfrak{o}$ such that $\mathbb{A}_{1,\mathfrak{o}}^{(f(0))}(\mathfrak{o})=1+\mathfrak{m}^{\lceil f(0)\rceil}$;

    \item[(iii)]  $(\mathcal{G}_{x,f}\times_{\mathfrak{o}}\mathcal{G}_{x,f})\cdot \overline{\Omega_{x,f}}= \overline{\mathcal{G}_{x,f}}$, i.e., the group action morphism is surjective.
\end{itemize}
\noindent Moreover we have 
\begin{itemize}
    \item[(1)] if $f(0)=0$, the quotient sheaf $(\overline{\mathcal{G}_{x,f}})_{\kappa}/(\mathscr{R}_u((\mathcal{G}_{x,f})_{\kappa})\times_{\kappa}\mathscr{R}_u((\mathcal{G}_{x,f})_{\kappa}))$ is represented by a scheme and the natural morphism  $\mathsf{G}_{x,f}\longrightarrow (\overline{\mathcal{G}_{x,f}})_{\kappa}/(\mathscr{R}_u((\mathcal{G}_{x,f})_{\kappa})\times_{\kappa}\mathscr{R}_u((\mathcal{G}_{x,f})_{\kappa}))$ induced by the open immersion $\mathcal{G}_{x,f}\hookrightarrow \overline{\mathcal{G}_{x,f}}$ is an \emph{equivariant toroidal embedding} of the split reductive group $\mathsf{G}_{x,f}$ determined by the cone $\mathfrak{C}$ in the negative Weyl chamber $\mathfrak{C}_{x,f}$ in the sense of \cite[Definition~6.2.2]{BrionKumar};

    \item[(2)]  if $f(0)\textgreater 0$, the special fiber of $\overline{\mathcal{G}_{x,f}}$ degenerates to the special fiber $(\mathcal{G}_{x,f})_{\kappa}$;

    \item[(3)]  $\overline{\mathcal{G}_{x,f}}$ is \emph{projective} over $\mathfrak{o}$ \emph{if and only if} $\mathcal{G}_{x,f}$ is \emph{a reductive group scheme} over $\mathfrak{o}$. This is the case, for instance, if $f=0$ and $x\in \mathcal{B}(G)$ is a hyperspecial point;

    \item[(4)] the boundary $\overline{\mathcal{G}_{x,f}}\backslash \mathcal{G}_{x,f}$ is covered by $(\mathcal{G}_{x,f}\times_{\mathfrak{o}}\mathcal{G}_{x,f})$-stable smooth $\mathfrak{o}$-relative effective Cartier divisors $S_{\alpha}$ for $\alpha\in \Delta$ with $\mathfrak{o}$-relative normal crossings.
    \end{itemize}
\end{theorem}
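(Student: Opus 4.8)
The plan is to build $\overline{\mathcal{G}_{x,f}}$ not by closure inside an ambient projective space but by gluing translates of the big cell $\overline{\Omega_{x,f}}$, using a variant of the Artin--Weil birational group law machinery; uniqueness will then be forced by properties (i)--(iii). First I would set up the candidate big cell $\overline{\Omega_{x,f}} = \mathcal{U}^- \times_{\mathfrak{o}} \prod_{\Delta} \mathbb{A}_{1,\mathfrak{o}}^{(f(0))} \times_{\mathfrak{o}} \mathcal{U}^+$ together with the open subscheme $\mathcal{G}_{x,f} \cap \overline{\Omega_{x,f}}$ (the preimage of the classical big cell $\Omega \subset G$), and verify that on generic fibers the partially-defined left and right $\mathcal{G}_{x,f}$-actions coincide with the restriction of the $(G\times_k G)$-action on $\overline{G}$. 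The key integrality input is the explicit description of $\mathcal{G}_{x,f}$ via concave functions: the root subgroup schemes $\mathcal{U}_a$ and the torus part $\mathscr{S}$ have coordinate rings one can write down, and one checks that the rational action maps $\mathcal{G}_{x,f} \times_{\mathfrak{o}} \overline{\Omega_{x,f}} \dashrightarrow \overline{\mathcal{G}_{x,f}}$ are defined on a large enough open (the complement of a divisor not meeting the relevant $\mathcal{G}_{x,f}$-orbit) — this is where the normalization $\mathbb{A}_{1,\mathfrak{o}}^{(f(0))}(\mathfrak{o}) = 1 + \mathfrak{m}^{\lceil f(0) \rceil}$ does its work, matching the valuation constraints on the root groups. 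Then I would invoke the birational group law theorem (Artin--Weil, in the form available over a base, e.g. as in BLR or the relative version in the body of the paper) to produce a smooth separated $\mathfrak{o}$-scheme $\overline{\mathcal{G}_{x,f}}$ containing $\overline{\Omega_{x,f}}$ as a dense open, carrying a $(\mathcal{G}_{x,f}\times_{\mathfrak{o}}\mathcal{G}_{x,f})$-action, with $\mathcal{G}_{x,f}$ embedded equivariantly; quasi-projectivity follows since $\overline{\mathcal{G}_{x,f}}$ is covered by finitely many $(\mathcal{G}_{x,f}\times\mathcal{G}_{x,f})$-translates of the affine $\overline{\Omega_{x,f}}$ and one can descend an ample line bundle from $\overline{G}$ or build one directly on the finite atlas.

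For uniqueness, suppose $\overline{\mathcal{G}}'$ is another model satisfying (i)--(iii). Over $k$ both restrict to $\overline{G}$; properties (ii) and (iii) pin down $\overline{\mathcal{G}}'$ as the union of $\mathfrak{o}$-translates of the \emph{same} open $\overline{\Omega_{x,f}}$ (the integral structure on the big cell is prescribed), and the equivariant open immersions glue the translates by the \emph{same} transition maps, so $\overline{\mathcal{G}}' \cong \overline{\mathcal{G}_{x,f}}$ by a standard separatedness/flatness argument — two flat $\mathfrak{o}$-models that agree on an $\mathfrak{o}$-dense open and whose generic fibers agree must coincide once we know both are covered by translates of that open.

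For the numbered assertions: (1) When $f(0)=0$, the scheme $\mathbb{A}_{1,\mathfrak{o}}^{(0)} = \mathbb{A}_{1,\mathfrak{o}}$ and the special fiber of the big cell is the classical big cell of $(\mathsf{G}_{x,f} \times \text{unipotent part})$; quotienting by $\mathscr{R}_u \times \mathscr{R}_u$ one recovers exactly the affine toric chart $\mathrm{U}^- \times \mathbb{A}^{\Delta_{x,f}} \times \mathrm{U}^+$ of the toroidal embedding of $\mathsf{G}_{x,f}$ attached to the cone $\mathfrak{C}$ sitting inside $\mathfrak{C}_{x,f}$ — one identifies the fan with that cone by comparing the $\mathrm{S}$-action on coordinates with the root data, using $\Phi_{x,f}$ and $\Delta_{x,f}$ as defined, and invokes \cite[Definition~6.2.2]{BrionKumar}. (2) When $f(0)>0$, the divisor $\{0\} \subset \mathbb{A}_{1,\mathfrak{o}}^{(f(0))}$ has special fiber equal to the whole special fiber, so the closed fiber of $\prod_{\Delta}\mathbb{A}_{1,\mathfrak{o}}^{(f(0))}$ is just a point; tracing through the atlas shows $(\overline{\mathcal{G}_{x,f}})_\kappa$ collapses onto $(\mathcal{G}_{x,f})_\kappa$, giving the claimed degeneration. (3) Projectivity: if $\mathcal{G}_{x,f}$ is reductive over $\mathfrak{o}$ then the fan is complete (the cone $\mathfrak{C} = \mathfrak{C}_{x,f}$ and the toroidal embedding is the wonderful compactification, which is projective over $\mathfrak{o}$ by properness of the relative wonderful compactification of a reductive group scheme); conversely if $\mathcal{G}_{x,f}$ is non-reductive, property (2) exhibits a non-proper special fiber (or, via (1), an incomplete fan), so $\overline{\mathcal{G}_{x,f}}$ is not proper. (4) The boundary divisors $S_\alpha$ for $\alpha \in \Delta$ are read off from the big-cell coordinates as $\{t_\alpha = 0\}$ in $\prod_\Delta \mathbb{A}_{1,\mathfrak{o}}^{(f(0))}$, translated around by the group action; smoothness, $\mathfrak{o}$-flatness, Cartier-ness and relative normal crossings are local on the atlas and visible in these coordinates.

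The main obstacle I expect is \textbf{step one}: verifying that the rational left/right actions on $\overline{\Omega_{x,f}}$ are defined on a large enough open over $\mathfrak{o}$ — i.e. that the denominators appearing when one multiplies an element of $\mathcal{G}_{x,f}$ into the big cell and re-expresses the result in big-cell coordinates remain units away from a divisor disjoint from the orbit. This requires a careful bookkeeping of the valuation filtrations $\Gamma_a'$ and the concavity inequalities $f(a) + f(b) \ge f(a+b)$, matched against the integral normalization $1 + \mathfrak{m}^{\lceil f(0)\rceil}$ on the $\mathbb{A}_1$-factors, and is the technical heart that makes the Artin--Weil descent go through over $\mathfrak{o}$ rather than merely over $k$.
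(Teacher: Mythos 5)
Your overall strategy matches the paper's: build $\overline{\mathcal{G}_{x,f}}$ as a union of $(\mathcal{G}_{x,f}\times\mathcal{G}_{x,f})$-translates of $\overline{\Omega_{x,f}}$ via an Artin--Weil-style rational-action argument (the paper packages this as a quotient sheaf in \Cref{definitionwonderfulembedding}, then proves it is an algebraic space and a scheme), deduce quasi-projectivity from being covered by finitely many translates of an affine, prove uniqueness by covering both candidates by translates of the same big cell and descending, identify the quotient of the special fiber with a toroidal embedding for (1), and read off (3) and (4) from the big cell. You also correctly flag that the technical bottleneck is controlling denominators when multiplying elements of $\mathcal{G}_{x,f}$ into the boundary. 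One substantive under-statement: the paper carries out this bookkeeping via explicit $\SL_2$ \emph{and} $\SU_3$ computations (\Cref{suslcase}), the latter handling multipliable/divisible relative roots, which is genuinely nontrivial and is then globalized by a distribution argument (\Cref{distributioninvariant}); treating only the rank-one "$\SL_2$"-type subgroups would miss the quasi-split non-split case.

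Two more concrete issues. First, for $f(0)>0$ the paper does \emph{not} invoke the rational-action machinery at all: it simply glues $\overline{G}$ and $\mathcal{G}_{x,f}$ along the common open $G$, using the structure of the special fiber when $f(0)>0$ (\Cref{unipotentradicalspecialfiber}~(2)) and the computation that $\mathbb{G}_m^{(n)}\hookrightarrow\mathbb{A}_1^{(n)}$ is an isomorphism on special fibers for $n>0$ (\Cref{dilatationofaffineline}). Your sketch doesn't separate this case and so is more complicated than necessary. Second, your reasoning for (2) is factually backwards: you claim "the divisor $\{0\}\subset\mathbb{A}_{1,\mathfrak{o}}^{(f(0))}$ has special fiber equal to the whole special fiber, so the closed fiber of $\prod_\Delta\mathbb{A}_{1,\mathfrak{o}}^{(f(0))}$ is just a point." In fact $\mathbb{A}_{1,\mathfrak{o}}^{(n)}=\Spec(\mathfrak{o}[u])$ with $T=1+\pi^n u$, so for $n>0$ the boundary divisor $\{T=0\}=\{1+\pi^n u=0\}$ has \emph{empty} special fiber, and the closed fiber of $\prod_\Delta\mathbb{A}_{1,\mathfrak{o}}^{(n)}$ is the affine space $\prod_\Delta\mathbb{A}_{1,\kappa}$, not a point. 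The conclusion (that $(\overline{\mathcal{G}_{x,f}})_\kappa=(\mathcal{G}_{x,f})_\kappa$) is correct precisely because the boundary misses the special fiber, which is the opposite mechanism of what you wrote. Finally, for the "only if" direction of (3) the paper argues directly with Borel's fixed point theorem applied to $\mathscr{R}_u((\mathcal{G}_{x,f})_\kappa)\times\mathscr{R}_u((\mathcal{G}_{x,f})_\kappa)$ acting on the proper special fiber; your suggested route via fan-incompleteness of the quotient in (1) can be made to work but needs an extra argument passing non-properness from the quotient back to $(\overline{\mathcal{G}_{x,f}})_\kappa$, and you should not conflate the two cases $f(0)=0$ and $f(0)>0$ there.
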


We first remark that the existence part of \Cref{introtheorem1} is only nontrivial when $f(0)=0$ because the (2) says that, when $f(0)\textgreater 0$, the $\overline{\mathcal{G}_{x,f}}$ is simply a gluing of $\overline{G}$ and $\mathcal{G}_{x,f}$ along the open subscheme $G$. This is similar to the structure of the group scheme $\mathcal{G}_{x,f}$ when $f(0)\textgreater 0$. Moreover, our wonderful embedding $\overline{\mathcal{G}_{x,f}}$ is compatible with dilatation operation on the group scheme $\mathcal{G}_{x,f}$, see \Cref{sectioncompatiblity}.

We study the Picard group of $\overline{\mathcal{G}_{x,f}}$ and its generators, which are similar to those of the classical wonderful compactification $\overline{G}$. 

\begin{theorem}
    The boundary $\overline{\mathcal{G}_{x,f}}\backslash \overline{\Omega_{x,f}}$ is covered by prime effective Cartier divisors 
    $$\mathbf{D}_{\alpha}\coloneq \overline{B\cdot \dot{s}_{\alpha}\cdot B^-},\alpha\in \Delta$$
    where the bar indicates taking the schematic closure in $\overline{\mathcal{G}_{x,f}}$ and $\dot{s}_{\alpha}\in N_G(S)(k)$ is a representative of the simple reflection along $\alpha$ in the relative Weyl group of $G$ with respect to $S$. Moreover the Picard group $\Pic(\overline{\mathcal{G}_{x,f}})$ is freely generated by $\{\mathbf{D}_{\alpha}\vert \alpha\in \Delta\}$.
\end{theorem}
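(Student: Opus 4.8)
The plan is to use the localization (excision) sequence for Picard groups attached to the open immersion $\overline{\Omega_{x,f}}\hookrightarrow\overline{\mathcal{G}_{x,f}}$ from \Cref{introtheorem1}(ii), together with the observation that $\overline{\Omega_{x,f}}$ is an affine space over $\mathfrak{o}$: each $\mathcal{U}^{\pm}$ is an iterated extension of Weil restrictions of $\mathbb{G}_{a}$ (in the split case just a product of root group schemes), hence isomorphic as an $\mathfrak{o}$-scheme to an affine space, and $\mathbb{A}_{1,\mathfrak{o}}^{(f(0))}\cong\mathbb{A}_{1,\mathfrak{o}}$, so $\overline{\Omega_{x,f}}\cong\mathbb{A}^{N}_{\mathfrak{o}}$ for some $N$. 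Since $\mathfrak{o}$ is a discrete valuation ring this yields $\Pic(\overline{\Omega_{x,f}})=0$ and $\mathcal{O}(\overline{\Omega_{x,f}})^{\times}=\mathfrak{o}[x_{1},\dots,x_{N}]^{\times}=\mathfrak{o}^{\times}$. Moreover $\overline{\mathcal{G}_{x,f}}$ is quasi-projective (hence Noetherian) and smooth over the regular ring $\mathfrak{o}$, hence regular, and it is integral, being $\mathfrak{o}$-flat with integral generic fibre $\overline{G}$ (equivalently, normal and connected, since it contains the dense open $\mathcal{G}_{x,f}$). On such a scheme Weil divisors are Cartier and one has the exact sequence
$$\bigoplus_{i}\mathbb{Z}\cdot Z_{i}\ \longrightarrow\ \Pic(\overline{\mathcal{G}_{x,f}})\ \longrightarrow\ \Pic(\overline{\Omega_{x,f}})=0,$$
with $Z_{i}$ ranging over the codimension-one irreducible components of $Z\coloneq\overline{\mathcal{G}_{x,f}}\setminus\overline{\Omega_{x,f}}$. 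So it suffices to show (I) the $Z_{i}$ are exactly the $\mathbf{D}_{\alpha}$, and (II) the left-hand map is injective.

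For (I) I would first note that $\mathbf{D}_{\alpha}=\overline{B\dot{s}_{\alpha}B^-}$ is integral — it is the reduced closure of the image of the irreducible $k$-variety $B\times_{k}B^-$ under $(b_{1},b_{2})\mapsto b_{1}\dot{s}_{\alpha}b_{2}$ followed by $\mathcal{G}_{x,f}\hookrightarrow\overline{\mathcal{G}_{x,f}}$ — and that its generic fibre is the classical Bruhat-cell closure $\overline{B\dot{s}_{\alpha}B^-}\subset\overline{G}$, which has codimension one; hence $\mathbf{D}_{\alpha}$ is $\mathfrak{o}$-flat of codimension one, the $\mathbf{D}_{\alpha}$ are pairwise distinct (already on the generic fibre), and by regularity they are effective Cartier divisors. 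Next I would invoke the classical description of the big cell of the wonderful compactification, $\overline{\Omega}=\overline{G}\setminus\bigcup_{\alpha\in\Delta}\overline{B\dot{s}_{\alpha}B^-}$: since $\overline{\Omega}$ is the generic fibre of $\overline{\Omega_{x,f}}$ and $B\dot{s}_{\alpha}B^-$ lies inside the generic fibre of $\overline{\mathcal{G}_{x,f}}$, the open set $\overline{\Omega_{x,f}}$ is disjoint from $B\dot{s}_{\alpha}B^-$ and hence, being open, from $\mathbf{D}_{\alpha}$; so $\bigcup_{\alpha}\mathbf{D}_{\alpha}\subseteq Z$. Conversely a codimension-one component $W$ of $Z$ is either horizontal over $\mathfrak{o}$, in which case $W$ is the closure of a codimension-one irreducible component of $Z\cap\overline{G}=\overline{G}\setminus\overline{\Omega}=\bigcup_{\alpha}\overline{B\dot{s}_{\alpha}B^-}$, i.e. $W=\mathbf{D}_{\alpha}$; or vertical, i.e. a component of the special fibre $(\overline{\mathcal{G}_{x,f}})_{\kappa}$ — but this is impossible, because by \Cref{introtheorem1}(iii) $(\overline{\mathcal{G}_{x,f}})_{\kappa}$ is the image of the connected scheme $(\mathcal{G}_{x,f}\times_{\mathfrak{o}}\mathcal{G}_{x,f})_{\kappa}$ acting on the irreducible $(\overline{\Omega_{x,f}})_{\kappa}$, hence irreducible, and it is not contained in $Z$. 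Thus the $Z_{i}$ are exactly the $\mathbf{D}_{\alpha}$. (That $Z$ equals $\bigcup_{\alpha}\mathbf{D}_{\alpha}$ as a set — i.e. that no boundary stratum of codimension $\geq 2$ intervenes in the special fibre — is read off from the construction of $\overline{\mathcal{G}_{x,f}}$, namely from the covering by the translates $(g,h)\cdot\overline{\Omega_{x,f}}$ of \Cref{introtheorem1}(iii); this refinement is not needed for the Picard computation.)

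For (II): if $\sum_{\alpha}n_{\alpha}\mathbf{D}_{\alpha}=\operatorname{div}(\varphi)$ for a rational function $\varphi$ on $\overline{\mathcal{G}_{x,f}}$, then $\varphi$ has neither zero nor pole at every codimension-one point of $\overline{\Omega_{x,f}}=\overline{\mathcal{G}_{x,f}}\setminus\bigcup_{\alpha}\mathbf{D}_{\alpha}$, so by normality $\varphi$ restricts to a unit of $\mathcal{O}(\overline{\Omega_{x,f}})=\mathfrak{o}[x_{1},\dots,x_{N}]$, i.e. $\varphi\in\mathfrak{o}^{\times}$; hence $\operatorname{div}(\varphi)=0$ and all $n_{\alpha}$ vanish since the $\mathbf{D}_{\alpha}$ are distinct prime divisors. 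Combined with $\Pic(\overline{\Omega_{x,f}})=0$ this shows $\Pic(\overline{\mathcal{G}_{x,f}})$ is freely generated by $\{\mathbf{D}_{\alpha}\mid\alpha\in\Delta\}$. (Run verbatim over $k$, the same argument reproves the classical statement $\Pic(\overline{G})=\bigoplus_{\alpha}\mathbb{Z}\,\overline{B\dot{s}_{\alpha}B^-}$.)

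The main obstacle is step (I): everything outside the identification of the boundary is formal, and within (I) the only ingredient not inherited from the generic fibre is the irreducibility of $(\overline{\mathcal{G}_{x,f}})_{\kappa}$, which comes for free from the surjectivity of the $(\mathcal{G}_{x,f}\times_{\mathfrak{o}}\mathcal{G}_{x,f})$-action in \Cref{introtheorem1}(iii). The genuinely delicate point, should one insist on the full set-theoretic equality $Z=\bigcup_{\alpha}\mathbf{D}_{\alpha}$ rather than merely the identification of codimension-one components, is to exclude extra boundary pieces of codimension $\geq 2$ supported in the special fibre; I expect this to follow transparently from the big-cell/birational-group-law construction of $\overline{\mathcal{G}_{x,f}}$, where each point lies in a translate of $\overline{\Omega_{x,f}}$ whose boundary is a union of translates of the $\mathbf{D}_{\alpha}$.
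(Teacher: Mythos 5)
Your proof is correct, and it identifies the boundary components by a genuinely different route from the paper's. The paper establishes the set-theoretic equality $\overline{\mathcal{G}_{x,f}}\setminus\overline{\Omega_{x,f}}=\bigcup_{\alpha\in\Delta}\mathbf{D}_{\alpha}$ by a direct fiberwise calculation: it proves a Claim identifying $\mathcal{G}_{x,f}\setminus\Omega_{x,f}$ fiber by fiber, using the Bruhat order on $G$ over $k$ and, on the special fiber, the Bruhat order on the maximal reductive quotient $\mathsf{G}_{x,f}$ together with flatness of $(\mathcal{G}_{x,f})_{\kappa}\to\mathsf{G}_{x,f}$; it then passes to $\overline{\mathcal{G}_{x,f}}$ via a density assertion. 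This requires the structure theory of $\mathsf{G}_{x,f}$ (root datum, Weyl-group lifts $m(a)$) but yields explicit information about the boundary strata in the special fiber. Your argument instead works at the level of codimension-one components: horizontal components are recovered from the classical description of $\overline{G}\setminus\overline{\Omega}$, and vertical components are ruled out because $(\overline{\mathcal{G}_{x,f}})_{\kappa}$ is irreducible (from surjectivity of $(\mathcal{G}_{x,f}\times_{\mathfrak{o}}\mathcal{G}_{x,f})\cdot\overline{\Omega_{x,f}}=\overline{\mathcal{G}_{x,f}}$) and meets $\overline{\Omega_{x,f}}$. This is slicker and avoids the special-fiber Bruhat analysis altogether; on the other hand it only gives the answer up to codimension one (you flag this yourself), whereas the paper's Claim gives the full set-theoretic equality, which is needed for some of the finer stratification remarks elsewhere in the paper. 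Both proofs then invoke the same excision/localization argument for $\Pic$ — you write out the sequence and the $\mathcal{O}(\overline{\Omega_{x,f}})^{\times}=\mathfrak{o}^{\times}$ computation; the paper cites the UFD property of $\mathfrak{o}[\overline{\Omega_{x,f}}]$ and G\"ortz–Wedhorn. One small point in the paper's favor: its density assertion (that $\mathcal{G}_{x,f}\setminus\Omega_{x,f}$ is dense in $\overline{\mathcal{G}_{x,f}}\setminus\overline{\Omega_{x,f}}$) is left implicit, and the cleanest justification for it is essentially your horizontal/vertical dichotomy combined with the fact that each $S_{\alpha}$ from Theorem~\ref{introtheorem1}(4) meets the big cell; so your argument in fact also patches a gap that a careful reader would notice in the paper's own proof.
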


Beyond the quasi-split case, we construct our wonderful embedding by \emph{étale descent}. We now consider an adjoint reductive (not necessarily quasi-split) group $G$ over a Henselian discretely valued field $k$ with perfect residue field and ring of integers $\mathfrak{o}$. Let $K$ be a maximal unramified extension of $k$, and let $\mathcal{O}\subset K$ be the ring of integers. By a theorem of Steinberg, $G_{K}$ is quasi-split over $K$. We choose a maximal $k$-split torus $S$ with $\Phi(S)\coloneq \Phi(S,G)$ and $\hat{\Phi}(S)\coloneq \Phi(S)\bigcup \{0\}$. Let $T$ be a special $k$-torus containing $S$. For a concave function $\widetilde{f}:\hat{\Phi}(T_K)\rightarrow \mathbb{R}$ the concave function obtained by compositing a concave function $f:\hat{\Phi}(S)\longrightarrow \mathbb{R}$ with the restriction map $X^*(T_K)\rightarrow X^*(S_K)=X^*(S)$ and a point $x$ in the apartment $\mathcal{A}(S)$ of the building $\mathcal{B}(G)$ corresponding to $S$, by Bruhat--Tits theory, we have the $\mathfrak{o}$-group scheme $\mathcal{G}_{x,f}$  obtained by étale descent from the $\mathcal{O}$-group scheme $\mathcal{G}_{x,\widetilde{f}}$. Let $\overline{\mathcal{G}_{x,\widetilde{f}}}$ be the integral model over $\mathcal{O}$ by applying \Cref{introtheorem1} to $\mathcal{G}_{x,\widetilde{f}}$.

\begin{theorem}\label{introtheorem2}
    (\Cref{theoremdescent}) The $(\mathcal{G}_{x,\widetilde{f}}\times_{\mathcal{O}}\mathcal{G}_{x,\widetilde{f}})$-scheme $\overline{\mathcal{G}_{x,\widetilde{f}}}$ over $\mathcal{O}$ descends to a smooth quasi-projective $(\mathcal{G}_{x,f}\times_{\mathfrak{o}}\mathcal{G}_{x,f})$-scheme $\overline{\mathcal{G}_{x,f}}$ over $\mathfrak{o}$ which equivariantly contains $\mathcal{G}_{x,f}$ as an open dense subscheme, and the boundary $\overline{\mathcal{G}_{x,f}}\backslash \mathcal{G}_{x,f}$ is an $\mathfrak{o}$-relative Cartier divisor with $\mathfrak{o}$-relative normal crossings. Moreover, $\overline{\mathcal{G}_{x,f}}$ is $\mathfrak{o}$-projective if and only if $\mathcal{G}_{x,f}$ is a reductive group scheme over $\mathfrak{o}$. 
\end{theorem}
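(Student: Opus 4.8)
The plan is to obtain $\overline{\mathcal{G}_{x,f}}$ by Galois descent along $\mathcal{O}/\mathfrak{o}$. This extension is a filtered union of finite Galois étale subextensions, with profinite Galois group $\Gamma\coloneq\Gal(K/k)\cong\Gal(\kappa^{\mathrm{sep}}/\kappa)$, and by construction $\mathcal{G}_{x,f}$ is the descent of $\mathcal{G}_{x,\widetilde{f}}$ along a canonical $\gamma$-semilinear automorphism datum $(g_\gamma)_{\gamma\in\Gamma}$ on $\mathcal{G}_{x,\widetilde f}$. Thus one must (i) lift $(g_\gamma)$ to a descent datum $(c_\gamma)$ on the wonderful embedding $\overline{\mathcal{G}_{x,\widetilde f}}$ compatible with the open immersion $\mathcal{G}_{x,\widetilde f}\hookrightarrow\overline{\mathcal{G}_{x,\widetilde f}}$ and with the bi-action, (ii) check effectivity of descent, and (iii) see that the resulting $\mathfrak{o}$-scheme has all the stated properties. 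Note that the split torus $S$, the special torus $T$, a minimal parabolic $k$-subgroup $P_0\supseteq S$ and the concave function $\widetilde f$ are all defined over $k$, so $x\in\mathcal{A}(S)$ is $\Gamma$-fixed and every datum entering the construction of \Cref{introtheorem1} applied to $\mathcal{G}_{x,\widetilde f}$ is $\Gamma$-stable \emph{except} for the auxiliary Borel $K$-subgroup $B_K\supseteq T_K$, which exists because $G_K$ is quasi-split (Steinberg) but does not descend to $k$ when $G$ is not quasi-split.

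The main obstacle is step (i), controlling the change of the auxiliary Borel. Since $\gamma$ fixes $x$, the Galois twist $\gamma(B_K)$ is again a Borel containing $T_K$, and the $\gamma$-semilinear automorphism $g_\gamma$ of $\mathcal{G}_{x,\widetilde f}$ is realized by a representative in $N_{G_K}(T_K)(K)$ of the linear part of the action of $\gamma$ on the apartment $\mathcal{A}(T_K)$, which can be chosen to fix $x$ because $\gamma$ does, composed with a pinned automorphism of the split group $G_K$ fixing $B_K$, $T_K$, $x$ and the pinning; in particular $g_\gamma$ extends to an $\mathcal{O}$-automorphism of $\mathcal{G}_{x,\widetilde f}$ (by functoriality of the Bruhat--Tits group scheme in the point $x$) which preserves $\widetilde f$ and carries the big cell $\overline{\Omega_{x,f}}^{\,B_K}$ onto $\overline{\Omega_{x,f}}^{\,\gamma(B_K)}$. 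One then shows that the construction of \Cref{introtheorem1} is insensitive to the auxiliary Borel among those containing $T_K$: for two such Borels, writing one as the $\mathrm{Int}(n)$-conjugate of the other with $n\in N_{G_K}(T_K)(K)$ chosen to fix $x$, the extended automorphism $\mathrm{Int}(n)$ of $\mathcal{G}_{x,\widetilde f}$ matches the two big cells, and the uniqueness clause of \Cref{introtheorem1} forces a canonical $(\mathcal{G}_{x,\widetilde f}\times_{\mathcal O}\mathcal{G}_{x,\widetilde f})$-equivariant isomorphism over $\mathcal{O}$ between the two models, restricting to the identity on $\mathcal{G}_{x,\widetilde f}$ and on the common generic fibre $\overline{G_K}$ (the wonderful compactification of the adjoint group $G_K$ being intrinsic); when $f(0)>0$ this insensitivity is immediate since by \Cref{introtheorem1}(2) the model is simply the gluing of $\overline{G_K}$ with $\mathcal{G}_{x,\widetilde f}$ along $G_K$, with no Borel involved. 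Composing the semilinear isomorphism $\overline{\mathcal{G}_{x,\widetilde f}}^{\,B_K}\to\overline{\mathcal{G}_{x,\widetilde f}}^{\,\gamma(B_K)}$ produced functorially by $g_\gamma$ with these comparison isomorphisms yields $\gamma$-semilinear automorphisms $c_\gamma$ of $\overline{\mathcal{G}_{x,\widetilde f}}$ lifting $g_\gamma$; taking the $n_\gamma$ compatibly, as the cocycle already underlying $(g_\gamma)$, and using the canonicity of the comparison isomorphisms, $(c_\gamma)$ satisfies the cocycle identity and is a descent datum.

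For effectivity, every object and the descent datum are of finite presentation over $\mathcal{O}$, hence already defined over the ring of integers $\mathfrak{o}'$ of a finite Galois subextension $k'/k$; descent along the finite faithfully flat cover $\mathfrak{o}'/\mathfrak{o}$ is effective here because $\overline{\mathcal{G}_{x,\widetilde f}}$ is quasi-projective over $\mathcal{O}$ and carries a $\Gal(k'/k)$-linearised ample line bundle: $\Gamma$ permutes the boundary divisors $\mathbf{D}_\alpha$, so a suitable positive combination $\sum_\alpha m_\alpha\mathbf{D}_\alpha$ is $\Gal(k'/k)$-stable and naturally linearised, and it is ample by the same argument as for the classical $\overline{G}$ using that $\Pic(\overline{\mathcal{G}_{x,\widetilde f}})$ is free on the $\mathbf{D}_\alpha$. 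This gives a smooth quasi-projective $\mathfrak{o}$-scheme $\overline{\mathcal{G}_{x,f}}$ with $\overline{\mathcal{G}_{x,f}}\times_{\mathfrak{o}}\mathcal{O}\cong\overline{\mathcal{G}_{x,\widetilde f}}$ and generic fibre the wonderful compactification of $G$ over $k$.

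The remaining assertions then follow from faithfully flat descent and the fppf-locality of the relevant properties. The $\Gamma$-equivariant open immersion $\mathcal{G}_{x,\widetilde f}\hookrightarrow\overline{\mathcal{G}_{x,\widetilde f}}$ descends to an equivariant open immersion $\mathcal{G}_{x,f}\hookrightarrow\overline{\mathcal{G}_{x,f}}$, with dense image since density is a fibrewise condition unchanged by the faithfully flat base change to $\mathcal{O}$; the $\Gamma$-stable boundary $\overline{\mathcal{G}_{x,\widetilde f}}\setminus\mathcal{G}_{x,\widetilde f}$ descends to $\overline{\mathcal{G}_{x,f}}\setminus\mathcal{G}_{x,f}$, which is an $\mathfrak{o}$-relative Cartier divisor with $\mathfrak{o}$-relative normal crossings, both notions being fppf-local on the base; and $\overline{\mathcal{G}_{x,f}}$ is proper over $\mathfrak{o}$ iff $\overline{\mathcal{G}_{x,\widetilde f}}$ is proper over $\mathcal{O}$ (properness is fppf-local on the base and $\mathcal{O}/\mathfrak{o}$ is faithfully flat), iff, by \Cref{introtheorem1}(3), $\mathcal{G}_{x,\widetilde f}$ is reductive over $\mathcal{O}$, iff $\mathcal{G}_{x,f}$ is reductive over $\mathfrak{o}$ (being a reductive group scheme is fppf-local on the base); as $\overline{\mathcal{G}_{x,f}}$ is already quasi-projective, proper here means projective, which is the last claim. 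The genuinely delicate point is the Borel-insensitivity of \Cref{introtheorem1} and the cocycle verification in the second paragraph; everything after that is a routine application of faithfully flat descent.
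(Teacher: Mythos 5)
Your overall strategy coincides with the paper's: descend $\overline{\mathcal{G}_{x,\widetilde{f}}}$ along $\mathcal{O}/\mathfrak{o}$ by lifting the $\Gamma$-semilinear descent datum on $\mathcal{G}_{x,\widetilde f}$ and then quoting effectivity for quasi-projective schemes. The genuine difference is in how the descent datum is lifted. The paper (\S 7.2--7.3) decomposes each $\sigma\in\Gamma$, via the split exact sequence $1\to G_K\to\Aut_{G_K/K}\to\Out_{G_K/K}\to 1$, into an inner part $\mathrm{Int}(g)$ with $g\in N(K)$ and a pinned outer automorphism $\xi$; it then lifts $\xi$ to an automorphism $\mu$ of $\overline{\mathcal T}$ by permutation of $\widetilde{\Delta}$, lifts the Weyl part of $g$ to an element $\dot g\in\mathcal{G}_{x,\widetilde f}(\mathcal O)\cap N(K)$ (using \cite[Theorem~8.2.9, Theorem~8.5.12]{Bruhattitsnewapproach}, which is where $f(0)=0$ enters), forms $\overline{\sigma}=\Ad(\dot g)\circ\mu$ on $\overline{\mathcal T}$, and checks directly that the formula of \Cref{quotientequation} descends through the quotient presentation of $\overline{\mathcal{G}_{x,\widetilde f}}$ (\Cref{f0equalto0}). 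You instead reach the same lift abstractly, by proving a ``Borel-insensitivity'' statement via the uniqueness clause of \Cref{introtheorem1}: this buys conceptual economy but is actually a \emph{stronger} intermediate claim than what the explicit route needs, since it asserts a canonical comparison between the models built from any two Borels, not just $B$ and $\gamma(B)$. Both routes require the same nontrivial Bruhat--Tits input --- that the relevant Weyl-group element has a representative compatible with $x$ and with the integral model --- which you allude to (``can be chosen to fix $x$'') but do not pin down; the paper localizes this input precisely in the cited theorems. Your observation that the case $f(0)>0$ is immediate (no Borel enters the gluing) matches the paper.

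On effectivity, the paper simply cites \cite[\S 6.2, Example~B]{BLR}; your more concrete argument contains a gap. First, the $\mathbf{D}_\alpha$ of \Cref{theoremcolors} are the closures $\overline{B\dot s_\alpha B^-}$, which depend on the auxiliary Borel $B$; since $\gamma(B)\neq B$ when $G$ is not quasi-split, $\Gamma$ does not literally permute this family, so ``$\Gamma$ permutes the $\mathbf{D}_\alpha$'' needs repair. Second, ``ample by the same argument as for the classical $\overline{G}$'' does not transfer: $\overline{\mathcal{G}_{x,\widetilde f}}$ is in general only quasi-projective, not projective, and freeness of $\Pic$ on the $\mathbf{D}_\alpha$ gives no information about which combinations are ample. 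Neither issue is fatal, because a $\Gamma$-linearised ample line bundle always exists once the descent datum $(c_\gamma)$ is in hand (take any ample $L$ on $\overline{\mathcal{G}_{x,\widetilde f}}$, then $\bigotimes_{\gamma}c_\gamma^{*}L$ is ample and carries a canonical linearisation); replacing your specific divisor by this average closes the gap. The remaining descent of smoothness, density of the open immersion, the relative normal-crossings property, and the projectivity--reductivity equivalence is a routine application of faithfully flat descent, as you say, and agrees with the paper's references to \cite{EGAIV2}, \cite{EGAIV4}, \cite{SGA1}, and \cite{SGA3III}.
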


Since the scheme $\overline{\mathcal{G}_{x,f}}$ behaves in many aspects similarly to the wonderful compactification $\overline{G}$ (when $f=0$), \Cref{introtheorem1} and \Cref{introtheorem2} can serve as a first step towards the Bruhat--Tits theory for wonderful compactification. It is also expected to expend our results to more general wonderful varieties in the sense of \cite{Lunawonderfulvarieties}.

\begin{remark}
Recall that, when $G$ splits over $k$, the set of the $(G\times_k G)$-orbits of the wonderful compactification $\overline{G}$ are bijective to the set $\Delta\times \{0,1\}$. This has the following affine analogue. 
    If $f(0)=0$, the $(\mathcal{G}_{x,f}(\mathfrak{o})\times\mathcal{G}_{x,f}(\mathfrak{o}))$-orbits of $ \overline{\mathcal{G}_{x,f}}(\mathfrak{o})$ are bijective to the set $\prod_{\alpha\in \Delta}(\Gamma_{\alpha}\bigcap \mathbb{R}_{\geq 0})$. Given such a parallel between $\overline{G}$ and $\overline{\mathcal{G}_{x,f}}$, we ask if there is a classification of $(G(k)_{\mathcal{C},0}\times G(k)_{\mathcal{C},0})$-orbits in $ \overline{\mathcal{G}_{x,0}}(\mathfrak{o})$, where $\mathcal{C}\subset \mathcal{B}(G)$ is a chamber containing $x$ and $G(k)_{\mathcal{C},0}$ is thus an Iwahoric subgroup, as an analogue of the classification of $(B\times_k B)$-orbits in $\overline{G}$ \cite[\S~2.1]{thebehaviouratinfinityofbruhatdecomposition}; if we have a description of $\mathcal{G}_{x,f}(\mathfrak{o})_{\Diag}$-stable pieces in $\overline{\mathcal{G}_{x,f}}(\mathfrak{o})$ as an analogue of \cite{Lusztigparabolicsheaves}.
\end{remark}

This paper is organized as follows. In \Cref{sectiongroupschemeinbruhattits}, we collect some basic results (especially about smooth integral model) in Bruhat--Tits theory which are used in different places in this article. The reader familiar with it can skip this part. In \Cref{sectionwonderfulcompactification} and \Cref{sectionequivarianttoroidal}, we review the basics of the theory of wonderful compactifications and toroidal embeddings. In \Cref{sectionrationalactions}, we introduce some results about rational actions over general base scheme, which serve as our main algebro-geometric tools to establish the existence part of \Cref{introtheorem1}. \Cref{sectionparahoriccompactification} consists of the full proof of \Cref{introtheorem1}. Finally, in \Cref{sectionetaledescent}, we discuss the étale descent of the wonderful embedding constructed in \Cref{sectionparahoriccompactification}, i.e., \Cref{introtheorem2}.

\subsection{Notation and conventions}

For a real number $r\in \mathbb{R}$, we denote by $\lceil r \rceil\in \mathbb{Z}$ the smallest integer greater than or equal to $r$.

\noindent If $X$ is a set equipped with an action of a group $\Gamma$, we denote by $X^{\Gamma}$ the set of elements of $X$ fixed by $\Gamma$.

\noindent  For a scheme $S$, we denote by $\Sch/S$ the category of $S$-schemes endowed with its fppf topology.

\noindent For a group scheme $G$ over a scheme $S$, we will denote by $e\in G(S)$ the identity section.

\noindent We use dotted arrows to depict a rational morphism. For an $S$-rational morphism $f$ between two schemes $X$ and $Y$ over a scheme $S$, a test $S$-scheme $S'$ and a section $x\in X(S')$, we say that $x$ is well defined with respect to $f$ if the image of $x$ in $X_{S'}$ lies in the definition domain of $f_{S'}$. When $Y$ is separated over $S$, we denote by $\Dom(f)$ the definition domain of $f$. For the existence and the uniqueness of the definition domain of a rational morphism, see \cite[exposé~XVIII, définition~1.5.1]{SGA3II}.

\noindent Let $\mathbb{R}$ be the field of real numbers, and let
$\widetilde{\mathbb{R}}\coloneq (\mathbb{R}\times \{0,1\})\bigcup \{\infty\}$. Then $\widetilde{\mathbb{R}}$ is actually a totally ordered commutative monoid which $\mathbb{R}$ as an ordered submonoid, with respect to the oerdered monoid structure defined in \cite[6.4.1]{Bruhattits1}, see also \cite[1.6]{Bruhattitsnewapproach}. We will write $r$ (resp., $r^+$) in place of $(r,0)$ (resp., $(r,1)$).

\noindent Following the terminology of \cite[exposé~XIII, 2.1]{SGA1}, for a scheme $X$ over a scheme $S$, we say that a relative Cartier divisor $D\subset X$ is strictly with $S$-relative normal crossings, if there exists a finite family $(f_i\in \Gamma(X,\mathcal{O}_X))_{i\in I}$ such that 
    \begin{itemize}
        \item $D=\bigcup_{i\in I}V_X(f_i)$;
        \item for every $x \in \Supp(D)$, $X$ is smooth at $x$ over $S$, and 
     the closed subscheme $V((f_i)_{i\in I(x)})\subset X$ is smooth over $S$ of codimension $ \vert I(x)\vert$, where $I(x)=\{i\in I\vert f_i(x)=0\}$. 
    \end{itemize}
    The divisor $D$ is with $S$-relative normal crossings, if étale locally on $X$ it is strictly with $S$-relative normal crossings. 

\noindent For a subset $\Psi$ of a root system $\Phi$, we denote by $\Psi^{\red}$ the set of roots in $\Psi$ which are nondivisible.

\subsection{Acknowledgements}
I thank Kęstutis Česnavičius, Philippe Gille, Jochen,Heinloth, João Lourenço, Ana Bălibanu, Friedrich Knop, Arnaud Mayeux, Vikraman Balaji, Corrado de Concini, Michel Brion, Penghui Li, Ning Guo and Bertrand Rémy for for helpful conversations and correspondence. Part of the present work was obtained during a visit to Institut Fourier at Université Grenoble Alpes; I thank Michel Brion for his hospitality. I thank the organizers of Algebraic Groups workshop (ID:2516) at Oberwolfach for giving me the opportunity to report the main results of this paper. This project has received funding from the European Research Council (ERC) under the European Union's Horizon 2020 research and innovation programme (grant agreement No.~851146) and funding from National Key R\&D Program of China (grant 2024YFA1014700).

\section{Group schemes in Bruhat--Tits theory}\label{sectiongroupschemeinbruhattits}

In this section, we review the basics of Bruhat--Tits theory which will be needed in the sequel.

\subsection{Setup}\label{quasisplitsetup}
Let $k$ be a strictly Henselian discretely valued field with the valuation $\omega:k^{\times}\rightarrow \mathbb{R}$, $\mathfrak{o}\subset k$ the ring of integers, $\mathfrak{m}\subset \mathfrak{o}$ the maximal ideal, $\pi$ a uniformizer, and let $\kappa=\mathfrak{o}/\mathfrak{m}$ be the residue field which is assumed to be perfect (In particular, $\kappa$ is algebraically closed). Let $k_s$ be a separable closure of $k$. We further assume that the valuation is normalized, i.e., $\omega(k^{\times})=\mathbb{Z}$.

Let $G$ be a quasi-split adjoint semisimple group over $k$. We fix a maximal split torus $S\subset G$ and a Borel subgroup $B\subset G$ containing $S$. Let $T$ be the centralizer of $S$, then $T\subset G$ is a maximal $k$-torus, see \cite[4.1.1]{Bruhattits2}. We denote the corresponding sets of simple roots, of positive roots and of roots with respect to $S$ by $\Delta\subset \Phi^+\subset \Phi$. Let $U_a$ be the root subgroup of $a$.
By for instance \cite[Proposition~0.21]{Landvogtcompactification}, there exists an open subscheme $\Omega_{G}$ and an open immersion 
\begin{equation}\label{eqbigcell}
    \prod_{a\in \Phi^{-,\red}} U_a \times_k T\times_k \prod_{a\in \Phi^{+,\red}} U_a\hookrightarrow G
\end{equation}
whose image is $\Omega_{G}$, where the two products are taken over any total order on $\Phi^-$ and $\Phi^+$.

In order to introduce a general class of open bounded subgroups of $G(k)$ (e.g., Moy--Prasad filtration subgroups), we recall the following definitions.

\begin{definition}
    (\cite[6.4.3]{Bruhattits1}) Let $\widehat{\Phi}\coloneq \Phi\bigcup \{0\}$. A function $f: \widehat{\Phi}\rightarrow \widetilde{\mathbb{R}}$ is called concave if, for $a,b\in\widehat{\Phi}$ with $a+b\in\widehat{\Phi}$, we have
    $f(a+b)\leq f(a)+f(b).$ In particular, we have $f(0)\geq 0$.
\end{definition}

\begin{definition}
    (\cite[Definition~8.5.8]{Bruhattitsnewapproach}) For a concave function $f: \widehat{\Phi}\rightarrow \widetilde{\mathbb{R}}$, by \cite[Lemma~8.5.9]{Bruhattitsnewapproach}, we have the concave function $f^+:\widehat{\Phi}\rightarrow \widetilde{\mathbb{R}}$ defined by 
    \begin{equation*}
       f^+(a)= \left\{  \begin{array}{rcl}
            f(a)+, & {\text{if } f(a)+f(-a)=0} \\
             f(a),& {\text{otherwise.}}
        \end{array}   \right.       
    \end{equation*}
\end{definition}

\subsection{Standard filtration of $T(k)$}\label{standardfiltrationtorus}
Recall that, since $G$ is adjoint, $T$ is an induced torus. Let $T(k)_0\subset T(k)$ be the unique Iwahoric subgroup of $T(k)$. We have a decreasing filtration of $T(k)$ parameterized by $\mathbb{R}_{\geq 0}$ (which is known as the \emph{minimal congruent filtration}): 
\begin{itemize}
    \item $T(k)_0$ is the Iwahori subgroup of $T(k)$; in our case, since $T$ is induced, 
    $$T(k)_0\coloneq\{t\in T(k)\vert\; \omega(\chi(t))=0, \;\text{for all}\; \chi\in X^*(T)\; \};$$
    \item for $r\in\mathbb{R}_{>0}$,
$T(k)_r\coloneq \{\;t\in T(k)_0\;\vert \;\omega(\chi(t)-1)\geq r, \;\text{for all}\; \chi\in X^*(T)\;\}.$
\end{itemize}
This filtration was defined by Moy--Prasad \cite{Moyprasad} and Schneider--Stuhler \cite{schneiderstuhler}. For the definition of this filtration for a general $k$-torus, we refer the reader to, for instance, \cite[Definition~B.10.8]{Bruhattitsnewapproach}.

Moreover, since $T$ is induced, by \cite[Proposition~B.3.6]{Bruhattitsnewapproach}, for each $r\in\mathbb{R}_{\geq  0}$, $T(k)_r$ is schematic in the sense that there exists an (necessarily unique) integral model $\mathcal{T}^{(r)}$ of $T$ over $\mathfrak{o}$ such that $\mathcal{T}^{(r)}(\mathfrak{o})=T(k)_r\subset T(k)$. In fact, if $T= \Res_{K/k}(\mathbb{G}_{m,K})$ for some finite separable extension $K/k$, $T(k)_r=\{x\in K^{\times}\vert \omega'(x-1)\geq r \}$, where $\omega'$ is the unique valuation on $K$ extending the valuation $\omega$ on $k$. Then we have $$\mathcal{T}^{(r)}=\Res_{\mathcal{O}_K/\mathfrak{o}}(\mathbb{G}_{m,\mathcal{O}_K}^{(\lceil er \rceil)}),$$ where $e\in \mathbb{Z}$ is the ramification index of $K/k$ and $\mathbb{G}_{m,\mathcal{O}_K}^{(\lceil er \rceil)}$ is the $\lceil er \rceil$th congruent group scheme of $\mathbb{G}_{m,\mathcal{O}_K}$, cf. \Cref{appendixdilatation}.

\subsection{Filtration subgroups of root subgroups}\label{filtrationsubgroup}
For this part, we will follow \cite[Appendix~C]{Bruhattitsnewapproach}.
By the definition \cite[Definition~6.1.24]{Bruhattitsnewapproach}, the elements of the apartment $\mathcal{A}(S)$ are the valuations of the relative root datum of $(G,S)$ which are equipollent to one (hence every) Chevalley valuation. Thus a point $x\in \mathcal{A}(S)$ gives rise to a collection of morphisms 
$$(\phi_a: U_a(k)\rightarrow \mathbb{R}\bigcup\{\infty\})_{a\in \Phi}$$
satisfying certain compatibilities with the group law of $G$, see \cite[6.2.1]{Bruhattits1}. Following \cite[6.2.1, 6.4.1]{Bruhattits1}, for $r\in\mathbb{R}$ and $r^+\in\widetilde{\mathbb{R}}$, we define the root filtration subgroups 
$$U_{a,x,r}\coloneq \phi_a^{-1}([r,\infty])\;\;\;\;\text{and}\;\;\;\; U_{a,x,r^+}\coloneq \bigcup_{s\textgreater r}  U_{a,x,s}.$$

\noindent For a concave function $f:\hat{\Phi}\longrightarrow \widetilde{\mathbb{R}}$ and $x\in \mathcal{A}(S)$, following the usual convention, we let 
$$U_{a,x,f}\coloneq U_{a,x,f(a)}U_{2a,x,f(2a)}\subset U_a(k)\;\;\text{for}\;\;a\in\Phi$$
where, if $2a\notin \Phi$, we let $U_{2a,x,f(2a)}$ to be the identity of $U_a(k)$.

\subsubsection{Set of values}\label{subsectionsetofvalues}

For a point $x=(\phi_a: U_a(k)\rightarrow \mathbb{R}\bigcup\{\infty\})_{a\in \Phi}\in \mathcal{A}(S)$ as above, we introduce
\begin{itemize}
    \item If $2a\notin \Phi$, let $\Gamma_a\coloneq\Gamma'_a\coloneq \phi_a(U_a(k)\backslash\{e\})\subset \mathbb{R}$.
    \item  If $2a\in \Phi$, let $\Gamma_a\coloneq \phi_{a}(U_{a}(k)\backslash\{e\})$, $\Gamma'_a\coloneq \{\sup(\phi_a (uU_{2a}(k)))\vert u\in U_a(k)\backslash\{e\} \}\subset \mathbb{R}$ and $\Gamma_{2a}\coloneq\Gamma'_{2a}\coloneq \phi_{2a}(U_{2a}(k)\backslash\{e\})\subset \mathbb{R}$.
\end{itemize}

\subsubsection{Integral models for root subgroups}
By, for instance, \cite[Proposition~C.5.1]{Bruhattitsnewapproach}, there exists a unique smooth $\mathfrak{o}$-scheme $\mathcal{U}_{a,x,r}$ with connected fibers such that $(\mathcal{U}_{a,x,r})_k=U_a$ and $\mathcal{U}_{a,x,r}(\mathfrak{o})=U_{a,x,r}$. By \cite[Remark~8.3.10]{Bruhattitsnewapproach}, the jump set $\{r\in \mathbb{R}\;\vert\; U_{a,x,r}\neq U_{a,x,r^+} \;\}$ form a discrete subset of $\mathbb{R}$. Hence we also have smooth $\mathfrak{o}$-scheme $\mathcal{U}_{a,x,r^+}$ with connected fibers such that $(\mathcal{U}_{a,x,r^+})_k=U_a$ and $\mathcal{U}_{a,x,r^+}(\mathfrak{o})=U_{a,x,r^+}$. 

Since $U_{a,x,r^+}\subset U_{a,x,r}$, by the extension principle \cite[Proposition~0.3]{Landvogtcompactification}, the identity of $U_a$ (uniquely) extends to an $\mathfrak{o}$-morphism of group schemes $\mathcal{U}_{a,x,r^+}\rightarrow \mathcal{U}_{a,x,r}$. This morphism is not necessarily an immersion in general. We denote by $(\mathcal{U}_{a,x,r})_{\kappa}^+$ the image of the special fiber $(\mathcal{U}_{a,x,r^+})_{\kappa}$ in $(\mathcal{U}_{a,x,r})_{\kappa}$.

Since the concrete construction of $\mathcal{U}_{a,x,r}$ will be needed in the proof of \Cref{boundarydivisor}, we now recall them. For this, since $U_{a,y,r}=U_{a,x,r-\langle a,x-y\rangle}$ for any $y\in \mathcal{A}(S)$, we can assume that $x$ is the adjusted Chevalley valuation given by a Chevalley--Steinberg system $X\coloneq \{X_{\alpha}\in (\mathfrak{g}_{k_s})_{\alpha} \vert \alpha\in \widetilde{\Phi}\}$, where $\mathfrak{g}_{k_s}$ is the Lie algebra of $G_{k_s}$ cf. \cite[Definition~2.9.15, Construction~6.1.18]{Bruhattitsnewapproach}. In the following construction, we will still write $\omega$ for any extension of the valuation $\omega$ on $k$.

\begin{construction}\label{constructionofunipotentintegralmodel}
    \begin{itemize}
        \item[(1)] If $a$ is neither divisible nor multipliable, then $X$ gives an isomorphism of the root subgroup $U_a$ and $\Res_{L/k}(\mathbb{G}_{a,L})$ for a finite separable extension $L/k$. We write $\mathbb{G}_{a,L}=\Spec(L[t])$ with $t$ an indeterminate. Under this isomorphism, $U_{a,x,r}$ is identified with $\{l\in L\vert \omega(l)\geq r\}\subset \Res_{L/k}(\mathbb{G}_{a,l})(k)$. Then we have the integral model
        $$\mathcal{U}_{a,x,r}=\Res_{\mathcal{O}_L/\mathfrak{o}}(\Spec(\mathcal{O}_L[\pi_L^{-r}t])),$$
        where $\mathcal{O}_L$ is the ring of integers of $L$; $\pi_L\in \mathcal{O}_L$ is a uniformizer.
        
         \item[(2)] If $a$ is divisible, i.e., $a/2\in \Phi$, then $X$ gives an isomorphism of the root subgroup $U_a$ and $\Res_{L/k}(R_{L'/L}^0(\mathbb{G}_{a,L'}))$ for a finite separable extension $L/k$, where $L'/L$ is a separable quadratic extension and $R_{L'/L}^0(\mathbb{G}_{a,L'})\coloneq \ker(\Tr_{L'/L}:\Res_{L'/L}(\mathbb{G}_{a,L'})\rightarrow \mathbb{G}_{a,L})$. Under this isomorphism, $U_{a,x,r}$ is identified with $\{l\in L'\vert \Tr_{L'/L}(l)=0, \omega(l)\geq r+\mu\}$, where $\mu=\text{sup}\{\omega(x)\vert x\in L', \Tr_{L'/L}(x)=1\}$. By choosing $\iota\in L'$ such that $\Tr(\iota)=0$, we have an isomorphism of $L$-group scheme
        \begin{align*}
            \mathbb{G}_{a,L}&\longrightarrow R_{L'/L}^0(\mathbb{G}_{a,L'})\\
               u&\longmapsto \iota\cdot u.
        \end{align*}
        Thus, under the above isomorphism, we can construct the integral model 
        $$\mathcal{U}_{a,x,r}= \Res_{\mathcal{O}_L/\mathfrak{o}}(\Spec(\mathcal{O}_L[\pi_L^{\omega(\iota)-r-\mu}t]).$$
        
        \item[(3)] If $a$ is multipliable, i.e., $2a\in \Phi$, then $X$ gives an isomorphism of the root subgroup $U_a$ and $\Res_{L/k}(U_{L'/L})$ for a finite separable extension $L/k$, where $L'/L$ is a separable quadratic extension (we let $\sigma$ be the nontrivial involution in $\Gal(L'/L)$) and $U_{L'/L}$ is an $L$-algebraic group sending an $L$-algebra $R$ to the set $\{u,v\in R\otimes_L L'\vert v+\overline{v}=u\overline{u}\}$ (bar denote the map $\Id_R\otimes\sigma$). Under this isomorphism, $U_{a,x,r}$ is identified with $\{(u,v)\in U_{L'/L}(L)\vert\omega(v)\geq 2r+\mu\}$, where $\mu=\text{sup}\{\omega(x)\vert x\in L', \Tr_{L'/L}(x)=1\}$. By choosing $\lambda\in L'$ such that $\Tr_{L'/L}(\lambda)=1$,
        we have an isomorphism of $L$-group scheme 
        \begin{align*}
            U_{L'/L}& \longrightarrow \Res_{L'/L}(\mathbb{G}_{a,L'})\times_L R_{L'/L}^0(\mathbb{G}_{a,L'})\\
            (u,v)&\longmapsto (u,v-u\overline{u}\lambda),
        \end{align*}
        where $(u,v)$ is understood as a section of $U_{L'/L}$ valued in a test $L$-algebra and $R_{L'/L}^0$ is defined in (2). Thus, under the above isomorphism, we can construct the integral model 
        $$\mathcal{U}_{a,x,r}= \Res_{\mathcal{O}_L/\mathfrak{o}}(\Spec(\mathcal{O}_L[\pi_L^{-r}t]\times_{\mathcal{O}_L}\mathcal{U}_{2a,x,2r}),$$
    \end{itemize}
    where $\mathcal{U}_{2a,x,2r}$ is defined in (2).
\end{construction}

\subsection{Integral model of $G$}\label{subsectionintegralmodelofG}
Let $\mathcal{B}(G)$ be the (reduced) Bruhat--Tits building of $G(k)$ in the sense of \cite{Bruhattits2}. Note that, since $G$ is adjoint, $\mathcal{B}(G)$ coincides with the enlarged building. Let $\mathcal{A}(S)\subset \mathcal{B}(G)$ be the apartment corresponding to the maximal $k$-split torus $S$. 

We define $G_{x,f}\subset G(k)$ to be the subgroup generated by $T(k)_{f(0)}$ and $U_{a, x, f}$ for all $a\in \Phi$. Note that a subgroup of $G(k)$ generated by bounded subgroups may fail to be bounded, see \cite[Example~8.2.7]{Bruhattitsnewapproach} for an example. However, it can be shown as in \cite[Corollary~7.3.14]{Bruhattitsnewapproach} that $G_{x,f}$ is an open bounded subgroup of $G(k)$.  If $f=0$, the $G_{x,0}$ is usually called a parahoric subgroup. If $x$ lies in a chamber, $G_{x,0}$ is also called an Iwahori subgroup. See \cite[Appendix, Definition~1]{Twistedloopgroupsaffineflagvarieties} and \cite[définition~5.2.6]{Bruhattits2} for different (but equivalent) definitions of a parahoric subgroup. We recall the following fundamentally important algebro-geometric result.  

\begin{theorem}\label{Thparahoricgrpsche}
    There exists a unique smooth affine $\mathfrak{o}$-group scheme $\mathcal{G}_{x,f}$ of finite type with connected special fiber such that
    \item[(i)] the generic fiber of $\mathcal{G}_{x,f}$ is isomorphic to $G$;
    
    \item[(ii)] $\mathcal{G}_{x,f}(\mathfrak{o})=G_{x,f}\subset G(k);$

    \item[(iii)] the product morphism of $\mathcal{G}_{x,f}$ induces an open immersion 
    \begin{equation}\label{eqparahoricbigcell}
        \Omega_{x,f}\coloneq \prod_{a\in\Phi^{-,\red}}\mathcal{U}_{a,x,f}\times_\mathfrak{o} \mathcal{T}^{(f(0))} \times_\mathfrak{o} \prod_{a\in\Phi^{+,\red}}\mathcal{U}_{a,x,f} \hookrightarrow \mathcal{G}_{x,f}
    \end{equation}
    whose generic fiber is $\Omega_G$ and whose image does not depend on the choice of the order on $\Phi^-$ and $\Phi^+$ in the above products.
\end{theorem}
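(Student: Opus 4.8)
The plan is to prove \Cref{Thparahoricgrpsche} by reducing the existence and uniqueness of the smooth affine $\mathfrak{o}$-group scheme $\mathcal{G}_{x,f}$ to the corresponding statement for parahoric (or rather Iwahori) group schemes together with a dilatation argument, or alternatively to construct $\mathcal{G}_{x,f}$ directly by gluing along the big cell. I will describe the latter, more self-contained route. First I would establish the uniqueness: if $\mathcal{G}$ and $\mathcal{G}'$ are two smooth affine $\mathfrak{o}$-group schemes of finite type with connected special fibers and generic fiber $G$ such that $\mathcal{G}(\mathfrak{o}) = \mathcal{G}'(\mathfrak{o}) = G_{x,f}$, then by the extension principle \cite[Proposition~0.3]{Landvogtcompactification} (applied over $\mathfrak{o}$, using that $G_{x,f}$ is bounded and $\mathcal{G},\mathcal{G}'$ are smooth affine) the identity of $G$ extends uniquely to $\mathfrak{o}$-morphisms $\mathcal{G} \to \mathcal{G}'$ and $\mathcal{G}' \to \mathcal{G}$ which are mutually inverse since they are so generically and the schemes are $\mathfrak{o}$-flat separated; one then checks these are group homomorphisms again by density of the generic fiber. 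This also shows that property (iii) is automatic once (i) and (ii) hold, since the open subscheme cut out by the big cell is intrinsic.

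For existence, the key object is the $\mathfrak{o}$-scheme $\Omega_{x,f} := \prod_{a\in\Phi^{-,\red}}\mathcal{U}_{a,x,f}\times_\mathfrak{o} \mathcal{T}^{(f(0))} \times_\mathfrak{o} \prod_{a\in\Phi^{+,\red}}\mathcal{U}_{a,x,f}$, which is smooth affine over $\mathfrak{o}$ with connected fibers (each factor being so, by \Cref{constructionofunipotentintegralmodel} and \Cref{standardfiltrationtorus}), and whose generic fiber is the big cell $\Omega_G\subset G$ via \eqref{eqbigcell}. The plan is to equip $G$ with the birational group law over $\mathfrak{o}$ restricted from the multiplication of $G$, and to apply the theory of Bruhat--Tits \cite[3.1--3.9]{Bruhattits2} (or the Artin--Weil theorem on birational group laws, e.g.\ \cite[exposé~XVIII]{SGA3II}): the multiplication $m: G\times_k G\to G$ induces an $\mathfrak{o}$-rational map $\Omega_{x,f}\times_\mathfrak{o}\Omega_{x,f}\dashrightarrow \Omega_{x,f}$. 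One must verify that this rational map is defined on a dense open subscheme meeting every fiber, that it satisfies the associativity-type cocycle condition of a birational group law, and — crucially — that the resulting group scheme has $\mathfrak{o}$-points exactly $G_{x,f}$. The verification that the partial multiplications $U_{a,x,f}\times U_{b,x,f}\to \cdots$ extend to $\mathfrak{o}$-morphisms on the relevant loci rests on the concavity of $f$ together with the commutator relations (Chevalley commutator formula) among root subgroups, exactly as in \cite[6.4.9, 6.4.48]{Bruhattits1}; this is where the hypothesis that $f$ is concave is used in an essential way.

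The main obstacle, as in the original Bruhat--Tits construction, is showing that the abstract group $\mathcal{G}_{x,f}(\mathfrak{o})$ produced by the birational group law procedure coincides with $G_{x,f}$, rather than being some a priori larger or smaller bounded subgroup. The inclusion $G_{x,f}\subseteq \mathcal{G}_{x,f}(\mathfrak{o})$ is comparatively easy, since $T(k)_{f(0)} = \mathcal{T}^{(f(0))}(\mathfrak{o})$ and $U_{a,x,f} = \mathcal{U}_{a,x,f}(\mathfrak{o})$ by construction and these generate $G_{x,f}$. The reverse inclusion requires knowing that every $\mathfrak{o}$-point of $\mathcal{G}_{x,f}$ admits an Iwasawa- or Bruhat-type decomposition with factors in the standard pieces; concretely, one invokes that $\mathcal{G}_{x,f}$ is covered by translates of $\Omega_{x,f}$ under $N(k)\cap G_{x,f}$, which is the content of the ``schematic root datum'' axioms verified in \cite[\S~4.6, \S~5.1]{Bruhattits2}. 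I would therefore cite \cite[Théorème~4.6.7, 5.1.20]{Bruhattits2} and \cite[Theorem~7.?, Appendix~C]{Bruhattitsnewapproach} for this step rather than reprove it, noting that the perfectness of $\kappa$ (hence $\kappa$ algebraically closed, since $k$ is strictly Henselian) guarantees the special fiber is connected and that the quotient-by-unipotent-radical constructions behave well. Finally, smoothness and finite type of $\mathcal{G}_{x,f}$ follow from smoothness and finite type of $\Omega_{x,f}$ together with the fact that the group scheme obtained from a birational group law inherits smoothness from its birational model \cite[exposé~XVIII, théorème~3.7]{SGA3II}, and affineness follows since a smooth affine generic fiber plus an affine model of an open dense piece, glued via translations, remains affine here because $G$ is (the argument of \cite[Proposition~2.?]{Bruhattits2} applies, using that the complement of the big cell has codimension $\geq 1$ and $\mathcal{G}_{x,f}$ is separated with affine diagonal).
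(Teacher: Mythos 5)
The paper does not prove \Cref{Thparahoricgrpsche}; it records the statement as known and defers to the literature, citing \cite{Bruhattits2} and \cite[\S~5]{Landvogtcompactification} for $f(0)=0$ (a representation-theoretic and an Artin--Weil construction, respectively) and \cite{Yusmoothmodel} (dilatation and group smoothening) for general $f$. Your sketch follows Landvogt's Artin--Weil route, which is a reasonable choice, and your uniqueness argument via the extension principle \Cref{extensionprinciple} is essentially correct. But the existence half has two substantive gaps. The most serious is \emph{affineness}: the Artin--Weil machinery by itself produces a smooth separated group algebraic space, and establishing that it is in fact an affine scheme is one of the genuinely hard points of the theory. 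Your stated reason — that the generic fiber is affine, the complement of the big cell has codimension $\geq 1$, and the diagonal is affine — does not work (the same reasoning would ``prove'' the projective line over $\mathfrak{o}$ is affine, and affine diagonal is automatic for any separated scheme); the reference you wrote as ``Proposition~2.?'' is a placeholder, and the real argument in \cite{Bruhattits2} (and the comparison step in Landvogt) goes through a representation-theoretic embedding into a general linear group scheme, which is precisely why the Bruhat--Tits proof is billed as ``via representation theory.''

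Second, you treat all concave $f$ uniformly, but the birational-group-law route only cleanly yields condition (ii) when $f(0)=0$. When $f(0)>0$ the scheme can be assembled by gluing $G$ and $\Omega_{x,f}$ along $U^-TU^+$ (as \Cref{unipotentradicalspecialfiber}(2) permits), yet — as the paper itself emphasizes in the remark immediately following \Cref{Thparahoricgrpsche} — this gluing does \emph{not} a priori give $\mathcal{G}_{x,f}(\mathfrak{o})=G_{x,f}$ over a Henselian (rather than complete) base, which is the standing hypothesis of \Cref{quasisplitsetup}. The general-$f$ case really is Yu's dilatation and group-smoothening construction; you flag ``a dilatation argument'' as an alternative in your opening sentence but never return to it. A smaller issue: the assertion that ``(iii) is automatic once (i) and (ii) hold'' conflates uniqueness with construction. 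Uniqueness lets you transfer (iii) from one model satisfying (i)--(iii) to any other model satisfying (i) and (ii), but you still must verify (iii) for whatever you actually build; showing that the extended product map $\Omega_{x,f}\to\mathcal{G}_{x,f}$ is an open immersion (and not merely an $\mathfrak{o}$-morphism that is an open immersion on the generic fiber) requires a special-fiber analysis beyond the extension principle alone.
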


If $f(0)= 0$, \Cref{Thparahoricgrpsche} was announced in \cite[\S~3.4]{Titscorvallis1977} and was proved via representation theory by Bruhat and Tits in \cite{Bruhattits2}. As indicated by Bruhat and Tits, Landvogt gives a different proof based on the Artin--Weil theorem in \cite[\S~5]{Landvogtcompactification}. Without the assumption $f(0)=0$, Yu constructs $\mathcal{G}_{x,f}$ by a systematic use of the theory of dilatation and group smoothening \cite{Yusmoothmodel}. See also \cite{BruhatTitsclassic1}, \cite{BruhatTitsclassic2}, \cite{groupschemeG2} and \cite{groupschemeF4E6} for concrete interpretations for parahoric group schemes of classical groups and exceptional groups.

\begin{remark}
    Note that $\mathcal{G}_{x,f}$ should not be viewed only as an $\mathfrak{o}$-scheme; the way that $G$ is identified with the generic fiber $(\mathcal{G}_{x,f})_{k}$ in \Cref{Thparahoricgrpsche} (i) is also important. For instance, if $G=\GL_{n,k}$ for $n\in \mathbb{Z}_{\geq 1}$, a maximal bounded subgroup is conjugate to $\GL_{n,\mathfrak{o}}(\mathfrak{o})$ by an element $g\in \GL_{n,k}(k)$ (see, for instance, \cite[Proposition~15.1.31]{Bruhattitsnewapproach}). Hence the corresponding integral model is $\GL_{n,\mathfrak{o}}$ while the identification of $G=\GL_{n,k}$ with the generic fiber of $\GL_{n,\mathfrak{o}}$ differs from the natural embedding by the $g^{-1}$-conjugate.
\end{remark}

Let us now recall the structure of the unipotent radical of the special fiber $(\mathcal{G}_{x,f})_{\kappa}$.

\begin{proposition}\label{unipotentradicalspecialfiber}
    (\cite[Corollary~8.5.12]{Bruhattitsnewapproach})
    \begin{itemize}
        \item[(1)] The unipotent radical $ \mathscr{R}_u((\mathcal{G}_{x,f})_{\kappa})$ is the subscheme 
     $$\prod_{a\in \Phi^{-,\red}}(\mathcal{U}_{a,x,f})_{\kappa}^+\times_{\kappa}\overline{\mathscr{T}^+}\times_{\kappa}\prod_{a\in\Phi^{+,\red}}(\mathcal{U}_{a,x,f})_{\kappa}^+\subset (\Omega_{x,f})_{\kappa}\subset (\mathcal{G}_{x,f})_{\kappa},$$
    where $\overline{\mathscr{T}^+}$ is the unipotent radical $\mathscr{R}_u((\mathcal{T}^{f(0)})_{\kappa})$, the $\Omega_{x,f}\subset \mathcal{G}_{x,f}$ is the open subscheme in \Cref{eqparahoricbigcell} and the two products $\prod_{a\in \Phi^{-,\red}}$ and $\prod_{a\in\Phi^{+,\red}}$ are taken in any order.
    
     \item[(2)]  If $f(0)\textgreater 0$, the special fiber $(\mathcal{G}_{x,f})_{\kappa}$ is unipotent, and \Cref{eqparahoricbigcell} induces a $\kappa$-isomorphism
     $$\prod_{a\in \Phi^{-,\red}}(\mathcal{U}_{a,x,f})_{\kappa}\times_{\kappa} (\mathcal{T}^{(f(0))})_{\kappa}\times_{\kappa}\prod_{a\in\Phi^{+,\red}}(\mathcal{U}_{a,x,f})_{\kappa} \xlongrightarrow{\cong}(\mathcal{G}_{x,f})_{\kappa}.$$
    \end{itemize}
\end{proposition}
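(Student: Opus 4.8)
The plan is to establish \Cref{unipotentradicalspecialfiber} by reducing everything to an explicit computation inside the big cell $\Omega_{x,f}$ of \Cref{eqparahoricbigcell}, which is exactly what makes the statement tractable: the open immersion $\Omega_{x,f}\hookrightarrow \mathcal{G}_{x,f}$ identifies a Zariski-dense open of the special fiber with a product of the smooth $\mathfrak{o}$-schemes $\mathcal{U}_{a,x,f}$ and the torus model $\mathcal{T}^{(f(0))}$, and the unipotent radical of $(\mathcal{G}_{x,f})_\kappa$ is detected on this open. First I would recall that for a smooth connected affine group over the (algebraically closed, since $\kappa$ is perfect here) residue field, the unipotent radical is determined by its intersection with a dense open containing the identity, so it suffices to understand the structure of $(\Omega_{x,f})_\kappa$ as a scheme together with the group operation restricted to it. The key inputs are: (a) the description of each $(\mathcal{U}_{a,x,f})_\kappa$ and the morphism $\mathcal{U}_{a,x,f^+}\to \mathcal{U}_{a,x,f}$ whose image is by definition $(\mathcal{U}_{a,x,f})_\kappa^+$; and (b) the analogous torus statement, namely that the reductive quotient of $(\mathcal{T}^{(f(0))})_\kappa$ is either the split torus $\mathrm{S}$-part (when $f(0)=0$) or trivial (when $f(0)>0$), with unipotent radical $\overline{\mathscr{T}^+}=\mathscr{R}_u((\mathcal{T}^{(f(0))})_\kappa)$, which follows from \Cref{standardfiltrationtorus} and the congruent group scheme computation recalled there.

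Next I would prove part (1). The reductive quotient $\mathsf{G}_{x,f}$ has root system $\Phi_{x,f}$ and maximal torus $\mathrm{S}$, and the quotient map $(\mathcal{G}_{x,f})_\kappa \twoheadrightarrow \mathsf{G}_{x,f}$ sends $(\mathcal{U}_{a,x,f})_\kappa$ onto the root subgroup $U_a$ of $\mathsf{G}_{x,f}$ when $a\in\Phi_{x,f}$ (the jump condition $f(a)\in\Gamma'_a$ together with $f(a)+f(-a)=0$ being precisely what guarantees a nontrivial one-parameter image) and kills it otherwise; in the latter case the whole $(\mathcal{U}_{a,x,f})_\kappa$ lands in the unipotent radical, and even when $a\in\Phi_{x,f}$ the kernel is exactly the subgroup $(\mathcal{U}_{a,x,f})_\kappa^+$ by the jump-set analysis of \cite[Remark~8.3.10]{Bruhattitsnewapproach} and the explicit \Cref{constructionofunipotentintegralmodel}. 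Dually, the torus factor contributes $\overline{\mathscr{T}^+}$. One then checks that the product $N\coloneq \prod_{a\in\Phi^{-,\red}}(\mathcal{U}_{a,x,f})_\kappa^+ \times_\kappa \overline{\mathscr{T}^+}\times_\kappa \prod_{a\in\Phi^{+,\red}}(\mathcal{U}_{a,x,f})_\kappa^+$, which is a closed subscheme of $(\Omega_{x,f})_\kappa$, is in fact a normal unipotent subgroup of $(\mathcal{G}_{x,f})_\kappa$: unipotence is clear since each factor is (the $(\mathcal{U}_{a,x,f})_\kappa^+$ are connected unipotent and $\overline{\mathscr{T}^+}$ is a unipotent radical), and normality plus the subgroup property follow from the commutation relations among the $U_a$'s (which survive on the integral models because they already hold at the level of the root group filtration in \cite[6.2.1]{Bruhattits1}) together with the fact that $N$ is precisely the kernel of the map to $\mathsf{G}_{x,f}$. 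Since $(\mathcal{G}_{x,f})_\kappa/N \cong \mathsf{G}_{x,f}$ is reductive and $N$ is unipotent, $N=\mathscr{R}_u((\mathcal{G}_{x,f})_\kappa)$, establishing (1).

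For part (2), when $f(0)>0$ one argues that $\Phi_{x,f}=\varnothing$: the condition $f(a)+f(-a)=0$ forces, via concavity $f(0)\le f(a)+f(-a)$, that $f(0)\le 0$, contradicting $f(0)>0$. Hence every root subgroup $(\mathcal{U}_{a,x,f})_\kappa$ and the torus part all map into the unipotent radical, so $\mathsf{G}_{x,f}$ is trivial and $(\mathcal{G}_{x,f})_\kappa=\mathscr{R}_u((\mathcal{G}_{x,f})_\kappa)$ is unipotent. It then remains to upgrade the open immersion $(\Omega_{x,f})_\kappa\hookrightarrow (\mathcal{G}_{x,f})_\kappa$ to an isomorphism: both sides are smooth connected of the same dimension, and $(\Omega_{x,f})_\kappa$ is a dense open, so one must show the complement is empty. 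This I would do by a dimension/properness-free argument: a connected unipotent group over an algebraically closed field of a given dimension $d$ containing a dense open isomorphic to $\mathbb{A}^d$ — here $(\Omega_{x,f})_\kappa$ is a product of affine spaces by \Cref{constructionofunipotentintegralmodel} and the congruent-torus description — must itself be $\mathbb{A}^d$; more directly, the big cell $\Omega_{x,f}$ is already a group scheme isomorphic to $\mathcal{G}_{x,f}$ after we observe that the complement $(\mathcal{G}_{x,f})_\kappa\setminus(\Omega_{x,f})_\kappa$, being a proper closed subgroup-coset stable under left and right translation by the dense open, must be empty.

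The main obstacle I expect is the bookkeeping in part (1): precisely pinning down that the kernel of $(\mathcal{U}_{a,x,f})_\kappa \to U_a\subset \mathsf{G}_{x,f}$ (for $a\in\Phi_{x,f}$) equals $(\mathcal{U}_{a,x,f})_\kappa^+$, and more subtly, that assembling the individual kernels and the torus contribution into the single subscheme $N$ really does give a \emph{subgroup} — this requires that the Bruhat-Tits commutation relations for the $U_{a,x,r}$ pass to the special fibers of the integral models in a way compatible with the product decomposition, and that the "$+$" operation is compatible with these commutators. This is where I would lean most heavily on \cite[Appendix~C]{Bruhattitsnewapproach} and on the explicit \Cref{constructionofunipotentintegralmodel}, checking case by case (non-multipliable, divisible, multipliable) that the Weil restriction / congruent group scheme descriptions make the unipotent parts visibly closed under the relevant operations. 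The rest — unipotence of the factors, the dimension count, and emptiness of the complement in (2) — is comparatively routine once the group structure of $N$ is in hand.
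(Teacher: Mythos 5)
The paper does not give its own proof of this proposition; it cites it directly from \cite[Corollary~8.5.12]{Bruhattitsnewapproach}, so there is no in-text argument to compare against. Your outline is a reasonable reconstruction of the sort of argument that must lie behind the cited result (work inside the big cell, identify the factor-by-factor kernels, assemble them into a normal unipotent subgroup $N$, show the quotient is reductive), and you correctly locate the delicate point. Two concrete issues are worth flagging.

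First, the step ``normality plus the subgroup property follow from the commutation relations \ldots\ together with the fact that $N$ is precisely the kernel of the map to $\mathsf{G}_{x,f}$'' is circular as phrased: $\mathsf{G}_{x,f}$ is \emph{defined} as $(\mathcal{G}_{x,f})_\kappa/\mathscr{R}_u((\mathcal{G}_{x,f})_\kappa)$, so asserting that $N$ is the kernel of the map to $\mathsf{G}_{x,f}$, or that $(\mathcal{G}_{x,f})_\kappa/N\cong\mathsf{G}_{x,f}$, already presupposes $N=\mathscr{R}_u((\mathcal{G}_{x,f})_\kappa)$. A non-circular version must (a) prove $N$ is a smooth connected normal unipotent subgroup by working directly with the commutator and torus-conjugation relations on special fibers of the $\mathcal{U}_{a,x,r}$ (this is genuinely where most of the work lives), and then (b) prove $(\mathcal{G}_{x,f})_\kappa/N$ is reductive by exhibiting a root datum $(X^*(\mathrm{S}),\Phi_{x,f},\ldots)$ for it, from which $N=\mathscr{R}_u$ follows since a unipotent normal subgroup with reductive quotient is the unipotent radical. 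You gesture at both (a) and (b) but do not close them.

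Second, in part (2) the auxiliary ``the complement $(\mathcal{G}_{x,f})_\kappa\setminus(\Omega_{x,f})_\kappa$, being a proper closed subgroup-coset stable under left and right translation by the dense open, must be empty'' is not a correct statement: the complement of a big cell is neither a coset nor stable under translation by elements of the big cell. The cleaner route is the one you essentially have one step earlier, made precise: when $f(0)>0$, concavity gives $f(a)+f(-a)\geq f(0)>0$ for all $a\in\Phi$, so $f^+=f$ and hence $(\mathcal{U}_{a,x,f})_\kappa^+=(\mathcal{U}_{a,x,f})_\kappa$ for every $a$; similarly $(\mathcal{T}^{(f(0))})_\kappa$ is already unipotent so $\overline{\mathscr{T}^+}=(\mathcal{T}^{(f(0))})_\kappa$. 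Thus, once part (1) is in hand, $N=(\Omega_{x,f})_\kappa$ and, since $\Phi_{x,f}=\varnothing$ and the torus has collapsed, the reductive quotient is trivial, giving $(\mathcal{G}_{x,f})_\kappa=\mathscr{R}_u((\mathcal{G}_{x,f})_\kappa)=N=(\Omega_{x,f})_\kappa$ directly, with no dimension count or density argument needed.
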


\begin{remark}
    When $f(0)\textgreater 0$, by \Cref{unipotentradicalspecialfiber} (2), the group scheme $\mathcal{G}_{x,f}$ can be realized as the Zariski gluing of $G$ and $\Omega_{x,f}$ along the common open $U^-TU^+\subset G$. However, this gluing process \emph{a priori} does not directly imply the condition of \Cref{Thparahoricgrpsche} (ii); it dose if we further assume that $k$ is \emph{complete} by \cite[proposition~6.4.48]{Bruhattits1}. 
\end{remark}

\section{Wonderful compactification $\overline{G}$}\label{sectionwonderfulcompactification}

\subsection{Split case}\label{splitwonderfulcomp}
Given a split reductive group of adjoint type $G$ over a field $k$, we have the following three canonical open embeddings:

$$\xymatrix{
       & & \mathbb{P}(\End(V_\lambda)) & \\
&(G\times_k G)/\Diag(G)\ar@{^{(}->}[ur]\ar@{^{(}->}[r] \ar@{^{(}->}[dr] &\Grass(\dim(G),\mathfrak{g}\oplus\mathfrak{g})\\
  & &\Hilb(G/B\times_k G/B) ,}$$
where, for the first arrow, we refer to \cite[\S~6.1]{BrionKumar} and $V_\lambda$ is an irreducible representation of the simply connected cover $G^{sc}$ with a regular dominant weight as the highest weight, see \cite[Lemma~6.1.1, Remark~6.1.2]{BrionKumar}; for the second arrow, we refer to \cite[\S~6]{deconciniprocesi} and $\mathfrak{g}$ is the Lie algebra of $G$; for the third arrow, we refer to \cite[\S~2]{compactificationHilbertsch} and $B$ is a Borel subgroup of $G$ (in fact, for the third embedding, one can replace $B$ by any parabolic subgroup $P\subset G$ such that $G$ acts faithfully on $G/P$; all exceptions for such $P$ are known, see \cite[théorème~1]{demazureautomorphismborel}). The wonderful compactification $\overline{G}$ of $G$ is realized as the schematic closure of $G$ along the above three embeddings. Moreover, in characteristic zero, $\overline{G}$ can also be constructed as a GIT quotient of the Vinberg monoid of $G^{sc}$, see \cite[\S~8, 6]{Vinbergmonoid}. In \cite{li2023equivariant}, a functorial intrinsic method based on the Artin--Weil theory of birational group laws is used to construct $\overline{G}$. In fact, we can have a relative version of wonderful compactification:

\begin{theorem}\label{threlativewonderfulcompsplitcase}
    Suppose that we have a split reductive group scheme $\mathbf{G}$ over a scheme $S$, a maximal $S$-split torus $\mathbf{T}\subset \mathbf{G}$ and a Borel subgroup $\mathbf{B}\subset \mathbf{G}$ containing $\mathbf{T}$. Let $\mathbf{B}^-$ be the unique opposite Borel subgroup such that $\mathbf{B}\bigcap\mathbf{B}^-=\mathbf{T}$ (\cite[exposé~XXVI, théorème~4.3.2.(a)]{SGA3III}), and let $\mathbf{U}^+$ and $\mathbf{U}^-$ be the unipotent radicals of $\mathbf{B}$ and $\mathbf{B}^-$ respectively, and let $\Delta$ be the set of simple roots associated to $\mathbf{B}$.
    
    Then there exists a smooth projective scheme $\overline{\mathbf{G}}$ over $S$ together with a $(\mathbf{G}\times_S \mathbf{G})$-action such that 
    \item[(i)] $\overline{\mathbf{G}}$ equivariantly contains $\mathbf{G}$ as an open subscheme;
    \item[(ii)] each geometric fiber of $\overline{\mathbf{G}}$ is identified with the wonderful compactification of the corresponding geometric fiber of $\mathbf{G}$;
    \item[(iii)] $\overline{\mathbf{G}}$ contains an open subscheme $\overline{\Omega}\coloneq \mathbf{U}^-\times_S \prod_{\Delta}\mathbb{A}_1\times_S \mathbf{U}^+$ so that $\mathcal{X}=(\mathbf{G}\times_S\mathbf{G})\cdot\overline{\Omega}$ holds as an equality of sheaves over the category of $S$-schemes, and we will write $\overline{\mathbf{T}}\coloneq \prod_{\Delta}\mathbb{A}_1$;
    \item[(iv)] the open immersion $\Omega_{\mathbf{G}}\coloneq \mathbf{U}^-\times_S \mathbf{T}\times_S \mathbf{U}^+\longhookrightarrow \overline{\mathbf{G}}$ is given by
    \begin{align*}
        \mathbf{U}^-\times_S \mathbf{T}\times_S \mathbf{U}^+&\longhookrightarrow \overline{\Omega}\coloneq \mathbf{U}^-\times_S \prod_{\Delta}\mathbb{A}_1\times_S \mathbf{U}^+\\
        (u^-,t,u^+)&\longmapsto (u^-,(\alpha(t)^{-1})_{\alpha \in\Delta},u^+).
    \end{align*}
\end{theorem}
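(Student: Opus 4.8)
The natural strategy is to reduce to the absolute (field) case by a standard spreading-out / descent argument for smooth projective morphisms, and then invoke the known construction of the wonderful compactification over a field together with the functorial (Artin--Weil) construction of \cite{li2023equivariant}. More precisely, I would first construct $\overline{\mathbf{G}}$ Zariski-locally on $S$: since $\mathbf{G}$ is split, $S$ is covered by affine opens over which $\mathbf{G}$, $\mathbf{T}$, $\mathbf{B}$ arise by base change from the split Chevalley group scheme over $\mathbb{Z}$ with its standard Borel and torus. So it suffices to treat the universal case $S_0 = \Spec\mathbb{Z}$ (or $\Spec k$ of a prime field), produce $\overline{\mathbf{G}_0}$ there, and then set $\overline{\mathbf{G}} \coloneq \overline{\mathbf{G}_0}\times_{S_0} S$; properties (i)--(iv) are stable under arbitrary base change once established over $S_0$, and uniqueness (hence glueability if one prefers to work locally) follows from the density of $\mathbf{G}$ in $\overline{\mathbf{G}}$ together with separatedness. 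Thus the real content is the case $S_0 = \Spec\mathbb{Z}$.

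Over $S_0 = \Spec\mathbb{Z}$ I would build $\overline{\mathbf{G}_0}$ by the birational-group-law method of \cite{li2023equivariant}: one writes down the candidate big cell $\overline{\Omega} = \mathbf{U}^-\times_{S_0}\prod_\Delta \mathbb{A}_1\times_{S_0}\mathbf{U}^+$ with the embedding of $\Omega_{\mathbf{G}} = \mathbf{U}^-\times_{S_0}\mathbf{T}\times_{S_0}\mathbf{U}^+$ prescribed by (iv), defines the rational $(\mathbf{G}\times_{S_0}\mathbf{G})$-action on $\overline{\Omega}$ by transporting the action via this birational chart (the formulas are the classical ones for $\overline{G}$, valid over $\mathbb{Z}$ since they only involve the structure constants of the root datum), checks the cocycle-type compatibilities that make this a genuine birational action in the sense of Artin--Weil, and then appeals to the effectivity theorem for birational group (pre)actions to obtain a smooth separated $S_0$-scheme $\overline{\mathbf{G}_0}$ with an honest action, containing $\overline{\Omega}$ (hence $\mathbf{G}$, via $\Omega_{\mathbf{G}}\hookrightarrow\mathbf{G}$ and the orbit $(\mathbf{G}\times\mathbf{G})\cdot\Omega_{\mathbf{G}}=\mathbf{G}$) as a dense open. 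Properness/projectivity is then checked fiberwise via the valuative criterion — on each geometric fiber $\overline{\mathbf{G}_0}\otimes\overline{\mathbb{F}_p}$ (and $\otimes\overline{\mathbb{Q}}$) one recognizes De~Concini--Procesi's $\overline{G}$ by its defining properties (the big cell, the $(G\times G)$-action, surjectivity of the action on translates of the big cell), which gives (ii) and properness of the geometric fibers; combined with $\overline{\mathbf{G}_0}$ being of finite type and the fibral criterion for properness over the Noetherian base $\Spec\mathbb{Z}$, one gets properness over $S_0$, and projectivity follows e.g. from the relative very-ampleness of $\mathcal{O}(\sum_\Delta \mathbf{D}_\alpha)$ on the fibers together with cohomology-and-base-change.

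The main obstacle I anticipate is \emph{uniform positive-characteristic control}: verifying that the birational group law extends effectively and that the resulting $\overline{\mathbf{G}_0}$ is smooth with reduced, geometrically integral fibers of the expected dimension in \emph{every} characteristic, rather than just in characteristic zero where the Vinberg-monoid GIT construction or Hilbert-scheme arguments are cleanest. This is exactly the point where \cite{li2023equivariant} does the work, so I would lean on it: the key inputs are that the big-cell chart $\overline{\Omega}$ is smooth over $S_0$ by inspection (it is an affine-space bundle over $\mathbf{U}^-\times\mathbf{U}^+$), that the rational action formulas have no denominators that become problematic mod $p$ (they are polynomial in the root-group coordinates and the $\mathbb{A}_1$-coordinates), and that the Artin--Weil effectivity theorem is characteristic-free. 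A secondary, more routine point is checking (iii) as an equality of fppf sheaves — that $(\mathbf{G}\times_S\mathbf{G})\cdot\overline{\Omega}$ already exhausts $\overline{\mathbf{G}}$; this I would deduce from the fiberwise statement for $\overline{G}$ (where it is classical, the boundary strata all meeting the closure of the big cell) together with the fact that a monomorphism of finite-type $S$-schemes that is surjective on geometric fibers and whose source and target are smooth of the same relative dimension is an isomorphism. Finally, compatibility of the construction with the prescribed open immersion in (iv) is built into the choice of chart, so no extra argument is needed there beyond unwinding the definitions.
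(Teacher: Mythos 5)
The paper states this theorem without supplying a proof, pointing instead to the author's own \cite{li2023equivariant}, where exactly the Artin--Weil birational-group-law construction you describe is carried out (big cell chart, rational $(\mathbf{G}\times\mathbf{G})$-action, effectivity, fibral criteria for properness and projectivity). Your proposal follows that same core route; the extra initial reduction to $S_0=\Spec\mathbb{Z}$ via the SGA3 isomorphism theorem for split reductive group schemes is a valid but inessential reorganization, and your remark that non-uniqueness of the Chevalley-form identification is absorbed by the uniqueness/gluing of $\overline{\mathbf{G}}$ correctly handles the one subtlety that reduction introduces.
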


\subsection{Quasi-split case}\label{quasisplitwonderfulcomp}

In this subsection, we keep the notations in \Cref{quasisplitsetup}.
By \cite[exposé~XXII, corollaire~2.3]{SGA3II}, we can find a finite Galois extension $\widetilde{k}/k$ such that $G_{\widetilde{k}}$ splits. Let $B^-$ be the opposite Borel such that $B^-\bigcap B=T$. Let $U^+$ and $U^-$ be the unipotent radicals of $B$ and $B^-$ respectively. Since $T_{k_s}$ is a split torus, we let 
$$\widetilde{\Phi}\subset X^*(T)\coloneq \Hom_{k_s}(T_{k_s},\mathbb{G}_{m,k_s})\;\;\text{and}\;\;\widetilde{\Delta}\coloneq\{a_1,...,a_l\}\subset\widetilde{\Phi}$$ 
be the absolute root system of $G$ relative to $T$ and the set of simple roots associated to the Borel subgroup $B$, where $l$ is the rank of $G$. The natural action of the Galois group $\Gal(k_s/k)$ on $k_s\otimes_k k[T]$ induces a $\Gal(k_s/k)$-action on $X^*(T)$ so that we have
$$(\gamma\cdot \alpha)(t)=\gamma(\alpha(\gamma^{-1}t))$$
for $\gamma\in \Gal(k_s/k)$, $\alpha\in X^*(T)$ and $t\in T(k_s)$. For a character $\alpha\in X^*(T)$, we denote its stabilizer by $\Sigma_{\alpha}\subset \Gal(k_s/k)$ whose fixed field is written as $k_{\alpha}$. We let $\mathfrak{m}_{\alpha}\subset\mathfrak{o}_{\alpha}\subset k_{\alpha}$ be the maximal ideal and the ring of integers, and let $e_{\alpha}\in\mathbb{Z}_{\geq 0}$ be the ramification index of the field extension $k_{\alpha}/k$. Because of the Borel $k$-subgroup $B$, we know that $\widetilde{\Delta}$ is stable under the action of $\Gal(k_s/k)$. Let 
$$\Phi\subset X^*(S)\coloneq \Hom_{k_s}(S_{k_s},\mathbb{G}_{m,k_s})$$
be the relative root system of $G$ relative to $S$.
Since $G$ is quasi-split, the kernel of the restriction $X^*(T)\rightarrow X^*(S)$ dose not contain any element of $\widetilde{\Phi}$ and the restriction of $\widetilde{\Delta}$ is a base $\Delta$ of $\Phi$, see \cite[\S~6]{boreltits}. For a $\alpha\in \Delta$, we denote by $\widetilde{\Delta}_{\alpha}$ the set of absolute simple roots in $\widetilde{\Delta}$ whose restriction is $\alpha$ which is a $\Gal(k_s/k)$-orbit, see \emph{loc.~cit.}

\begin{proposition}\label{propquasisplitwonderfulcomp}
    The wonderful compactification $\overline{G_{\widetilde{k}}}$ over $\widetilde{k}$ (that we recall in \Cref{splitwonderfulcomp}) descends to a projective smooth scheme $\overline{G}$ over $k$ such that the canonical open immersion 
    \begin{equation*}
        \Omega_{G_{\widetilde{k}}}\coloneq (U^-)_{\widetilde{k}}\times_{\widetilde{k}} \overline{T_{\widetilde{k}}}\times_{\widetilde{k}} (U^+)_{\widetilde{k}}\longhookrightarrow \overline{G_{\widetilde{k}}} 
    \end{equation*}
    which we recall in \Cref{threlativewonderfulcompsplitcase} (iii) descends to an open immersion
    \begin{equation*}
        \overline{\Omega}\coloneq U^-\times_k \overline{T} \times_k U^+\longhookrightarrow \overline{G},
    \end{equation*}
    where $\overline{T}$ is descended from $\overline{T_{\widetilde{k}}}=\prod_{\widetilde{\Delta}}\mathbb{A}_1$ by the natural action of $\widetilde{\Delta}$ on $\Delta$. More concretely, we have 
    $$\overline{T}\cong \prod_{\alpha\in\Delta}\Res_{k_{\alpha/k}}\mathbb{A}_{1,k_{\alpha}}.$$
\end{proposition}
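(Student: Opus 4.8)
The plan is to descend the wonderful compactification $\overline{G_{\widetilde k}}$ along the finite Galois extension $\widetilde k/k$ using Galois descent for quasi-projective schemes, and to check that this descent is compatible with the descent of $G$ itself and of the big cell $\Omega_{G_{\widetilde k}}$. First I would recall that $\overline{G_{\widetilde k}}$ is quasi-projective (in fact projective) over $\widetilde k$ by \Cref{threlativewonderfulcompsplitcase}, so that a descent datum for the $\Gal(\widetilde k/k)$-action will automatically be effective (Weil's theorem on descent along finite surjective morphisms for quasi-projective schemes; equivalently, a $G$-equivariant ample line bundle descends). Thus the real content is to produce the descent datum. For this I would use the $(G\times_k G)$-equivariance: the group $G$ is defined over $k$, hence so is $G\times_k G$, and the action of $\Gal(\widetilde k/k)$ on $G_{\widetilde k}$ is the canonical one. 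Because $\overline{G_{\widetilde k}}$ is \emph{characterized} (say by \cite{li2023equivariant} or by the universal property in \cite[\S~6.1]{BrionKumar}) as the unique normal $(G\times_k G)_{\widetilde k}$-equivariant projective compactification of $G_{\widetilde k}$ with the prescribed big-cell/boundary structure, the conjugate $\sigma^*\overline{G_{\widetilde k}}$ for $\sigma\in\Gal(\widetilde k/k)$ again satisfies the same universal property (the Galois action permutes the absolute simple roots $\widetilde\Delta$, but the resulting equivariant compactification is unique regardless of any labelling), so there is a canonical isomorphism $c_\sigma:\sigma^*\overline{G_{\widetilde k}}\xrightarrow{\sim}\overline{G_{\widetilde k}}$ extending the canonical one on $G_{\widetilde k}$. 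Uniqueness of $c_\sigma$ (it is determined on the dense open $G_{\widetilde k}$ and $\overline{G_{\widetilde k}}$ is separated) immediately gives the cocycle condition $c_{\sigma\tau}=c_\sigma\circ\sigma^*c_\tau$, so $(\overline{G_{\widetilde k}},(c_\sigma))$ is a genuine descent datum.

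Next I would descend: let $\overline G$ be the resulting $k$-scheme, which is smooth and projective over $k$ since these properties descend along the faithfully flat $\Spec\widetilde k\to\Spec k$, with $\overline{G}_{\widetilde k}\cong\overline{G_{\widetilde k}}$. The open immersion $G_{\widetilde k}\hookrightarrow\overline{G_{\widetilde k}}$ is, by construction of $c_\sigma$, compatible with the descent data on both sides (the left being the tautological descent datum on $G_{\widetilde k}$ coming from the $k$-form $G$), so it descends to a $k$-open immersion $G\hookrightarrow\overline G$; here one uses that open immersions are detected fppf-locally. Likewise the $(G\times_k G)_{\widetilde k}$-action descends because it is $\Gal(\widetilde k/k)$-equivariant by our choice of $c_\sigma$, giving $\overline G$ a $(G\times_k G)$-action extending the two-sided translation action on $G$.

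For the big cell, I would observe that $U^-$, $T$, $U^+$ are all defined over $k$ (they come from the $k$-Borel $B$, its $k$-opposite $B^-$ with $B\cap B^-=T$, and their unipotent radicals), and the scheme $\overline T_{\widetilde k}=\prod_{\widetilde\Delta}\mathbb A_{1,\widetilde k}$ carries the $\Gal(\widetilde k/k)$-action induced by the Galois action on the index set $\widetilde\Delta$ (permuting the $\mathbb A_1$-factors) together with the standard action on the coordinates. Since the open immersion $\Omega_{G_{\widetilde k}}=(U^-)_{\widetilde k}\times\overline T_{\widetilde k}\times(U^+)_{\widetilde k}\hookrightarrow\overline{G_{\widetilde k}}$ is $(T\times_k T)_{\widetilde k}$-equivariant and restricts on the open $(U^-TU^+)_{\widetilde k}$ to the explicit formula $(u^-,t,u^+)\mapsto(u^-,(\alpha(t)^{-1})_{\alpha\in\widetilde\Delta},u^+)$ from \Cref{threlativewonderfulcompsplitcase}(iv) — a formula manifestly equivariant for the Galois action, because $\sigma(\alpha(\sigma^{-1}t))=(\sigma\alpha)(t)$ — it is compatible with all the descent data in sight; hence it descends to an open immersion $\overline\Omega=U^-\times_k\overline T\times_k U^+\hookrightarrow\overline G$. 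Grouping the factors of $\prod_{\widetilde\Delta}\mathbb A_1$ according to the $\Gal(k_s/k)$-orbits $\widetilde\Delta_\alpha$, each orbit being a $\Gal(k_s/k)/\Sigma_\alpha$-set, identifies $\overline T$ with $\prod_{\alpha\in\Delta}\Res_{k_\alpha/k}\mathbb A_{1,k_\alpha}$, which is the standard description of the descent of a permutation-induced product. The main obstacle I expect is the verification that the canonical identifications $c_\sigma$ really exist and satisfy the cocycle identity, i.e. pinning down a clean uniqueness statement for the equivariant compactification $\overline{G_{\widetilde k}}$ that survives the permutation of $\widetilde\Delta$ by $\Gal(\widetilde k/k)$; once that is in hand, effectivity (via projectivity) and the compatibility with $G$ and $\overline\Omega$ are formal.
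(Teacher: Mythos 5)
Your proposal is correct and lands where the paper does, but the descent datum is produced along a slightly different path. The paper begins by defining the $\Gal(\widetilde k/k)$-action \emph{directly on the big cell} $\Omega_{G_{\widetilde k}}=(U^-)_{\widetilde k}\times_{\widetilde k}\overline{T_{\widetilde k}}\times_{\widetilde k}(U^+)_{\widetilde k}$ — the torus part $\overline{T_{\widetilde k}}=\prod_{\widetilde\Delta}\mathbb A_1$ via the permutation action on $\widetilde\Delta$ and the unipotent parts via the tautological action coming from the $k$-forms $U^\pm$ — and then invokes \cite[\S 6.2]{li2023equivariant} to extend this (necessarily uniquely) from $\Omega_{G_{\widetilde k}}$ to $\overline{G_{\widetilde k}}$, with effectivity supplied by \cite[Corollary 6.4]{li2023equivariant}. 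You instead go via an abstract characterization of $\overline{G_{\widetilde k}}$ as the unique equivariant compactification of the adjoint group, use the Galois action on $G_{\widetilde k}$ alone to produce the $c_\sigma$, deduce the cocycle identity from density and separatedness, and afterwards check that the big cell is compatible with the resulting descent. You rightly flag that the existence of the $c_\sigma$ — i.e.\ a clean, labelling-free uniqueness statement for the wonderful compactification — is the step that must be pinned down; once one adds the observation that for adjoint $G$ the only $(G\times G)$-equivariant automorphism of the open orbit is the identity (trivial center), $c_\sigma$ is indeed canonical and the argument closes. The paper's route sidesteps this by working with the big cell from the start, which is what \cite{li2023equivariant} is set up to handle; your route is essentially the same ingredient packaged as a universal property, and additionally requires the (easy) post-hoc check that $\Omega_{G_{\widetilde k}}\hookrightarrow\overline{G_{\widetilde k}}$ is Galois-equivariant, which the paper gets for free by construction. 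Your effectivity argument via Weil descent for quasi-projective schemes is the same mechanism the cited corollary uses, and your identification $\overline T\cong\prod_{\alpha\in\Delta}\Res_{k_\alpha/k}\mathbb A_{1,k_\alpha}$ matches the paper's final appeal to \cite[\S 1.5.16]{Bruhattits2}.
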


\begin{proof}
    The Galois group $\Gal(\widetilde{k}/k)$ naturally acts on $(U^-)_{\widetilde{k}}$ and $(U^+)_{\widetilde{k}}$ and also it acts on $\overline{T_{\widetilde{k}}}=\prod_{\widetilde{\Delta}}\mathbb{A}_{1,\widetilde{k}}$ by the permutation action of $\Gal(k_s/k)$ on $\widetilde{\Delta}$. By \cite[\S~6.2]{li2023equivariant}, this action of $\Gal(\widetilde{k}/k)$ on $\Omega_{G_{\widetilde{k}}}$ (necessarily uniquely) extends to an action on $\overline{G_{\widetilde{k}}}$. Then, by \cite[Corollary~6.4]{li2023equivariant}, this Galois action is an effective descent datum. The last claim follows from, for instance, \cite[\S~1.5.16]{Bruhattits2}.
\end{proof}

\begin{remark}
    In fact, the descent result for the wonderful compactification holds not only for descent data which extends the outer descent data of the groups, but also for those which come from arbitrary descent data of the group. However, in the later general case, we may lose the big cell structure as in \Cref{propquasisplitwonderfulcomp} (i), see \cite[\S~6]{li2023equivariant} for details.
\end{remark}

\section{Equivariant toroidal embedding}\label{sectionequivarianttoroidal}

In this section, we recall some basics of the theory of toroidal embeddings for reductive groups which will be used in this paper. Our main reference is \cite{BrionKumar}.

Let $G$ be a connected reductive group over an algebraically closed field $k$, let $G_{\ad}$ be the adjoint quotient of $G$, and let $\overline{G_{\ad}}$ be the wonderful compactification of $G_{\ad}$. Let $T\subset G$ be a maximal split torus, and let $B$ and $B^-$ be two opposite Borel subgroups such that $B\bigcap B^-=T$, whose unipotent radicals are $U^+,U^-$.

Let $X$ be a normal $k$-variety equipped with a $(G\times_k G)$-action such that $X$ equivariantly contains the symmetric variety $(G\times_k G)/\Diag(G)$ as an open subvariety. 

\begin{definition}
    (\cite[Definition~6.2.2]{BrionKumar}) $X$ is called a toroidal embedding of $G$ if there exists a $k$-morphism $X\rightarrow \overline{G_{\ad}}$ satisfying the following commutative diagram
    $$\xymatrix{
        G \ar@{^{(}->}[r]\ar@{->>}[d] & X\ar[d] \\
        G_{\ad}\ar@{^{(}->}[r] & \overline{G_{\ad}}.
       }$$
\end{definition}

The following results shows that 
a toroidal embedding of $G$ behaves very similarly to the wonderful compactification, and the toroidal embeddings of $G$ are classified by fans supported in the negative Weyl chamber which is defined by the Borel subgroup $B$.

\begin{theorem}\label{toroidalembeddingtheorem}
    There is a bijection
    \begin{align*}
        \{\text{fans supported in the negative Weyl chamber}\}\;\;&\longleftrightarrow\;\; \{\text{toroidal embeddings of $G$}\}/\sim \\
        \sigma \;\;\;\;\;\;\;\;\;\;\;\;\;\;\;\;\;\;\;\;&\longmapsto \;\;\;\;\;\;\;\;\;\;\;\;\;\;X_{\sigma}.
    \end{align*}
    The toroidal embedding $X_{\sigma}$ contains an open subscheme $X_{\sigma,0}$ such that 
    \begin{itemize}
        \item $X_{\sigma,0}$ intersects properly with every $(G\times_k G)$-orbit of $X_{\sigma}$;
        \item we have an $k$-isomorphism $$X_{\sigma,0}\cong U^-\times_k T_{\sigma}\times_k U^+,$$
        where the $T_{\sigma}$ is the toric variety of $T$ defined by the fan $\sigma$, cf. \cite[Chapter~I]{toroidalembedding}.
    \end{itemize}
\end{theorem}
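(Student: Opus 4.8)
The plan is to reduce Theorem~\ref{toroidalembeddingtheorem} to the corresponding classification for $T$-toric varieties and the local structure theory of spherical embeddings, rather than reproving everything from scratch. The symmetric space $(G\times_k G)/\Diag(G)$ is a spherical $(G\times_k G)$-homogeneous space, and $\overline{G_{\ad}}$ is its unique wonderful compactification; by the general Luna--Vust theory of spherical embeddings (as developed in \cite{BrionKumar}, Chapter~6), embeddings of a fixed spherical homogeneous space are classified by \emph{colored fans} in the valuation cone, and the toroidal ones---those dominating the wonderful compactification, equivalently those with no colors---are classified by honest fans supported in the valuation cone. So the first step is to identify the valuation cone of $(G\times_k G)/\Diag(G)$ with the negative Weyl chamber $\mathfrak{C}\subset X_*(T)_{\mathbb{R}}$ determined by $B$; this is a standard computation using the fact that the $(G\times_k G)$-invariant valuations restrict injectively to valuations of the open $B\times B^-$-orbit, whose ``abstract Cartan'' is canonically $T$.

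Granting that identification, I would build the map $\sigma\mapsto X_\sigma$ and its inverse. Given a fan $\sigma$ supported in $\mathfrak{C}$, one forms the $T$-toric variety $T_\sigma$ (\cite{toroidalembedding}, Chapter~I, or \cite{BrionKumar}, \S6.2), sets $X_{\sigma,0}\coloneq U^-\times_k T_\sigma\times_k U^+$, and glues translates $(g_1,g_2)\cdot X_{\sigma,0}$ over $(g_1,g_2)\in G\times_k G$ using the open immersion $U^-\times_k T\times_k U^+\hookrightarrow G$ of \eqref{eqbigcell} to patch on the overlaps; the Bruhat-type decomposition guarantees the translates cover, and normality plus separatedness follow because each chart is smooth and the gluing is along open subschemes of $G$. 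This is precisely the construction recalled in the second bullet of the theorem, so the content is to check it is well defined (independent of choices up to $(G\times_k G)$-equivariant isomorphism) and that the resulting $X_\sigma$ carries a morphism to $\overline{G_{\ad}}$, i.e. is toroidal: the latter map is induced chart-by-chart from $T_\sigma\to\overline{T_{\ad}}=\prod_\Delta\mathbb{A}_1$, the morphism of toric varieties attached to the inclusion of fans, compatibly with \Cref{threlativewonderfulcompsplitcase}(iv). For the inverse, given a toroidal embedding $X$, one restricts the structure morphism $X\to\overline{G_{\ad}}$ over the big cell $\overline{\Omega}$ of \Cref{threlativewonderfulcompsplitcase}(iii); the preimage $X_0$ is a $(U^-\times U^+)$-fibration over a $T$-toric variety $T_X$, and one reads off the fan $\sigma$ of $T_X$. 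That the two constructions are mutually inverse, and that $X_{\sigma,0}$ meets every orbit properly, is then the orbit--cone dictionary for $X_\sigma$ transported from $T_\sigma$.

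The main obstacle, and the place where I expect to spend the most care, is the \textbf{gluing/descent step}: showing that the charts $(g_1,g_2)\cdot X_{\sigma,0}$ genuinely glue to a separated normal scheme with a $(G\times_k G)$-action extending the action on $G$. This is exactly the kind of birational-group-law/patching argument the paper elsewhere handles via Artin--Weil techniques (cf. \cite{li2023equivariant} and \Cref{sectionrationalactions}), and the cocycle conditions on triple overlaps $(g_1,g_2)\cdot X_{\sigma,0}\cap(h_1,h_2)\cdot X_{\sigma,0}\cap(k_1,k_2)\cdot X_{\sigma,0}$ require knowing that transition functions, computed inside $G$ via \eqref{eqbigcell}, extend across the toric boundary of $T_\sigma$---which holds because $T_\sigma$ is built from $\mathfrak{C}$, the valuation cone, so every such transition is a well-defined morphism there. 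A clean alternative that sidesteps most of this is to simply invoke the Luna--Vust classification in the form already packaged in \cite{BrionKumar}, Theorem~6.2.3 and its surroundings, and then only verify the two bullet points about $X_{\sigma,0}$: properness of intersection with orbits, and the explicit product description, both of which are local on $X_\sigma$ and reduce to the analogous statements for $T_\sigma\subset T_\sigma$ that are standard in toric geometry. I would present the proof in this second, citation-heavy style, since the paper's interest is in the Bruhat--Tits refinement rather than in reproving the classical toroidal classification.
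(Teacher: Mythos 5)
Your proposal is correct and, in its final recommendation, takes essentially the same approach as the paper: the paper's proof is the one-line citation ``The theorem follows from [Proposition~6.2.3, 6.2.4]{BrionKumar}.'' Your extended preamble sketching the gluing construction and valuation-cone identification is a reasonable unpacking of what those propositions do, but the paper itself does not reprove any of it, so your concluding choice to ``present the proof in this second, citation-heavy style'' matches the paper exactly.
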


\begin{proof}
    The theorem follows from \cite[Proposition~6.2.3, 6.2.4]{BrionKumar}.
\end{proof}

\section{General results about rational actions}\label{sectionrationalactions}

The purpose of this section is roughly to extend a rational action of a group scheme $G$ on a scheme $Y$ to an actual action of $G$ on an algebraic space (sometimes a scheme, depending on the geometry of $Y$) which is birational to the given scheme $Y$. The results of this section are essentially reformulations of the results of \cite[\S~3,\S~4]{li2023equivariant} in a slightly more general context whose ideas stem from Weil \cite{Weilbirationalgrouplaw} and Artin \cite[exposé~XVIII]{SGA3II}. We expect that the general framework established in this section will open the door to construct various other equivariant embeddings for group schemes over general base schemes.

\subsection{Setup}\label{rationalactionsetup}
In this section, we consider a smooth group scheme $G$ over a scheme $S$, a scheme $Y$ which is flat and finitely presented over $S$ and an $S$-rational action of $G$ on $Y$, i.e., an $S$-rational morphism 
$$A: G\times_S Y\dashrightarrow Y$$
satisfying 
\begin{itemize}
    \item $\{e\}\times_S Y\subset \Dom(A)$ and $A(e,y)=y$ for any section $y$ of $Y$, where $e\in G(S)$ is the identity section;
    \item (Associativity) we have the following two $S$-rational morphisms which coincide:
    \begin{align*}
        G\times_S G\times_S Y &\longrightarrow Y \;\;\;\;\;\;\;\;\;\;\;\;\;\;\;\;\;\;\;\;\;\; G\times_S G\times_S Y \longrightarrow Y\\
        (g_1,g_2,y)&\longmapsto A (g_1g_2,y)  \;\;\;\;\;\;\;\;\;\;\;\;\;\;\;\; (g_1,g_2,y)\longmapsto A (g_1, A(g_2,y)).
    \end{align*}    
\end{itemize}

\noindent Then $\Dom(A) \subset G\times_S Y$ is $Y$-dense with respect to the projection onto $Y$. If we have a section $y\in Y(S')$, we will denote by $\Dom(A)_y\subset G_{S'}$ the base change of $\Dom(A)$, viewed as a $Y$-scheme via the projection, along $y$.

\subsection{An equivalence relation}
The following definition is, in spirit, similar to \cite[exposé~XVIII, 3.2.3]{SGA3II} and \cite[Definition~4.1]{li2023equivariant}.

\begin{definition}\label{equivalencerelation}
    For a test $S$-scheme $S'$ and two sections 
    $$(g_1,y_1),(g_2,y_2)\in (G\times_S Y)(S'),$$
    we say that $(g_1,y_1)$ and $(g_2,y_2)$ are equivalent if there exist an fppf cover $S''\rightarrow S'$ and a section $a\in G(S'')$ such that $A(ag_1,y_1)$ and $A(ag_2,y_2)$ are both well-defined (meaning $(ag_1,y_1),(ag_2,y_2)\in \Dom(A_{S''})$) and are equal. If so, we will write $$(g_1,y_1)\sim_{A} (g_2,y_2).$$
\end{definition}

We aim at proving the above relation is an equivalence relation. As a preparation, we will need the following lemma which says that two equivalent sections can be tested by any section that can bring them into the definition domain of the corresponding rational action. 

\begin{lemma}\label{testinglemma}
    We consider two sections $(g_1, y_1),(g_2,y_2)\in (G\times_S Y)(S)$ and $a\in G(S')$ where $S'\rightarrow S$ is an fppf-covering such that $A(ag_1,y_1)=A(ag_2,y_2)$. Then, for any $a'\in G(S'')$, where $S''$ is an $S$-scheme, if $A(a'g_1,y_1)$ and $A(a'g_2,y_2)$ are both well-defined, then, they are equal.
\end{lemma}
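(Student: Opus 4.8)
The statement is a ``uniqueness of analytic continuation'' type result for the rational action $A$: once two sections of $G\times_S Y$ are brought into $\Dom(A)$ by one fppf-local group element $a$ and become equal there, they become equal whenever any other group element $a'$ (over any $S$-scheme $S''$) brings them into $\Dom(A)$. The plan is to reduce everything to the case where the two comparison elements $a$ and $a'$ are tested simultaneously over a common cover, and then to exploit the associativity of the rational action together with the fact that $\Dom(A)$ is $Y$-dense (hence schematically dense in each fiber, since $Y$ is flat over $S$) to propagate the equality.

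\textbf{Key steps, in order.} First I would pass to the fppf cover $S''' \coloneq S' \times_S S''$, over which both $a$ (pulled back from $S'$) and $a'$ (pulled back from $S''$) are available as sections of $G$. It suffices to prove $A(a'g_1,y_1) = A(a'g_2,y_2)$ after this faithfully flat base change, since $Y$ is separated over $S$ and the two sides are already defined over $S''$; so we may rename $S'''$ as our base and assume $a, a' \in G(S''')$ with $A(ag_1,y_1) = A(ag_2,y_2)$ well-defined, and $A(a'g_1,y_1)$, $A(a'g_2,y_2)$ well-defined. Second, I would like to write, heuristically, $A(a'g_i,y_i) = A\bigl(a'a^{-1}, A(ag_i,y_i)\bigr)$ using associativity; since $A(ag_1,y_1)=A(ag_2,y_2)$, the right-hand sides for $i=1,2$ would then be literally the same expression, giving the claim. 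The issue is that $\bigl(a'a^{-1}, A(ag_i,y_i)\bigr)$ need not lie in $\Dom(A)$, so this manipulation is only a rational identity and must be justified on a dense open.

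The rigorous version of step two is: consider the element $b \coloneq a'a^{-1} \in G(S''')$ and translate by a further fppf-local parameter. Concretely, the associativity axiom gives an equality of $S$-rational morphisms $G\times_S G\times_S Y \dashrightarrow Y$ sending $(h_1,h_2,z) \mapsto A(h_1 h_2, z)$ and $(h_1,h_2,z)\mapsto A(h_1, A(h_2,z))$. I would feed into this the section $(c b, a g_i, y_i)$ where $c \in G$ is a further variable and $cb \cdot a = c a'$; by $Y$-density of $\Dom(A)$ and smoothness of $G$ over $S$, after an fppf cover one can choose a section $c$ (e.g.\ near the identity, or translating) making both rational morphisms defined at $(cb, ag_i, y_i)$ for $i=1,2$ \emph{simultaneously} — this is where one uses that finitely many $Y$-dense opens intersect in a $Y$-dense open, so that the locus of valid $(c, \text{stuff})$ is itself $G$-dense, hence has sections fppf-locally. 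For such $c$ we get $A(ca'g_i, y_i) = A(cb, A(ag_i,y_i))$, and the right-hand side is independent of $i$ because $A(ag_1,y_1)=A(ag_2,y_2)$; hence $A(ca'g_1,y_1) = A(ca'g_2,y_2)$. Finally, since we also assumed $A(a'g_1,y_1)$ and $A(a'g_2,y_2)$ are well-defined, the pairs $(a'g_i,y_i)$ and $(ca'g_i,y_i)$ are related exactly as in \Cref{equivalencerelation} via the common translate $c$, so once one knows $A(ca'g_1,y_1)=A(ca'g_2,y_2)$ one concludes $A(a'g_1,y_1)=A(a'g_2,y_2)$ by separatedness of $Y/S$ and faithfully flat descent along the cover where $c$ lives.

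\textbf{Main obstacle.} The delicate point is the simultaneous-definedness argument in step three: one must choose the auxiliary fppf-local element $c$ so that \emph{all} of the finitely many rational morphisms involved ($A$ evaluated at $(ca'g_i,y_i)$, and the associativity composite at $(cb, ag_i, y_i)$, for $i=1,2$) are defined at once. This rests on the fact, recorded in \Cref{rationalactionsetup}, that $\Dom(A)$ is $Y$-dense with respect to the projection to $Y$, together with the stability of $Y$-density under finite intersection and under the relevant translations by sections of the smooth group scheme $G$; assembling these into a single $G$-dense locus admitting an fppf-local section is the technical heart, and is exactly the analogue of \cite[exposé~XVIII, 3.2.3]{SGA3II} and \cite[Definition~4.1 ff.]{li2023equivariant} in the present relative setting. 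Everything else is formal manipulation with the group law and descent.
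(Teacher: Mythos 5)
Your core idea — to use the associativity of $A$ to convert the tested equality $A(ag_1,y_1)=A(ag_2,y_2)$ into the desired one at $a'$ — is exactly the right one, and it is the idea the paper uses. But your execution introduces an unnecessary auxiliary translate $c$, and the final step, where you return from the $c$-translated equality $A(ca'g_1,y_1)=A(ca'g_2,y_2)$ to the sought $A(a'g_1,y_1)=A(a'g_2,y_2)$, is not justified. Invoking ``separatedness of $Y/S$ and faithfully flat descent along the cover where $c$ lives'' does not do the job: $A(ca'g_i,y_i)$ is not the pullback of $A(a'g_i,y_i)$ along that cover, so there is nothing to descend. Worse, the implication you actually need at this point — from ``$A(ca'g_1,y_1)=A(ca'g_2,y_2)$ holds, and $A(a'g_1,y_1)$, $A(a'g_2,y_2)$ are both well-defined'' to ``they are equal'' — is an instance of the very lemma you are proving, with $ca'$ in the role of $a$. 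To close the gap without circularity you would have to choose $c$ so that additionally $(c^{-1}, A(ca'g_i,y_i))\in\Dom(A)$ and run associativity a second time to write $A(a'g_i,y_i)=A\bigl(c^{-1},A(ca'g_i,y_i)\bigr)$; you do not do this, and as written the argument is incomplete.

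The paper avoids the auxiliary translate altogether by upgrading your pointwise computation to a statement about rational morphisms. It sets $H_i\colon G_{S'}\dashrightarrow Y_{S'}$, $H_i(g)=A(gg_i,y_i)$, and proves $H_1=H_2$ as $S'$-rational morphisms via exactly your chain $A(gg_1,y_1)=A(ga^{-1},A(ag_1,y_1))=A(ga^{-1},A(ag_2,y_2))=A(gg_2,y_2)$ — only now read as an identity of rational morphisms in the variable $g$ rather than a pointwise identity at a cleverly chosen $g$. Then \cite[exposé~XVIII, proposition~1.6]{SGA3II} descends $H_1=H_2$ from $S'$ to $S$, and the conclusion at $a'$ is immediate from the uniqueness of the definition domain of a rational morphism: $H_1=H_2$ forces $\Dom(H_1)=\Dom(H_2)$ and the two morphisms agree there, while $a'\in\Dom(H_i)$ by hypothesis. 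This one observation is what makes the simultaneous-definedness bookkeeping for $c$ — which you rightly flag as the technical heart of your approach — entirely unnecessary.
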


\begin{proof}
    Consider two $S'$-rational morphism $H_1: G_{S'}\dashrightarrow Y_{S'}$ and $H_2: G_{S'}\dashrightarrow Y_{S'}$ given by
    $$H_1(g)= A(gg_1,y_1)  \;\;\; H_1(g)= A(gg_2,y_2)$$
    where $g$ is a section of $G$ valued in an $S'$-scheme. By the associativity of $A$, we have $H_1=H_2$ as an equality of $S'$-rational morphisms because of 
    $$H_1(g)= A(gg_1,y_1)=A(ga^{-1}ag_1, y_1)=A(ga^{-1},A(ag_1, y_1))=A(ga^{-1},A(ag_2, y_2))=A(gg_2,y_2)=H_2(g).$$
    Since $S'\rightarrow S$ is an fppf cover, by \cite[exposé~XVIII, proposition~1.6]{SGA3II}, we have an equality of $S$-rational morphisms $H_1=H_2$. Thus $A(a'g_1,y_1)=A(a'g_2,y_2)$.
\end{proof}

\begin{lemma}\label{lemmaequivalence}
    The relation in \Cref{equivalencerelation} is an equivalence relation.
\end{lemma}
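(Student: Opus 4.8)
The plan is to verify the three defining properties of an equivalence relation—reflexivity, symmetry, and transitivity—for the relation $\sim_A$ on sections of $G\times_S Y$, using \Cref{testinglemma} as the main workhorse.

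\textbf{Reflexivity.} Given a section $(g,y)\in (G\times_S Y)(S')$, I need an fppf cover $S''\to S'$ and a section $a\in G(S'')$ with $(ag,y)\in \Dom(A_{S''})$. Since $\Dom(A)\subset G\times_S Y$ is $Y$-dense with respect to the projection onto $Y$ (as noted in \Cref{rationalactionsetup}), the fiber $\Dom(A)_y\subset G_{S'}$ is schematically dense and in particular fppf-covers (a dense open of) $S'$; more concretely, after an fppf base change $S''\to S'$ there is a section $h\in \Dom(A)_y(S'')$, i.e., $(h,y)$ is well-defined, and then $a\coloneqq hg^{-1}$ works since $(ag,y)=(h,y)$. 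Then trivially $A(ag,y)=A(ag,y)$, so $(g,y)\sim_A(g,y)$.

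\textbf{Symmetry.} This is immediate from the definition, since the condition ``$A(ag_1,y_1)$ and $A(ag_2,y_2)$ are both well-defined and equal'' is symmetric in the two pairs.

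\textbf{Transitivity.} Suppose $(g_1,y_1)\sim_A(g_2,y_2)$ via an fppf cover $S_1\to S'$ and $a_1\in G(S_1)$, and $(g_2,y_2)\sim_A(g_3,y_3)$ via an fppf cover $S_2\to S'$ and $a_2\in G(S_2)$. Pass to the common refinement $S_3\coloneqq S_1\times_{S'} S_2$, which is still an fppf cover of $S'$, and pull back $a_1,a_2$ to $S_3$. I want to find (after possibly a further fppf cover) a single $a\in G(S_3)$ such that all three of $A(ag_1,y_1)$, $A(ag_2,y_2)$, $A(ag_3,y_3)$ are well-defined; once this is achieved, \Cref{testinglemma} applied to the pair $((g_1,y_1),(g_2,y_2))$ with the test section $a_1$ gives $A(ag_1,y_1)=A(ag_2,y_2)$, and applied to the pair $((g_2,y_2),(g_3,y_3))$ with the test section $a_2$ gives $A(ag_2,y_2)=A(ag_3,y_3)$, whence $A(ag_1,y_1)=A(ag_3,y_3)$ and $(g_1,y_1)\sim_A(g_3,y_3)$. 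To produce such an $a$: the locus of $a\in G_{S_3}$ making $A(ag_i,y_i)$ well-defined is, for each $i$, the preimage of $\Dom(A)_{y_i}$ under the translation-by-$g_i$ automorphism of $G_{S_3}$, hence a $Y$-dense (in particular schematically dense) open $V_i\subset G_{S_3}$; the intersection $V_1\cap V_2\cap V_3$ is again schematically dense over $S_3$ (finite intersections of dense opens in a scheme flat over the base behave well—this uses flatness and the relative density, cf.\ the discussion of $Y$-density in \Cref{rationalactionsetup} and \cite[exposé~XVIII]{SGA3II}), so after an fppf base change it admits a section $a$.

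\textbf{Main obstacle.} The only delicate point is the last one in transitivity: checking that the three ``definition-domain after translation'' open subschemes of $G$ have a schematically dense (hence, after fppf base change, section-admitting) common intersection. This is where the $Y$-density hypothesis on $\Dom(A)$ and the flatness of the situation are genuinely used; I expect to invoke the relative schematic-density formalism of \cite[exposé~XVIII, §1]{SGA3II} (the same tool underlying \Cref{testinglemma}) to conclude that a finite intersection of relatively schematically dense opens is relatively schematically dense, and that such an open, being faithfully flat and finitely presented over a dense open of the base, receives a section after an fppf cover. Reflexivity uses the same density input in a simpler form. Everything else is formal.
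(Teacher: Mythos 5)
Your proof is correct and follows essentially the same route as the paper: verify the three axioms, with symmetry immediate and transitivity resting on the $Y$-density of $\Dom(A)$, the fact that a finite intersection of relatively dense opens in the smooth $S$-group $G$ is again relatively dense and so admits a section (SGA3 II, exp.~XVIII, \S1), and two applications of \Cref{testinglemma}. The only place you overcomplicate matters is reflexivity: since $\{e\}\times_S Y\subset\Dom(A)$, one can simply take $a=g^{-1}$ over $S'$ itself (no fppf cover and no section-finding needed), because then $A(ag,y)=A(e,y)=y$ is automatically well-defined — which is exactly what the paper does.
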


\begin{proof}
    It is clear that the relation is \emph{symmetric}.

    To show the \emph{reflexivity}, for a test $S$-scheme $S'$ and a section $(g,y)\in (G\times_S Y)(S')$, we have that $A(g^{-1}g, y)=A(e, y)$ is well-defined.

    To show the \emph{transitivity}, consider the following two pairs of sections
    $$(a_1, y_1)\sim (a_2, y_2),(a_2, y_2)\sim (a_3, y_3)\in (G\times_S Y)(S').$$
    Since $\{e\}\times_S Y\subset \Dom(A)$, the base changes $\Dom(A)_{y_1},\Dom(A)_{y_2},\Dom(A)_{y_3}\subset G_{S'}$ are all $S'$-dense open subschemes. Then 
    $\bigcap_{i=1,2,3}\Dom(A)_{y_i}a_i^{-1}\subset G_{S'}$ is also $S'$-dense. By \cite[exposé~XVIII,proposition~1.7]{SGA3II}, we can find a section $a\in G(S')$ such that $A(aa_1, y_1)$,$A(aa_2, y_2)$ and $A(aa_3, y_3)$ are all well-defined. We conclude by \Cref{testinglemma}.
\end{proof}

\subsection{Solution to a rational action}

\begin{definition}\label{definitionwonderfulembedding}
    We define the quotient sheaf over $\Sch$
    $$\overline{Y}\coloneq(G\times_S Y)/\sim_A$$
    with respect to the equivalence relation defined in \Cref{equivalencerelation}.
\end{definition}

\begin{remark}\label{remarkfuncotriality}
    \Cref{definitionwonderfulembedding} has a natural functoriality in the sense that, given an (not necessarily flat) $S$-scheme $S'$, we have the isomorphism
    $$\overline{Y}_{S'}\cong (G_{S'}\times_{S'} Y_{S'})/\sim_{A_{S'}}.$$
\end{remark}

We will show that $\overline{Y}$ is an algebraic space over $S$. We define an $S$-rational morphism
\begin{align}\label{definitionofphi}
    \phi: G\times_S G\times_S Y\;\;&\dashrightarrow \;\;\;\;\;\;\;\;\;\;\;Y\\
         (g_1, g_2, y)\;\;\;\;\;\;&\longmapsto A(g_1^{-1}, A(g_2,y)),\nonumber
\end{align}
and let $\Gamma\subset G\times_S G\times_S Y\times_S Y$ be the graph of the $S$-morphism $\phi\vert_{\Dom(\phi)}$. Then $\Gamma$ is naturally endowed with two projections
\begin{equation}\label{relationequation}
    \Gamma \xrightrightarrows[\pr_{23}]{\pr_{14}} G\times_S Y.
\end{equation}

\begin{theorem}\label{reinterpretation}
   The quotient sheaf of $G\times_S Y$ with respect to \Cref{relationequation} is isomorphic to $\overline{Y}$ as $\fppf$-sheaves over $\Sch/S$.
\end{theorem}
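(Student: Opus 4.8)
The plan is to show that the two quotient sheaves agree by exhibiting a natural isomorphism at the level of $T$-points for every $S$-scheme $T$, compatibly with the sheafification on both sides. Concretely, both $\overline{Y}$ and the quotient of $G \times_S Y$ by the relation $\Gamma$ in \eqref{relationequation} are defined as fppf-sheafifications of presheaf quotients of the \emph{same} presheaf $T \mapsto (G\times_S Y)(T)$; so it suffices to prove that, for a fixed test scheme $T=S'$, the relation $(g_1,y_1)\sim_A (g_2,y_2)$ of \Cref{equivalencerelation} coincides with the relation generated by the image of $\Gamma(S') \rightrightarrows (G\times_S Y)(S')$ under $(\pr_{14},\pr_{23})$, after allowing fppf-local refinement of $S'$ on both sides. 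Since sheafification is a left adjoint and commutes with forming such quotients, matching the two \emph{equivalence relations} (as subsheaves of $(G\times_S Y)\times_S(G\times_S Y)$) will give the desired isomorphism of sheaves.

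First I would unwind the definition of $\Gamma$. A section of $\Gamma$ over $S'$ is, by construction, a tuple $(g_1,g_2,y,z)$ with $(g_1,g_2,y)\in \Dom(\phi)(S')$ and $z = \phi(g_1,g_2,y) = A(g_1^{-1}, A(g_2,y))$. Applying $\pr_{14}$ gives $(g_1, z) = (g_1, A(g_1^{-1},A(g_2,y)))$ and applying $\pr_{23}$ gives $(g_2, y)$. The key computation is then: for any section $a$ bringing things into $\Dom(A)$, we have $A(a g_1, z) = A(a g_1, A(g_1^{-1}, A(g_2,y))) = A(a, A(g_2,y)) = A(a g_2, y)$ by the associativity axiom of the rational action. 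Hence $\pr_{14}$ and $\pr_{23}$ of any section of $\Gamma$ are $\sim_A$-equivalent; taking $a=g_1$ (after suitable fppf refinement so that everything is defined, using that $\Dom(A)$ is $Y$-dense and the arguments of \Cref{lemmaequivalence}) shows directly that the image relation of $\Gamma$ is contained in $\sim_A$.

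Conversely, I would start from $(g_1,y_1)\sim_A(g_2,y_2)$, so after an fppf cover $S''\to S'$ there is $a\in G(S'')$ with $A(ag_1,y_1)=A(ag_2,y_2)=:w$, both well-defined. I want to realize the pair $((g_1,y_1),(g_2,y_2))$ (pulled back to $S''$) as linked through $\pr_{14}$ and $\pr_{23}$ of sections of $\Gamma$. Working over $S''$, the section $(a^{-1}, a g_1, y_1)$ — one checks it lies in $\Dom(\phi)$, again using $Y$-density of $\Dom(A)$ and possibly a further refinement — has $\pr_{23} = (ag_1, y_1)$ and $\pr_{14} = (a^{-1}, \phi(a^{-1},ag_1,y_1)) = (a^{-1}, A(a, A(ag_1,y_1))) = (a^{-1}, A(a,w))$; similarly $(a^{-1}, ag_2, y_2)\in\Gamma$ relates $(ag_2,y_2)$ to $(a^{-1}, A(a,w))$. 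And $(g_1, y_1)$ is related to $(ag_1,y_1)$ through a section of $\Gamma$ of the form $(\,\cdot\,, a, \cdot\,)$-type; more precisely one uses sections like $(e, a, y_1)$ or $(a, a, \ast)$ whose two projections are $(g_1,y_1)$-type and $(ag_1, y_1)$-type respectively — I will need to choose these carefully so the $\phi$-value is well-defined. Chaining these finitely many $\Gamma$-links (and their inverses, which is legitimate since we pass to the \emph{generated} equivalence relation, or equivalently since $\Gamma$ is stable under the symmetry $(g_1,g_2,y,z)\mapsto(g_2,g_1,z,y)$ up to the rational identification) exhibits $(g_1,y_1)\sim_\Gamma (g_2,y_2)$ fppf-locally.

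The main obstacle I anticipate is bookkeeping of definition domains: every time I write $A(ag_i, y_i)$ or a section of $\Gamma$ I must ensure I am inside $\Dom(A)$ or $\Dom(\phi)$, and these are only \emph{dense} open subschemes, so I will repeatedly invoke the density statements from \Cref{rationalactionsetup} together with \cite[exposé~XVIII, proposition~1.7]{SGA3II} to find sections $a$ making all finitely many relevant expressions simultaneously well-defined, possibly after refining the fppf cover. A secondary technical point is that the quotient by the \emph{graph} $\Gamma$ a priori yields only a relation, not obviously an equivalence relation, so I should either show $\Gamma$ already defines an equivalence relation on $G\times_S Y$ (which follows from the same associativity computation, mirroring \Cref{lemmaequivalence}) or simply note that both sides are defined as the fppf-sheaf quotient by the \emph{generated} equivalence relation and that the generated relations coincide by the two inclusions above. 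Once the two equivalence relations are identified, the isomorphism of quotient sheaves is formal. I do not foresee any difficulty beyond this careful tracking of rational-domain conditions, since the algebra is entirely governed by the associativity axiom of the rational action $A$.
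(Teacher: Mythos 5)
The converse direction in your proposal is broken. You want to link $(g_1,y_1)$ to $(g_2,y_2)$ by a chain of $\Gamma$-relations going through $(ag_1,y_1)$, $(a^{-1},A(a,w))$, and $(ag_2,y_2)$. The middle links are fine: $(a^{-1},ag_1,y_1,A(a,w))$ and $(a^{-1},ag_2,y_2,A(a,w))$ are sections of $\Gamma$, so $(ag_1,y_1)$, $(a^{-1},A(a,w))$ and $(ag_2,y_2)$ are all $\Gamma$-related. But the final step you invoke — linking $(g_1,y_1)$ to $(ag_1,y_1)$ by ``a section of $\Gamma$ of the form $(\cdot,a,\cdot)$-type'' — cannot work, and no choice of section will repair it, because $(g_1,y_1)$ and $(ag_1,y_1)$ are already not $\sim_A$-equivalent. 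Indeed the forward direction you proved (via \Cref{multiplicationlemma}) shows every $\Gamma$-link is a $\sim_A$-link, so $\Gamma$-equivalence is contained in $\sim_A$. Heuristically the class of $(g,y)$ is the point ``$g^{-1}y$'', and $g_1^{-1}y_1 \neq g_1^{-1}a^{-1}y_1$ for generic $a$; formally, by \Cref{testinglemma}, $(g_1,y_1)\sim_A(ag_1,y_1)$ would force the rational morphisms $b\mapsto A(bg_1,y_1)$ and $b\mapsto A(bag_1,y_1)$ to coincide, which fails. So your chain connects the class of $(g_1,y_1)$ to the \emph{wrong} class (the one containing $(ag_1,y_1)$, $(ag_2,y_2)$, and $(a^{-1},A(a,w))$, which is the class ``$w$'', not ``$A(a^{-1},w)$'').

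There is also a second, more conceptual gap. You frame the theorem as an identity of \emph{generated} equivalence relations and treat the domain conditions as bookkeeping to be handled by density arguments. But the actual content of the converse is to produce a single section $(g_2,g_1,y_1,y_2)\in\Gamma(S')$, i.e.\ to show $(g_2,g_1,y_1)\in\Dom(\phi)(S')$ — where $\Dom(\phi)$ is the definition domain of $\phi$ computed over $S$, not over $S'$. Knowing that $A(ag_1,y_1)$ and $A(ag_2,y_2)$ are well defined for an $a$ living only over an fppf cover does not put $(g_2,g_1,y_1)$ into $\Dom(\phi)$; one must manufacture an auxiliary flat, finitely presented $S_0\to S$ together with a section $h\in G(S_0)$ such that $\phi$ can be rewritten as $\phi'(f_1,f_2,x)=A((hf_1)^{-1},A(hf_2,x))$ with each intermediate evaluation at $(g_2,g_1,y_1)$ well defined, and then descend. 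This is \Cref{claim1} in the paper, and it is the nontrivial part of the proof: it reduces to $S$ strictly Henselian, chooses a system of parameters in $\mathcal{O}_{G_s,h}$, and invokes the local criterion for flatness and Zariski's main theorem to build $S_0$. Passing to the generated equivalence relation does not let you sidestep this, because each individual $\Gamma$-link still requires membership in $\Dom(\phi)$; and, as noted above, the chain you propose does not close up in any case. The paper's route is both the correct one and the simpler one: it shows $\Gamma$ \emph{is} the relation $\sim_A$ (no chaining), which is also exactly what \Cref{algebraicspace} needs, namely that $\Gamma$ is already an fppf equivalence relation.
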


Before coming to the proof of \Cref{reinterpretation}, as a preparation, we need the following lemma.

\begin{lemma}\label{multiplicationlemma}
    If $(g_1,g_2, y, x)\in \Gamma(S)$, then $(g_2, y)\sim (g_1, x)$.
\end{lemma}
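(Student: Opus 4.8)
The plan is to unwind the definitions of $\Gamma$ and of the equivalence relation $\sim_A$. By construction, $\Gamma$ is the graph of the restriction of the $S$-rational morphism $\phi$ of \Cref{definitionofphi} to its definition domain. So a section $(g_1, g_2, y, x) \in \Gamma(S)$ is precisely the data of a section $(g_1, g_2, y) \in (G \times_S G \times_S Y)(S)$ lying in $\Dom(\phi)$ together with the identity $x = \phi(g_1, g_2, y) = A(g_1^{-1}, A(g_2, y))$ in $Y(S)$. The key point is that, since $(g_1, g_2, y) \in \Dom(\phi)$ and $\phi(g_1, g_2, y) = A(g_1^{-1}, A(g_2, y))$, both $A(g_2, y)$ is well-defined (as a section of $Y$) and $A(g_1^{-1}, A(g_2, y))$ is well-defined; set $z \coloneq A(g_2, y) \in Y(S)$, so that $x = A(g_1^{-1}, z)$ and hence also $z = A(g_1, x)$ by the associativity and unit axioms of the rational action $A$ (applying the $S$-rational identity $A(g_1, A(g_1^{-1}, z)) = A(e, z) = z$, valid wherever both sides are defined; here one must check the relevant sections land in the definition domains, using that $\Dom(A)$ is $Y$-dense and the argument of \Cref{testinglemma}).

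Now I would exhibit the witness for $(g_2, y) \sim_A (g_1, x)$ required by \Cref{equivalencerelation}. Take the fppf cover to be $S$ itself and the section $a \coloneq e \in G(S)$... but more carefully, I need a single section $a \in G(S'')$, over some fppf cover $S'' \to S$, such that $A(a g_2, y)$ and $A(a g_1, x)$ are both well-defined and equal. The natural candidate is to choose $a$ so that $a g_2$ brings $y$ into $\Dom(A)_y$ and simultaneously $a g_1$ brings $x$ into $\Dom(A)_x$; the point of the argument is rather that we already know $A(g_2, y) = z = A(g_1, x)$ holds as sections of $Y(S)$ once we have shown both are well-defined. Indeed, $A(g_2, y)$ is well-defined because it equals $z$ (it appears as the inner argument of $\phi$), and $A(g_1, x) = A(g_1, A(g_1^{-1}, z))$ is well-defined and equals $z$ by the associativity/unit identity together with the density of $\Dom(A)$ and \Cref{testinglemma} (which guarantees the value is independent of the auxiliary section used to bring things into the domain). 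With $a = e$ we then get $A(e \cdot g_2, y) = A(e \cdot g_1, x) = z$, i.e. $(g_2, y) \sim_A (g_1, x)$.

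The main obstacle I anticipate is the bookkeeping around definition domains: one must be careful that the equality $A(g_1, A(g_1^{-1}, z)) = z$ is an equality of $S$-rational morphisms that genuinely specializes to an equality of sections at the specific point $(g_1, x)$, i.e. that $(g_1, x) \in \Dom(A)$. This does not follow formally from $(g_1, g_2, y) \in \Dom(\phi)$ alone; the clean way around it is to invoke \Cref{testinglemma}: by $Y$-density of $\Dom(A)$ (ensured by $\{e\}\times_S Y \subset \Dom(A)$), the subscheme $\Dom(A)_y g_2^{-1} \cap \Dom(A)_x g_1^{-1} \subset G_S$ is $S$-dense, so by \cite[exposé~XVIII, proposition~1.7]{SGA3II} after an fppf base change $S'' \to S$ we can pick $a \in G(S'')$ with $A(a g_2, y)$ and $A(a g_1, x)$ both well-defined; then associativity gives $A(a g_2, y) = A(a, A(g_2, y)) = A(a, z) = A(a, A(g_1, x)) = A(a g_1, x)$, using that $A(g_2,y)=z=A(g_1,x)$ holds at least on the dense locus where everything is defined, and \Cref{testinglemma} propagates this to the chosen $a$. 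This exhibits the required witness and completes the proof.
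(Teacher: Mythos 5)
Your overall plan is on the right track, and you correctly identify the two key tools (density of $\Dom(A)$ via $\{e\}\times_S Y\subset\Dom(A)$, and the fppf cover giving a section $a$ with $(ag_2,y),(ag_1,x)\in\Dom(A)$), which is exactly what the paper uses. However, the final chain of equalities still has a genuine gap that you flag but do not actually close.

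The gap is the definition and use of $z\coloneq A(g_2,y)$. Membership $(g_1,g_2,y)\in\Dom(\phi)$ does \emph{not} imply $(g_2,y)\in\Dom(A)$: the definition domain of the rational morphism $\phi$ is the \emph{maximal} open on which some representative is a morphism, and this can strictly contain the locus where the defining composition $A(g_1^{-1},A(g_2,y))$ makes sense termwise. You acknowledge this, but your patch re-introduces the same object: the chain
$A(ag_2,y)=A(a,A(g_2,y))=A(a,z)=A(a,A(g_1,x))=A(ag_1,x)$
invokes associativity at the specific sections $(a,g_2,y)$ and $(a,g_1,x)$, which again requires $(g_2,y)$ and $(g_1,x)$ to lie in $\Dom(A)$ — precisely what is not known. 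The remark that these identities ``hold on the dense locus where everything is defined'' does not help, because $(g_2,y)$ and $(g_1,x)$ are fixed sections, not points ranging over a dense locus, and \Cref{testinglemma} propagates an equality once established rather than establishing one. The paper's proof avoids this entirely: after choosing $a$, it proves the equality
$A(ah_1,\phi(h_1,h_2,z))=A(ah_2,z)$
as an identity of $S'$-\emph{rational morphisms} in the variables $(h_1,h_2,z)$ (checked on a dense open where all intermediate compositions make sense, using associativity), and only \emph{then} evaluates at the specific section $(g_1,g_2,y)$, where both sides are well-defined by the choice of $a$ and the hypothesis $\phi(g_1,g_2,y)=x$. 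To repair your argument, replace the pointwise chain through $z$ by this rational-morphism identity and evaluate at $(g_1,g_2,y)$.
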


\begin{proof}
    Since $\{e\}\times_S Y\subset \Dom(A)$, the open subscheme $S'\coloneq \Dom(A)_xg_1^{-1}\bigcap\Dom(A)_yg_2^{-1}\subset G$ is $S$-dense. Hence $S'\rightarrow S$ is an fppf cover and the open immersion gives a section $a\in G(S')$ such that $A(ag_2, y)$ and $A(ag_1, x)$ are both well-defined. 

    We have the following equations of $S'$-rational morphisms from $(G\times_S G\times_S Y)_{S'}$ to $Y_{S'}$
    \begin{align*}
        A(ah_1, \phi(h_1, h_2, z))= A(ah_1, A(h_1^{-1},A(h_2, z)))=A(a, A(h_2, z))=A(ah_2, z),
    \end{align*}
    where $(h_1,h_2, z)$ are variables. Since $(g_1,g_2, y, x)\in \Gamma(S)$, we have $\phi(g_1,g_2, y)=x$. Hence $A(ag_2, y)=A(ag_1,x)$, as desired.
\end{proof}

\begin{proof}[Proof of \Cref{reinterpretation}]
    We need to show that two sections of $G\times_S Y$ are equivalent in the sense of \Cref{equivalencerelation} if and only if they come from a section of $\Gamma$ via $\pr_{14}$ and $\pr_{23}$.

    Let $S'$ be a test $S$-scheme, and consider two sections $(g_1, y_1), (g_2,y_2)\in(G\times_S Y)(S')$. Since we have $\Dom(\phi)_{S'}\subset \Dom(\phi_{S'})$, by using \Cref{multiplicationlemma}, the condition that $(g_2, g_1, y_1 ,y_2)\in\Gamma(S')$ gives rise to $(g_1, y_1)\sim(g_2,y_2)$.

    The converse direction is more involved. Suppose that, for a test $S$-scheme $S'$, we have two equivalent sections $(g_1, y_1)\sim(g_2,y_2)\in (G\times_S Y)(S')$. 
    We aim at proving $(g_2, g_1,y_1, y_2)\in \Gamma(S')$. For this, it suffices to localize on $S'$. Since $\Gamma$ is locally of finite presentation over $S$, by a limit argument, we can assume that $S'$ is the spectrum of a strictly Henselian local ring. Moreover, by \cite[proposition~18.8.8~(iii) (iv)]{EGAIV4}, the strict henselization $\widetilde{S}$ of $S$ at the image of the geometric closed point of $S'$ is flat over $S$. Since, by \cite[2.5, Proposition~6]{BLR}, $\Dom(\phi)_{\widetilde{S}}=\Dom(\phi_{\widetilde{S}})$, we can also assume that $S$ is strictly Henselian and the morphism $S'\rightarrow S$ is local.
    \begin{claim}\label{claim1}
        There exist a flat and finitely presented morphism $S_0\rightarrow S$ and a section $h\in G(S_0)$ such that 
        $$A(hg_1,y_1),\;A(hg_2, y_2)\;\;\text{and}\;\;A((hg_2)^{-1},A(hg_1, y_1))$$
        are all well-defined.
    \end{claim}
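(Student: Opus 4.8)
The plan is to produce the required $h$ as the tautological section of an open subscheme of $G_{S'}$ that is faithfully flat and of finite presentation over $S'$. For $i=1,2$ set $V_i\coloneq\{\,h\in G_{S'}\mid (hg_i,y_i)\in\Dom(A)\,\}$; this is the preimage of the fibre $\Dom(A)_{y_i}\subseteq G_{S'}$ of $\Dom(A)$ over $y_i$ under right translation by $g_i$, hence a right translate of $\Dom(A)_{y_i}$. Since $\{e\}\times_S Y\subseteq\Dom(A)$ and $\Dom(A)$ is $Y$-dense, each $\Dom(A)_{y_i}$ is an $S'$-dense open subscheme of $G_{S'}$ containing the identity section; therefore $V_1$, $V_2$, and $V_1\cap V_2$ are $S'$-dense open, and over $V_1\cap V_2$ the assignments $h\mapsto A(hg_1,y_1)$ and $h\mapsto A(hg_2,y_2)$ are honest morphisms to $Y$. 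This already settles the well-definedness of $A(hg_1,y_1)$ and of $A(hg_2,y_2)$; the content lies in the third expression.

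For that expression I would first invoke \Cref{testinglemma}. Applied to the hypothesis $(g_1,y_1)\sim_A(g_2,y_2)$ and to the tautological section $h$ over $V_1\cap V_2$ — over which both $A(hg_i,y_i)$ are well-defined — it gives the identity of morphisms $A(hg_1,y_1)=A(hg_2,y_2)$ on $V_1\cap V_2$. Hence, on $V_1\cap V_2$, the condition $\bigl((hg_2)^{-1},A(hg_1,y_1)\bigr)\in\Dom(A)$ is the same as $\bigl((hg_2)^{-1},A(hg_2,y_2)\bigr)\in\Dom(A)$; let $W\subseteq V_1\cap V_2$ be the open subscheme it cuts out, namely the preimage of $\Dom(A)$ under the morphism $h\mapsto\bigl((hg_2)^{-1},A(hg_2,y_2)\bigr)$.

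It then suffices to show $W\to S'$ is surjective: being open in the flat and finitely presented $G_{S'}$, it is then faithfully flat and of finite presentation over $S'$, and we take $S_0\coloneq W$ with $h$ its tautological section — conditions (a) and (b) hold because $W\subseteq V_1\cap V_2$, and (c) holds by the definition of $W$ together with $A(hg_1,y_1)=A(hg_2,y_2)$. For surjectivity I substitute $g=hg_2$: right translation by $g_2$ carries $W$ isomorphically onto $\Dom(A)_{y_1}g_1^{-1}g_2\cap D$, where $D\coloneq\{\,g\in G_{S'}\mid (g,y_2)\in\Dom(A),\ (g^{-1},A(g,y_2))\in\Dom(A)\,\}$. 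The first factor is again an $S'$-dense open; and $D$ is the locus where the iterated expression $A(g^{-1},A(g,y_2))$ is defined — an open subscheme of $G_{S'}$ (the preimage of the open $\Dom(A)$ under $g\mapsto(g^{-1},A(g,y_2))$) which contains the identity section, since the unit axiom puts $(e,y_2)$ into $\Dom(A)$; note that by the unit and associativity axioms this iterated expression represents the constant morphism $g\mapsto y_2$. In each fibre $G_{S',s}$ the traces on the identity component $G^{\circ}_{S',s}$ of the dense open $(\Dom(A)_{y_1}g_1^{-1}g_2)_s$ and of the nonempty open $D_s$ are nonempty opens of an irreducible scheme, hence meet; so $W$ meets every fibre of $G_{S'}\to S'$, as needed.

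The step I expect to be the main obstacle is precisely this third condition, and the delicate point is that in $A\bigl((hg_2)^{-1},A(hg_1,y_1)\bigr)$ the inner value $A(hg_1,y_1)$ varies with $h$, so the locus where the outer $A$ is defined is a \emph{moving target}; compounding this, the definition locus of an iterated rational action is in general strictly smaller than the domain of the rational morphism it represents, so the formal identity $A(g^{-1},A(g,y))=y$ does not by itself exhibit a point of definition. The manoeuvre that resolves it is the use of \Cref{testinglemma} to trade $A(hg_1,y_1)$ for $A(hg_2,y_2)$, which realigns the two occurrences of $hg_2$; afterwards the surviving condition is cut out by a genuine open passing through the identity section, and one is left only with the elementary fact that an $S'$-dense open and an open meeting the identity component of each fibre must intersect. (Throughout one should keep in mind the earlier reductions, which guarantee that the domains of the rational morphisms in play are compatible with the base changes used here.)
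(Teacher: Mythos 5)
The core manipulation you identify is the same one the paper uses: invoke \Cref{testinglemma} to replace $A(hg_1,y_1)$ by $A(hg_2,y_2)$, so the third expression becomes $A\bigl((hg_2)^{-1},A(hg_2,y_2)\bigr)$, whose definition locus is a genuine open in $G_{S'}$ containing the identity section. Your description of $W$ and your argument that it is $S'$-dense (meeting the identity component of each fibre) are both correct. The gap is in the final step: you set $S_0\coloneq W$, but $W$ is an open subscheme of $G_{S'}$, hence flat and of finite presentation over $S'$ — \emph{not} over $S$, which is what the claim requires. At the stage of the proof where the claim is invoked we have deliberately \emph{not} assumed $S'\rightarrow S$ flat (the preceding reductions make $S$ and $S'$ strictly Henselian local with $S'\rightarrow S$ local, but that is all), so $W\rightarrow S'\rightarrow S$ can fail to be flat.

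This is not a cosmetic mismatch: the flatness of $S_0$ over $S$ is exactly what makes the rest of the paper's argument go. After the claim one writes $\phi'=\phi_{S_0}$ and observes that $(g_2,g_1,y_1)$ pulled back to $S'\times_S S_0$ lands in $\Dom(\phi_{S_0})$; then flatness of $S_0\rightarrow S$ gives $\Dom(\phi_{S_0})=\Dom(\phi)_{S_0}$, and faithfully flat descent along $S'\times_S S_0\rightarrow S'$ yields $(g_2,g_1,y_1)\in\Dom(\phi)(S')$. If instead $S_0=W$ lives only over $S'$, the same chain gives you $(g_2,g_1,y_1)\in\Dom(\phi_{S'})(S')$ — and the inclusion $\Dom(\phi)_{S'}\subseteq\Dom(\phi_{S'})$ is in general \emph{strict} for non-flat $S'\rightarrow S$, which is precisely the phenomenon the whole argument is designed to work around. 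Your construction therefore never leaves the ``$S'$-world'' and cannot feed the descent step. The missing idea is the one borrowed from the proof of \cite[corollaire~17.16.2]{EGAIV4}: starting from the $S'$-dense open $\mathcal{W}'\subset G_{S'}$ you have constructed, one observes that its closed fibre contains an open of the form $\mathcal{W}\times_{k(s)}k(s')$ with $\mathcal{W}\subset G_s$ open nonempty, chooses a closed point $h\in\mathcal{W}$, cuts out by a system of parameters to get a flat quasi-finite $S$-scheme through $h$, and uses the Henselian hypothesis and Zariski's Main Theorem to extract a \emph{finite flat} $S$-scheme $S_0=\Spec(\mathcal{O}_{V',h})$ mapping to $G$ whose base change to $S'$ lands in $\mathcal{W}'$. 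That step is what you would need to add.
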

    Granted the claim, we consider $S_0$-rational morphism
    \begin{align*}
        \phi':(G\times_S G\times_S Y)_{S_0}\;&\dashrightarrow\;\;\;\;\;\; Y_{S_0}\\
       (f_1,f_2, x) \;\;\;\;\;\;&\longmapsto    A((hf_1)^{-1},A(hf_2, x)).
    \end{align*}
    It is clear that $\phi_{S_0}=\phi'$, and we have $(g_2, g_1,y_1)\in \Dom(\phi')(S'\times_S S_0)$ because, in each step we apply $A$ to form $\phi'$, the objects are well-defined by design. Hence, by flat descent, $(g_2, g_1,y_1)\in \Dom(\phi_{S_0})(S'\times_S S_0)=\Dom(\phi)(S')$. Moreover, by \Cref{testinglemma}, we have $A(hg_1,y_1)=A(hg_2, y_2)$, hence, $(g_2, g_1,y_1, y_2)\in \Gamma(S')$.

    \begin{proof}[Proof of \Cref{claim1}]
     We define an $S$-dense open subscheme
     $$\mathcal{V}\coloneq \Dom(A)\bigcap \psi^{-1}(\Dom(A)),$$
     where $\psi: G\times_S Y\longrightarrow G\times_S Y$ is an $S$-rational morphism given by sending $(g,x)$ to $(g^{-1},A(g,x))$. It is clear that $\{e\}\times Y\subset \mathcal{V}$. In particular the open $\mathcal{V}$ is $Y$-dense. We only need to show that there exist a flat and finitely presented morphism $S_0\rightarrow S$ and an $S$-morphism $h:S_0\rightarrow G$ whose base change to $S'$ lands in the $S'$-dense open subscheme
     $$\mathcal{W}'\coloneq \mathcal{V}_{y_1} g_1^{-1}\bigcap\mathcal{V}_{y_2} g_2^{-1} \subset G\times_S S',$$
     during this reduction procedure, \Cref{testinglemma} is used.
     For this, we adopt the idea of the proof of \cite[corollaire~17.16.2]{EGAIV4}.

     Let $s'$ (resp., $s$) be the closed point of $S'$ (resp., $S$). Since $\mathcal{W}'$ is $S'$-dense, we can assume that the special fiber $\mathcal{W}'_{s'}$ contains an open subscheme of the form $\mathcal{W}\times_{k(s)}k(s')$ where $\mathcal{W}$ is a nonempty open subscheme of $G_s$ (this follows from a limit argument and \cite[01JR, 01UA]{stacks-project}). We choose a closed point $h\in \mathcal{W}\subset  G$. We can choose a system of parameters $(\widetilde{t_i})_{1\leq i\leq n}$ in the local ring $\mathcal{O}_{G_s,h}$, because the special fiber $G_s$ is a regular scheme. Then we can find an affine open neighborhood $V$ containing $h$ and $n$ sections $(t_i)_{1\leq i\leq n}\subset \Gamma (V,\mathcal{O}_{G})$ lifting $(\widetilde{t_i})_{1\leq i\leq n}$. Let $V'\subset V$ be the closed subscheme cut out by $(t_i)_{1\leq i\leq n}$. By the local criterion for flatness (see \cite[théorème~11.3.8 b') and c)]{EGAIV3}), after shrinking $V'$ if needed, we can assume that $V'$ is flat over $S$. Since $(\widetilde{t_i})_{1\leq i\leq n}$ is a system of parameters,  $\mathcal{O}_{V'_s, h}$ is artinian. Since $h$ is a closed point in $V'_s$, it is isolated in $V'_s$. Hence $\Spec(\mathcal{O}_{V',h})$ is quasi-finite over $S$. As $S$ is Henselian, by the Zariski main theorem, the $S$-scheme $\Spec(\mathcal{O}_{V',h})$ is even finite, see \cite[théorème~18.5.11 c')]{EGAIV4}. We take $\Spec(\mathcal{O}_{V',h})$ as $S_0$ and consider the natural $S$-morphism $\epsilon\colon\Spec(\mathcal{O}_{V',h})\rightarrow G$.

    Now we show that the $S'$-morphism $\epsilon\times \Id_{S'}\colon S_0\times_S S'\rightarrow G\times_S S'$ has image in $\mathcal{W'}$. Since $\epsilon$ sends the closed point of $\Spec(\mathcal{O}_{V',h})$ to $h\in \mathcal{W}$ and, by \cite[proposition~2.4.4]{EGA1}, $\epsilon$ preserves generizations (in the sense of \cite[0061]{stacks-project}) of the closed point of $\Spec(\mathcal{O}_{V',h})$, the image of the closed fiber $\epsilon_s$ lies in $\mathcal{W}$. 
    Hence, the special fiber $(\epsilon\times_S \Id_{S'})_{s'}$ has image in $\mathcal{W}\times_{k(s)}k(s')\subset \mathcal{W}'$. Now since $S'$ is Henselian, by \cite[proposition~18.5.9 (ii)]{EGAIV4}, the $S'$-finite scheme $\Spec(S_0\times_S S')$ decomposes as the disjoint union of some local schemes. We already know that the closed points of these local schemes are mapped into $\mathcal{W}'$ by $\epsilon\times_S \Id_{S'}$. 
    Hence, by \cite[proposition~2.4.4]{EGA1}, the whole space $\Spec(S_0\times_S S')$ also lands in $\mathcal{W}'$, as desired. 
    \end{proof}

\end{proof}

\begin{corollary}\label{algebraicspace}
    The quotient sheaf $\overline{Y}$ is an algebraic space over $S$.
\end{corollary}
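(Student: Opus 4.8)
The plan is to obtain the corollary as a formal consequence of \Cref{reinterpretation}. That theorem presents $\overline{Y}$ as the fppf quotient sheaf of the \emph{scheme} $G\times_S Y$ by the relation $\Gamma\rightrightarrows G\times_S Y$ of \Cref{relationequation}, with structure maps $\pr_{14}$ and $\pr_{23}$. Hence it suffices to check that this relation satisfies the hypotheses of the theorem of M.~Artin that the quotient of a scheme by a flat, locally finitely presented equivalence relation is an algebraic space (\cite[04S6]{stacks-project} and the nearby bootstrap results; compare also \cite[\S4]{li2023equivariant}). Three points have to be verified: that $\Gamma$ is a scheme, that $j\coloneq(\pr_{14},\pr_{23})\colon\Gamma\to(G\times_S Y)\times_S(G\times_S Y)$ is a monomorphism defining an equivalence relation, and that $\pr_{14}$ and $\pr_{23}$ are flat and locally of finite presentation.

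The first two points are essentially bookkeeping. As the graph of the morphism $\phi\vert_{\Dom(\phi)}$, the scheme $\Gamma$ is isomorphic, through the projection to its first three factors, to the open subscheme $\Dom(\phi)\subset G\times_S G\times_S Y$; in particular it is a scheme. The morphism $j$ is the composite of the graph immersion $\Gamma\hookrightarrow G\times_S G\times_S Y\times_S Y$ with the reshuffling isomorphism $(g_1,g_2,y,x)\mapsto\bigl((g_1,x),(g_2,y)\bigr)$, hence a monomorphism. Moreover \Cref{lemmaequivalence}, \Cref{multiplicationlemma} and the proof of \Cref{reinterpretation} together show that, for every $S$-scheme $S'$, the image of $j(S')$ is exactly the equivalence relation $\sim_A$ on $(G\times_S Y)(S')$; since $j$ is a monomorphism, the reflexivity, symmetry and transitivity of this subfunctor automatically promote to morphisms of schemes $e$, $\iota$, $c$ exhibiting $(\Gamma,\pr_{14},\pr_{23})$ as a scheme-theoretic equivalence relation.

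The remaining point --- flatness and finite presentation of the two projections --- is the only one that is not purely formal, and I expect it to be the main obstacle. For $\pr_{23}$ it is immediate: under $\Gamma\cong\Dom(\phi)$ this map is the composite of the open immersion $\Dom(\phi)\hookrightarrow G\times_S(G\times_S Y)$ with the projection forgetting the first factor, and the latter is the base change along $G\times_S Y\to S$ of the smooth morphism $G\to S$. For $\pr_{14}$ I would not argue directly, since its defining formula involves the rational morphism $\phi$ and it is not visibly a projection; instead I would use the symmetry automorphism $\iota$ of $\Gamma$ produced above, which satisfies $\pr_{23}\circ\iota=\pr_{14}$ (explicitly $\iota$ sends $(g_1,g_2,y,x)$ to $(g_2,g_1,x,y)$), so that the statement for $\pr_{14}$ follows from that for $\pr_{23}$. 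With all hypotheses in place, Artin's theorem yields that $\overline{Y}=(G\times_S Y)/\Gamma$ is an algebraic space over $S$; the symmetry automorphism $\iota$ is precisely the device that makes the one delicate step --- the behavior of $\pr_{14}$ --- go through.
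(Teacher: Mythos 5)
Your proof is correct and follows essentially the same route as the paper: reduce via \Cref{reinterpretation} to checking that $\pr_{14}$ and $\pr_{23}$ are flat and locally of finite presentation so that Artin's theorem applies, handle $\pr_{23}$ by base change from $G\to S$, and deduce the statement for $\pr_{14}$ from the symmetry automorphism $(g_1,g_2,y,x)\mapsto(g_2,g_1,x,y)$ of $\Gamma$, which is available because $\sim_A$ is symmetric. The only difference is that you spell out a few bookkeeping steps (that $\Gamma\cong\Dom(\phi)$ is a scheme, that $(\pr_{14},\pr_{23})$ is a monomorphism) which the paper leaves implicit.
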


\begin{proof}
    By the Artin's result (see \cite[corollaire~(10.4)]{Champsalgebrique} or \cite[04S6~(2)]{stacks-project} or \cite[théorème~3.1.1]{anantharaman}) which says that the quotient of an algebraic space with respect to an fppf-relation is an algebraic space, \Cref{reinterpretation} reduces us to showing that the two $S$-morphisms $\pr_{14}, \pr_{23}:\Gamma\rightarrow G\times_S Y$ are flat and locally of finite presentation. Since $G$ is flat and locally of finite presentation over $S$, by base change, so is $\pr_{23}$. To show that $\pr_{14}$ is so, it suffices to notice that $\Gamma$ is symmetric under the permutation $(g_1,g_2, y,x)\longmapsto (g_2,g_1, x,y)$, which follows from \Cref{reinterpretation} and the relation $\sim_A$ is symmetric \Cref{lemmaequivalence}.
\end{proof}

We now endow $\overline{Y}$ with a $G$-action.

\begin{definition-proposition}\label{groupaction}
    Let $S'$ be a test $S$-scheme. For a section $g\in G(S')$ and a section $\overline{y}\in\overline{Y}(S')$ represented by a section $(f, x)\in (G\times_S Y)(S'')$ where $S''\rightarrow S'$ is an fppf-cover, we define $g\overline{y}\in \overline{Y}(S')$ to be the section represented by $(gf, x)\in (G\times_S Y)(S'')$. We claim that $g\overline{y}$ does not depend on the choice of a representative of $\overline{y}$.
\end{definition-proposition}

\begin{proof}
    Suppose that $\overline{y}$ has two representatives $(f_1, x_1)\in (G\times_S Y)(S_1'')$ and $(f_2, x_2)\in (G\times_S Y)(S_2'')$, where $S_1''$ and $S_2''$ are fppf-cover of $S'$. By pulling back to $S_1''\times_{S'}S_2''$, we can assume $S_1''=S_2''$ which we rename to $S''$. Since $\{e\}\times Y\subset \Dom(A)$, by \cite[exposé~XVIII, proposition~1.7]{SGA3II}, we can find a section $a$ of $G$ valued in some fppf cover of $S''$ such that $A(agf_1,x_1)$ and $A(agf_2,x_2)$ are both well-defined. We conclude by appealing to \Cref{testinglemma}.
\end{proof}

\begin{proposition}\label{openimmersion}
    The morphism $j: Y\rightarrow \overline{Y}$ which sends a section $y\in Y(S')$ to the equivalence class represented by $(e, y)\in Y(S')$ is an open immersion, where $S'$ is a test $S$-scheme.
\end{proposition}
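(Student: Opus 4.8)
I would prove that $j\colon Y \to \overline Y$ is an open immersion by the standard strategy: first show it is a monomorphism, then exhibit an open subscheme $\mathcal{V}$ of $\overline Y$ (or rather, describe a compatible family of opens that glue) whose preimage under arbitrary test sections recovers $Y$, and finally check that $j$ induces an isomorphism onto that open. Since $\overline Y$ is only known to be an algebraic space (\Cref{algebraicspace}), ``open immersion'' should be interpreted in the category of algebraic spaces, and the cleanest route is to produce, for every $S$-scheme $S'$ and every section $\overline y \in \overline Y(S')$, an open subscheme $V_{\overline y} \subset S'$ such that $\overline y$ factors through $j$ exactly over $V_{\overline y}$, and such that the resulting subfunctor of $\overline Y$ is represented by $Y$.

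\textbf{Monomorphism.} First I would check $j$ is a monomorphism. If $y_1, y_2 \in Y(S')$ have $(e,y_1) \sim_A (e,y_2)$, then by definition there is an fppf cover $S'' \to S'$ and $a \in G(S'')$ with $A(a, y_1)$ and $A(a, y_2)$ both well-defined and equal. By \Cref{testinglemma}, applied with $g_1 = g_2 = e$ and the test section $e \in G(S')$ itself, we get $A(e, y_1) = A(e, y_2)$, i.e. $y_1 = y_2$. (One must check the hypothesis of \Cref{testinglemma} is met: $A(a g_i, y_i) = A(a, y_i)$ are equal by assumption, so it applies.) Hence $j$ is injective on sections, i.e. a monomorphism of sheaves.

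\textbf{Openness.} This is where the real content lies. The subfunctor $j(Y) \subset \overline Y$ consists of those sections $(g,x)$ that are $\sim_A$-equivalent to a section of the form $(e, y)$; equivalently, using the graph description of \Cref{reinterpretation}, $(g,x)$ lies in $j(Y)$ over $S'$ iff, fppf-locally, there is $a \in G$ with $A(ag, x)$ well-defined — i.e. iff $(g,x)$ lies in the image of the ``action map'' $\Dom(A) \to \overline Y$, $(g,x) \mapsto \overline{A(g,x)}$ wait — more precisely $j(Y)$ is the image of the composite $\Dom(A) \hookrightarrow G \times_S Y \to \overline Y$. The key point is that $\Dom(A) \to \overline Y$ is fppf (even smooth surjective onto its image): indeed $\Dom(A) \subset G \times_S Y$ is $Y$-dense open, the projection $\Dom(A) \to Y$ is smooth surjective (being an open of the smooth surjective $G \times_S Y \to Y$ with $\{e\}\times_S Y$ in it), and this projection is exactly $j \circ (\text{proj})$ composed appropriately — concretely, for $(g,x) \in \Dom(A)(S')$ one has $(g,x) \sim_A (e, A(g,x))$ by the reflexivity/associativity argument of \Cref{lemmaequivalence}, so the composite $\Dom(A) \to \overline Y$ factors as $\Dom(A) \xrightarrow{A} Y \xrightarrow{j} \overline Y$. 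Therefore $j(Y)$, being the image of the fppf (indeed smooth, surjective-onto-image) morphism $A\colon \Dom(A) \to Y$ followed by $j$, is an open subspace of $\overline Y$ once we know $j$ itself is an immersion onto it; but running this the other way: $j(Y)$ is the image of the smooth morphism $\Dom(A) \to \overline Y$, hence open in $\overline Y$ by \cite[05VH]{stacks-project} (image of a smooth, hence flat and locally of finite presentation, morphism of algebraic spaces is open). Combined with the monomorphism property and the fact that $Y \to j(Y)$ is an fppf-surjection of sheaves (via $\Dom(A) \to Y$) that is also injective, hence an isomorphism, we conclude $j\colon Y \xrightarrow{\sim} j(Y) \subset \overline Y$ is an open immersion.

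\textbf{The main obstacle.} The delicate point is showing that $j(Y)$ is genuinely an \emph{open} subspace and not merely the image of some morphism — i.e., identifying $j(Y)$ with the image of the smooth morphism $\Dom(A)\times_S G \to \overline Y$, $(g,x,h)\mapsto \overline{(hg,x)}$ (the $G$-orbit of the big open), which surjects onto $j(Y)$ because $Y$ is covered fppf-locally by translates landing in $\Dom(A)$ — and then invoking that flat locally-of-finite-presentation morphisms of algebraic spaces are open. One must be careful that $j(Y)$ is stable under the $G$-action of \Cref{groupaction} (it is, since $g\cdot \overline{(e,y)} = \overline{(g,y)}$, and $(g,y)\sim_A (e, A(g,y))$ when $(g,y)\in\Dom(A)$, and fppf-locally on the base every section becomes of this form after $G$-translation), and that the ``image'' is taken sheaf-theoretically so that it is automatically a subsheaf; the representability of that subsheaf by $Y$ is then exactly the monomorphism-plus-fppf-surjection argument above. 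I expect no further subtlety beyond bookkeeping with the functoriality of \Cref{remarkfuncotriality} to reduce openness to a fibral/fppf-local statement.
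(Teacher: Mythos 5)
Your proof is correct and takes a genuinely different route from the paper. Both begin with the same monomorphism argument (via \Cref{testinglemma} with $a'=e$). But from there, the paper establishes that $j$ is \emph{formally smooth} by a direct infinitesimal-lifting argument — reducing to strictly Henselian local rings, producing a section $m$ of a dense open $M\subset G\times_S H$ via the lifting property, and writing the desired lift as $\phi(m,mg,x)$ — then combines this with local finite presentation to get smoothness, invokes Rydh's theorem \cite[0B8A]{stacks-project} that smooth monomorphisms are representable by schemes, and finally \cite[025G]{stacks-project}. You instead argue \emph{topologically}: the quotient map $G\times_S Y\to\overline Y$ is surjective, flat and locally of finite presentation by the general theory of fppf equivalence relations (this is exactly what \cite[th\'eor\`eme~10.4]{Champsalgebrique}, cited in \Cref{algebraicspace}, provides), so the restriction $\Dom(A)\to\overline Y$ has open image; this image coincides with the subfunctor $j(Y)$ because of the key observation that $(g,x)\sim_A(e,A(g,x))$ for all $(g,x)\in\Dom(A)$, taking $a=e$ as witness; and $j\colon Y\to j(Y)$ is then a monomorphism that is also an fppf epimorphism (being dominated by the fppf cover $\Dom(A)\to j(Y)$), hence an isomorphism in the topos of fppf sheaves. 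Your approach sidesteps both the Henselian lifting argument and Rydh's representability theorem, trading them for the general structure theory of quotients — arguably cleaner and more conceptual, though less self-contained. One caution: in your final paragraph the remark that $j(Y)$ is ``stable under the $G$-action'' is incorrect (in the motivating case of $\overline G$, the big cell is \emph{not} stable under $G\times G$); fortunately this is not used in your actual argument for openness, which relies only on $\{e\}\times_S Y\subset\Dom(A)$ and the factorization through $A$.
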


\begin{proof}
    First, note that $j$ is a monomorphism. Actually, if there are two sections $y_1, y_2\in Y(S')$ such that $(e, y_1)\sim (e, y_2)$, by \Cref{testinglemma}, we have $y_1=y_2$.

    Next we prove that $j$ is formally smooth. Given an infinitesimal thickening $H'\longhookrightarrow H$ and the commutative diagram
    $$\xymatrix{
H \ar@{^{(}->}[d] \ar[r]^y & Y\ar[d]^{j}\\
H' \ar[r]_{(g,x)}   \ar@{.>}[ur]^{?}       &\overline{Y},}$$
where $(g, x)\in(G\times_S Y)(H')$,
we need to find a section in $Y(H')$ fitting into the above commutative diagram. Since $j$ is a monomorphism and by our assumption $Y$ is finitely presented over $S$, by working étale locally over $H'$ and a limit argument, we can assume that $H'$ is the spectrum of a strictly Henselian local ring. Since $\{e\}\times Y\subset \Dom(A)$, the open subscheme $M\coloneq \Dom(A_H)_y\bigcap (\Dom(A_{H'})_xg^{-1})_H\subset G\times_S H$ is $H$-dense. Hence $M$ is $H'$-dense as well. Now \cite[\S~2.3, Proposition~5]{BLR} gives a section $m\in M(H')$. By the construction of $M$, we have that 
$A(mg,x)$ and $A(m,y)$ are both well-defined, and by \Cref{testinglemma}, they are equal. Hence by \Cref{reinterpretation}, we have that 
$$y\in \phi(m,mg,x)\in Y(H'),$$
where $\phi$ is given by \Cref{definitionofphi}.

Finally,we show that $j$ is an open immersion. Since by \Cref{reinterpretation}, $\overline{Y}$ is locally of finite type over $S$, and so is $Y$, the monomorphism $j$ is locally of finite presentation by \cite[06Q6]{stacks-project}. Hence, by \cite[0DP0]{stacks-project}, $j$ is smooth. Furthermore, by \cite[0B8A]{stacks-project} (due to David~Rydh), $j$ is representable by schemes, and so an open immersion by \cite[025G]{stacks-project}.
\end{proof}

%

\section{Integral model of $\overline{G}$: quasi-split case}\label{sectionparahoriccompactification}

The goal of this section is to prove \Cref{introtheorem1} and study various geometric properties of the embeddings in \Cref{introtheorem1}. The proof of uniqueness is given in \Cref{uniqueness}. When $f(0)=0$, to establish the existence of the desired integral model of the wonderful compactification $\overline{G}$ in \Cref{introtheorem1}, we show the key result \Cref{theoremrationalaction} in \Cref{subsectionrationalaction} so that the machinery of \Cref{sectionrationalactions} can be applied to the construction of embeddings. When $f(0)\textgreater 0$, the construction is discussed in \Cref{f0biggerthat0}. We also explore various geometric properties like boundary divisors, compatibility with dilatation and Picard group in the rest of \Cref{sectionparahoriccompactification}.

\subsection{Group setup}\label{groupsetup}
We shall keep the notation as in \Cref{quasisplitwonderfulcomp}. Let $\widetilde{\Delta}\subset \widetilde{\Phi}^+$ and $\widetilde{\Phi}^-$ be the preimages of $\Delta\subset \Phi^+$ and $\widetilde{\Phi}^-$ along the restriction map $\widetilde{\Phi}\rightarrow \Phi$. We fix an element in $\widetilde{a}\in \widetilde{\Delta}_{a}$ for each $a\in \Delta$. This gives a lifting $\widetilde{r}\in \widetilde{\Phi}$ for each relative root $r\in \Phi$. For an absolute root $\alpha\in \widetilde{\Phi}$, let $O(\alpha)\subset\widetilde{\Phi}$ be the $\Gal(k_s/k)$-orbit of $\alpha$.

\subsubsection{Extension principle}
The following well-known extension principle will be frequently used in this paper. For the convenience of the reader, we shortly recall the statement. Let $\mathcal{X}$ and $\mathcal{Y}$ be two smooth affine scheme over $\mathfrak{o}$, and let $g: \mathcal{X}_k\rightarrow \mathcal{Y}_k$ be a morphism over $k$.

\begin{proposition}\label{extensionprinciple}
(\cite[1.7.3~c)]{Bruhattits2}, \cite[Proposition~0.3]{Landvogtcompactification}, \cite[Lemma~2.10.13]{Bruhattitsnewapproach}) If the residue field $\kappa$ of $\mathfrak{o}$ is separable closed and $g(\mathcal{X}(\mathfrak{o}))\subset \mathcal{Y}(\mathfrak{o})$, then $g$ extends uniquely to an $\mathfrak{o}$-morphism $\mathcal{X}\rightarrow\mathcal{Y}$.
\end{proposition}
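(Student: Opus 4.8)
The plan is to reduce the proposition to a purely algebraic statement about the smooth $\mathfrak{o}$-algebra underlying $\mathcal{X}$, and to extract the extension by testing functions against $\mathfrak{o}$-points. Uniqueness is the easy half: writing $\mathcal{X}=\Spec A$ with $A$ flat (hence $\pi$-torsion free) and of finite type over $\mathfrak{o}$, the open subscheme $\mathcal{X}_k=\Spec A[1/\pi]$ is schematically dense in $\mathcal{X}$, and $\mathcal{Y}$ is separated over $\mathfrak{o}$ (being affine), so two $\mathfrak{o}$-morphisms $\mathcal{X}\to\mathcal{Y}$ that agree on $\mathcal{X}_k$ agree on a closed subscheme containing $\mathcal{X}_k$, hence coincide. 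So only existence needs work.

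For existence, write also $\mathcal{Y}=\Spec B$ with $B$ smooth of finite type over $\mathfrak{o}$, and view $g$ as a $k$-algebra homomorphism $g^{\#}\colon B[1/\pi]\to A[1/\pi]$. It suffices to show $g^{\#}(B)\subseteq A$, since then $g^{\#}$ restricts to an $\mathfrak{o}$-algebra map $B\to A$, i.e.\ to an $\mathfrak{o}$-morphism $\overline{g}\colon\mathcal{X}\to\mathcal{Y}$ with generic fibre $g$. Fix $b\in B$ and set $a\coloneq g^{\#}(b)\in A[1/\pi]$. Unwinding the identifications $\mathcal{X}_k(k)=\mathcal{X}(k)$ and $\mathcal{Y}_k(k)=\mathcal{Y}(k)$, the hypothesis $g(\mathcal{X}(\mathfrak{o}))\subseteq\mathcal{Y}(\mathfrak{o})$ says exactly that for every $\mathfrak{o}$-point $x\colon A\to\mathfrak{o}$ of $\mathcal{X}$, its extension $A[1/\pi]\to k$ carries $a$ into $\mathfrak{o}$. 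So the whole proposition reduces to the following claim, in which $g$ and $\mathcal{Y}$ have disappeared: \emph{if $A$ is a finite-type smooth $\mathfrak{o}$-algebra and $a\in A[1/\pi]$ satisfies $x(a)\in\mathfrak{o}$ for all $x\in\Hom_{\mathfrak{o}}(A,\mathfrak{o})$, then $a\in A$.}

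To prove the claim I would write $a=\pi^{-n}b$ with $b\in A$ and $n\ge 0$ minimal and assume $n\ge 1$ for contradiction; then $b\notin\pi A$, so its image $\overline{b}$ in $A_{\kappa}\coloneq A/\pi A$ is nonzero. As $\mathcal{X}$ is smooth over $\mathfrak{o}$, $A_{\kappa}$ is smooth over $\kappa$, hence reduced, so the nonvanishing locus of $\overline{b}$ in $\Spec A_{\kappa}$ is a nonempty open; being a nonempty finite-type smooth scheme over the separably closed field $\kappa$, it carries a $\kappa$-point, i.e.\ there is $\overline{x}\colon A_{\kappa}\to\kappa$ with $\overline{x}(\overline{b})\ne 0$. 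Because $\mathcal{X}$ is smooth over the Henselian local ring $\mathfrak{o}$, the reduction map $\mathcal{X}(\mathfrak{o})\to\mathcal{X}(\kappa)$ is surjective, so $\overline{x}$ lifts to an $\mathfrak{o}$-point $x\colon A\to\mathfrak{o}$, necessarily with $x(b)\in\mathfrak{o}^{\times}$ since its reduction $\overline{x}(\overline{b})$ is nonzero. Then $x(a)=\pi^{-n}x(b)\notin\mathfrak{o}$ as $n\ge 1$, contradicting the hypothesis on $a$; hence $n=0$ and $a\in A$. Applying this with $a=g^{\#}(b)$ for each $b\in B$ gives $g^{\#}(B)\subseteq A$, which is all that was needed.

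I do not expect a real obstacle: the argument is short once one grants the two standard incarnations of smoothness used above — that a nonempty open of $\mathcal{X}_{\kappa}$ has a $\kappa$-rational point (this is where $\kappa$ separably closed enters) and that $\mathfrak{o}$-points of $\mathcal{X}$ lift $\kappa$-points (this is where $\mathfrak{o}$ Henselian enters). The only slightly delicate point is the bookkeeping in the previous paragraph, converting ``$g$ maps $\mathfrak{o}$-points to $\mathfrak{o}$-points'' into the single-element integrality condition on $a=g^{\#}(b)$. A variant would replace the $\mathfrak{o}$-point step by normality of $A$ (reduce to $A$ a normal domain by passing to connected components, write $A=\bigcap_{\operatorname{ht}\mathfrak{p}=1}A_{\mathfrak{p}}$, and check $a\in A_{\mathfrak{p}}$ at the generic points of the special fibre), but that route still requires a geometric ingredient, so the above seems the most economical.
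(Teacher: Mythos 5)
Your proof is correct, and it follows the same route as the references the paper cites for this result (Landvogt's Proposition~0.3 and Kaletha--Prasad's Lemma~2.10.13): reduce to showing $g^{\#}(B)\subseteq A$, then establish the integrality criterion $A=\{a\in A[1/\pi]:x(a)\in\mathfrak{o}\ \text{for all}\ x\in\mathcal{X}(\mathfrak{o})\}$ by combining density of $\kappa$-rational points in the smooth special fibre (separably closed residue field) with the surjection $\mathcal{X}(\mathfrak{o})\twoheadrightarrow\mathcal{X}(\kappa)$ coming from smoothness over a Henselian base. The bookkeeping and the minimality-of-$n$ argument are both sound, and the ``normality variant'' you mention at the end is exactly the other standard proof of this extension principle.
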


Note that \Cref{extensionprinciple} fails if $\mathcal{Y}$ is not assumed to be affine over $\mathfrak{o}$, see \cite[Example~2.10.14]{Bruhattitsnewapproach} for an example.

\subsubsection{Extension of a toroidal embedding}

Since $G$ is adjoint, the maximal torus $T$ is an induced torus, see \cite[proposition~4.4.16]{Bruhattits2}. We now elaborate on this fact. For an $a\in \Delta$, we have the $k_s$-morphism
\begin{align*}
\nu_{k_s}:T_{k_s}&\longrightarrow (\Res_{k_{\widetilde{a}}/k}(\mathbb{G}_{m,k_{\widetilde{a}}}))_{k_s}=\mathbb{G}_{m,k_s}^{[k_{\widetilde{a}}:k]}\\ 
t&\longmapsto (\alpha(t)^{-1})_{\alpha\in \widetilde{\Delta}_a}
\end{align*}
which is $\Gal(k_s/k)$-equivariant. By Galois descent (see, for instance, \cite[Corollary~11.2.9]{linearalgrpSpringer}), we get a morphism of $k$-groups
\begin{align*}
    \nu_a: T\longrightarrow \Res_{k_{\widetilde{a}}/k}(\mathbb{G}_{m,k_{\widetilde{a}}}).
\end{align*}
Since $G$ is of adjoint type, the morphism of $k$-group 
$$\nu\coloneq (\nu_a)_{a\in \Delta}:T\longrightarrow \prod_{a\in \Delta}\Res_{k_{\widetilde{a}}/k}(\mathbb{G}_{m,k_{\widetilde{a}}})$$
is an isomorphism. 

We denote by $\mathfrak{o}_{\widetilde{a}}$ the ring of integers of $k_{\widetilde{a}}$. We have the connected lft-Néron model (i.e., the relative neutral component of lft-Néron model) $\mathcal{T}$ and $\mathcal{T}'$ for $T$ and $\prod_{a\in \Delta}\Res_{k_{\widetilde{a}}/k}(\mathbb{G}_{m,k_{\widetilde{a}}})$ respectively. In fact $$\mathcal{T'}\cong\prod_{a\in \Delta}\Res_{\mathfrak{o}_{\widetilde{a}}/\mathfrak{o}}(\mathbb{G}_{m,\mathfrak{o}_{\widetilde{a}}}).$$

\noindent By \cite[proposition~4.4.4]{Bruhattits2}, the morphism of $k$-induced tori $(\nu_a)_{a\in \Delta}$ extends uniquely to an $\mathfrak{o}$-isomorphism of group schemes 
\begin{align}\label{equationisomorphismoftorusintegralmodel}
    \mathcal{T}\longrightarrow \prod_{a\in \Delta}\Res_{\mathfrak{o}_{\widetilde{a}}/\mathfrak{o}}(\mathbb{G}_{m,\mathfrak{o}_{\widetilde{a}}}).
\end{align}
By postcomposing with the canonical open immersion $\prod_{a\in \Delta}\Res_{\mathfrak{o}_{\widetilde{a}}/\mathfrak{o}}(\mathbb{G}_{m,\mathfrak{o}_{\widetilde{a}}})\hookrightarrow \prod_{a\in \Delta}\Res_{\mathfrak{o}_{\widetilde{a}}/\mathfrak{o}}(\mathbb{A}_{1,\mathfrak{o}_{\widetilde{a}}})$ (\cite[Proposition~A.5.2 (4)]{CGP}) and \Cref{dilitationproperty}, for $r\in \mathbb{R}_{\geq 0}$, we obtain an open immersion over $\mathfrak{o}$
\begin{align}\label{definitionofnubar}
    \overline{\nu^{(r)}}:\mathcal{T}^{(r)}\hookrightarrow \overline{\mathcal{T}^{(r)}}\coloneq \prod_{a\in \Delta}\Res_{\mathfrak{o}_{\widetilde{a}}/\mathfrak{o}}(\mathbb{A}^{(r)}_{1,\mathfrak{o}_{\widetilde{a}}}),
\end{align}
where $\mathcal{T}^{(r)}$ is defined in \Cref{standardfiltrationtorus} and $\mathbb{A}^{(r)}_{1,\mathfrak{o}_{\widetilde{a}}}$ is the unique integral model of $\mathbb{A}_{1,k_{\widetilde{a}}}$ over $\mathfrak{o}_{\widetilde{a}}$ such that $\mathbb{A}^{(r)}_{1,\mathfrak{o}_{\widetilde{a}}}(\mathfrak{o}_{\widetilde{a}})=1+\mathfrak{m}_{\widetilde{a}}^{\lceil e_{\widetilde{a}}r\rceil}$. The group scheme $\mathcal{T}^{(r)}$ is usually called the $\lceil r\rceil$th congruence group scheme of $\mathcal{T}$, cf., \cite[Definition~A.5.12]{Bruhattitsnewapproach}. Note that we have $(\overline{\mathcal{T}^{(r)}})_{k}\cong \overline{T}$, see \Cref{propquasisplitwonderfulcomp}.
Using the open immersion $\overline{\nu^{(r)}}$, we can further define the following open immersion:
\begin{equation}\label{eqembeddingofbigcell}
		\begin{aligned}
			\mu: \Omega_{x,f}\coloneq\prod_{a\in\Phi^{-,\red}}\mathcal{U}_{a,x,f}\times_{\mathfrak{o}}\mathcal{T}^{(f(0))}\times_{\mathfrak{o}}\prod_{a\in\Phi^{+,\red}}\mathcal{U}_{a,x,f}&\longhookrightarrow \overline{\Omega_{x,f}}\coloneq\prod_{a\in\Phi^{-,\red}}\mathcal{U}_{a,x,f}\times_{\mathfrak{o}}\overline{\mathcal{T}^{(f(0))}}\times_{\mathfrak{o}}\prod_{a\in\Phi^{+,\red}}\mathcal{U}_{a,x,f}\\
			(u^-,t,u^+)&\longmapsto (u^-,\overline{\nu^{(f(0))}}(t),u^+).
		\end{aligned}
\end{equation}
\noindent In the following, if $f(0)=0$, we will omit the superscript $(f(0))$ in \Cref{eqembeddingofbigcell}.

\subsubsection{Extension of morphisms of tori}
For an absolute root $\alpha=\sum_{\alpha_i\in\widetilde{\Delta}}n_i(-\alpha_i)\in \widetilde{\Phi}^-$ with $n_i\in \mathbb{Z}_{\geq 0}$, let
\begin{align*}
    m_{\alpha}': \overline{T_{k_s}}=\displaystyle \prod_{\alpha_i\in \widetilde{\Delta}}\mathbb{A}_{1,k_s} &\longrightarrow \displaystyle\prod_{\beta\in O(\alpha)}\mathbb{A}_{1,k_s}\\
   (t_i)_{\alpha_i\in \widetilde{\Delta}} &\longmapsto (t_1^{b_1}\cdot t_2^{b_2}\cdot...\cdot t_l^{b_l})_{\beta= \sum b_i(-\alpha_i)\in O(\alpha)}.
\end{align*}
We have the following two commutative diagrams:

\begin{equation}\label{multiplicationdescent}
     \begin{tikzcd}
	T_{k_s}           & \overline{T_{k_s}}                                 && T & \overline{T} \\
	\displaystyle\prod_{\beta\in O(\alpha)}\mathbb{G}_{m,k_s} & \displaystyle\prod_{\beta\in O(\alpha)}\mathbb{A}_{1,k_s}                       && \Res_{k_{\alpha}/k}\mathbb{G}_{m,k_{\alpha}} & \Res_{k_{\alpha}/k}\mathbb{A}_{1,k_{\alpha}}
	\arrow["\nu_{k_s}",hook,from=1-1, to=1-2]
	\arrow["(\beta)_{\beta\in O(\alpha)}"', from=1-1, to=2-1]
	\arrow["m_{\alpha}'"',""{name=0, anchor=center, inner sep=0}, from=1-2, to=2-2]
	\arrow["\nu",hook,from=1-4, to=1-5]
	\arrow[""{name=1, anchor=center, inner sep=0}, from=1-4, to=2-4]
	\arrow["m_{\alpha}"',from=1-5, to=2-5]
	\arrow[hook,from=2-1, to=2-2]
	\arrow[hook,from=2-4, to=2-5]
	\arrow["{\text{Galois Descent}}", shorten <=16pt, shorten >=16pt, Rightarrow, from=0, to=1]
     \end{tikzcd}
\end{equation}
where the components $\mathbb{A}_{1,k_{\alpha}}$ (resp., $\mathbb{G}_{m,k_s}$) in the product $\displaystyle\prod_{\beta\in O(\alpha)}\mathbb{A}_{1,k_s}$ (resp., $\displaystyle\prod_{\beta\in O(\alpha)}\mathbb{G}_{m,k_s}$) are transitively permuted by the natural action of $\Gal(k_s/k)$ on the orbit $O(\alpha)$.

\begin{corollary}\label{multiplicationintegral}
    The $k$-morphism $m_{\alpha}: \overline{T}\longrightarrow \Res_{k_{\alpha}/k}\mathbb{A}_{1,k_{\alpha}}$ which is defined by \Cref{multiplicationdescent} extends (necessarily uniquely) to an $\mathfrak{o}$-morphism $\overline{\mathcal{T}}\longrightarrow \Res_{\mathfrak{o}_{\alpha}/\mathfrak{o}}\mathbb{A}_{1,\mathfrak{o}_{\alpha}}$.
\end{corollary}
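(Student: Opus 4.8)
The plan is to deduce this from the extension principle \Cref{extensionprinciple} together with the fact that $\overline{\mathcal{T}}$ is an explicit product of Weil restrictions of affine lines, which makes it easy to check integrality of points. First I would recall, via \Cref{definitionofnubar} (in the case $r = 0$), that $\overline{\mathcal{T}} \cong \prod_{a\in\Delta}\Res_{\mathfrak{o}_{\widetilde{a}}/\mathfrak{o}}(\mathbb{A}_{1,\mathfrak{o}_{\widetilde{a}}})$, so that $\overline{\mathcal{T}}$ is a smooth affine $\mathfrak{o}$-scheme with generic fiber $\overline{T}$; similarly $\Res_{\mathfrak{o}_{\alpha}/\mathfrak{o}}\mathbb{A}_{1,\mathfrak{o}_{\alpha}}$ is smooth affine over $\mathfrak{o}$ with generic fiber $\Res_{k_{\alpha}/k}\mathbb{A}_{1,k_{\alpha}}$. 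Since $\kappa$ is separably (in fact algebraically) closed by the running hypothesis of \Cref{quasisplitsetup}, \Cref{extensionprinciple} applies: it suffices to verify that the $k$-morphism $m_{\alpha}$ carries $\overline{\mathcal{T}}(\mathfrak{o})$ into $(\Res_{\mathfrak{o}_{\alpha}/\mathfrak{o}}\mathbb{A}_{1,\mathfrak{o}_{\alpha}})(\mathfrak{o}) = \mathfrak{o}_{\alpha}$, and then uniqueness of the extension is automatic.

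The core verification goes as follows. Unwinding the identification $\overline{\mathcal{T}}(\mathfrak{o}) = \prod_{a\in\Delta}\mathfrak{o}_{\widetilde{a}}$, a point of $\overline{T_{k_s}}$ coming from $\overline{\mathcal{T}}(\mathfrak{o})$ is, after base change to $k_s$, a tuple $(t_i)_{\alpha_i\in\widetilde{\Delta}}$ whose $i$-th coordinate lies in $\mathfrak{o}_{\widetilde{\alpha_i}}\subset \mathcal{O}_{k_s}$ (using that $\widetilde{\alpha_i}$ is $\Gal(k_s/k)$-conjugate to the chosen lift and that Weil restriction of $\mathbb{A}_1$ has $\mathfrak{o}$-points given by $\mathcal{O}_{k_{\widetilde{a}}}$-valued tuples permuted by Galois). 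Now the formula for $m_{\alpha}'$ in \Cref{multiplicationdescent} sends such a tuple to $(t_1^{b_1}\cdots t_l^{b_l})_{\beta = \sum b_i(-\alpha_i)\in O(\alpha)}$, and since $\alpha = \sum n_i(-\alpha_i)\in\widetilde{\Phi}^-$ has all $n_i\in\mathbb{Z}_{\geq 0}$ (and likewise each $\beta\in O(\alpha)$, being Galois-conjugate to $\alpha$, has nonnegative coefficients $b_i$), each monomial $t_1^{b_1}\cdots t_l^{b_l}$ is a product of elements of $\mathcal{O}_{k_s}$ with nonnegative exponents, hence lies in $\mathcal{O}_{k_s}$. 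Thus the image tuple lies in the $\mathcal{O}_{k_s}$-points of $\prod_{\beta\in O(\alpha)}\mathbb{A}_{1,k_s}$, compatibly with the $\Gal(k_s/k)$-action, which by Galois descent (the right-hand square of \Cref{multiplicationdescent}) is exactly saying that $m_{\alpha}$ maps the point into $(\Res_{\mathfrak{o}_{\alpha}/\mathfrak{o}}\mathbb{A}_{1,\mathfrak{o}_{\alpha}})(\mathfrak{o}) = \mathfrak{o}_{\alpha}$.

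Having checked $m_{\alpha}(\overline{\mathcal{T}}(\mathfrak{o}))\subset (\Res_{\mathfrak{o}_{\alpha}/\mathfrak{o}}\mathbb{A}_{1,\mathfrak{o}_{\alpha}})(\mathfrak{o})$, \Cref{extensionprinciple} produces the unique $\mathfrak{o}$-morphism $\overline{\mathcal{T}}\to\Res_{\mathfrak{o}_{\alpha}/\mathfrak{o}}\mathbb{A}_{1,\mathfrak{o}_{\alpha}}$ extending $m_{\alpha}$, which is the assertion. I expect the only mildly delicate point to be bookkeeping the Galois descent: being careful that the identification of $\overline{\mathcal{T}}$-points with Galois-stable tuples of $\mathcal{O}_{k_s}$-points is compatible on both source and target with the permutation actions on $\widetilde{\Delta}$ and on $O(\alpha)$, so that the extended integral morphism really descends from the $k_s$-level one; but this is precisely packaged by the commutative diagram \Cref{multiplicationdescent} and the functoriality of Weil restriction, so no genuine obstacle arises. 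An even shorter route, which I would mention as an alternative, is to bypass the extension principle entirely: the monomial map $\prod_i\mathbb{A}_{1,\mathcal{O}_{k_s}}\to\prod_{\beta\in O(\alpha)}\mathbb{A}_{1,\mathcal{O}_{k_s}}$ with nonnegative exponents is visibly defined over $\mathcal{O}_{k_s}$, it is $\Gal(k_s/k)$-equivariant by construction, and it restricts over $\Spec\mathcal{O}_{\widetilde{a}}$-factors appropriately, so one may simply take the Galois descent of it directly and observe it extends $m_{\alpha}$; uniqueness then follows from separatedness and density of the generic fiber.
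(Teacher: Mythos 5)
Your proposal is correct and follows essentially the same route as the paper: apply the extension principle (Proposition~\ref{extensionprinciple}) after checking directly, using the explicit monomial form of $m_\alpha'$ with nonnegative exponents, that $m_\alpha$ carries $\overline{\mathcal{T}}(\mathfrak{o})=\prod_{a\in\Delta}\mathfrak{o}_{\widetilde{a}}$ into $\mathfrak{o}_\alpha$. The paper's write-up is terser — it writes $m_\alpha(y)=\prod_{a\in\Delta}\prod_{\sigma}\sigma(y_{\widetilde{a}})^{n_{\sigma(\widetilde{a})}}$ and observes this lies in $\mathfrak{o}_\alpha$ without spelling out the Galois-descent bookkeeping you detail — but the underlying argument is identical.
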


\begin{proof}
    Let us write 
    $$\alpha=\displaystyle\sum_{a\in\Delta}\sum_{\sigma\in \Gal(k_s/k)/\Gal(k_s/k_{\widetilde{a}})}n_{\sigma(\widetilde{a})}\sigma(\widetilde{a}),\;n_{\sigma(\widetilde{a})}\in\mathbb{Z}_{\geq 0}. $$
    By \Cref{extensionprinciple}, it suffices to show that $m_{\alpha}(\overline{\mathcal{T}}(\mathfrak{o}))\subset \Res_{\mathfrak{o}_{\alpha}/\mathfrak{o}}\mathbb{A}_{1,\mathfrak{o}_{\alpha}}(\mathfrak{o})=\mathfrak{o}_{\alpha}$. This follows from the definition of $m_{\alpha}'$: for $y=(y_{\widetilde{a}}\in\mathfrak{o}_{\widetilde{a}})_{a\in \Delta}\in \overline{\mathcal{T}}(\mathfrak{o})$, we have that
    $$m_{\alpha}(y)=\displaystyle\prod_{a\in\Delta} \displaystyle\prod_{\sigma\in \Gal(k_s/k)/\Gal(k_s/k_{\widetilde{a}})}   \sigma(y_{\widetilde{a}})^{n_{\sigma(\widetilde{a})}}    \in \mathfrak{o}_{\alpha},$$
    as desired.
\end{proof}

\subsection{Rational action}\label{subsectionrationalaction}
The ultimate goal of \Cref{subsectionrationalaction} is to prove \Cref{theoremrationalaction}. The proof is based on two explicit analyses of $\SL_2$ and $\SU_3$ (\Cref{suslcase}). We use the techniques of distribution to combine the previous analyses together to form \Cref{distributioninvariant} which will be crucial in the proof of \Cref{theoremrationalaction}.

The point $x\in \mathcal{A}(S)$ is by definition a valuation $(x_{a}: U_a(k)\longrightarrow \mathbb{R})_{a\in\Phi}$ of the relative root datum of $(G,S)$. Here, following \cite{Bruhattits1}, the root datum means the datum $(\{U_a(k)\}_{a\in \Phi}, T(k))$, which differs from the root datum used in \cite{SGA3III} to classify split reductive group schemes. 

\subsubsection{Chevalley--Steinberg system}\label{ChevalleySteinbergsystem}

By \cite[\S~4.1.3]{Bruhattits2}\cite[Proposition~4.4]{Landvogtcompactification} or \cite[Proposition~4.4]{Landvogtcompactification}, we can fix a Chevalley--Steinberg system of $G$ (with respect to the maximal torus $T$)
$$(\widetilde{\chi}_{\gamma}: \mathbb{G}_{a,k_s}\longrightarrow \widetilde{U}_{\gamma})_{\gamma\in \widetilde{\Phi}}$$
where $\widetilde{U}_{\gamma}\subset G_{k_s}$ is the root subgroup associated to the absolute root $\gamma$. In particular, by definition, we have that
\begin{itemize}
    \item[(1)] if $\alpha\in \widetilde{\Phi}$ and $(\alpha\vert_S)/2\notin \Phi$, for any $\sigma\in \Gal(k_s/k)$, $\widetilde{\chi}_{\sigma(\alpha)}=\sigma\circ \widetilde{\chi}_{\alpha}\circ \sigma^{-1}$. Hence $\widetilde{\chi}_{\alpha}$ is defined over $k_{\alpha}$.
    \item[(2)] if $\alpha\in \widetilde{\Phi}$ and $(\alpha\vert_S)/2\in \Phi$, then there exist $\beta, \beta'\in \widetilde{\Phi}$ such that $\beta\vert_S=\beta'\vert_S=(\alpha\vert_S)/2$ and $\alpha=\beta+\beta'$. Then, for any $\sigma\in \Gal(k_s/k)$, there exists an $\epsilon\in \{\pm 1\}$ such that $\widetilde{\chi}_{\sigma(\alpha)}=\sigma\circ \widetilde{\chi}_{\alpha}\circ (\epsilon\sigma^{-1})$, and $\epsilon=-1$ if and only if $\sigma$ acts non-trivially on $k_{\beta}=k_{\beta'}$. Hence $\widetilde{\chi}_{\alpha}$ is defined over $k_{\beta}$.
\end{itemize}

By \cite[\S~4.2.2]{Bruhattits2}, this Chevalley--Steinberg system gives rise to a valuation of $\varphi$ of the relative root datum of $(G,S)$ via (quasi-split) descent. By the definition of the apartment $\mathcal{A}(S)$ \cite[\S~4.2.4]{Bruhattits2}, $x$ is equipollent to $\varphi$ in the sense that there exists a $v\in X_{\ast}(T)\otimes_{\mathbb{Z}}\mathbb{R}$ such that 
$$x=\varphi+v.$$

Our choice of $\varphi$ serves as a temporary tool to establish \Cref{theoremrationalaction} which does not depend on the choice of any Chevalley--Steinberg system.

\subsubsection{Two commutator computations}
In the construction of parahoric group scheme \cite{Bruhattits2} and \cite{Landvogtcompactification} and also in the proof of existence theorem for split reductive groups \cite[exposé~XXV]{SGA3III} and \cite{Chvealleysemisimplegroup}, the behavior of forming commutator of two opposite root subgroups plays a fundamental role. Here we need a similar result whose proof is based on two fundamental cases of $\SL_2$ and $\SU_3$. The following lemma is inspired by \cite[4.5.8~proposition]{Bruhattits2} and \cite[Lemma~5.9, Lemma~5.11]{Landvogtcompactification}.

\begin{lemma}\label{suslcase}
    Suppose that $a\in \Phi^{+,\red}$.
     There exists $d_{a}\in\mathfrak{o}[\mathcal{U}_{ a,x, f}\times_{\mathfrak{o}}\overline{\mathcal{T}}\times_{\mathfrak{o}} \mathcal{U}_{-a,x,f}]$ such that 
    \item[(1).] $\mathcal{U}_{a,x, f^+}(\mathfrak{o})\times\overline{\mathcal{T}}(\mathfrak{o})\times \mathcal{U}_{-a,x, f^+}(\mathfrak{o})\subset D_{\mathcal{U}_{a,x, f}\times_{\mathfrak{o}}\overline{\mathcal{T}}\times_{\mathfrak{o}} \mathcal{U}_{-a,x, f}}(d_{a})(\mathfrak{o}),$
    \item[(2).] $e\times_{k}  \overline{T}\times_{k} e\subset D_{U_{a}\times_{k}\overline{T}\times_{k} U_{-a}}(d_{a}),$
    \item[(3).] The $k$-morphism 
    $$D_{U_a\times_k \overline{T} \times_k U_{-a}}(d_a)\rightarrow U_{-a}\times_k \overline{T} \times_k U_{a}\subset \overline{G}$$
    given by $(G\times_k G)$-action on $\overline{G}$ extends to an $\mathfrak{o}$-morphism
    $$\beta_{a}: D_{\mathcal{U}_{a,x, f}\times_{\mathfrak{o}}\overline{\mathcal{T}}\times_{\mathfrak{o}} \mathcal{U}_{-a,x, f}}(d_{a})\longrightarrow \mathcal{U}_{-a,x, f}\times_{\mathfrak{o}}\overline{\mathcal{T}}\times_{\mathfrak{o}} \mathcal{U}_{a,x, f}$$
    such that $\beta_a(\mathcal{U}_{-a,x, f^+}(\mathfrak{o})\times\overline{\mathcal{T}}(\mathfrak{o})\times \mathcal{U}_{a,x, f^+}(\mathfrak{o}))\subset \mathcal{U}_{a,x, f^+}(\mathfrak{o})\times\overline{\mathcal{T}}(\mathfrak{o})\times \mathcal{U}_{-a,x, f^+}(\mathfrak{o}).$
\end{lemma}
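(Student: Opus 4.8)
The plan is to reduce the statement, via the isomorphism $\nu$ and the Chevalley--Steinberg system, to the rank-one situation: the subgroup of $G$ generated by $U_a$, $U_{-a}$, and the one-parameter subtorus they span. Since $a \in \Phi^{+,\mathrm{red}}$, this rank-one group is either (isogenous to) $\SL_2$ when $a$ is neither divisible nor multipliable, or to $\SU_3$ when $a$ is multipliable (the divisible case is absorbed into the multipliable one since $a$ is nondivisible). In both cases the commutator formula for opposite root groups is classical and explicit. First I would write down, in the $\SL_2$-case, the open cell $U_a \cdot T \cdot U_{-a}$ of $\SL_2$ and the rational map to $U_{-a} \cdot T \cdot U_a$ coming from the ``swap'' of the two opposite Borels; concretely, with root groups $\begin{smallmatrix} 1 & u \\ 0 & 1 \end{smallmatrix}$ and $\begin{smallmatrix} 1 & 0 \\ v & 1 \end{smallmatrix}$, one has the identity $\begin{smallmatrix} 1 & u \\ 0 & 1\end{smallmatrix}\begin{smallmatrix} t & 0 \\ 0 & t^{-1}\end{smallmatrix}\begin{smallmatrix} 1 & 0 \\ v & 1\end{smallmatrix}$ expanded in the opposite order, whose denominators are polynomial in $(u,v,t)$ and explicitly $1 + uvt^{-2}$ (up to normalization); the point on $\overline{G}$ corresponding to the boundary coordinate $\prod_\Delta \mathbb{A}_1$ makes the relevant expression regular even where $t$ degenerates, which is the whole reason to work with $\overline{\mathcal{T}}$ rather than $\mathcal{T}$.

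The element $d_a$ will be the (pullback of the) denominator of this explicit rational expression, transported to $\mathcal{U}_{a,x,f} \times \overline{\mathcal{T}} \times \mathcal{U}_{-a,x,f}$ via the integral models $\mathcal{U}_{\pm a,x,f}$ of Construction \ref{constructionofunipotentintegralmodel} and the open immersion $\overline{\nu^{(f(0))}}$ from \eqref{definitionofnubar}. Then (2) is immediate: at $u = v = 0$ (the identity sections of $U_{\pm a}$), $d_a$ specializes to a unit, so $e \times_k \overline{T} \times_k e$ lies in the nonvanishing locus $D_{U_a \times_k \overline{T} \times_k U_{-a}}(d_a)$. For (1), I would use the valuation-theoretic input: the set-of-values computations of \S\ref{subsectionsetofvalues} together with the definition of $f^+$ show that if $u \in U_{a,x,f^+}(\mathfrak{o})$ and $v \in U_{-a,x,f^+}(\mathfrak{o})$ then $\phi_a(u) + \phi_{-a}(v) > f(a) + f(-a) \geq 0$ strictly, so the valuation of the cross term $uvt^{-2}$ is strictly positive (here $t \in \overline{\mathcal{T}}(\mathfrak{o})$ has nonnegative-valuation coordinates), whence $d_a$ evaluated at such a triple is $1 + (\text{something in } \mathfrak{m})$, a unit in $\mathfrak{o}$ — this is exactly the content of ``$f^+$'' being strictly bigger than $f$ on the walls. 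For (3), once $\beta_a$ is defined on generic fibers by the explicit rational formula (landing in $U_{-a} \times_k \overline{T} \times_k U_a \subset \overline{G}$ by the definition of the $(G \times G)$-action on the big cell $\overline{\Omega}$ of Theorem \ref{threlativewonderfulcompsplitcase}(iii)--(iv)), I would invoke the extension principle (Proposition \ref{extensionprinciple}): the source $D_{\mathcal{U}_{a,x,f} \times \overline{\mathcal{T}} \times \mathcal{U}_{-a,x,f}}(d_a)$ and the target $\mathcal{U}_{-a,x,f} \times \overline{\mathcal{T}} \times \mathcal{U}_{a,x,f}$ are smooth and affine over $\mathfrak{o}$, the residue field is separably closed, and one must check the integrality of points, i.e.\ $\beta_a$ carries $\mathfrak{o}$-points to $\mathfrak{o}$-points. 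The latter reduces, again by the explicit formula, to the same valuation estimates: the output unipotent coordinates are (up to units) $u(1 + uvt^{-2})^{-1}$ and $v(1+uvt^{-2})^{-1}$ together with a twisted torus part, and the stated inclusion of the ``$f^+$-level'' follows because dividing by a $1+\mathfrak{m}$-unit does not change valuations, while the torus coordinate gets multiplied by $(1+uvt^{-2})^{\pm 2}$ which stays in the appropriate congruence subgroup. One must also handle the $\overline{\mathcal{T}}$-factor carefully using Corollary \ref{multiplicationintegral} to see that the character-type expressions appearing in the swapped torus coordinate extend integrally.

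In the multipliable ($\SU_3$) case the same strategy applies but the bookkeeping is heavier: the rank-one group is $\SU_3$ relative to a quadratic extension $L'/L$, the root group $U_a$ is a ``twisted Heisenberg'' group with coordinates $(u,v)$ subject to $v + \bar v = u\bar u$, and the commutator of $U_a$ with $U_{-a}$ produces contributions in both $U_a$ and $U_{2a}$. I would use the explicit parametrizations from Construction \ref{constructionofunipotentintegralmodel}(2)--(3) and the corresponding standard $\SU_3$ commutator identities (as in \cite[4.5.8]{Bruhattits2} or \cite[Lemma 5.11]{Landvogtcompactification}) to write down $d_a$ and $\beta_a$; the regularity at the $\overline{\mathcal{T}}$-boundary again comes for free because the boundary coordinate is designed to absorb the negative powers of the torus parameter. \textbf{The main obstacle} I anticipate is verifying (1) and the integrality in (3) in the $\SU_3$ case: one has to track the interaction of the quadratic-extension valuation $\omega'$, the constant $\mu = \sup\{\omega(x) : \Tr_{L'/L}(x) = 1\}$ appearing in the filtration, and the shifts by $\omega(\iota)$ from Construction \ref{constructionofunipotentintegralmodel}, and check that the strict inequality defining $f^+$ survives all of these twists so that the relevant denominators land in $1 + \mathfrak{m}$. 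This is a genuinely delicate valuation estimate rather than a formality, and it is the technical heart of the lemma; the $\SL_2$ case, by contrast, is essentially a one-line matrix computation plus the extension principle.
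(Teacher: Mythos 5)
Your plan follows essentially the same route as the paper's proof: reduce to the rank-one subgroup $\langle U_a, U_{-a}\rangle$ via the simply connected cover and the Chevalley--Steinberg system, split into the $\SL_2$ case ($2a\notin\Phi$, realized as $\Res_{k_{\widetilde a}/k}\SL_{2}$) and the $\SU_3$ case ($2a\in\Phi$, realized as $\Res_{k_{\widetilde a+\widetilde a'}/k}\SU_3$), take $d_a$ to be a norm of the explicit denominator in the commutator formula transported through $\overline{\nu}$ and $m_{-\widetilde a}$, and verify (1)--(3) by direct valuation estimates plus the extension principle (\Cref{extensionprinciple}) and \Cref{multiplicationintegral}. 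The paper carries out the $\SU_3$ bookkeeping in full (tracking $\gamma$, $\sigma$, and the two coordinates $(u,v)$), which is exactly the part you flagged as the technical heart and left as a plan; your $\SL_2$ outline also matches the paper's, up to the sign/normalization you acknowledge.
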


\begin{proof}
    Let $\pi: G^a \longrightarrow \langle U_a, U_{-a} \rangle$ be the simply connected covering. In particular, $\pi$ maps the root subgroups of $G_a$ isomorphically onto $U_a$ and $U_{-a}$. Then $G^a$ is a quasi-split semisimple group over $k$, which contains $\pi^{-1}(T)$ as a maximal $k$-torus. Since the preimages of $a$ in $\widetilde{\Phi}$ are transitively permuted by $\Gal(k_s/k)$ (\cite[4.1.2]{Bruhattits2}), by verifying the Dynkin diagrams, there are only two possibilities of $G^a$.
    
    \textbf{Case 1: $2a\notin \Phi$}; 
    By \cite[1. Case at page 41]{Landvogtcompactification} or \cite[4.1.4]{Bruhattits2}, $G^a_{k_s}$ is isomorphic to a product of $\SL_{2,k_s}$ indexed by the preimages of $a$ in $\widetilde{\Phi}$ which are transitively permuted by the Galois group $\Gal(k_s/k)$. Let $G^{\widetilde{a}}_{k_s}$ be the factor of $G^a_{k_s}$ indexed by $\widetilde{a}$, which is defined over $k_{\widetilde{a}}$. We denote by $G^{\widetilde{a}}$ the $k_{\widetilde{a}}$-form of $G^{\widetilde{a}}_{k_s}$. By descent (\cite[1.5.16]{Bruhattits2}), we have 
    $$G^a\cong \Res_{k_{{\widetilde{a}}}/k}(G^{\widetilde{a}}).$$
    Then the $\SL_2$-crutch is available. More precisely, by \cite[4.1.5]{Bruhattits2}, there is a unique $k_{{\widetilde{a}}}$-isomorphism $\xi_{{\widetilde{a}}}: \SL_{2,k_{\widetilde{a}}}\rightarrow G^{{\widetilde{a}}}$ such that 
    $$\widetilde{\chi}_{\pm\widetilde{a}}=\pi_{k_{\widetilde{a}}} \cdot \xi_{\widetilde{a}}\cdot y_{\pm},$$
    where $\widetilde{\chi}_{\pm\widetilde{a}}$ is defined over $k_{\widetilde{a}}$ (\Cref{ChevalleySteinbergsystem} (1)) and $y_{\pm}: \mathbb{G}_{a,k_{\widetilde{a}}}\longrightarrow \SL_{2,k_{\widetilde{a}}}$ are the canonical pinning of $\SL_{2,k_{\widetilde{a}}}$. Let
    \begin{align*}
      T_a: \Res_{k_{\widetilde{a}}/k}\mathbb{G}_{m,k_{\widetilde{a}}} \longrightarrow \Res_{k_{\widetilde{a}}/k}(\SL_{2,k_{\widetilde{a}}}) \xlongrightarrow{\Res_{k_{\widetilde{a}}/k}(\xi_a)}     \Res_{k_{\widetilde{a}}/k}G^{\widetilde{a}}\cong G^a\xlongrightarrow{\pi}  \langle U_a, U_{-a}\rangle\bigcap T,
    \end{align*}
    where the first arrow is the Weil restriction of the canonical morphism $\mathbb{G}_{m,k_{\alpha}}\rightarrow \SL_{2,k_{\alpha}}$ by sending $t$ to $\begin{bmatrix}
        t & 0 \\ 0 & t^{-1}
    \end{bmatrix}.$
    By \emph{loc. cit.,} 
    we have the isomorphisms over $k$:
    $$\chi_{\pm a}\coloneq \Res_{k_{\widetilde{a}}/k}(\widetilde{\chi}_{\pm\widetilde{a}}):\Res_{k_{\widetilde{a}}/k}(\mathbb{G}_{a,k_{\widetilde{a}}})\rightarrow U_{\pm a},$$
    where we recall that by our choice $\widetilde{\chi}_{\pm\widetilde{a}}$ is defined over $k_{\widetilde{a}}$ (\Cref{ChevalleySteinbergsystem} (1)). By Galois descent and \cite[4.4.19~(1)]{Bruhattits2}, we have, for any $u\in k_{\widetilde{a}}$ and $t\in T(k)$,
    \begin{align}\label{torusaction1}
        t\cdot \chi_{\pm a}(u)\cdot t^{-1}=\chi_{\pm a}(m_{-\widetilde{a}}(\nu(t))^{\mp 1}u).
    \end{align}
    In this case, we define 
    \begin{align*}
        d_a: U_{a}\times_k \overline{T}\times_k U_{-a} &\longrightarrow \mathbb{A}_{1,k}\\
        (\chi_a(u), \overline{t}, \chi_{-a}(u'))&\longmapsto \Norm_{k_{\widetilde{a}}/k}(1-m_{-\widetilde{a}}(\overline{t}) uu'),
    \end{align*}
    where $u,u'\in k_{\widetilde{a}}$, $\overline{t}\in \overline{T}(k)$ and $m_{-\widetilde{a}}$ is defined by \Cref{multiplicationdescent}. We further define a $k$-morphism 
    \begin{equation}\label{definitiontheta1}
        \begin{aligned}
        \theta_a: D_{U_a\times_k \overline{T} \times_k U_{-a}}(d_a)&\longrightarrow U_{-a}\overline{T} U_a\subset \overline{G}\\
         (\chi_a(u), \overline{t}, \chi_{-a}(u'))&\longmapsto \chi_{-a}\left(\frac{m_{-\widetilde{a}}(\overline{t})u'}{1- m_{-\widetilde{a}}(\overline{t})uu'}\right)T_a(1-m_{-\widetilde{a}}(\overline{t})uu')\overline{t} \chi_{a}\left(\frac{m_{-\widetilde{a}}(\overline{t})u}{1-m_{-\widetilde{a}}(\overline{t}) uu'}\right),
    \end{aligned}
    \end{equation}
    where $u, u'\in k_{\widetilde{a}}$ and $\overline{t}\in \overline{T}(k)$.
    We now verify that $\theta_a$ coincides with the $(G\times_k G)$-action on $\overline{G}$. For this, since $T$ is dense in $\overline{T}$, it suffices to assume that $\overline{t}\in T(k)$, i.e., there exists a $t\in T(k)$ such that $\overline{t}=\nu(t)$. Then the claim follows from the following computations: for any $(\chi_a(u), \overline{t}, \chi_{-a}(u'))\in D_{U_a\times_k \overline{T} \times_k U_{-a}}(d_a)$,
    \begin{equation*}
    \begin{split}
         \chi_{a}(u)\cdot \overline{t}\cdot \chi_{-a}(u') &= \chi_{a}(u)\cdot (t \cdot \chi_{-a}(u')\cdot t^{-1})\cdot t\\
        &=\chi_{a}(u)\cdot \chi_{-a}(m_{-\widetilde{a}}(\nu(t))u')\cdot t \\
        &= \chi_{-a}\left(\frac{m_{-\widetilde{a}}(\nu(t))u'}{1-m_{-\widetilde{a}}(\nu(t))uu'}\right)\cdot T_a(1-m_{-\widetilde{a}}(\nu(t))uu')\cdot \chi_{a}\left(\frac{u}{1-m_{-\widetilde{a}}(\nu(t))uu'}\right)\cdot t\\
        &= \chi_{-a}\left(\frac{m_{-\widetilde{a}}(\nu(t))u'}{1-m_{-\widetilde{a}}(\nu(t))uu'}\right)\cdot T_a(1-m_{-\widetilde{a}}(\nu(t))uu')\cdot t\cdot \chi_{a}\left(\frac{m_{-\widetilde{a}}(\nu(t^{-1}))^{-1}u}{1-m_{-\widetilde{a}}(\nu(t))uu'}\right)\\
        &= \chi_{-a}\left(\frac{m_{-\widetilde{a}}(\nu(t))u'}{1-m_{-\widetilde{a}}(\nu(t))uu'}\right)\cdot T_a(1-m_{-\widetilde{a}}(\nu(t))uu')\cdot t\cdot \chi_{a}\left(\frac{m_{-\widetilde{a}}(\nu(t))u}{1-m_{-\widetilde{a}}(\nu(t))uu'}\right), 
    \end{split}
    \end{equation*}
    where in the second and fourth equalities, we use \Cref{torusaction1}; in the third equality, we use \cite[4.1.6~(2)]{Bruhattits2} which is simply a calculation in $\SL_2$; in the last equality, the fact that $m_{-\widetilde{a}}(\nu(t))=(m_{-\widetilde{a}}(\nu(t^{-1})))^{-1}$ is used.

    To see $d_a\in\mathfrak{o}[\mathcal{U}_{-a,x,f}\times_{\mathfrak{o}}\overline{\mathcal{T}}\times_{\mathfrak{o}} \mathcal{U}_{a,x,f}]$, by the extension principle \Cref{extensionprinciple}, it suffices to show that
    $$d_a(\mathcal{U}_{-a,x,f}(\mathfrak{o})\times\overline{\mathcal{T}}(\mathfrak{o})\times \mathcal{U}_{a,x,f}(\mathfrak{o}))\subset \mathfrak{o}.$$
    By \cite[4.2.2]{Bruhattits2} or \cite[4.8, 4.9]{Landvogtcompactification}, 
    $$x_a(\chi_a(u))=\omega(u)+v(a), \;u\in k_{\widetilde{a}}.$$ By \Cref{multiplicationintegral}, for $\overline{t}\in \overline{\mathcal{T}}(\mathfrak{o})$, we have $m_{-\widetilde{a}}(\overline{t})\in \mathfrak{o}_{\widetilde{a}}$.
    Hence, for $u, u'\in k_{\widetilde{a}}$ with $\omega(u)+ v(a)\geq f(a)$ (resp., $\textgreater f(a) $) and $\omega(u')+ v(-a)\geq f(-a)$ (resp., $\textgreater f(a) $), since $f$ is a concave function, 
    we have $\omega(m_{-\widetilde{a}}(\overline{t})uu')\geq 0$ (resp., $\textgreater 0 $), hence $1-m_{-\widetilde{a}}(\overline{t})uu'\in\mathfrak{o}_{\widetilde{a}}$ (resp., $\in \mathfrak{o}_{\widetilde{a}}^\times$). Therefore $d_a(\chi_a(u), \overline{t}, \chi_{-a}(u'))=\Norm_{k_{\widetilde{a}}/k}(1-m_{-\widetilde{a}}(\overline{t}) uu')\in \mathfrak{o}$ (resp., $\in \mathfrak{o}^\times$). Hence $d_a\in\mathfrak{o}[\mathcal{U}_{-a,x,f}\times_{\mathfrak{o}}\overline{\mathcal{T}}\times_{\mathfrak{o}} \mathcal{U}_{a,x,f}]$ and (1). For claim (2), it suffices to note that, if $u=u'=0$, then $d_a(\chi_a(u), \overline{t}, \chi_{-a}(u'))=1$.

    To see that the $k$-morphism $\theta_a$ extends to an $\mathfrak{o}$-morphism $ D_{\mathcal{U}_{a,x, f}\times_{\mathfrak{o}}\overline{\mathcal{T}}\times_{\mathfrak{o}} \mathcal{U}_{-a,x, f}}(d_{a})\longrightarrow \mathcal{U}_{-a,x, f}\times_{\mathfrak{o}}\overline{\mathcal{T}}\times_{\mathfrak{o}} \mathcal{U}_{a,x, f}$, by \Cref{extensionprinciple}, it suffices to check that 
     $$\theta_a(D_{\mathcal{U}_{a,x, f}\times_{\mathfrak{o}}\overline{\mathcal{T}}\times_{\mathfrak{o}} \mathcal{U}_{-a,x, f}}(d_{a})(\mathfrak{o}))\subset \mathcal{U}_{-a,x, f}(\mathfrak{o})\times\overline{\mathcal{T}}(\mathfrak{o})\times \mathcal{U}_{a,x, f}(\mathfrak{o}).$$
     For this, it suffices to combine \Cref{definitiontheta1} with \Cref{multiplicationintegral}.
    
    \textbf{Case 2: $2a\in \Phi$}. The preimage of $a$ in $\widetilde{\Phi}$ consists of a set $I$ of pairs of orthogonal absolute roots. In this case, $G^a_{k_s}$ is isomorphic to a product of $\SL_3$ indexed by $I$ and $\Gal(k_s/k)$ acts on $G^a_{k_s}$ by permuting the factors, and we fix a factor $G^{\widetilde{a},\widetilde{a}'}_{k_s}\subset G^a_{k_s}$ associated to two absolute roots $\widetilde{a},\widetilde{a}'\in\widetilde{\Phi}$ satisfying $\widetilde{a}\vert_S=\widetilde{a}'\vert_S=a$ and $\widetilde{a}+\widetilde{a}'\in\widetilde{\Phi}$. Since $\Gal(k_s/k_{\widetilde{a}+\widetilde{a}'})$ stabilizes $\widetilde{a}+\widetilde{a}'$, hence $G^{\widetilde{a},\widetilde{a}'}$ is defined over $k_{\widetilde{a}+\widetilde{a}'}$; we denote by $G^{\widetilde{a},\widetilde{a}'}$ the $k_{\widetilde{a}+\widetilde{a}'}$-form of $G^{\widetilde{a},\widetilde{a}'}_{k_s}$. Since $\Gal(k_s/k_{\widetilde{a}})$ acts trivially on $\{\widetilde{a},\widetilde{a}'\}$, hence $G^{\widetilde{a},\widetilde{a}'}$ splits over $k_{\widetilde{a}}$ (see, for instance, \cite[16.2.1]{linearalgrpSpringer}). Also, we have that $k_{\widetilde{a}}=k_{\widetilde{a}'}$ is a quadratic separable extension of $k_{\widetilde{a}+\widetilde{a}'}$, and we let $\sigma\in \Gal(k_{\widetilde{a}}/k_{\widetilde{a}+\widetilde{a}'})$ be the nontrivial element. Thus $G^{\widetilde{a},\widetilde{a}'}$ is isomorphic to the special unitary group $\SU_{3,k_{\widetilde{a}}/k_{\widetilde{a}+\widetilde{a}'}}$ of the hermitian form 
    $$h:(x_{-1},x_0,x_1)\mapsto \sigma(x_{-1})x_1+\sigma(x_0)x_0+\sigma(x_1)x_{-1},$$
    where $(x_{-1},x_0,x_1)$ runs over $k_{\widetilde{a}}$-vector space $k_{\widetilde{a}}^3$, see \cite[4.1.4~cas II]{Bruhattits2} or \cite[2. Case at page~43]{Landvogtcompactification}.
    Furthermore, by descent (\cite[1.5.16]{Bruhattits2}), $G^a$ is isomorphic to $\Res_{k_{\widetilde{a}+\widetilde{a}'}/k}(G^{\widetilde{a},\widetilde{a}'})$ over $k$. Now we have the following $\SU_3$-crutch.

    Let $H_0(k_{\widetilde{a}},k_{\widetilde{a}+\widetilde{a}'})$ be the $k_{\widetilde{a}+\widetilde{a}'}$-group scheme defined in \cite[4.1.9]{Bruhattits2} or \cite[Lemma~4.13]{Landvogtcompactification} with the $k_{\widetilde{a}+\widetilde{a}'}$-points $\{(u,v)\in k_{\widetilde{a}}\times k_{\widetilde{a}}\vert u\sigma(u)=v+\sigma(v)\}$. We consider the canonical parameterizations of root subgroups of $\SU_{3,k_{\widetilde{a}}/k_{\widetilde{a}+\widetilde{a}'}}$ (corresponding to two indivisible roots) and the maximal torus:
    \begin{equation*}
        y_+: H_0(k_{\widetilde{a}},k_{\widetilde{a}+\widetilde{a}'})\longrightarrow \SU_{3,k_{\widetilde{a}}/k_{\widetilde{a}+\widetilde{a}'}}, \;\; (u,v)\longmapsto 
        \begin{pmatrix}
        1 & -\sigma(u) & -v\\
        0 & 1 & u\\
        0 & 0 & 1
       \end{pmatrix};
    \end{equation*}
    \begin{equation*}
        y_-: H_0(k_{\widetilde{a}},k_{\widetilde{a}+\widetilde{a}'})\longrightarrow \SU_{3,k_{\widetilde{a}}/k_{\widetilde{a}+\widetilde{a}'}}, \;\; (u,v)\longmapsto 
        \begin{pmatrix}
        1 & 0 & 0\\
        u & 1 & 0\\
        -v& -\sigma(u) & 1
       \end{pmatrix};
    \end{equation*}
    \begin{equation*}
        z: \Res_{k_{\widetilde{a}}/k_{\widetilde{a}+\widetilde{a}'}}(\mathbb{G}_{m,k_{\widetilde{a}}})\longrightarrow \SU_{3,k_{\widetilde{a}}/k_{\widetilde{a}+\widetilde{a}'}}, \;\; t\longmapsto 
        \begin{pmatrix}
        t & 0 & 0\\
        0 & t^{-1}\sigma(t) & 0\\
        0 & 0 & \sigma(t)^{-1}
       \end{pmatrix}.
    \end{equation*}
By \cite[4.1.9]{Bruhattits2} or \cite[4.20]{Landvogtcompactification}, let $\xi: \SU_{3,k_{\widetilde{a}}/k_{\widetilde{a}+\widetilde{a}'}}\longrightarrow G^{\widetilde{a},\widetilde{a}'}$ be the unique morphism satisfying 
$$\pi_{k_{k_{\widetilde{a}+\widetilde{a}'}}}\circ \xi \circ y_{\pm}=\widetilde{\chi}_{\pm\widetilde{a},\pm\widetilde{a}'}(u,v) \coloneq \widetilde{\chi}_{\pm\widetilde{a}}(u)\widetilde{\chi}_{\pm\widetilde{a}\pm\widetilde{a}'}(-v)\widetilde{\chi}_{\pm\widetilde{a}'}(\sigma(u)),$$
where $\widetilde{\chi}_{\pm\widetilde{a}}$ and $\widetilde{\chi}_{\pm\widetilde{a}'}$ are defined over $k_{\widetilde{a}}$ (\Cref{ChevalleySteinbergsystem} (1) and (2)), in particular, $\widetilde{\chi}_{\pm\widetilde{a},\pm\widetilde{a}'}$ is defined over $k_{\widetilde{a}+\widetilde{a}'}$. Also let 
$$T_a\coloneq \pi\circ\Res_{k_{\widetilde{a}+\widetilde{a}'}/k}(\xi \circ z): \Res_{k_{\widetilde{a}}/k}(\mathbb{G}_{m,k_{\widetilde{a}}})\longrightarrow \langle U_a, U_{-a} \rangle\bigcap T\subset G.$$
Moreover, we have the $k$-isomorphisms 
$$\chi_{\pm a}\coloneq \Res_{k_{\widetilde{a}+\widetilde{a}'}/k}(\widetilde{\chi}_{\pm\widetilde{a},\pm\widetilde{a}'}):\Res_{k_{\widetilde{a}+\widetilde{a}'}/k}(H_0(k_{\widetilde{a}},k_{\widetilde{a}+\widetilde{a}'}))\longrightarrow U_{\pm a},$$ see \cite[page~45]{Landvogtcompactification} or \cite[4.1.9]{Bruhattits2}.
Since $t\cdot \widetilde{\chi}_{\pm\widetilde{a},\pm\widetilde{a}'}(u,v)\cdot t^{-1}=\widetilde{\chi}_{\pm\widetilde{a},\pm\widetilde{a}'}(\widetilde{a}(t)^{\pm1}u,\widetilde{a}(t)^{\pm1}\widetilde{a}'(t)^{\pm1}v)$ for $(u,v)\in H_0(k_{\widetilde{a}},k_{\widetilde{a}+\widetilde{a}'})(k_{\widetilde{a}+\widetilde{a}'})$ and $t\in T(k_{\widetilde{a}})$, by Galois descent (\Cref{multiplicationdescent}), we have that 
\begin{align}\label{torusactionSU}
    t\cdot \chi_{\pm a}(u,v)\cdot t^{-1}=\chi_{\pm a}(m_{- \widetilde{a}}(\nu(t))^{\mp 1}u,m_{-\widetilde{a}}(\nu(t))^{\mp 1}m_{-\widetilde{a}'}(\nu(t))^{\mp 1}v),
\end{align}
where $(u,v)\in \Res_{k_{\widetilde{a}+\widetilde{a}'}/k} (H_0(k_{\widetilde{a}},k_{\widetilde{a}+\widetilde{a}'}))(k)$.
Now we define 
\begin{align*}
        d_{a}: U_{a}\times_k \overline{T}\times_k U_{-a} &\longrightarrow \mathbb{A}_{1,k}\\
        (\chi_a(u,v), \overline{t}, \chi_{-a}(u',v'))&\longmapsto \Norm_{k_{\widetilde{a}/k}}(\epsilon), 
    \end{align*}
where $\epsilon\coloneq 1-m_{-\widetilde{a}}(\overline{t})\sigma(u)u'+m_{-\widetilde{a}}(\overline{t}) \sigma(m_{-\widetilde{a}}(\overline{t}))vv'$. We further define a $k$-morphism
\begin{align*}
        \theta_a: D_{U_a\times_k \overline{T} \times_k U_{-a}}(d_{a}) \longrightarrow U_{-a}\overline{T} U_a\subset \overline{G} \;\;\text{by sending} \;\; (\chi_a(u,v), \overline{t}, \chi_{-a}(u',v'))\;\; \text{to}
    \end{align*}
    $$\chi_{-a}\left(\frac{m_{-\widetilde{a}}(\overline{t})(u'-\sigma(m_{-\widetilde{a}}(\overline{t}))uv')}{\epsilon},\frac{m_{-\widetilde{a}}(\overline{t})\sigma(m_{-\widetilde{a}}(\overline{t}))v'}{\epsilon}\right)\cdot T_a(\epsilon)\cdot \overline{t} \cdot \chi_{a}\left(\frac{m_{-\widetilde{a}}(\overline{t})(u-m_{-\widetilde{a}}(\overline{t})\sigma(v)u')}{\sigma(\epsilon)},\frac{m_{-\widetilde{a}}(\overline{t})\sigma(m_{-\widetilde{a}}(\overline{t}))v}{\epsilon}\right),$$
    where $u,v, u',v'\in k_{\widetilde{a}}$ and $\overline{t}\in \overline{T}(k)$.

    We now verify that $\theta_a$ coincides with the $(G\times_k G)$-action on $\overline{G}$. For this, since $T$ is open dense in $\overline{T}$, it suffices to assume that $\overline{t}\in T(k)$, i.e., there exists a $t\in T(k)$ such that $\overline{t}=\nu(t)$. Note that this gives 
    $$\epsilon=1-m_{-\widetilde{a}}(\nu(t))\sigma(u)u'+m_{-\widetilde{a}}(\nu(t)) \sigma(m_{-\widetilde{a}}(\nu(t)))vv'.$$ 
    Then, the claim follows from the following computations (and the equality $\sigma(m_{-\widetilde{a}'}(\overline{t}))=m_{-\widetilde{a}}(\overline{t})$):
    \begin{equation*}
        \begin{split}
             &\chi_a(u,v)\cdot \overline{t}\cdot \chi_{-a}(u',v') = \chi_a(u,v)\cdot (t\cdot \chi_{-a}(u',v')\cdot t^{-1})\cdot t\\
            &=\chi_a(u,v)\cdot \chi_{-a}(m_{-\widetilde{a}}(\nu(t))u', m_{-\widetilde{a}}(\nu(t))m_{-\widetilde{a}'}(\nu(t))v')\cdot t\\
            &= \chi_{-a}\left(\frac{m_{-\widetilde{a}}(\nu(t))u'-m_{-\widetilde{a}}(\nu(t))m_{-\widetilde{a}'}(\nu(t))uv'}{\epsilon}, \frac{m_{-\widetilde{a}}(\nu(t))m_{-\widetilde{a}'}(\nu(t))v'}{\epsilon}\right)\cdot T_a(\epsilon)\cdot \\
            &\chi_a\left(\frac{u-m_{-\widetilde{a}}(\nu(t))u'\sigma(v)}{\sigma(\epsilon)},\frac{v}{\epsilon}\right) \cdot t\\
            &=\chi_{-a}\left(\frac{m_{-\widetilde{a}}(\nu(t))u'-m_{-\widetilde{a}}(\nu(t))m_{-\widetilde{a}'}(\nu(t))uv'}{\epsilon}, \frac{m_{-\widetilde{a}}(\nu(t))m_{-\widetilde{a}'}(\nu(t))v'}{\epsilon}\right)\cdot T_a(\epsilon)\cdot t\\
            &\cdot \chi_a\left(\frac{m_{-\widetilde{a}}(\nu(t))(u-m_{-\widetilde{a}}(\nu(t))u'\sigma(v))}{\sigma(\epsilon)},\frac{m_{-\widetilde{a}}(\nu(t))m_{-\widetilde{a}'}(\nu(t))v}{\epsilon}\right), 
        \end{split}
    \end{equation*}
    where the second and the last equalities follow from \Cref{torusactionSU}; the third equality uses \cite[4.1.12]{Bruhattits2} which follows from a simple computation in $\SU_3$; in the last equality, we use the fact that $m_{-\widetilde{a}}(\nu(t))=(m_{-\widetilde{a}}(\nu(t^{-1})))^{-1}$ and $m_{-\widetilde{a}'}(\nu(t))=(m_{-\widetilde{a}'}(\nu(t^{-1})))^{-1}$.

    To see $d_{a}\in\mathfrak{o}[\mathcal{U}_{-a,x,f}\times_\mathfrak{o}\overline{\mathcal{T}}\times_\mathfrak{o} \mathcal{U}_{a,x,f}]$, by the extension principle \Cref{extensionprinciple}, it suffices to show that 
    $$d_a(\mathcal{U}_{-a,x,f}(\mathfrak{o})\times\overline{\mathcal{T}}(\mathfrak{o})\times \mathcal{U}_{a,x,f}(\mathfrak{o}))\subset \mathfrak{o}.$$ 
     Let $\gamma\coloneq -\sup\{\omega(b)\vert b\in k_\alpha, \Tr_{k_\alpha/k_{\alpha+\alpha'}}(b)=1\}/2$ which is nonnegative by \cite[Lemme~4.3.3(iii)]{Bruhattits2}. 
    For $\chi_a(u,v)\in \mathcal{U}_{a,x,f}(\mathfrak{o})$ and  $\chi_{-a}(u',v')\in \mathcal{U}_{-a,x,f}(\mathfrak{o})$, by \cite[4.3.5~(3)]{Bruhattits2}, we have 
    $$\omega(u)\geq f(a)+\gamma,\;\omega(v)\geq f(2a),\;\omega(u')\geq f(-a)+\gamma,\;\omega(v')\geq f(-2a).$$
    By \Cref{multiplicationintegral}, for a $\overline{t}\in \overline{\mathcal{T}}(\mathfrak{o})$, we have that, $m_{-\widetilde{a}}(\overline{t})\in\mathfrak{o}_\alpha$. Since $f$ is a concave function, we obtain $$\epsilon=1-m_{-\widetilde{a}}(\overline{t})\sigma(u)u'+m_{-\widetilde{a}}(\overline{t}) \sigma(m_{-\widetilde{a}}(\overline{t}))vv'\in\mathfrak{o}_{\alpha},$$
    hence $\Norm_{k_{\alpha+\alpha'}/k}(\epsilon)\in\mathfrak{o}$, as desired.

    To see that the $k$-morphism $\theta_a$ extends to an $\mathfrak{o}$-morphism $ D_{\mathcal{U}_{a,x, f}\times_{\mathfrak{o}}\overline{\mathcal{T}}\times_{\mathfrak{o}} \mathcal{U}_{-a,x, f}}(d_{a })\longrightarrow \mathcal{U}_{-a,x, f}\times_{\mathfrak{o}}\overline{\mathcal{T}}\times_{\mathfrak{o}} \mathcal{U}_{a,x,f}$, by \Cref{extensionprinciple}, it suffices to check that 
     $$\theta_a(D_{\mathcal{U}_{a,x, f}\times_{\mathfrak{o}}\overline{\mathcal{T}}\times_{\mathfrak{o}} \mathcal{U}_{-a,x, f}}(d_{a })(\mathfrak{o}))\subset \mathcal{U}_{-a, x, f}(\mathfrak{o})\times\overline{\mathcal{T}}(\mathfrak{o})\times \mathcal{U}_{a,x, f}(\mathfrak{o}).$$
     For this, as in the Case 1, by the definition of $\theta_a$, we only need to combine the definition of $\theta_a$ with \Cref{multiplicationintegral} and the extension principle \Cref{extensionprinciple}.
\end{proof}

\subsubsection{Distribution computation}
Now we unify the commutator relations in \Cref{suslcase} via the language of distribution which is recalled in \Cref{appendixdistribution}. The following result is inspired by \cite[\S~3.4]{Bruhattits2} and \cite[Proposition~5.13]{Landvogtcompactification}.

For a set of positive roots  $\Psi\subset \Phi$, let 
\begin{equation}\label{integralunipotentconvention}
    \mathcal{U}_{\Psi,x,f}\coloneq \displaystyle\prod_{r\in\Psi^{\red}}\mathcal{U}_{r,x,f}
\end{equation}
which does not depend on the choice of an order on $\Psi^{\red}$. We will use the natural inclusion $ \overline{\mathcal{T}}(\mathfrak{o})\subset \overline{T}(k),\; \mathcal{U}_{\Psi,x,f}(\mathfrak{o})\subset U_{\Psi}(k)$ (see, for instance, \cite[0.2]{Landvogtcompactification}).
 We denote the group action on $\overline{G}$ by 
$$\Multi: G\times_k \overline{G}\times_k  G\longrightarrow \overline{G}.$$

\begin{proposition}\label{distributioninvariant} 
    For any set of positive roots  $\Psi\subset \Phi$, we have that
    \begin{itemize}
        \item[(1)] $\Multi(\mathcal{U}_{-\Psi,x,f^+}(\mathfrak{o})\times\overline{\mathcal{T}}(\mathfrak{o})\times\mathcal{U}_{\Psi,x,f^+}(\mathfrak{o}))\subset \mathcal{U}_{\Phi^-,x,f^+}(\mathfrak{o})\times\overline{\mathcal{T}}(\mathfrak{o})\times\mathcal{U}_{\Phi^+,x,f^+}(\mathfrak{o})\subset \overline{\Omega_{x,f}}(\mathfrak{o})$;
        
        \item[(2)]  for $g_1\in\mathcal{U}_{-\Psi,x,f^+}(\mathfrak{o}), g_2\in \mathcal{U}_{\Psi,x,f^+}(\mathfrak{o})$ and $ \tau\in \overline{\mathcal{T}}(\mathfrak{o})$, inside $\Dist(\overline{\Omega},g_1 \tau g_2)\cong\Dist(\overline{\Omega_{x,f}},g_1 \tau g_2)\otimes_{\mathfrak{o}}k$ 
        we have 
        $d(\Multi)_{(g_1,\tau,g_2)}(\Dist(\mathcal{U}_{-\Psi,x,f},g_1)\otimes_{\mathfrak{o}} \Dist(\overline{\mathcal{T}},\tau)\otimes_{\mathfrak{o}}\Dist(\mathcal{U}_{\Psi,x,f},g_2))\subset \Dist(\overline{\Omega_{x,f}},g_1 \tau g_2).$
    \end{itemize}
\end{proposition}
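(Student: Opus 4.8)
The plan is to reduce (1) and (2) to the rank-one computations of \Cref{suslcase}, and to extract the infinitesimal statement (2) from a scheme-theoretic strengthening of (1) by means of the extension principle \Cref{extensionprinciple}, so that no direct manipulation of divided powers is needed. I would argue by induction on $\#\Psi^{\red}$ (one may first reduce to $\Psi=\Phi^{+}$, since by the order-independence in \eqref{integralunipotentconvention} the schemes $\mathcal{U}_{-\Psi,x,f}\subset\mathcal{U}_{\Phi^{-},x,f}$ and $\mathcal{U}_{\Psi,x,f}\subset\mathcal{U}_{\Phi^{+},x,f}$ are the ``fill in identities'' closed subschemes); the case $\#\Psi^{\red}=0$ is trivial and the case $\#\Psi^{\red}=1$ (one reduced root, with its double when multipliable) is \Cref{suslcase}. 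The inductive assertion I would carry is the following scheme-theoretic form: there is a nonzero $d_{\Psi}\in\mathfrak{o}[\mathcal{U}_{-\Psi,x,f}\times_{\mathfrak{o}}\overline{\mathcal{T}}\times_{\mathfrak{o}}\mathcal{U}_{\Psi,x,f}]$ assembled from the functions $d_{a}$ of \Cref{suslcase} and their $\overline{\mathcal{T}}$-conjugates such that, writing $\mathcal{V}_{\Psi}$ for the nonvanishing locus of $d_{\Psi}$ (a principal open, hence affine, and $\mathfrak{o}$-smooth since the root group models and $\overline{\mathcal{T}}$ are affine and $\mathfrak{o}$-smooth): (a) $\Multi$ maps $(\mathcal{V}_{\Psi})_{k}$ into the open $\overline{\Omega}\subset\overline{G}$; (b) $\Multi$ maps $\mathcal{V}_{\Psi}(\mathfrak{o})$ into $\overline{\Omega_{x,f}}(\mathfrak{o})$; and (c) $d_{\Psi}$ is a unit at every point of $\mathcal{U}_{-\Psi,x,f^{+}}(\mathfrak{o})\times\overline{\mathcal{T}}(\mathfrak{o})\times\mathcal{U}_{\Psi,x,f^{+}}(\mathfrak{o})$, and $\Multi$ sends those points into $\mathcal{U}_{\Phi^{-},x,f^{+}}(\mathfrak{o})\times\overline{\mathcal{T}}(\mathfrak{o})\times\mathcal{U}_{\Phi^{+},x,f^{+}}(\mathfrak{o})$.

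Granting this, (1) is part of (c). For (2): by (a), (b) and \Cref{extensionprinciple} (the target $\overline{\Omega_{x,f}}$ being affine), $\Multi$ extends to an $\mathfrak{o}$-morphism $\mathcal{V}_{\Psi}\to\overline{\Omega_{x,f}}$; by (c) the triple $(g_{1},\tau,g_{2})$ is an $\mathfrak{o}$-point of the open subscheme $\mathcal{V}_{\Psi}$ of $\mathcal{U}_{-\Psi,x,f}\times_{\mathfrak{o}}\overline{\mathcal{T}}\times_{\mathfrak{o}}\mathcal{U}_{\Psi,x,f}$, so its distribution algebra there equals $\Dist(\mathcal{U}_{-\Psi,x,f},g_{1})\otimes_{\mathfrak{o}}\Dist(\overline{\mathcal{T}},\tau)\otimes_{\mathfrak{o}}\Dist(\mathcal{U}_{\Psi,x,f},g_{2})$ by locality of distributions and their compatibility with products (\Cref{appendixdistribution}); and $d(\Multi)_{(g_{1},\tau,g_{2})}$ carries this $\mathfrak{o}$-lattice into $\Dist(\overline{\Omega_{x,f}},g_{1}\tau g_{2})$ simply by functoriality of $\Dist$ applied to the $\mathfrak{o}$-morphism $\Multi|_{\mathcal{V}_{\Psi}}$, which is compatible with the generic-fibre identification $\Dist(\overline{\Omega},-)\cong\Dist(\overline{\Omega_{x,f}},-)\otimes_{\mathfrak{o}}k$.

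For the inductive step, writing $\Psi=\Psi'\sqcup\{a\}$ with $a$ chosen for a suitable order on $\Psi^{\red}$ (peeling off $\{a,2a\}$ together when $a$ is multipliable), the order-independence in \eqref{integralunipotentconvention} lets me factor $g_{1}$ and $g_{2}$ through the $\pm a$-root subgroups and reduce, via the inductive hypothesis applied to the inner triple indexed by $\Psi'$, to the statement that left-multiplying an element of $\overline{\Omega_{x,f}}(\mathfrak{o})$ by an element of $\mathcal{U}_{a,x,f^{+}}(\mathfrak{o})$ and right-multiplying by an element of $\mathcal{U}_{-a,x,f^{+}}(\mathfrak{o})$ again lands in $\overline{\Omega_{x,f}}(\mathfrak{o})$ (together with the corresponding scheme- and distribution-theoretic refinements). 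To see this one writes the given element as $v^{-}\sigma v^{+}$ and drags the new $a$-factor rightward through $v^{-}$ (resp.\ the new $-a$-factor leftward through $v^{+}$): each passage past a factor $\chi_{\pm b}(\cdot)$ produces correction terms in root subgroups $\mathcal{U}_{\pm(ia+jb),x,f^{+}}$ controlled by the Bruhat--Tits commutator relations, the residual $\{a,-a\}$-collision together with the central factor is resolved by $\beta_{a}$ of \Cref{suslcase}, and the torus conjugations stay $\overline{\mathcal{T}}(\mathfrak{o})$-valued and integral by the estimate $m_{\alpha}(\overline{\mathcal{T}}(\mathfrak{o}))\subset\mathfrak{o}_{\alpha}$ of \Cref{multiplicationintegral}. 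Each of these operations is an $\mathfrak{o}$-morphism between the relevant integral models, so reading the same composition at the level of distribution algebras, using that $d$ of a composite is the composite of the $d$'s, yields the infinitesimal part of the step.

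The whole weight of the argument lies in this bookkeeping: arranging the order in which reduced roots are removed so that the cascade of commutator corrections terminates, and checking that every such correction and every torus conjugation remains inside the $f^{+}$-integral subgroups $\mathcal{U}_{\cdot,x,f^{+}}(\mathfrak{o})$ and $\overline{\mathcal{T}}(\mathfrak{o})$; this is where the $\SU_{3}$-geometry of \Cref{suslcase}, the valued-root-datum axioms of \cite{Bruhattits1}, and (for multipliable roots) the filtration shift $\gamma$ of \Cref{suslcase} all enter. By contrast the passage to distributions is purely formal once the integral morphism on $\mathcal{V}_{\Psi}$ has been produced, which is precisely why (2) is routed through (b) and the extension principle rather than proved by a separate computation.
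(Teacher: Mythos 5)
Your plan to route the distribution statement~(2) through a scheme-theoretic $\mathfrak{o}$-morphism on a principal open subscheme, so that the inclusion of integral distribution lattices follows by pure functoriality via \Cref{corollaryextension} and \Cref{distributionlocalproperty}, is in the right spirit and is essentially how the paper treats~(2). The gap is in the induction itself. First, a hypothesis mismatch: in the statement, ``set of positive roots $\Psi\subset\Phi$'' means a \emph{positive system} of $\Phi$ (this is what makes the Bourbaki reference in the paper's proof applicable), so $\#\Psi^{\red}$ is the same for every admissible $\Psi$ and cannot serve as an induction variable. The paper inducts on $\#(\Psi\cap\Phi^{-})$, the number of ``misplaced'' roots, with $\Psi=\Phi^{+}$ as the trivial base case, not as a reduction target as you propose.

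Second, and more substantively, your inductive step runs in the wrong order. You apply the inductive hypothesis to an inner triple to land in $\overline{\Omega_{x,f}}(\mathfrak{o})$, and then try to conjugate the result by the lone root factors $u_{a}$ and $u_{-a}$ by dragging each through the entire $v^{\mp}$. Every passage of $u_{a}$ past a factor $\chi_{b}$ with $b\neq -a$ scatters into $\prod_{i,j>0}U_{ia+jb}$, and for general $a,b$ these include roots that are again out of place; that this cascade terminates with every correction inside the $f^{+}$-filtration and $\overline{\mathcal{T}}(\mathfrak{o})$ is precisely the bookkeeping you flag as ``the whole weight of the argument'' and then do not carry out. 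The paper's proof deliberately avoids any such cascade: it chooses an indivisible $a\in\Psi\cap\Phi^{-}$, uses the order-independence of the big-cell products from \Cref{Thparahoricgrpsche}~(iii) to place $U_{-a}$ and $U_{a}$ \emph{immediately adjacent} to $\overline{T}$ on either side, applies the single rank-one $\mathfrak{o}$-morphism $\beta_{a}$ of \Cref{suslcase} to the inner triple $(u_{-a},\overline{t},u_{a})$, observes that the resulting tuple already lies in $\mathcal{U}_{-\Psi',x,f^{+}}\times\overline{\mathcal{T}}\times\mathcal{U}_{\Psi',x,f^{+}}$ with $\Psi'=(\Psi\setminus(\mathbb{R}_{>0}a\cap\Psi))\cup(-(\mathbb{R}_{>0}a\cap\Psi))$ (a positive system with strictly smaller $\#(\Psi'\cap\Phi^{-})$), and only \emph{then} invokes the inductive hypothesis. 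No commutator of $U_{a}$ with any $U_{b}$, $b\neq -a$, is ever formed, and the distribution estimate at each stage follows from $\beta_{a}$ alone together with \Cref{corollaryextension}. Your rank-one input and extension-principle mechanism are the right ingredients, but they must be combined in the order ``resolve the innermost collision with $\beta_{a}$, then induct,'' not the reverse.
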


\begin{proof}
    Suppose that $\Psi\subset\Phi$ is a set of positive roots. We proceed by induction on the cardinality of $\Psi\bigcap \Phi^-$. When $\Psi\bigcap \Phi^-=\varnothing$, there is nothing to prove. Now suppose that $\Psi\bigcap \Phi^-\neq\varnothing$, then there exists an indivisible root $a\in \Psi\bigcap \Phi^-$. Let us consider 
    $$\Psi'\coloneq (\Psi\backslash\{\mathbb{R}_{>0}a\bigcap \Psi\})\bigcup\{-(\mathbb{R}_{>0}a\bigcap \Psi)\},$$
    which is a set of positive root of $\Phi$ by \cite[chapitre~VI, n$^o$~1.4 proposition~13, n$^o$~1.6 proposition~17]{bourbakigal}. We have the following commutative diagram of rational morphisms
    $$\begin{tikzcd}[sep=small]
\displaystyle\prod_{r\in-\Psi^{\red}\backslash -a} U_r\times_k (U_{-a}\times_k \overline{T}\times_k U_a)\times_k \displaystyle\prod_{r\in\Psi^{\red}\backslash a} U_r\arrow[rd] \arrow[rr,"B_a"] && \displaystyle\prod_{r\in-\Psi'^{\red}} U_r\times_k \overline{T}\times_k \displaystyle\prod_{r\in-\Psi'^{\red}} U_r \arrow[ld]\\
& \overline{\Omega}\coloneq U^-\times_k \overline{T} \times_k U^+\subset \overline{G} 
\end{tikzcd}$$
where $B_a$ rationally sends $U_{-a}\times_k\overline{T}\times_k U_a$ to $U_{a}\times_k\overline{T}\times_k U_{-a}$ via $\Multi$ and keeps the other components unchanged and the other two arrows indicate the rational morphism given by $\Multi$.
   
   The claim (1) follows from \Cref{suslcase}, the induction hypothesis and the above commutative diagram.
   To verify (2), let $g_1\in\mathcal{U}_{-\Psi,x,f^+}(\mathfrak{o}), g_2\in \mathcal{U}_{\Psi,x,f^+}(\mathfrak{o}), \tau\in \overline{\mathcal{T}}(\mathfrak{o})$. We write 
    $$g_1=u^-\cdot v^+\in(\prod_{r\in -\Psi^{\red}\backslash -a} \mathcal{U}_{r,x,f})(\mathfrak{o})\times \mathcal{U}_{-a,x,f}(\mathfrak{o})$$
    $$g_2=v^-\cdot u^+\in\mathcal{U}_{a,x,f^+}(\mathfrak{o})\times (\prod_{r\in \Psi^{\red}\backslash a} \mathcal{U}_{r,x,f^+})(\mathfrak{o}) $$
    By \Cref{suslcase}, we can write $\beta_a(v^+,\tau,v^-)=\widetilde{v}^-\tau'\widetilde{v}^+\in(\mathcal{U}_{a,x,f^+}\times_{\mathfrak{o}}\overline{\mathcal{T}}\times_{\mathfrak{o}}\mathcal{U}_{-a,x,f^+})(\mathfrak{o})$ and let 
    $$g_1'\coloneq u^-\widetilde{v}^-\in \mathcal{U}_{-\Psi',x,f^+}(\mathfrak{o})\;\;\;\text{and}\;\;\; g_2'\coloneq\widetilde{v}^+u^+\in\mathcal{U}_{\Psi',x,f^+}(\mathfrak{o}).$$

   We have that (the tensor product is over $\mathfrak{o}$)
    \begin{equation*}
        \begin{split}
          & d(\Multi)_{(g_1,\tau,g_2)}(\Dist(\mathcal{U}_{-\Psi,x,f},g_1)\otimes \Dist(\overline{\mathcal{T}},\tau)\otimes\Dist(\mathcal{U}_{\Psi,x,f},g_2))\\
          = &d(\Multi)_{(g_1,\tau,g_2)}(\Dist(\mathcal{U}_{-\Psi\backslash -a,x,f},u^-)\otimes\Dist(\mathcal{U}_{-a,x,f},v^+)\otimes\Dist(\overline{\mathcal{T}},\tau)\\
          &\otimes\Dist(\mathcal{U}_{a,x,f} ,v^-)\otimes\Dist(\mathcal{U}_{\Psi\backslash a,x,f},u^+))\\
          =& d(\Multi)_{(u^-,v^+\tau v^-,u^+)}(\Dist(\mathcal{U}_{-\Psi\backslash -a,x,f},u^-)\otimes d(\Multi)_{(v^+,\tau,v^-)}(\Dist(\mathcal{U}_{-a,x,f},v^+)\otimes\Dist(\overline{\mathcal{T}},\tau)\\  
          &\otimes\Dist(\mathcal{U}_{a,x,f} ,v^-))\otimes\Dist(\mathcal{U}_{\Psi\backslash a,x,f},u^+))\\
          =& d(\Multi)_{(u^-,v^+\tau v^-,u^+)}(\Dist(\mathcal{U}_{-\Psi\backslash -a,x,f},u^-)\otimes d(\Multi\circ (\beta_a)_k)_{(v^+,\tau,v^-)}(\Dist(\mathcal{U}_{-a,x,f},v^+)\otimes\Dist(\overline{\mathcal{T}},\tau)\\  
          &\otimes\Dist(\mathcal{U}_{a,x,f} ,v^-))\otimes\Dist(\mathcal{U}_{\Psi\backslash a,x,f},u^+))\\
        \subset & d(\Multi)_{(u^-, \widetilde{v}^-\tau'\widetilde{v}^+,u^+)}(\Dist(\mathcal{U}_{-\Psi\backslash -a,x,f},u^-)\otimes d(\Multi)_{(\widetilde{v}^-,\overline{\tau},\widetilde{v}^+)}(\Dist(\mathcal{U}_{a,x,f},\widetilde{v}^-)\otimes\Dist(\overline{\mathcal{T}},\tau')\\
          &\otimes\Dist(\mathcal{U}_{-a,x,f} ,\widetilde{v}^+))\otimes\Dist(\mathcal{U}_{\Psi\backslash a,x,f},u^+))\\
        =&d(\Multi)_{(g_1',\tau',g_2')}(\Dist(\mathcal{U}_{-\Psi',x,f},g_1')\otimes \Dist(\overline{\mathcal{T}},\tau')\otimes\Dist(\mathcal{U}_{\Psi',x,f},g_2'))\\
       \subset&  \Dist(\overline{\Omega_{x,f}},g_1 \tau g_2)
        \end{split}
    \end{equation*}
    hold, 
    where the second and fifth equalities come from the associativity of the group action; in the third equality, $\beta_a$ comes from \Cref{suslcase}; the fourth inclusion follows from \Cref{suslcase} (3) and \Cref{corollaryextension}, and also we implicitly use \Cref{distributionlocalproperty}; the last inclusion follows from the induction hypothesis.
\end{proof}

\subsubsection{Reconstruction of rational action}
Now we are in the position to define a rational action and study its definition domain. The following result is inspired by \cite[Page~60]{Landvogtcompactification}. Recall that the open $\mathfrak{o}$-dense subscheme $\Omega_{x,f}\subset\mathcal{G}_{x,f}$ is viewed as an open subscheme of $\overline{\Omega
_{x,f}}$ via \Cref{eqembeddingofbigcell}.

\begin{theorem}\label{theoremrationalaction}
    The $\mathfrak{o}$-rational morphism
    $$A: \mathcal{G}_{x,f}\times_\mathfrak{o}\overline{\Omega_{x,f}}\times_\mathfrak{o}\mathcal{G}_{x,f}\dashrightarrow \overline{\Omega_{x,f}}$$ defined by the $\mathfrak{o}$-rational action of $\mathcal{G}_{x,f}\times_{\mathfrak{o}}\mathcal{G}_{x,f}$ on $\Omega_{x,f}$
    satisfies
        \item[(1)] $\mathcal{U}_{\Phi^+,x,f^+}(\mathfrak{o})\times \overline{\mathcal{T}}(\mathfrak{o})\times \mathcal{U}_{\Phi^-,x,f^+}(\mathfrak{o})\subset \Dom(A)(\mathfrak{o})$. In particular, since $\kappa$ is assumed to be perfect (hence is also algebraically closed), by the lifting property \cite[\S~2.3, Proposition~5]{BLR}, we have $$(\mathcal{U}_{\Phi^+,x,f^+})_{\kappa}\times_{\kappa}(\overline{\mathcal{T}})_{\kappa}\times_{\kappa}(\mathcal{U}_{\Phi^-,x,f^+})_{\kappa}\subset \Dom(A);$$
        \item[(2)]           
        $A(\mathcal{U}_{\Phi^+,x,f^+}(\mathfrak{o})\times \overline{\mathcal{T}}(\mathfrak{o})\times \mathcal{U}_{\Phi^-,x,f^+}(\mathfrak{o}))\subset \mathcal{U}_{\Phi^-,x,f^+}(\mathfrak{o})\times \overline{\mathcal{T}}(\mathfrak{o})\times \mathcal{U}_{\Phi^+,x,f^+}(\mathfrak{o});$
        \item[(3)] $e\times_{\mathfrak{o}} \overline{\mathcal{T}} \times_{\mathfrak{o}} e\subset \Dom(A)$.
\end{theorem}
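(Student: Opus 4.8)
The plan is to \emph{deduce \Cref{theoremrationalaction} from \Cref{distributioninvariant}} by means of the extension criterion of the distribution formalism of \Cref{appendixdistribution}: if $\mathcal{X}$ is smooth over $\mathfrak{o}$, $\mathcal{Y}$ is affine of finite type over $\mathfrak{o}$ (so separated), and $g\colon \mathcal{X}_k\dashrightarrow \mathcal{Y}_k$ is a $k$-rational morphism, then $g$ extends over the reduction $\bar{P}$ of an $\mathfrak{o}$-point $P\in\mathcal{X}(\mathfrak{o})$ provided (i) $P_k\in\Dom(g)$, (ii) the value $g(P_k)$ extends to an $\mathfrak{o}$-point $Q\in\mathcal{Y}(\mathfrak{o})$, and (iii) the differential $dg_{P_k}$ carries the integral distributions $\Dist(\mathcal{X},P)$ into $\Dist(\mathcal{Y},Q)$. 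Here $\mathcal{X}=\mathcal{G}_{x,f}\times_{\mathfrak{o}}\overline{\Omega_{x,f}}\times_{\mathfrak{o}}\mathcal{G}_{x,f}$, $\mathcal{Y}=\overline{\Omega_{x,f}}$, and $g=A$, whose generic fibre is the $(G\times_k G)$-rational action on $\overline{\Omega}$, i.e.\ the restriction of $\Multi$.

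First I would fix $\Psi\coloneq\Phi^-$ in \Cref{distributioninvariant}, so that $-\Psi=\Phi^+$ and the set $\mathcal{U}_{\Phi^+,x,f^+}(\mathfrak{o})\times\overline{\mathcal{T}}(\mathfrak{o})\times\mathcal{U}_{\Phi^-,x,f^+}(\mathfrak{o})$ occurring in (1) and (2) is exactly the set $\mathcal{U}_{-\Psi,x,f^+}(\mathfrak{o})\times\overline{\mathcal{T}}(\mathfrak{o})\times\mathcal{U}_{\Psi,x,f^+}(\mathfrak{o})$ to which \Cref{distributioninvariant} applies. Fix such a $P=(g_1,\tau,g_2)$ and put $Q\coloneq\Multi(g_1,\tau,g_2)$. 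By \Cref{distributioninvariant}(1), $Q\in\overline{\Omega_{x,f}}(\mathfrak{o})$ and moreover $Q\in\mathcal{U}_{\Phi^-,x,f^+}(\mathfrak{o})\times\overline{\mathcal{T}}(\mathfrak{o})\times\mathcal{U}_{\Phi^+,x,f^+}(\mathfrak{o})$; in particular $Q_k\in\overline{\Omega}$, so $P_k\in\Dom(A_k)$ with $A_k(P_k)=Q_k$ and $Q$ extends $Q_k$. (Concretely this says that the ``denominator'' assembled from the functions $d_a$ of \Cref{suslcase} through the inductive procedure in the proof of \Cref{distributioninvariant} is a \emph{unit} at $P$, hence nonzero on both fibres.) This gives hypotheses (i) and (ii). For (iii) I would use the tensor decomposition $\Dist(\mathcal{X},P)=\Dist(\mathcal{G}_{x,f},g_1)\otimes_{\mathfrak{o}}\Dist(\overline{\Omega_{x,f}},\tau)\otimes_{\mathfrak{o}}\Dist(\mathcal{G}_{x,f},g_2)$ of \Cref{distributionlocalproperty}, the $\mathcal{G}_{x,f}(\mathfrak{o})$-translation action on distributions, and the big-cell decompositions of $\Dist(\mathcal{G}_{x,f},e)$ and $\Dist(\overline{\Omega_{x,f}},\tau)$ in terms of $\Dist(\mathcal{U}_{\Phi^{\pm},x,f},-)$, $\Dist(\mathcal{T}^{(f(0))},e)$ and $\Dist(\overline{\mathcal{T}},\tau)$, so as to reduce the desired inclusion $dA_{P_k}(\Dist(\mathcal{X},P))\subset\Dist(\overline{\Omega_{x,f}},Q)$ — after peeling off the translation and the $U^{\mp}$/torus directions by the associativity of $\Multi$ together with \Cref{corollaryextension} — to the inclusion furnished by \Cref{distributioninvariant}(2). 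The criterion then yields $\bar{P}\in\Dom(A)$, i.e.\ $P\in\Dom(A)(\mathfrak{o})$, which is the first assertion of (1). The ``in particular'' of (1) follows because, $\kappa$ being algebraically closed and $\mathcal{U}_{a,x,f^+},\overline{\mathcal{T}}$ being smooth over $\mathfrak{o}$ with connected fibres, every $\kappa$-point of $(\mathcal{U}_{\Phi^+,x,f^+})_{\kappa}\times_{\kappa}(\overline{\mathcal{T}})_{\kappa}\times_{\kappa}(\mathcal{U}_{\Phi^-,x,f^+})_{\kappa}$ lifts to an $\mathfrak{o}$-point of the above set by the lifting property \cite[\S~2.3, Proposition~5]{BLR}, and $\Dom(A)$ is open.

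Once (1) is available, (2) is immediate: for $P$ in the given set, $A(P)\in\overline{\Omega_{x,f}}(\mathfrak{o})$ is defined with generic fibre $A_k(P_k)=Q_k$; since $\overline{\Omega_{x,f}}$ is separated over the DVR $\mathfrak{o}$, the reduction map $\overline{\Omega_{x,f}}(\mathfrak{o})\hookrightarrow\overline{\Omega_{x,f}}(k)$ is injective, so $A(P)=Q\in\mathcal{U}_{\Phi^-,x,f^+}(\mathfrak{o})\times\overline{\mathcal{T}}(\mathfrak{o})\times\mathcal{U}_{\Phi^+,x,f^+}(\mathfrak{o})$ by \Cref{distributioninvariant}(1). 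For (3) I would run the same criterion at the $\mathfrak{o}$-points $(e,\tau,e)$ with $\tau\in\overline{\mathcal{T}}(\mathfrak{o})$: there $A_k(e,\tau_k,e)=\tau_k$ since the identity acts trivially, so (i) and (ii) hold with $Q=\tau$, while hypothesis (iii) is the $g_1=g_2=e$ case of \Cref{distributioninvariant}(2) fed through the big-cell decomposition of $\Dist(\mathcal{G}_{x,f},e)$ exactly as above; combined with $\{e\}\times\overline{T}\times\{e\}\subset\Dom(A_k)$ and the lifting property on the special fibre this gives $e\times_{\mathfrak{o}}\overline{\mathcal{T}}\times_{\mathfrak{o}} e\subset\Dom(A)$.

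The main obstacle is hypothesis (iii) in the reduction just described: \emph{propagating the distribution estimate of \Cref{distributioninvariant}(2)}, which is phrased only along the root-subgroup and toric directions, to the full distribution space of the ambient product $\mathcal{G}_{x,f}\times_{\mathfrak{o}}\overline{\Omega_{x,f}}\times_{\mathfrak{o}}\mathcal{G}_{x,f}$; this is where one must carefully unwind the group-scheme and big-cell structure together with the associativity of the rational action. Everything genuinely delicate about the commutator behaviour has, however, already been absorbed into \Cref{suslcase} and \Cref{distributioninvariant}, so this last step is bookkeeping within the distribution formalism of \Cref{appendixdistribution} rather than new geometry.
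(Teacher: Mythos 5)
Your strategy — reducing to \Cref{distributioninvariant} via the distribution formalism — is the one the paper pursues, but your mechanism differs in two places, and in both there is a gap.

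First, the ``extension criterion for rational morphisms'' you invoke at the outset is not in \Cref{appendixdistribution}. \Cref{corollaryextension} applies to a $K$-\emph{morphism} $f\colon X_K\to Y_K$ defined on the \emph{entire} generic fibre of an irreducible smooth affine integral model; it does not apply directly to a rational morphism, because one cannot feed a rational function that fails to be globally regular into \Cref{propextension}. To get back into the setting of \Cref{corollaryextension} one must first replace the source by an affine open $V$ containing the whole section $P$ and with $V_k\subset\Dom(g)$, and verify the distribution condition on $V$. That is exactly the localisation the paper performs by choosing normalised elements $g,g'$ cutting out principal opens around $\tau$ and $\Multi(\tau)$. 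Your criterion is plausible and provable along these lines, but stating it is not having it — and proving it amounts to the very localisation you skip.

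Second, the step you label ``bookkeeping'' — propagating \Cref{distributioninvariant}(2), an estimate phrased only along the $\mathcal{U}_{\pm}$ and $\overline{\mathcal{T}}$ directions, to the full space $\Dist(\mathcal{G}_{x,f}\times_{\mathfrak{o}}\overline{\Omega_{x,f}}\times_{\mathfrak{o}}\mathcal{G}_{x,f},P)$ — is where the actual proof lives. The paper never checks such an inclusion directly for $A$. Instead, for each candidate $\tau$ it assembles an explicit $\mathfrak{o}$-rational morphism $A_\tau$ as a composite of identities, the group law $\mathcal{M}$ of $\mathcal{G}_{x,f}$ applied to the wrong-order factors $\mathcal{U}^{+}\times_{\mathfrak{o}}\mathcal{U}^{-}$, the morphism $E$ produced by \Cref{distributioninvariant} together with \Cref{corollaryextension} on principal opens, and conjugations by torus sections; every arrow in the composite is visibly defined at $\tau$, giving $\tau\in\Dom(A_\tau)$, and then $A_\tau=A$ by comparison of generic fibres. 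Your appeal to ``associativity of $\Multi$'' gestures at this composite but does not produce it, so hypothesis (iii) of your own criterion is left unverified; that is not bookkeeping, it is the theorem. (A minor point: the tensor decomposition $\Dist(\mathcal{X},P)\cong\Dist(\mathcal{G}_{x,f},g_1)\otimes\Dist(\overline{\Omega_{x,f}},\tau)\otimes\Dist(\mathcal{G}_{x,f},g_2)$ is not \Cref{distributionlocalproperty}, which treats immersions; you need the K\"unneth-type decomposition for products, standard but not stated in the paper.)

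Your deduction of (2) from (1) and \Cref{distributioninvariant}(1) via separatedness is correct, and your treatment of (3) coincides with the paper's: generic fibre by flat base change, special fibre from (1) and the lifting property.
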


\begin{proof}
    For a section 
    $\tau\in (\mathcal{U}_{\Phi^+,x,f^+}\times_{\mathfrak{o}}\overline{\mathcal{T}}\times_{\mathfrak{o}}\mathcal{U}_{\Phi^-,x,f^+})(\mathfrak{o}),$ we will construct an $\mathfrak{o}$-rational morphism $$A_{\tau}:\mathcal{G}_{x,f}\times_{\mathfrak{o}} \overline{\Omega_{x,f}}\times_{\mathfrak{o}}\mathcal{G}_{x,f}\dashrightarrow \overline{\Omega_{x,f}}$$ 
    such that $A=A_{\tau}$ holds as an equality of $\mathfrak{o}$-rational morphisms and $\tau\in \Dom(A_{\tau})(\mathfrak{o})$.
    
     We can find $g\in k[U^-\times_k\overline{T}\times_k U^+]$ and $g'\in k[U^+\times_k\overline{T}\times_k U^-]$ such that 
    $$\tau \in D_{U^+\times_k\overline{T} \times_k U^-}(g')(k),\;\Multi(\tau)\in D_{U^-\times_k\overline{T}\times_k U^+}(g)(k),$$
    $$\Multi(D_{U^+\times_k\overline{T} \times_k U^-}(g'))\subset D_{U^-\times_k\overline{T}\times_k U^+}(g)\subset\overline{G}.$$
    Without loss of generality, we can assume that $$g\in \mathfrak{o}[\mathcal{U}_{\Phi^-,x,f}\times_\mathfrak{o}\overline{\mathcal{T}}\times_\mathfrak{o}\mathcal{U}_{\Phi^+,x,f}]\backslash \pi\mathfrak{o}[\mathcal{U}_{\Phi^-,x,f}\times_\mathfrak{o}\overline{\mathcal{T}}\times_\mathfrak{o}\mathcal{U}_{\Phi^+,x,f}],$$
    $$g'\in \mathfrak{o}[\mathcal{U}_{\Phi^+,x,f}\times_\mathfrak{o}\overline{\mathcal{T}}\times_\mathfrak{o} \mathcal{U}_{\Phi^-,x,f}]\backslash \pi\mathfrak{o}[\mathcal{U}_{\Phi^+,x,f}\times_\mathfrak{o}\overline{\mathcal{T}}\times_\mathfrak{o}\mathcal{U}_{\Phi^-,x,f}].$$
    By the extension principle \Cref{extensionprinciple}, we have that 
    \begin{equation}\label{equationidentity}
        \begin{aligned}
            \Multi(\tau)\in \Spec(\mathfrak{o}[\mathcal{U}_{\Phi^-,x,f}\times_\mathfrak{o}\overline{\mathcal{T}}\times_\mathfrak{o}\mathcal{U}_{\Phi^+,x,f}]_g)(\mathfrak{o}),\;\;
            \tau\in \Spec(\mathfrak{o}[\mathcal{U}_{\Phi^+,x,f}\times_\mathfrak{o}\overline{\mathcal{T}}\times_\mathfrak{o} \mathcal{U}_{\Phi^-,x,f}]_{g'})(\mathfrak{o}).
        \end{aligned}
    \end{equation}
    Combining \Cref{distributioninvariant} with \Cref{corollaryextension}, we obtain an $\mathfrak{o}$-morphism
    $$E:D_{\mathcal{U}_{\Phi^+,x,f}\times_\mathfrak{o}\overline{\mathcal{T}}\times_\mathfrak{o}\mathcal{U}_{\Phi^-,x,f}}(g')\longrightarrow D_{\mathcal{U}_{\Phi^-,x,f}\times_\mathfrak{o}\overline{\mathcal{T}}\times_\mathfrak{o} \mathcal{U}_{\Phi^+,x,f}}(g)$$
    whose generic fiber is given by the group action on wonderful compactification.
    
    Now, since $\Omega_{x,f}\subset \mathcal{G}_{x,f}$ is fiberwise dense, it suffices to define $A_{\tau}$ as the composition of the $\mathfrak{o}$-rational morphisms:
 
    \xymatrix{
\Omega_{x,f}\times_\mathfrak{o}\overline{\Omega_{x,f}}\times_\mathfrak{o}\Omega_{x,f}= (\mathcal{U}_{\Phi^-,x,f}\times_\mathfrak{o}\mathcal{T}\times_\mathfrak{o} \mathcal{U}_{\Phi^+,x,f})\times_\mathfrak{o}(\mathcal{U}_{\Phi^-,x,f}\times_\mathfrak{o}\overline{\mathcal{T}}\times_\mathfrak{o}\mathcal{U}_{\Phi^+,x,f})\times_\mathfrak{o}(\mathcal{U}_{\Phi^-,x,f}\times_\mathfrak{o}\mathcal{T}\times_\mathfrak{o} \mathcal{U}_{\Phi^+,x,f})
    }
    \xymatrix@=-0.5em@R=4ex{
 = &(\mathcal{U}_{\Phi^-,x,f} \times_\mathfrak{o}\mathcal{T})\ar[d]^{\Id}  & \times_\mathfrak{o} & (\mathcal{U}_{\Phi^+,x,f}\times_\mathfrak{o} \mathcal{U}_{\Phi^-,x,f}) \ar@{-->}[d]^{\mathcal{M}} &\times_\mathfrak{o} &\overline{\mathcal{T}} \ar[d]^{\Id}&\times_\mathfrak{o} & (\mathcal{U}_{\Phi^+,x,f}\times_\mathfrak{o}\mathcal{U}_{\Phi^-,x,f})\ar@{-->}[d]^{\mathcal{M}}&\times_\mathfrak{o} &(\mathcal{T}\times_\mathfrak{o} \mathcal{U}_{\Phi^+,x,f})\ar[d]^{\Id} \\
      &  (\mathcal{U}_{\Phi^-,x,f}\times_\mathfrak{o}\mathcal{T}) & \times_\mathfrak{o} & (\mathcal{U}_{\Phi^-,x,f}\times_\mathfrak{o} \mathcal{T}\times_\mathfrak{o}\mathcal{U}_{\Phi^+,x,f})&\times_\mathfrak{o} &\overline{\mathcal{T}} &\times_\mathfrak{o}& (\mathcal{U}_{\Phi^-,x,f}\times_\mathfrak{o} \mathcal{T}\times_\mathfrak{o}\mathcal{U}_{\Phi^+,x,f})&\times_\mathfrak{o} &(\mathcal{T}\times_\mathfrak{o} \mathcal{U}_{\Phi^+,x,f})
      }

    \xymatrix@=-0.5em@R=4ex{
=&  (\mathcal{U}_{\Phi^-,x,f}\times_\mathfrak{o}\mathcal{T}  \times_\mathfrak{o}  \mathcal{U}_{\Phi^-,x,f}\times_\mathfrak{o} \mathcal{T})\ar[d]^{\Id}&\times_\mathfrak{o}&(\mathcal{U}_{\Phi^+,x,f}\times_\mathfrak{o} \overline{\mathcal{T}} \times_\mathfrak{o}\mathcal{U}_{\Phi^-,x,f})\ar@{-->}[d]^{E}&\times_\mathfrak{o} &(\mathcal{T}\times_\mathfrak{o}\mathcal{U}_{\Phi^+,x,f}\times_\mathfrak{o} \mathcal{T}\times_\mathfrak{o} \mathcal{U}_{\Phi^+,x,f})\ar[d]^{\Id}\\
& (\mathcal{U}_{\Phi^-,x,f}\times_\mathfrak{o}\mathcal{T}  \times_\mathfrak{o}  \mathcal{U}_{\Phi^-,x,f}\times_\mathfrak{o} \mathcal{T})&\times_\mathfrak{o}&(\mathcal{U}_{\Phi^-,x,f}\times_\mathfrak{o} \overline{\mathcal{T}} \times_\mathfrak{o} \mathcal{U}_{\Phi^+,x,f})&\times_\mathfrak{o} &(\mathcal{T}\times_\mathfrak{o}\mathcal{U}_{\Phi^+,x,f}\times_\mathfrak{o} \mathcal{T}\times_\mathfrak{o} \mathcal{U}_{\Phi^+,x,f})
    }

\xymatrix{
 & ((u_1^-,t_1,u_2^-, t_2), (u_0^-, \overline{t}, u_0^+), (t_3,u_1^+, t_4, u_2^+))\ar@{|->}[d] \\
 &(u_1^-(t_1u_2^-t_1^{-1})(t_1t_2u_0^-t_2^{-1}t_1^{-1}), t_1t_2\overline{t}t_3t_4, (t_4^{-1}t_3^{-1}u_0^+t_3t_4)(t_4^{-1}u_1^+t_4)u_2^+)\\
 &\in \mathcal{U}_{\Phi^-,x,f}\times_\mathfrak{o} \overline{\mathcal{T}} \times_\mathfrak{o} \mathcal{U}_{\Phi^+,x,f}=\overline{\Omega_{x,f}},
}
\noindent where $\mathcal{M}$ is the $\mathfrak{o}$-rational morphism given by the group law of the group scheme $\mathcal{G}_{x,f}$. The facts that $A_{\tau}=A$ and that $\tau\in \Dom(A_{\tau})(\mathfrak{o})$ follow from the construction of $E$. 

By varying $\tau$, the claim (1) follows. The claim (2) follows from \Cref{distributioninvariant} (1).
For the claim (3), by flat base change, we have $e\times_k \overline{T}\times_k e\subset \Dom(A_k)=\Dom(A)_k$. For the inclusion of the special fibers, since the residue field $\kappa$ is algebraically closed, we conclude that $(e\times_{\mathfrak{o}} \overline{\mathcal{T}} \times_{\mathfrak{o}} e)_{\kappa}\subset \Dom(A)_{\kappa}$ by the claim (1) and the lifting property \cite[\S~2.3, Proposition~5]{BLR}.
\end{proof}

\subsection{Integral model of $\overline{G}$}
Now we are in the position to construct the desired integral model of the wonderful compactification $\overline{G}$ in \Cref{introtheorem1}. The construction is divided into two cases depending on if $f(0)=0$ or $f(0)\textgreater 0$. The two cases can be also related to each other via dilatation, which will be discussed in \Cref{sectioncompatiblity}.

\subsubsection{The case when $f(0)=0$}\label{f0equalto0}
Now we apply \Cref{equivalencerelation} to the rational morphism $A$ in \Cref{theoremrationalaction} to get the equivalence relation $\sim_A$ on $\mathcal{G}_{x,f}\times_\mathfrak{o}\overline{\Omega_{x,f}}\times_\mathfrak{o}\mathcal{G}_{x,f}$. Then we further apply \Cref{definitionwonderfulembedding} to get a quotient sheaf over the category $\Sch/\mathfrak{o}$, which will be denoted by $\overline{\mathcal{G}_{x,f}}$. By \Cref{algebraicspace}, $\overline{\mathcal{G}_{x,f}}$ is an algebraic space over $\mathfrak{o}$. By \Cref{groupaction}, $\overline{\mathcal{G}_{x,f}}$ is endowed with a well-defined action of $\mathcal{G}_{x,f}\times_\mathfrak{o}\mathcal{G}_{x,f}$.

\begin{proposition}\label{quasiprojectivityandsmoothness}
    \begin{itemize}
        \item[(1)] The algebraic space $\overline{\mathcal{G}_{x,f}}$ is covered by the translates of $\overline{\Omega_{x,f}}$ by the sections in $(\mathcal{G}_{x,f}\times_{\mathfrak{o}}\mathcal{G}_{x,f})(\mathfrak{o})$. In particular, by the smoothness of $\overline{\Omega_{x,f}}$ over $\mathfrak{o}$, $\overline{\mathcal{G}_{x,f}}$ is smooth over $\mathfrak{o}$.
        \item[(2)] 
        The algebraic space $\overline{\mathcal{G}_{x,f}}$ is quasi-projective over $\mathfrak{o}$. In particular, $\overline{\mathcal{G}_{x,f}}$ is represented by a scheme.
    \end{itemize}
\end{proposition}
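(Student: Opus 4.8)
The plan is to exploit the presentation $\overline{\mathcal{G}_{x,f}}=(\mathcal{G}_{x,f}\times_{\mathfrak{o}}\overline{\Omega_{x,f}}\times_{\mathfrak{o}}\mathcal{G}_{x,f})/\!\sim_{A}$ of \Cref{definitionwonderfulembedding}, rewritten via \Cref{reinterpretation} as the quotient of $\Pi:=(\mathcal{G}_{x,f}\times_{\mathfrak{o}}\mathcal{G}_{x,f})\times_{\mathfrak{o}}\overline{\Omega_{x,f}}$ by the fppf relation $\Gamma$, with quotient map $\rho\colon\Pi\to\overline{\mathcal{G}_{x,f}}$. Recall from \Cref{openimmersion} and \Cref{groupaction} that $\overline{\Omega_{x,f}}$ is an open subscheme of $\overline{\mathcal{G}_{x,f}}$ and that, for $g_{0}\in(\mathcal{G}_{x,f}\times_{\mathfrak{o}}\mathcal{G}_{x,f})(\mathfrak{o})$, the translate $g_{0}\overline{\Omega_{x,f}}$ is an open subscheme isomorphic to $\overline{\Omega_{x,f}}$.

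\emph{Part (1).} Since $\rho$ is fppf it is surjective on points; given $w\in|\overline{\mathcal{G}_{x,f}}|$ I would pick a point $(g,y)\in|\Pi|$ above it, with residue field $F$, and seek $g_{0}\in G(\mathfrak{o})$ (writing $G:=\mathcal{G}_{x,f}\times_{\mathfrak{o}}\mathcal{G}_{x,f}$) whose image in $G_{F}$ lies in the fibre $\Dom(\phi)_{(g,y)}$. For such $g_{0}$ one has $(g_{0},g,y,\phi(g_{0},g,y))\in\Gamma$, hence $\rho(g_{0},\phi(g_{0},g,y))=\rho(g,y)=w$ by \Cref{reinterpretation}, i.e.\ $w\in g_{0}\overline{\Omega_{x,f}}$. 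Using the associativity of $A$ to rewrite $\phi(g_{1},g_{2},y)=A(g_{1}^{-1}g_{2},y)$ one sees $\Dom(\phi)_{(g,y)}=g\cdot(\Dom(A)_{y})^{-1}$, which, because $\{e\}\times_{\mathfrak{o}}\overline{\Omega_{x,f}}\subseteq\Dom(A)$ and the fibre $G_{F}$ is irreducible, is a non\mbox{-}empty dense open of $G_{F}$. That it contains the image of an $\mathfrak{o}$-section of $G$ is the density step: $G(\mathfrak{o})$ is Zariski dense in each fibre of $G$ (an open subgroup of the Zariski-dense $G_{x,f}\subset G(k)$ in the generic fibre, and surjecting onto the dense $G(\kappa)$ in the special fibre since $G$ is $\mathfrak{o}$-smooth and $\mathfrak{o}$ is strictly Henselian with algebraically closed residue field), while the ``bad locus'' of $g_{0}$ whose image in $G_{F}$ misses this open is a proper closed subscheme of the relevant fibre of $G$, because the constant $k$- (resp.\ $\kappa$-) points are Zariski dense in $G_{F}$ — this is exactly the genericity manipulation of \Cref{claim1}. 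Hence $\overline{\mathcal{G}_{x,f}}=\bigcup_{g_{0}\in G(\mathfrak{o})}g_{0}\overline{\Omega_{x,f}}$, and since each $g_{0}\overline{\Omega_{x,f}}\cong\overline{\Omega_{x,f}}$ is $\mathfrak{o}$-smooth (a product of the smooth $\mathcal{U}_{a,x,f}$ with $\overline{\mathcal{T}^{(f(0))}}$), so is $\overline{\mathcal{G}_{x,f}}$.

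\emph{Part (2).} The cover just produced is by open subschemes, so $\overline{\mathcal{G}_{x,f}}$ is a scheme; it is quasi-compact, being the image under $\rho$ of the quasi-compact $\Pi$, so finitely many translates suffice, say $\overline{\mathcal{G}_{x,f}}=\bigcup_{i=1}^{n}g_{i}\overline{\Omega_{x,f}}$. By \Cref{constructionofunipotentintegralmodel} together with the formula $\overline{\mathcal{T}^{(f(0))}}=\prod_{a\in\Delta}\Res_{\mathfrak{o}_{\widetilde{a}}/\mathfrak{o}}(\mathbb{A}^{(f(0))}_{1,\mathfrak{o}_{\widetilde{a}}})$, the big cell $\overline{\Omega_{x,f}}$ is affine over $\mathfrak{o}$ — indeed an affine space over $\mathfrak{o}$ — so $\overline{\mathcal{G}_{x,f}}$ is covered by finitely many affine opens and is of finite type over $\mathfrak{o}$. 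To exhibit a relatively ample sheaf, note $\overline{\mathcal{G}_{x,f}}$ is regular and integral (its generic fibre $\overline{G}$ is irreducible and, by $\mathfrak{o}$-flatness, dense), so the boundary $\partial:=\overline{\mathcal{G}_{x,f}}\setminus\overline{\Omega_{x,f}}$, which over $\overline{G}$ is the Cartier divisor $\overline{G}\setminus\overline{\Omega}$, is an effective Cartier divisor; put $L:=\mathcal{O}_{\overline{\mathcal{G}_{x,f}}}(\partial)$. Because $G=\mathcal{G}_{x,f}\times_{\mathfrak{o}}\mathcal{G}_{x,f}$ is a smooth group scheme with connected fibres it acts trivially on $\Pic(\overline{\mathcal{G}_{x,f}})$, so $g_{i}\partial$ is linearly equivalent to $\partial$ and yields a section $s_{i}$ of $L$ with $(\overline{\mathcal{G}_{x,f}})_{s_{i}}=g_{i}\overline{\Omega_{x,f}}$ affine; as these non\mbox{-}vanishing loci cover $\overline{\mathcal{G}_{x,f}}$, the sheaf $L$ is ample. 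Granting separatedness of $\overline{\mathcal{G}_{x,f}}$ over $\mathfrak{o}$ — which I would deduce from separatedness of $\overline{G}$ after checking on the affine cover that each overlap $g_{i}\overline{\Omega_{x,f}}\cap g_{j}\overline{\Omega_{x,f}}$ is, using $\Pic(\mathbb{A}^{N}_{\mathfrak{o}})=0$, a basic (hence affine) open compatibly from both sides, or alternatively via the valuative criterion — one concludes that $\overline{\mathcal{G}_{x,f}}$ is quasi-projective over $\mathfrak{o}$, and in particular a scheme.

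\emph{The main obstacle} is the density step in part (1): upgrading ``$w$ lies in a translate of $\overline{\Omega_{x,f}}$ by some point of a fibre of $G$'' to ``\dots by a common $\mathfrak{o}$-section of $G$'', which needs both the Zariski density of $G(\mathfrak{o})$ in the two fibres and the descent of the relevant dense open from $F$ back to $k$ (resp.\ $\kappa$), in the spirit of \Cref{claim1}. A secondary subtlety is that the quotient presentation does not make separatedness of $\overline{\mathcal{G}_{x,f}}$ automatic — the relation $\Gamma$ is only locally closed in $\Pi\times_{\mathfrak{o}}\Pi$ — so separatedness (and, with it, the identification of $\partial$ as an honest Cartier divisor) must be read off from the explicit big-cell coordinates as above.
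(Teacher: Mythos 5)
Your part (1) is essentially the paper's argument: rewrite $\overline{\mathcal{G}_{x,f}}$ as the fppf quotient from \Cref{reinterpretation}, observe that a point lies in $g_0\overline{\Omega_{x,f}}$ as soon as $g_0\in(\mathcal{G}_{x,f}\times_{\mathfrak{o}}\mathcal{G}_{x,f})(\mathfrak{o})$ lands in the relevant dense open of a group fibre, and close with Artin's density of $\mathfrak{o}$-sections (\cite[\S~5.3, Lemma~7]{BLR}). The paper phrases this for sections over test schemes and reduces to a local scheme and then a field, whereas you phrase it for topological points of the algebraic space, but the substance and the crux --- the density step you correctly isolate --- are the same.

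Part (2) is where you genuinely depart from the paper, and the departure leaves a gap. The paper obtains quasi-projectivity in one stroke from \cite[\S~6.6, Theorem~2~(d)]{BLR}, which is designed for precisely the configuration produced in (1): a smooth finite-type space over a Dedekind base equal to the orbit of an affine open under a smooth group scheme with connected fibres. You instead try to produce an ample sheaf $L=\mathcal{O}(\partial)$ directly, with $\partial=\overline{\mathcal{G}_{x,f}}\setminus\overline{\Omega_{x,f}}$, by exhibiting finitely many sections $s_i$ of $L$ whose affine non-vanishing loci $g_i\overline{\Omega_{x,f}}$ cover. The decisive step --- $g_i^*L\cong L$, i.e.\ triviality of the action of the connected group scheme $\mathcal{G}_{x,f}\times_{\mathfrak{o}}\mathcal{G}_{x,f}$ on $\Pic(\overline{\mathcal{G}_{x,f}})$ --- is asserted, not proved. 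The usual routes to such triviality require either properness of the scheme acted on (so one can use the Picard scheme) or prior knowledge that $\Pic$ is discrete and finitely generated; here $\overline{\mathcal{G}_{x,f}}$ is not proper and the base is a DVR, and the computation of $\Pic(\overline{\mathcal{G}_{x,f}})$ in \Cref{theoremcolors} occurs later in the paper and relies on the present proposition, so it cannot be invoked. You also explicitly defer separatedness; that would indeed follow a posteriori once ampleness is established, so the Pic-triviality claim is the genuine missing ingredient. As written, part (2) is therefore incomplete, and the BLR citation used by the paper is exactly the package that absorbs both of these difficulties.
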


\begin{proof}

    To show (1), let $S'$ be a test $S$-scheme, and let $\overline{x}\in \overline{\mathcal{G}_{x,f}}(S')$ represented by $(g_1,\omega,g_2)\in (\mathcal{G}_{x,f}\times_{\mathfrak{o}} \overline{\Omega_{x,f}}\times_{\mathfrak{o}}\mathcal{G}_{x,f})(S'')$ where $S''$ is an fppf-cover of $S'$. It suffices to find a Zariski open covering $S'=\bigcup_{\nu\in N} S'_{\nu}$ such that, for each open $S'_{\nu}$, there exists a section $(c_{\nu}^1, c_{\nu}^2)\in(\mathcal{G}_{x,f}\times_{\mathfrak{o}}\mathcal{G}_{x,f})(\mathfrak{o})$ such that $\omega'\coloneq \phi(c_{\nu}^1, c_{\nu}^2,g_1\vert_{S'_{\nu}}, g_2\vert_{S'_{\nu}},\omega\vert_{S'_{\nu}})$ is well-defined, where $\phi$ is obtained by letting $G=\mathcal{G}_{x,f}\times_{\mathfrak{o}} \mathcal{G}_{x,f}$ and $Y=\overline{\Omega_{x,f}}$ in \Cref{definitionofphi}. If so, by \Cref{reinterpretation}, we have that $(c_{\nu}^1,\omega',c_{\nu}^2)\sim (g_1\vert_{S'_{\nu}},\omega\vert_{S'_{\nu}},g_2\vert_{S'_{\nu}})$ which means that $\overline{x}\vert_{S'_{\nu}}$ lies in $(c_{\nu}^1, c_{\nu}^2)\cdot \overline{\Omega_{x,f}}$, as desired. 

    Without loss of generality, we can assume that $S''=S'$.
    Since now the problem is local on $S'$, by working Zariski locally on $S'$ and a limit argument, we can assume that $S'$ is a local scheme. By \Cref{theoremrationalaction}.1 and the definition of $\phi$ (\Cref{definitionofphi}), the condition on the desired section $c\coloneq(c^1,c^2)$ amounts to requiring $c(S')\subset U$ where $U\subset (\mathcal{G}_{x,f}\times_{\mathfrak{o}}\mathcal{G}_{x,f})\times_{\mathfrak{o}} S'$ is an open $S'$-dense subscheme. Since $S'$ is now a local scheme, in this case, it suffices to require that the closed point of $S'$ be sent into $U$ by $c$. Then we are reduced to the case where $S'=\Spec(k)$ with $k$ a field. Thus, since $\mathcal{G}_{x,f}$ is smooth over $\mathfrak{o}$, the existence of such a section $c$ is ensured by Artin's \cite[\S~5.3, Lemma~7]{BLR} which asserts that, for any point $t\in \Spec(\mathfrak{o})$, the set $\{a(t)\vert a\in \mathcal{G}_{x,f}(\mathfrak{o})\}$ is dense in a connected component of the fiber $(\mathcal{G}_{x,f})_t$.

    For (2), by \Cref{openimmersion}, $\overline{\Omega_{x,f}}$ is an open subscheme of $\overline{\mathcal{G}_{x,f}}$, and by the definition of $\overline{\mathcal{G}_{x,f}}$, we have that $\overline{\mathcal{G}_{x,f}}=(\mathcal{G}_{x,f}\times_{\mathfrak{o}}\mathcal{G}_{x,f})\cdot\overline{\Omega_{x,f}}$. The \emph{quasi-projectivity} of $\overline{\mathcal{G}_{x,f}}$ over $\mathfrak{o}$ follows from \cite[\S~6.6, Theorem~2~(d)]{BLR}.
\end{proof}

\begin{proposition}\label{groupopenimmersion}
    The morphism $i: \mathcal{G}_{x,f}\longrightarrow \overline{\mathcal{G}_{x,f}}$ by sending $g\in \mathcal{G}_{x,f}(S')$ to the section represented by $(g,e,e)$ in $\overline{\mathcal{G}_{x,f}}(S')$ is an open immersion. Moreover, $\mathcal{G}_{x,f}$ is dense in $\overline{\mathcal{G}_{x,f}}$.

    The generic fiber $(\overline{\mathcal{G}_{x,f}})_k$ is isomorphic to the wonderful compactification $\overline{G}$.
\end{proposition}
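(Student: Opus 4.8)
The strategy is to combine the general machinery of \Cref{sectionrationalactions}, applied to the group $\mathcal{G}_{x,f}\times_{\mathfrak{o}}\mathcal{G}_{x,f}$ acting on $Y=\overline{\Omega_{x,f}}$, with the big-cell decomposition of $\mathcal{G}_{x,f}$ and Artin's density lemma. First I would record that $i$ is $(\mathcal{G}_{x,f}\times_{\mathfrak{o}}\mathcal{G}_{x,f})$-equivariant: by \Cref{groupaction} the section $(g_1,g_2)\cdot i(g)$ is represented by $(g_1g,e,g_2)$ whereas $i(g_1gg_2^{-1})$ is represented by $(g_1gg_2^{-1},e,e)$, and these are $\sim_A$-equivalent because for $(a_1,a_2)$ in a suitable dense open (obtained fppf-locally on the test scheme) both $A(a_1g_1g,e,a_2g_2)$ and $A(a_1g_1gg_2^{-1},e,a_2)$ are well-defined and equal to $a_1g_1gg_2^{-1}a_2^{-1}$; here one uses that on triples whose middle entry lies in the open subscheme $\Omega_{x,f}$ the rational morphism $A$ is simply the honest action of $\mathcal{G}_{x,f}\times_{\mathfrak{o}}\mathcal{G}_{x,f}$ on $\mathcal{G}_{x,f}$. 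The same observation shows, now taking $(a_1,a_2)=(e,e)$, that for $\omega\in\Omega_{x,f}$ the sections $(\omega,e,e)$ and $(e,\omega,e)$ are $\sim_A$-equivalent; that is, $i|_{\Omega_{x,f}}$ coincides with the composite $\Omega_{x,f}\hookrightarrow\overline{\Omega_{x,f}}\xrightarrow{\,j\,}\overline{\mathcal{G}_{x,f}}$, where the first arrow is \eqref{eqembeddingofbigcell} and $j$ is the open immersion of \Cref{openimmersion}. In particular $i|_{\Omega_{x,f}}$ is an open immersion onto an open subscheme $W_0\subseteq\overline{\mathcal{G}_{x,f}}$.

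Next I would globalize. By Artin's density lemma \cite[\S~5.3, Lemma~7]{BLR} together with the connectedness (hence, by smoothness, irreducibility) of the fibers of $\mathcal{G}_{x,f}$, and exactly as in the proof of \Cref{quasiprojectivityandsmoothness}(1), one checks that $\mathcal{G}_{x,f}=\bigcup_{c\in\mathcal{G}_{x,f}(\mathfrak{o})}c\cdot\Omega_{x,f}$: it suffices to cover all closed points of the special and of the generic fiber, and for such a point $P$ over $t\in\Spec\mathfrak{o}$ the condition $P\in c\cdot\Omega_{x,f}$ translates on the fiber over $t$ into $c(t)$ lying in a nonempty open of the irreducible scheme $(\mathcal{G}_{x,f})_t$, which meets the dense set $\{a(t):a\in\mathcal{G}_{x,f}(\mathfrak{o})\}$. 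Since $\overline{\mathcal{G}_{x,f}}$ is a scheme (\Cref{quasiprojectivityandsmoothness}) on which each $c\in\mathcal{G}_{x,f}(\mathfrak{o})$ acts, via $(c,e)$, as an automorphism, equivariance gives $i(c\cdot\Omega_{x,f})=(c,e)\cdot W_0$ and identifies $i|_{c\cdot\Omega_{x,f}}$ with an isomorphism onto this open subscheme. Thus $i$ is, locally on its source, an open immersion; in particular it is \'etale.

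To upgrade this to a genuine open immersion it now suffices to prove that $i$ is a monomorphism, since an \'etale monomorphism is an open immersion \cite[025G]{stacks-project}. Given $g,g'\in\mathcal{G}_{x,f}(S')$ with $i(g)=i(g')$, \Cref{testinglemma} permits testing the equality against any $(a_1,a_2)$ valued in an fppf cover of $S'$; choosing it in the $S'$-dense open locus (sectionable after an fppf base change, by smoothness) where both $a_1ga_2^{-1}$ and $a_1g'a_2^{-1}$ lie in $\Omega_{x,f}$, the identity $A(a_1g,e,a_2)=A(a_1g',e,a_2)$ becomes $a_1ga_2^{-1}=a_1g'a_2^{-1}$ inside $\Omega_{x,f}\subseteq\mathcal{G}_{x,f}$, whence $g=g'$ by fppf descent. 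Density of $\mathcal{G}_{x,f}$ in $\overline{\mathcal{G}_{x,f}}$ is then immediate: by \Cref{quasiprojectivityandsmoothness} the scheme $\overline{\mathcal{G}_{x,f}}$ is covered by translates of $\overline{\Omega_{x,f}}$, each of which contains the corresponding translate of the dense open $\Omega_{x,f}\subseteq\overline{\Omega_{x,f}}$, and all of these translates lie in $i(\mathcal{G}_{x,f})$.

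Finally, for the generic fiber, the functoriality of the quotient construction (\Cref{remarkfuncotriality}) identifies $(\overline{\mathcal{G}_{x,f}})_k$ with the quotient sheaf $(G\times_k\overline{\Omega}\times_kG)/\!\sim_{A_k}$, where $A_k$ is the rational action of $G\times_kG$ on $\overline{\Omega}\subseteq\overline{G}$ obtained by restricting the honest $(G\times_kG)$-action on $\overline{G}$; this is precisely the description of $\overline{G}$ furnished by the birational-group-law method of \cite{li2023equivariant}, so $(\overline{\mathcal{G}_{x,f}})_k\cong\overline{G}$ compatibly with the open immersions of $G$. (Alternatively, $(g_1,\omega,g_2)\mapsto g_1\omega g_2^{-1}$ descends to a $(G\times_kG)$-equivariant, surjective, birational monomorphism $(\overline{\mathcal{G}_{x,f}})_k\to\overline{G}$, which is an isomorphism by Zariski's main theorem since $\overline{G}$ is smooth.) The step I expect to require the most care is the packaging of the local computations into the statement that $i$ is an open immersion: one must simultaneously exploit that $A$ restricts to the honest group action over the big cell, to get the monomorphism property, and that $\mathcal{G}_{x,f}$ is exhausted by $\mathfrak{o}$-rational translates of $\Omega_{x,f}$, to get \'etaleness; the identification of the generic fiber is comparatively formal once \cite{li2023equivariant} is invoked.
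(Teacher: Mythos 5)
Your proposal is correct and takes essentially the same approach as the paper: both prove the covering $\mathcal{G}_{x,f}=\bigcup_{c\in\mathcal{G}_{x,f}(\mathfrak{o})}c\cdot\Omega_{x,f}$ via Artin's density lemma, both establish the monomorphism property by using \Cref{testinglemma} with a section $a$ moving everything into $\Omega_{x,f}$, and both derive the generic fiber identification from \Cref{remarkfuncotriality} together with $\overline{G}=(G\times_k G)\cdot\overline{\Omega}$. The only cosmetic difference is that you conclude via ``étale monomorphism $\Rightarrow$ open immersion'' after exhibiting $i$ as locally an open immersion, whereas the paper invokes ``flat, locally-finitely-presented monomorphism $\Rightarrow$ open immersion''; both are the same tag \cite[025G]{stacks-project}, and your route has the minor merit of spelling out the $(\mathcal{G}_{x,f}\times_{\mathfrak{o}}\mathcal{G}_{x,f})$-equivariance of $i$ and the identification $i|_{\Omega_{x,f}}=j\circ\mu$, which the paper uses implicitly when it says ``Recall that $\Omega_{x,f}$ is an open subscheme of $\overline{\Omega_{x,f}}\subset\overline{\mathcal{G}_{x,f}}$'' and then deduces flatness from \Cref{claim3}.
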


\begin{proof}
    To show that $i$ is an open immersion, by \cite[025G]{stacks-project}, it suffices to verify that $i$ is a flat monomorphism, locally of finite presentation. 
    \begin{claim}\label{claim3}
        The group scheme $\mathcal{G}_{x,f}$ is covered by the translates of $\Omega_{x,f}$ by the sections in $\mathcal{G}_{x,f}(\mathfrak{o})$.
    \end{claim}
    \noindent We do not find an appropriate reference for this claim, for completeness, we include a proof.
    \begin{proof}[Proof of \Cref{claim3}]
        For any point $g\in\mathcal{G}_{x,f}$, we can assume that $g\in\mathcal{G}_{x,f}(k)$ where $k$ is a field over $\mathfrak{o}$. Since $\Omega_{x,f}$ is $\mathfrak{o}$-dense in $\mathcal{G}_{x,f}$, the open subscheme $(\Omega_{x,f})_k\cdot g^{-1}$ is dense in $(\mathcal{G}_{x,f})_k$. Then, again by appealing to Artin's \cite[\S~5.3, Lemma~7]{BLR} which asserts that for any point $t\in \Spec(\mathfrak{o})$ the set $\{a(t)\vert a\in \mathcal{G}_{x,f}(\mathfrak{o})\}$ is dense in a connected component of the fiber $(\mathcal{G}_{x,f})_t$, we can find a section $c\in \mathcal{G}_{x,f}(\mathfrak{o})$ whose base change $c_k$ has image in $(\Omega_{x,f})_k\cdot g^{-1}$. This means that $g\in c^{-1}\cdot \Omega_{x,f}$. 
    \end{proof}
    \noindent  Recall that $\Omega_{x,f}$ is an open subscheme of the open subscheme $\overline{\Omega_{x,f}}\subset \overline{\mathcal{G}_{x,f}}$ via \Cref{eqembeddingofbigcell}. Hence, by \Cref{claim3}, $i$ is flat. Since $\mathcal{G}_{x,f}$ and $\overline{\mathcal{G}_{x,f}}$ are both locally of finite presentation over $\Spec(\mathfrak{o})$, by \cite[06Q6]{stacks-project}, so is $i$.
    
    To show that $i$ is a monomorphism, for a test $\mathfrak{o}$-scheme $S'$, we consider two sections $g_1,g_2\in \mathcal{G}_{x,f}(S')$ such that $(g_1,e,e)\sim (g_2,e,e)$. Then we apply \cite[exposé~XVIII, proposition~1.7]{SGA3II} to the $S'$-dense open subscheme $C\coloneq(\Omega_{x,f})_{S'}\cdot g_1^{-1}\bigcap (\Omega_{x,f})_{S'}\cdot g_2^{-1}\subset (\mathcal{G}_{x,f})_{S'}$ to get an fppf-cover $S''\rightarrow S'$ and a section $c\in C(S'')$. Then $cg_1, cg_2\in \Omega_{x,f}(S'')$, and, by the definition of the rational morphism $A$ (see \Cref{theoremrationalaction}) and \Cref{testinglemma}, we have that $cg_1=cg_2$, hence also $g_1=g_2$.

    The density of $\mathcal{G}_{x,f}$ in $\overline{\mathcal{G}_{x,f}}$ follows from the density of $\Omega_{x,f}$ in $\overline{\Omega_{x,f}}$ and \Cref{quasiprojectivityandsmoothness}(1).

    To investigate the generic fiber of $\overline{\mathcal{G}_{x,f}}$, by the functoriality \Cref{remarkfuncotriality}, $(\overline{\mathcal{G}_{x,f}})_k$ is isomorphic to the quotient of $G\times_k \overline{\Omega}\times_k G$ with respect to the equivalence relation given by applying \Cref{equivalencerelation} to the rational morphism $A_k: G\times_k\overline{\Omega}\times_k G\dashrightarrow \overline{\Omega}$. Then the last claim follows from the facts that $A_k$ coincides with the rational morphism defined by the group law of $G$ and that $\overline{G}=(G\times_k G)\cdot\overline{\Omega}$ (which follows from \Cref{threlativewonderfulcompsplitcase} (iii) and \Cref{propquasisplitwonderfulcomp}).
\end{proof}

\subsubsection{The case when $f(0)\textgreater 0$}\label{f0biggerthat0}
If $f(0)\textgreater 0$, to build the desired $\overline{\mathcal{G}_{x,f}}$ in \Cref{introtheorem1}, it suffices to define $\overline{\mathcal{G}_{x,f}}$ to be the gluing of $\overline{G}$ and $\mathcal{G}_{x,f}$. The fact that the open immersion $G\hookrightarrow \overline{G}$ extends to an open immersion $\mathcal{G}_{x,f}\hookrightarrow\overline{\mathcal{G}_{x,f}}$ follows from the open immersion in \Cref{eqembeddingofbigcell} and the structure of special fiber $(\mathcal{G}_{x,f})_{\kappa}$ \Cref{unipotentradicalspecialfiber} (2).
The condition (ii) of \Cref{introtheorem1} follows from the computation \Cref{dilatationofaffineline}.

\begin{remark}
    Note that, since $\mathfrak{o}$ is a local ring, it is a direct consequence of \Cref{introtheorem1} (ii) and (2) that we have the bijections of $\mathfrak{o}$-points (when $f(0)\textgreater 0$)
    $$\overline{\mathcal{G}_{x,f}}(\mathfrak{o})\longleftrightarrow \Big(\prod_{a\in\Phi^{-,\red}}\mathcal{U}_{a,x,f}(\mathfrak{o})\Big)\times \overline{\mathcal{T}^{(f(0))}}(\mathfrak{o})\times \Big(\prod_{a\in\Phi^{+,\red}}\mathcal{U}_{a,x,f}(\mathfrak{o})\Big)\longleftrightarrow \mathcal{G}_{x,f}(\mathfrak{o}),$$
    where the second bijection follows from the definition of $\overline{\mathcal{T}^{(f(0))}}$.
\end{remark}

\subsection{Special fiber of the wonderful embedding $\overline{\mathcal{G}_{x,f}}$}\label{subsectionspecialfiber}

In this section, we study the special fiber of $\overline{\mathcal{G}_{x,f}}$. Since, by construction, \Cref{introtheorem1} (2) is clear when $f(0)\textgreater 0$, we will assume that $f(0)=0$ in the rest of \Cref{subsectionspecialfiber}.

By the functoriality of \Cref{definitionwonderfulembedding}, the special fiber $(\overline{\mathcal{G}_{x,f}})_{\kappa}$ is isomorphic to the quotient sheaf $(\mathcal{G}_{x,f})_{\kappa}\times_{\kappa}(\overline{\Omega_{x,f}})_{\kappa}\times_{\kappa}(\mathcal{G}_{x,f})_{\kappa}/\sim_{\kappa}$ over $\Sch/\kappa$, where the equivalence relation $\sim_{\kappa}$ is defined by the base change $A_{\kappa}:(\mathcal{G}_{x,f})_{\kappa}\times_{\kappa}(\overline{\Omega_{x,f}})_{\kappa}\times_{\kappa}(\mathcal{G}_{x,f})_{\kappa}\dashrightarrow (\overline{\Omega_{x}})_{\kappa}$ of the $\mathfrak{o}$-rational morphism $A$ in \Cref{theoremrationalaction} in the same way as \Cref{equivalencerelation}. 

 We keep the notations of \Cref{groupsetup}. For $a\in\Phi$, let $e_{\widetilde{a}}$ be the ramification index of the field extension $k_{\widetilde{a}}/k$, and let $\pi_{\widetilde{a}}\in \mathfrak{o}_{\widetilde{a}}$ be a uniformizer. Since $k$ is strictly Henselian, by for instance \cite[Chapter~III, \S~5, Corollary~3]{localfieldsserre}, the field extension $k_{\widetilde{a}}/k$ is totally ramified. Hence, by \cite[Chapter~III, \S~6, Lemma~3]{localfieldsserre}, we can choose $\{1, \pi_{\widetilde{a}},\pi_{\widetilde{a}}^2,\cdots,\pi_{\widetilde{a}}^{e_{\widetilde{a}}-1} \}$ as an $\mathfrak{o}$-basis of $\mathfrak{o}_{\widetilde{a}}$, and we denote by $\{\overline{1}, \overline{\pi_{\widetilde{a}}},\overline{\pi_{\widetilde{a}}^2},\cdots,\overline{\pi_{\widetilde{a}}^{e_{\widetilde{a}}-1}} \}$ the image of this basis in the quotient module $\mathfrak{o}_{\widetilde{a}}/\pi$. By a computation following the definition of Weil restriction, we have an isomorphism of $\kappa$-group schemes:
\begin{align}\label{iso2}
    \mathbb{G}_{m,\kappa} \times_{\kappa} \prod_{e_{\widetilde{a}}-1}\mathbb{A}_{1,\kappa}
      \;\;&\xlongrightarrow{\cong} 
    (\Res_{\mathfrak{o}_{\widetilde{a}}/\mathfrak{o}}(\mathbb{G}_{m,\mathfrak{o}_{\widetilde{a}}}))_{\kappa}=\Res_{(\mathfrak{o}_{\widetilde{a}}/\pi)/ \kappa}(\mathbb{G}_{m,\mathfrak{o}_{\widetilde{a}}/\pi})\\
    (x_0, x_1,\cdots,x_{(e_{\widetilde{a}}-1)})&\longmapsto x_0\otimes \overline{1}+ x_0x_1\otimes \overline{\pi_{\widetilde{a}}}+\cdots + x_0x_{(e_{\widetilde{a}}-1)}\otimes \overline{\pi_{\widetilde{a}}^{e_{\widetilde{a}}-1}} \in R\otimes_{\kappa}(\mathfrak{o}_{\widetilde{a}}/\pi), \nonumber
\end{align}
where $R$ is a test $\kappa$-algebra and the group law of $\prod_{e_{\widetilde{a}}-1}\mathbb{A}_{1,\kappa}$ is given by the following embedding:
\begin{align*}
    \prod_{e_{\widetilde{a}}-1}\mathbb{A}_{1,\kappa}&\;\;\;\;\longhookrightarrow \;\;\;\;\;\;\;\;\;\;\;\;\;\;\;\;\GL_{e_{\widetilde{a}},\kappa}\\
    (x_1,...,x_{(e_{\widetilde{a}}-1)})\;\;\;&\longmapsto  
    \begin{pmatrix}
        1 & x_1 &x_2  &x_3    & \cdots &  x_{(e_{\widetilde{a}}-2)}  & x_{(e_{\widetilde{a}}-1)}\\
        0 &1    &x_1  &x_2    & \cdots &  x_{(e_{\widetilde{a}}-3)}  &  x_{(e_{\widetilde{a}}-2)}\\
        0 &0    &1    &x_1    & \cdots &  x_{(e_{\widetilde{a}}-4)}  &  x_{(e_{\widetilde{a}}-3)}\\
        \vdots  &\vdots&\vdots &\vdots &\ddots  &\vdots    &\vdots \\
        0 &0 &0 &0  &\cdots    &x_1       &x_2\\
        0 &0 &0 &0  &\cdots &1    &x_1 \\
        0 &0 &0 &0  &\cdots         &0  & 1
    \end{pmatrix}.
\end{align*}

\noindent Hence we have that $ \mathscr{R}_u((\Res_{\mathfrak{o}_{\widetilde{a}}/\mathfrak{o}}(\mathbb{G}_{m,\mathfrak{o}_{\widetilde{a}}}))_{\kappa})= \prod_{e_{\widetilde{a}}-1}\mathbb{A}_{1,\kappa}$ under the identification of \Cref{iso2}. We let $\overline{\mathscr{T}^+}\coloneq \mathscr{R}_u(\mathcal{T}_{\kappa})$ which is isomorphic to $\prod_{a\in \Delta}(\prod_{e_{\widetilde{a}}-1}\mathbb{A}_{1,\kappa})$ via \Cref{equationisomorphismoftorusintegralmodel} and \Cref{iso2}, and let $\overline{\mathscr{S}}\coloneq \prod_{a\in\Delta}\mathbb{A}_{1,\kappa}\subset (\overline{\mathcal{T}})_{\kappa}$, where each $\mathbb{A}_{1,\kappa}\subset \Res_{(\mathfrak{o}_{\widetilde{a}}/\pi)/ \kappa}(\mathbb{A}_{1,\mathfrak{o}_{\widetilde{a}}/\pi})$ is the first component with respect to the ordered basis $\{\overline{1}, \overline{\pi_{\widetilde{a}}},\overline{\pi_{\widetilde{a}}^2},\cdots,\overline{\pi_{\widetilde{a}}^{e_{\widetilde{a}}-1}} \}$. Then we have
$$(\overline{\mathcal{T}})_{\kappa}=\overline{\mathscr{S}}\times_{\kappa}\overline{\mathscr{T}^+}.$$

Let $\mathscr{S}$ be the schematic closure of $S$ in $\mathcal{G}_{x,f}$, and let $\mathrm{S}$ be the isomorphic image of the special fiber $\mathscr{S}_{\kappa}$ in the maximal reductive quotient $\mathsf{G}_{x,f}$. By \cite[Theorem~8.5.14~(1)]{Bruhattitsnewapproach}, $\mathrm{S}\subset \mathsf{G}_{x,f}$ is a maximal torus. We will adopt the natural identifications of the character lattices and of cocharacter lattices:
\begin{align}
    X^*(S)\cong X^*(\mathscr{S})\cong X^*(\mathrm{S});\\
    X_*(S)\cong X_*(\mathscr{S})\cong X_*(\mathrm{S}).
\end{align}

\noindent Let $\Phi_{x,f}\coloneq \{a\in \Phi\vert f(a)+f(-a)=0\;\text{and}\; f(a)\in \Gamma_a'\}$ which is identified with the root system of $\mathsf{G}_{x,f}$ with respect to the maximal torus $\mathrm{S}$ under the above identifications, cf. \cite[Theorem~8.5.14~(3)]{Bruhattitsnewapproach}, where $\Gamma_a'$ is recalled in \Cref{subsectionsetofvalues}. Let $\Delta_{x,f}\coloneq \Delta\bigcap \Phi_{x,f}\subset \Phi_{x,f}$ which is a set of simple roots. Moreover, the negative Weyl chamber $\mathfrak{C}\subset X_*(S)_{\mathbb{R}}$ defined by the simple roots $\Delta$ is a cone inside the negative Weyl chamber $\mathfrak{C}_{x,f}\subset X_*(\mathrm{S})_{\mathbb{R}}$ defined by $\Delta_{x,f}$.

\subsubsection{The induced rational action}
We will continue to use the convention of \Cref{integralunipotentconvention}.
We denote by $\mathsf{G}_{x,f}$ the maximal reductive quotient of the special fiber $(\mathcal{G}_{x,f})_{\kappa}$. To study the quotient sheaf $(\overline{\mathcal{G}_{x,f}})_{\kappa}$, we introduce more notations. 
Let $$R_u^+\subset (\mathcal{U}_{\Phi^+,x,f})_{\kappa} \;\;\text{and}\;\;R_u^-\subset (\mathcal{U}_{\Phi^-,x,f})_{\kappa}$$
be the images of the special fibers of the natural morphisms $\mathcal{U}_{\Phi^+,x,f^+}\rightarrow \mathcal{U}_{\Phi^+,x,f}$ and $\mathcal{U}_{\Phi^-,x,f^+}\rightarrow \mathcal{U}_{\Phi^-,x,f}$ respectively. Let 
$$\overline{\Omega_{\red}}\coloneq (\mathcal{U}_{\Phi^-,x,f})_{\kappa}/R_u^-\times_{\kappa}(\overline{\mathcal{T}})_{\kappa}/\overline{\mathscr{T}^+}\times_{\kappa} (\mathcal{U}_{\Phi^+,x,f})_{\kappa}/R_u^+.$$
For notational simplicity, in the following lemma and its proof, we will fix a test $\kappa$-scheme and all points exist with respect to this test scheme.

\begin{lemma}\label{lemmaquotientrational}
    Let $g_1, g_2\in (\mathcal{G}_{x,f})_{\kappa}$, $u\in (\mathcal{U}_{\Phi^-,x,f})_{\kappa}, v\in (\mathcal{U}_{\Phi^+,x,f})_{\kappa}$ and $\overline{t}\in(\overline{\mathcal{T}})_{\kappa}$ such that 
    $$g_1\cdot u\cdot \overline{t}\cdot v\cdot g_2\in (\overline{\Omega_{x,f}})_{\kappa}.$$ 
    Then, for $r_1, r_2\in \mathscr{R}_u((\mathcal{G}_{x,f})_{\kappa})$ and $r^+\in R_u^+,r^-\in R_u^-, t^+\in \overline{\mathscr{T}^+}$, we have 
    $$(g_1\cdot r_1)\cdot (u\cdot r^-)\cdot(\overline{t}\cdot t^+)\cdot (v\cdot r^+)\cdot (g_2\cdot r_2)\in (\overline{\Omega_{x,f}})_{\kappa}.$$
    Moreover, $E\coloneq g_1\cdot u\cdot \overline{t}\cdot v\cdot g_2$ and $\widetilde{E}\coloneq (g_1\cdot r_1)\cdot (u\cdot r^-)\cdot(\overline{t}\cdot t^+)\cdot (v\cdot r^+)\cdot (g_2\cdot r_2)$ have the same image in $\overline{\Omega_{\red}}$.
\end{lemma}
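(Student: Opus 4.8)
The plan is to exploit the \emph{honest} (not merely rational) action of $(\mathcal{G}_{x,f})_{\kappa}\times_{\kappa}(\mathcal{G}_{x,f})_{\kappa}$ on the algebraic space $(\overline{\mathcal{G}_{x,f}})_{\kappa}$ provided by \Cref{groupaction}, so that associativity holds unconditionally and the only point to verify is that the relevant sections land in the open subscheme $(\overline{\Omega_{x,f}})_{\kappa}$. The first step is to absorb both the passage from $u\overline{t}v$ to $\widetilde{u}$ and the replacements $g_i\rightsquigarrow g_ir_i$ into a single left/right multiplication by the unipotent radical. Using the explicit decomposition $\mathscr{R}_u((\mathcal{G}_{x,f})_{\kappa})=R_u^-\times_{\kappa}\overline{\mathscr{T}^+}\times_{\kappa}R_u^+$ of \Cref{unipotentradicalspecialfiber}(1), together with the normality of $R_u^-$ in $(\mathcal{U}_{\Phi^-,x,f})_{\kappa}$, of $R_u^+$ in $(\mathcal{U}_{\Phi^+,x,f})_{\kappa}$, and the fact that multiplication by $\overline{\mathscr{T}^+}$ fixes the leading coefficient in the basis $\{\overline{1},\overline{\pi_{\widetilde{a}}},\dots\}$ (hence fixes the $\overline{\mathscr{T}^+}$-coset of the middle coordinate), I would write $\widetilde{u}=\rho_1\cdot(u\overline{t}v)\cdot\rho_2$ with $\rho_1\in R_u^-$ and $\rho_2\in\overline{\mathscr{T}^+}R_u^+$. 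Then, by associativity and the normality of $\mathscr{R}_u((\mathcal{G}_{x,f})_{\kappa})$ in $(\mathcal{G}_{x,f})_{\kappa}$, one obtains $\widetilde{E}=s_1\cdot E\cdot s_2$ with $s_1,s_2\in\mathscr{R}_u((\mathcal{G}_{x,f})_{\kappa})$ and $E=g_1(u\overline{t}v)g_2\in(\overline{\Omega_{x,f}})_{\kappa}$. This reduces the lemma to the single assertion: \emph{$(\overline{\Omega_{x,f}})_{\kappa}$ is stable under $\mathscr{R}_u((\mathcal{G}_{x,f})_{\kappa})\times_{\kappa}\mathscr{R}_u((\mathcal{G}_{x,f})_{\kappa})$, and the quotient morphism $(\overline{\Omega_{x,f}})_{\kappa}\to\overline{\Omega_{\red}}$ is invariant under this action.}

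To prove the reduced assertion, write $E=u'\overline{t}'v'$ and $s_i=s_i^-\sigma_i^+s_i^+$ according to the decomposition above. The leftmost factor $s_1^-$ and the rightmost factor $s_2^+$ act by left/right multiplication inside $(\mathcal{U}_{\Phi^\mp,x,f})_{\kappa}$: they keep the element in the big cell and alter the outer cell-coordinates only within $R_u^\mp$-cosets, hence are invisible in $\overline{\Omega_{\red}}$. The factors $\sigma_1^+,\sigma_2^+\in\overline{\mathscr{T}^+}$ conjugate the unipotent cell-coordinates within their $R_u^\pm$-cosets (normality of $\mathscr{R}_u$, plus the fact that $\mathcal{T}_{\kappa}$ normalizes every root subgroup) and translate the $\overline{\mathcal{T}}_{\kappa}$-coordinate within its $\overline{\mathscr{T}^+}$-coset, again invisible in $\overline{\Omega_{\red}}$. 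Peeling these off, then conjugating the remaining $s_1^+$ past $u'$ and $s_2^-$ past $v'$ — which replaces them by $u'^{-1}s_1^+u'$ and $v's_2^-v'^{-1}$, still in $\mathscr{R}_u((\mathcal{G}_{x,f})_{\kappa})$ by normality — and peeling off once more, I am reduced to showing that $\widetilde{s}_1^+\cdot\overline{t}'\cdot\widetilde{s}_2^-\in(\overline{\Omega_{x,f}})_{\kappa}$ with $(\mathcal{U}_{\Phi^\mp,x,f})_{\kappa}$-coordinates in $R_u^\mp$ and $\overline{\mathcal{T}}_{\kappa}$-coordinate congruent to $\overline{t}'$ modulo $\overline{\mathscr{T}^+}$, for $\widetilde{s}_1^+\in R_u^+$, $\widetilde{s}_2^-\in R_u^-$. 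Since $R_u^\pm$ is the image of $(\mathcal{U}_{\Phi^\pm,x,f^+})_{\kappa}$, the containment $\widetilde{s}_1^+\overline{t}'\widetilde{s}_2^-\in(\overline{\Omega_{x,f}})_{\kappa}$ and the statement about the outer coordinates are exactly \Cref{theoremrationalaction}(1),(2) read on special fibers; the statement about the middle coordinate modulo $\overline{\mathscr{T}^+}$ is read off the explicit formula for $\theta_a$ in the proof of \Cref{suslcase}, where the torus correction $T_a(\epsilon)$ has argument a principal unit (as established there) and hence reduces into $\overline{\mathscr{T}^+}$. Unwinding the peeled-off factors then yields the reduced assertion, and with it the lemma.

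The main obstacle I expect lies in the bookkeeping of the reduction: each time an $f^+$-level unipotent is conjugated past an $f$-level cell-coordinate, one must check that the resulting element, and all the Steinberg commutator terms it spawns, are again $f^+$-level and therefore lie in $\mathscr{R}_u((\mathcal{G}_{x,f})_{\kappa})$. This is precisely where the concavity of $f$ is used, and it is the content already secured by \Cref{suslcase} and \Cref{distributioninvariant}; so no genuinely new input is required, but the argument must be arranged so that the final appeal to \Cref{theoremrationalaction} is legitimate. A secondary delicate point is the invariance of the $\overline{\mathscr{S}}$-coordinate (equivalently, the $\overline{\mathscr{T}^+}$-coset of the middle coordinate) under the various conjugations and under the action of $R_u^\pm$; this becomes harmless once one keeps in mind that multiplication by $\overline{\mathscr{T}^+}$ fixes the leading coefficient in the chosen basis and that the torus factors produced in \Cref{suslcase} are principal units.
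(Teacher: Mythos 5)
Your proposal is correct and, despite the slightly more structured framing (explicit reduction to a stability-plus-invariance claim for $(\overline{\Omega_{x,f}})_{\kappa}$ under the $\mathscr{R}_u\times\mathscr{R}_u$-action, together with the appeal to \Cref{groupaction} for unconditional associativity), it runs on precisely the same engine as the paper's proof: normality of $\mathscr{R}_u((\mathcal{G}_{x,f})_{\kappa})$ to rewrite $\widetilde{E}=s_1\cdot E\cdot s_2$ with $s_i\in\mathscr{R}_u$, the decomposition $\mathscr{R}_u=R_u^-\times_{\kappa}\overline{\mathscr{T}^+}\times_{\kappa}R_u^+$ of \Cref{unipotentradicalspecialfiber}(1), peeling/conjugation through the big cell, and finally the "switching across the torus" supplied by \Cref{theoremrationalaction}/\Cref{suslcase}. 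The only cosmetic difference is in justifying why the $\overline{\mathscr{S}}$-coordinate is unchanged: the paper argues by $\mathscr{S}_{\kappa}$-normality of $R_u^{\pm}$ plus density of $\mathscr{S}_{\kappa}$ in $\overline{\mathscr{S}}$, while you read it off the explicit $\theta_a$-formula, noting that the torus correction $T_a(\epsilon)$ has $\epsilon$ a principal unit (by concavity of $f$ at $f^+$-level inputs) so its special fiber lands in $\overline{\mathscr{T}^+}$; both are valid and amount to the same observation.
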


\begin{proof}
   The proof is simply a computation. More specific,
   first by our hypothesis, we can write 
   
   $$E=u'\cdot \overline{t}'\cdot t'^{+}\cdot v',$$ 
   where $u'\in (\mathcal{U}_{\Phi^-,x,f})_{\kappa}, v'\in (\mathcal{U}_{\Phi^+,x,f})_{\kappa}, \overline{t}'\in\overline{\mathscr{S}}$ and $t'^{+}\in\overline{\mathscr{T}^+}.$
   Since $\mathscr{R}_u((\mathcal{G}_{x,f})_{\kappa})$ is by definition normalized by $(\mathcal{G}_{x,f})_{\kappa}$ and we have the following equation
    \begin{align*}
       \widetilde{E}\coloneq &(g_1\cdot r_1)\cdot (u\cdot r^-)\cdot(\overline{t}\cdot t^+)\cdot (v\cdot r^+)\cdot (g_2\cdot r_2) \notag\\
        =& g_1\cdot (r_1\cdot (u\cdot r^-\cdot u^{-1}))\cdot g_1^{-1}\cdot( g_1\cdot u\cdot \overline{t}\cdot v\cdot g_2)\cdot (g_2^{-1}\cdot v^{-1}\cdot t^+\cdot v\cdot g_2)\cdot g_2^{-1}\cdot r^+\cdot g_2\cdot r_2,
    \end{align*}
   we can change notations by writing 
   \begin{align}
       \widetilde{E}=&\widetilde{r_1}\cdot (u'\cdot \overline{t}'\cdot t'^{+}\cdot v')\cdot \widetilde{r_2}\notag\\
       =&u'\cdot( u'^{-1}\cdot \widetilde{r_1}\cdot u')\cdot \overline{t}'\cdot t'^{+}\cdot (v'\cdot \widetilde{r_2}\cdot v'^{-1} )\cdot v'\label{equation1}
   \end{align}
   where $\widetilde{r_1},\widetilde{r_2}\in \mathscr{R}_u((\mathcal{G}_{x,f})_{\kappa})$.
   We can write
   $$u'^{-1}\cdot \widetilde{r_1}\cdot u'=  u_1^-\cdot \dot{t}^+_1 \cdot u_1^+ \in   R_u^-\times_{\kappa}\overline{\mathscr{T}^+}\times_{\kappa} R^+_u\;\;\text{and}\;\; v'\cdot \widetilde{r_2}\cdot v'^{-1}= v_1^-\cdot \dot{t}^+_2 \cdot v_1^+\in R_u^-\times_{\kappa}\overline{\mathscr{T}^+}\times_{\kappa} R^+_u.$$
   We substitute them into \Cref{equation1} and continue the computation of $\widetilde{E}$
   \begin{align*}
       \widetilde{E}=&u'\cdot(   u_1^-\cdot \dot{t}^+_1 \cdot u_1^+ )\cdot \overline{t}'\cdot t'^{+}\cdot (v_1^-\cdot \dot{t}^+_2 \cdot v_1^+)\cdot v'\\
       =&u'\cdot   u_1^-\cdot (\dot{t}^+_1 \cdot u_1^+ \cdot (\dot{t}^{+}_1)^{-1})\cdot \overline{t}'\cdot \dot{t}^+_1t'^{+}\dot{t}^+_2\cdot ((\dot{t}^+_2)^{-1}v_1^-\cdot \dot{t}^+_2 )\cdot v_1^+\cdot v'
   \end{align*}
   By \Cref{theoremrationalaction}, we can "switch" the positive part $\dot{t}^+_1 \cdot u_1^+ \cdot (\dot{t}^{+}_1)^{-1}$ and the negative part $(\dot{t}^+_2)^{-1}v_1^-\cdot \dot{t}^+_2 $ across $\overline{t}'\cdot \dot{t}^+_1t'^{+}\dot{t}^+_2$ without changing the $\overline{\mathscr{S}}$-component $\overline{t}'$ (since $R_u^+$ is normalized by $(\mathscr{S})_\kappa$, by \Cref{unipotentradicalspecialfiber} (1), we see that this is the case when $\overline{t}'\in (\mathscr{S})_{\kappa}$; then the invariance of $\overline{t}'$ follows from the density of $(\mathscr{S})_{\kappa}$ in $(\overline{\mathscr{S}})_{\kappa}$). More precisely,
   there exist $\dot{r}^-\in R^-_u, \dot{r}^+\in R^+_u, \dot{t}^+\in \overline{\mathscr{T}^+}$ such that 
   $$\widetilde{E}=u'\cdot   u_1^-\cdot \dot{r}^-\cdot \overline{t}'\cdot \dot{t}^+\cdot \dot{r}^+\cdot v_1^+\cdot v'.$$
   Finally we finish our proof by noting that $R_u^-$ (resp., $R_u^+$) is normalized by $(\mathcal{U}_{\Phi^-,x,f})_{\kappa}$ (resp., $(\mathcal{U}_{\Phi^+,x,f})_{\kappa}$). 
\end{proof}

Thanks to \Cref{lemmaquotientrational}, the special fiber $A_{\kappa}$ of the $\mathfrak{o}$-rational morphism $A$ naturally induces a rational action $$(A_{\kappa})_{\red}:\mathsf{G}_{x,f}\times_{\kappa}\overline{\Omega_{\red}}\times_{\kappa}\mathsf{G}_{x,f}\dashrightarrow\overline{\Omega_{\red}}.$$ and $\{e\}\times \overline{\Omega_{\red}}\times \{e\}\subset \Dom((A_{\kappa})_{\red})$.

\subsubsection{Quotient by a non-free action}
Note that the quotient sheaf in \Cref{introtheorem1} (1) is formed with respect to a \emph{non-free} action. It is worth to recall that the fppf sheaf quotient of an algebraic space over a base scheme with respect to a \emph{free} action by a flat and locally finitely presented algebraic group space over the same base is an algebraic space, see \cite[06PH]{stacks-project}. Without the freeness condition, it seems more subtle to determine the representability of the quotient sheaf by algebraic space or scheme. The following result provides an interesting example.

\begin{proposition}\label{toroidalspecialfiber}
The quotient sheaf $(\overline{\mathcal{G}_{x,f}})_{\kappa}/(\mathscr{R}_u((\mathcal{G}_{x,f})_{\kappa})\times_{\kappa}\mathscr{R}_u((\mathcal{G}_{x,f})_{\kappa})$ is isomorphic to the quotient sheaf of $\mathsf{G}_{x,f}\times_{\kappa}\overline{\Omega_{\red}}\times_{\kappa}\mathsf{G}_{x,f}$ with respect to the equivalence relation defined by the rational action $(A_{\kappa})_{\red}$.

Furthermore, $(\overline{\mathcal{G}_{x,f}})_{\kappa}/(\mathscr{R}_u((\mathcal{G}_{x,f})_{\kappa})\times_{\kappa}\mathscr{R}_u((\mathcal{G}_{x,f})_{\kappa})$ is isomorphic to the toroidal embedding $X_{\mathfrak{C}}$ of the $\mathsf{G}_{x,f}$ corresponding to the cone $\mathfrak{C}\subset X_*(S)_{\mathbb{R}}$ defined by the simple roots $\Delta$ which lies inside the negative Weyl chamber $\mathfrak{C}_{x,f}\subset X_*(\mathrm{S})_{\mathbb{R}}$ defined by $\Delta_{x,f}$.
   
\end{proposition}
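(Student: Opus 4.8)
The plan is to prove the two assertions of \Cref{toroidalspecialfiber} in sequence, first identifying the quotient sheaf abstractly and then recognizing it as the announced toroidal embedding.

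\emph{First step: the quotient sheaf is the solution to a rational action.}
Recall from \Cref{f0equalto0} and the functoriality of \Cref{definitionwonderfulembedding} (cf. \Cref{remarkfuncotriality}) that $(\overline{\mathcal{G}_{x,f}})_{\kappa}\cong (\mathcal{G}_{x,f})_{\kappa}\times_{\kappa}(\overline{\Omega_{x,f}})_{\kappa}\times_{\kappa}(\mathcal{G}_{x,f})_{\kappa}/\sim_{A_{\kappa}}$. First I would observe that quotienting further by $\mathscr{R}_u((\mathcal{G}_{x,f})_{\kappa})\times_{\kappa}\mathscr{R}_u((\mathcal{G}_{x,f})_{\kappa})$ acting by right translation on the two outer factors amounts, after composing with the projections $(\mathcal{G}_{x,f})_{\kappa}\to \mathsf{G}_{x,f}$ and $(\overline{\Omega_{x,f}})_{\kappa}\to\overline{\Omega_{\red}}$, to forming the quotient of $\mathsf{G}_{x,f}\times_{\kappa}\overline{\Omega_{\red}}\times_{\kappa}\mathsf{G}_{x,f}$ by $\sim_{(A_{\kappa})_{\red}}$. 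The content of this identification is exactly \Cref{lemmaquotientrational}: it shows that the equivalence class of a triple in the bigger quotient depends only on the images of its entries in the smaller factors, and conversely that two triples with the same reduced images are equivalent (by running the switching argument in reverse). So I would phrase this as: the natural map $\mathcal{G}_{x,f}\times_{\kappa}\overline{\Omega_{x,f}}\times_{\kappa}\mathcal{G}_{x,f}\to \mathsf{G}_{x,f}\times_{\kappa}\overline{\Omega_{\red}}\times_{\kappa}\mathsf{G}_{x,f}$ is an fppf-cover which carries $\sim_{A_{\kappa}}$ (refined by the $\mathscr{R}_u\times\mathscr{R}_u$-action) bijectively onto $\sim_{(A_{\kappa})_{\red}}$, whence the quotient sheaves agree. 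By \Cref{algebraicspace} (applied to the smooth group $\mathsf{G}_{x,f}$, the flat finitely presented scheme $\overline{\Omega_{\red}}$, and the rational action $(A_{\kappa})_{\red}$) this quotient is an algebraic space over $\kappa$; it contains $\mathsf{G}_{x,f}$ as a dense open subscheme by \Cref{openimmersion}, is smooth and quasi-projective over $\kappa$ by the same argument as in \Cref{quasiprojectivityandsmoothness} (using that $\{e\}\times\overline{\Omega_{\red}}\times\{e\}\subset\Dom((A_{\kappa})_{\red})$ and Artin's density lemma), hence is represented by a scheme; call it $X$.

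\emph{Second step: $X$ is the toroidal embedding $X_{\mathfrak{C}}$.}
Here I would use \Cref{toroidalembeddingtheorem}. The scheme $X$ carries a $(\mathsf{G}_{x,f}\times_{\kappa}\mathsf{G}_{x,f})$-action, equivariantly contains $\mathsf{G}_{x,f}$ (and hence $(\mathsf{G}_{x,f}\times\mathsf{G}_{x,f})/\Diag$) as a dense open, and is normal since it is smooth. To see it is a toroidal embedding I must produce a $\kappa$-morphism $X\to \overline{(\mathsf{G}_{x,f})_{\ad}}$ compatible with the adjoint quotient. The adjoint quotient of $\mathsf{G}_{x,f}$ has root system $\Phi_{x,f}$ with simple roots $\Delta_{x,f}$, and its wonderful compactification is built from the big cell $(U^-_{\red})\times \prod_{\Delta_{x,f}}\mathbb{A}_1\times (U^+_{\red})$; the big cell $\overline{\Omega_{\red}}$ of $X$ is $(\mathcal{U}_{\Phi^-,x,f})_{\kappa}/R_u^-\times\overline{\mathscr{S}}\times(\mathcal{U}_{\Phi^+,x,f})_{\kappa}/R_u^+$ with $\overline{\mathscr{S}}\cong\prod_{\alpha\in\Delta}\mathbb{A}_{1,\kappa}$, i.e. the toric variety $T_{\sigma}$ of $\mathrm{S}$ attached to the cone $\mathfrak{C}$ (generated by the coweights on which all $\alpha\in\Delta$ take values in $\mathbb{R}_{\le 0}$). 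I would define the morphism $X\to\overline{(\mathsf{G}_{x,f})_{\ad}}$ on the big cell by the obvious map $\overline{\mathscr{S}}\to \prod_{\Delta_{x,f}}\mathbb{A}_1$ induced by the inclusion $\Delta_{x,f}\subset\Delta$ (forgetting the coordinates indexed by $\Delta\setminus\Delta_{x,f}$, which is exactly the toric morphism corresponding to $\mathfrak{C}\subset\mathfrak{C}_{x,f}$), extend it equivariantly over $X=(\mathsf{G}_{x,f}\times\mathsf{G}_{x,f})\cdot\overline{\Omega_{\red}}$ — here the gluing/well-definedness of the extension is handled by the same rational-action formalism, or alternatively one checks the cocycle condition using \Cref{suslcase}-type $\SL_2/\SU_3$ computations, which for the adjoint group reduce to the classical ones — and verify the square with $\mathsf{G}_{x,f}\hookrightarrow\mathsf{G}_{x,f,\ad}$ commutes on the open cell, hence everywhere by density. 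Then \Cref{toroidalembeddingtheorem} identifies $X$ with $X_{\sigma}$ for the fan $\sigma=\{$faces of $\mathfrak{C}\}$, i.e. $X\cong X_{\mathfrak{C}}$, and matches the open cell $X_{\sigma,0}$ with $\overline{\Omega_{\red}}$ via the isomorphism $\overline{\mathscr{S}}\cong T_{\sigma}$.

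\emph{Main obstacle.}
The hard part will be the first step — proving that the fibers of the map from the $\mathcal{G}_{x,f}$-level triples to the $\mathsf{G}_{x,f}$-level triples are \emph{exactly} the $\sim_{(A_{\kappa})_{\red}}$-classes. The ``only if'' direction is \Cref{lemmaquotientrational} as stated, but for the equality of quotient \emph{sheaves} (not just their sets of $\kappa$-points) one must run the argument functorially over an arbitrary test $\kappa$-scheme and, crucially, show the converse: if two reduced triples are $(A_{\kappa})_{\red}$-equivalent then \emph{some} lifts of them are $A_{\kappa}$-equivalent after an fppf base change. This requires lifting the switching element through $\mathscr{R}_u((\mathcal{G}_{x,f})_{\kappa})$, which is possible because $\mathscr{R}_u$ is smooth and $\kappa$ is perfect (so the quotient map $(\mathcal{G}_{x,f})_{\kappa}\to\mathsf{G}_{x,f}$ admits sections fppf-locally), together with a careful bookkeeping of definition domains — one must stay inside $\Dom(A_{\kappa})$ throughout, which is where \Cref{theoremrationalaction}(1),(2) and the testing lemma \Cref{testinglemma} do the real work.
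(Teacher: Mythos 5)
Your first step is essentially the paper's and your identification of the crux is correct, but it stops short of the decisive computation. The paper proves the monomorphism of the comparison map by an explicit factorization: writing $g_1\omega g_2 = v^-\,\overline{t}\,v^+$ and $g_1'\omega'g_2' = v^-u^-\,\overline{t}\,t^+\,v^+u^+$, one rewrites
\begin{equation*}
g_1'\omega'g_2' = \bigl(v^-u^-(v^-)^{-1}\bigr)\cdot v^-\,\overline{t}\,v^+\cdot\bigl((v^+)^{-1}t^+v^+\bigr)u^+,
\end{equation*}
exhibiting the two sections as differing by a pair in $\mathscr{R}_u\times\mathscr{R}_u$ via normality of $\mathscr{R}_u$ in $(\mathcal{G}_{x,f})_\kappa$. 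Your ``running the switching argument in reverse'' gestures at this but doesn't supply it; this one line is what actually closes the gap you flag as the ``main obstacle.'' (The paper builds a map $H'$ from $(\overline{\mathcal{G}_{x,f}})_\kappa$ down to the reduced quotient, shows by \Cref{lemmaquotientrational} that it factors through the $\mathscr{R}_u\times\mathscr{R}_u$-quotient, and then checks the factored map is epi and mono — a cleaner bookkeeping than re-running \Cref{algebraicspace} on the reduced side, which you don't actually need.)

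Your second step takes a genuinely different route, and it is the more costly one. You try to verify the definition of toroidal embedding head-on by constructing a $\kappa$-morphism $X\to\overline{(\mathsf{G}_{x,f})_{\mathrm{ad}}}$ on the big cell and extending it equivariantly, which requires a cocycle/well-definedness check that you leave to ``the same rational-action formalism.'' The paper goes the other way around: it first identifies $\overline{\Omega_{\red}}$ with the open cell $X_{\mathfrak{C},0}\cong U^-\times T_{\mathfrak{C}}\times U^+$ of the known toroidal embedding $X_{\mathfrak{C}}$ (using the open immersion $\mathrm{S}\hookrightarrow\overline{\mathscr{S}}=\prod_{\Delta}\mathbb{A}_1$, $s\mapsto(a(s)^{-1})$), and then the morphism
\begin{equation*}
\mathsf{G}_{x,f}\times_{\kappa}\overline{\Omega_{\red}}\times_{\kappa}\mathsf{G}_{x,f}/\!\sim_{(A_\kappa)_{\red}}\;\longrightarrow\;X_{\mathfrak{C}},\qquad (g_1,\omega,g_2)\longmapsto g_1\cdot\omega\cdot g_2,
\end{equation*}
is automatic from the universal property of the quotient sheaf — no equivariant gluing to verify. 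It is a monomorphism by the same reasoning as \Cref{multiplicationlemma}, and an epimorphism because $X_{\mathfrak{C}}=(\mathsf{G}_{x,f}\times\mathsf{G}_{x,f})\cdot\overline{\Omega_{\red}}$ by \Cref{toroidalembeddingtheorem}. What the paper's approach buys is that one never needs to show $X$ satisfies the abstract definition of a toroidal embedding; one directly exhibits an isomorphism with $X_{\mathfrak{C}}$. Your approach is salvageable but would need the extension of the morphism to be carried out (via \Cref{corollaryextension}-type arguments, tracking definition domains), and you would additionally have to identify which fan $X$ corresponds to, which you do by matching open cells — at which point you are essentially redoing the paper's comparison anyway.
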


\begin{proof}
   By the functoriality of the definition of $\overline{\mathcal{G}_{x,f}}$ \Cref{remarkfuncotriality}, the special fiber $(\overline{\mathcal{G}_{x,f}})_{\kappa}$ can be identified with the quotient sheaf of $(\mathcal{G}_{x,f})_{\kappa}\times_{\kappa}(\overline{\Omega_{x,f}})_{\kappa}\times_{\kappa}(\mathcal{G}_{x,f})_{\kappa}$ obtained by applying \Cref{definitionwonderfulembedding} to the $\kappa$-rational morphism $A_{\kappa}$. Then, by the definition of $(A_{\kappa})_{\red}$, there is a natural well-defined morphism of sheaves over $\Sch/\kappa$:
    $$H'\coloneq (\overline{\mathcal{G}_{x,f}})_{\kappa}\longrightarrow \mathsf{G}_{x,f}\times_{\kappa}\overline{\Omega_{\red}}\times_{\kappa}\mathsf{G}_{x,f}/\sim_{(A_{\kappa})_{\red}}.$$
    By \Cref{lemmaquotientrational}, $H'$ naturally factors through $(\overline{\mathcal{G}_{x,f}})_{\kappa}/(\mathscr{R}_u((\mathcal{G}_{x,f})_{\kappa})\times_{\kappa}\mathscr{R}_u((\mathcal{G}_{x,f})_{\kappa})$:
    $$\xymatrix{
    (\overline{\mathcal{G}_{x,f}})_{\kappa}  \ar[rr]^{H'} \ar[dr]^{Q} && \mathsf{G}_{x,f}\times_{\kappa}\overline{\Omega_{\red}}\times_{\kappa}\mathsf{G}_{x,f}/\sim_{(A_{\kappa})_{\red}}\\
    & (\overline{\mathcal{G}_{x,f}})_{\kappa}/(\mathscr{R}_u((\mathcal{G}_{x,f})_{\kappa})\times_{\kappa}\mathscr{R}_u((\mathcal{G}_{x,f})_{\kappa})  \ar[ur]^{H} &
    }$$
    where $Q$ is the quotient morphism.
    The $H$ is clearly an epimorphism. To show that $H$ is monomorphism, we need to verify if for two sections
    $(g_1, \omega, g_2), (g_1', \omega', g_2')\in(\mathcal{G}_{x,f})_{\kappa}\times_{\kappa}(\overline{\Omega_{x,f}})_{\kappa}\times_{\kappa}(\mathcal{G}_{x,f})_{\kappa}$ such that 
    $ g_1\cdot \omega\cdot g_2$ and $ g_1'\cdot \omega'\cdot g_2'$ both lie in $(\overline{\Omega_{x,f}})_{\kappa}$ and have the same image in $\overline{\Omega_{\red}}$, then there exists $(z_1, z_2)\in \mathscr{R}_u((\mathcal{G}_{x,f})_{\kappa})\times_{\kappa}\mathscr{R}_u((\mathcal{G}_{x,f})_{\kappa})$ such that $z_1\cdot g_1\cdot \omega\cdot g_2\cdot z_2 = g_1'\cdot \omega' \cdot g_2' \in (\overline{\mathcal{G}_{x,f}})_{\kappa}$. This follows from the normality of $\mathscr{R}_u((\mathcal{G}_{x,f})_{\kappa})$ in $(\mathcal{G}_{x,f})_{\kappa}$. More precisely, let $g_1\cdot \omega \cdot g_2=v^-\cdot\overline{t}\cdot v^+\in(\overline{\Omega_{x,f}})_{\kappa}$. Then, by our assumption, there exist $u^-\in(\mathcal{U}_{\Phi^-,x,f^+})_{\kappa}, u^+\in (\mathcal{U}_{\Phi^+,x,f^+})_{\kappa}, t^+\in\overline{\mathscr{T}^+}$ such that $g_1'\cdot \omega' \cdot g_2'=v^-\cdot u^-\cdot \overline{t}\cdot t^+\cdot v^+\cdot u^+$. Then we conclude that $(\overline{\mathcal{G}_{x,f}})_{\kappa}/(\mathscr{R}_u((\mathcal{G}_{x,f})_{\kappa})\times_{\kappa}\mathscr{R}_u((\mathcal{G}_{x,f})_{\kappa})\cong \mathsf{G}_{x,f}\times_{\kappa}\overline{\Omega_{\red}}\times_{\kappa}\mathsf{G}_{x,f}/\sim_{(A_{\kappa})_{\red}}$ by observing
    $$g_1'\cdot \omega' \cdot g_2'=(v^-\cdot u^-\cdot (v^-)^{-1})\cdot v^-\cdot \overline{t}\cdot v^+\cdot ((v^+)^{-1}\cdot t^+\cdot v^+)\cdot u^+.$$
    
    By \Cref{openimmersion}, $(\overline{\mathcal{G}_{x,f}})_{\kappa}/(\mathscr{R}_u((\mathcal{G}_{x,f})_{\kappa})\times_{\kappa}\mathscr{R}_u((\mathcal{G}_{x,f})_{\kappa})$ contains $\overline{\Omega_{\red}}$ as an open subscheme and $(\overline{\mathcal{G}_{x,f}})_{\kappa}/(\mathscr{R}_u((\mathcal{G}_{x,f})_{\kappa})\times_{\kappa}\mathscr{R}_u((\mathcal{G}_{x,f})_{\kappa})=(\mathsf{G}_{x,f}\times_{\kappa}\mathsf{G}_{x,f})\cdot \overline{\Omega_{\red}}$ holds as an equality of fppf-sheaves over $\Sch/\kappa$. Moreover, passing to the special fiber and modulo $\overline{\mathscr{T}^+}$, the open immersion $\overline{\nu}$ (\Cref{definitionofnubar}) induces an open immersion over $\kappa$
    \begin{align*}
        \mathrm{S}&\longrightarrow (\overline{\mathcal{T}})_{\kappa}/\overline{\mathscr{T}^+}=\overline{\mathscr{S}}=\prod_{a\in\Delta}\mathbb{A}_{1,\kappa}\\
        s &\longmapsto \;\;\;\;\; (a(s)^{-1})_{a\in \Delta}.
    \end{align*}
    Since $f(0)=0$, by \cite[Theorem~8.5.14]{Bruhattitsnewapproach}, $\mathrm{S}\times_{\kappa}((\mathcal{U}_{\Phi^+,x,f})_{\kappa}/R_u^+)\subset \mathsf{G}_{x,f}$ is a Borel subgroup corresponding to the simple roots $ \Delta_{x,f}$, whose opposite Borel subgroup is $((\mathcal{U}_{\Phi^-,x,f})_{\kappa}/R_u^-)\times_{\kappa}\mathrm{S}$. Then,
    by \Cref{toroidalembeddingtheorem}, the toroidal embedding $X_{\mathfrak{C}}$ contains $\overline{\Omega_{\red}}$ as an open subscheme. Combining with \Cref{definitionwonderfulembedding}, we have a natural morphism of $\kappa$-schemes
$$\mathsf{G}_{x,f}\times_{\kappa}\overline{\Omega_{\red}}\times_{\kappa}\mathsf{G}_{x,f}/\sim_{(A_{\kappa})_{\red}}\longrightarrow X_{\mathfrak{C}}$$
    which is given by group action of $\mathsf{G}_{x,f}\times_{\kappa}\mathsf{G}_{x,f}$ on $X_{\mathfrak{C}}$, and this morphism is a monomorphism. Again by \Cref{toroidalembeddingtheorem}, we have $X_{\mathfrak{C}}=(\mathsf{G}_{x,f}\times_{\kappa}\mathsf{G}_{x,f})\cdot \overline{\Omega_{\red}}$, thus the above morphism is an isomorphism.

\end{proof}

\begin{example}
    We consider $G=\PGL_{2,k}$. The wonderful compactification $\overline{G}$ is $\mathbb{P}_{3,k}$. We take $T$ to be the subgroup of (the equivalence classes of) the diagonal matrices. Then we have the standard positive root and its coroot
    \begin{align*}
        \alpha:T\;\;\;\;&\longrightarrow  \mathbb{G}_{m,k}, \;\; \;\;\;\;\;\;\;     \alpha^\vee:\mathbb{G}_{m,k}   \longrightarrow \;\;\;\;\;\;\;T   \\
    \begin{bmatrix}         
                      t & 0\\
                      0 &  1
                   \end{bmatrix}  &\longmapsto     t   
   \;\;\;\;\;\;\;\;\;\;\;\;\;\;\;\;\;\;\;\; \;\;\;\;\;\;\;\;\;\;t \longrightarrow \;\;\begin{bmatrix}         
                      t^2 & 0\\
                      0 &  1
                   \end{bmatrix}\; .
    \end{align*}
We use $\alpha^\vee$ to identify $X_\ast(T)\cong \mathbb{Z}$. Then we have (with $f=0$)
\begin{itemize}
    \item For the group scheme $\mathcal{G}_{0}=\PGL_{2,\mathfrak{o}}$ together with the natural open immersion $\PGL_{2,k}\hookrightarrow\PGL_{2,\mathfrak{o}}$, we have $\overline{\mathcal{G}_0}=\mathbb{P}_{3,\mathfrak{o}}$ on which $\mathcal{G}_{0}\times_{\mathfrak{o}}\mathcal{G}_{0}$ acts via the multiplication of matrices.
    
    \item For the group scheme $\mathcal{G}_{x}$ with $0\textless x\textless 1/2$, its special fiber is the product $\mathbb{G}_{m,\kappa}\times_{\kappa} \mathbb{A}_{2,\kappa}$. The special fiber of $\overline{\mathcal{G}_x}$ is identified with the affine space $\mathbb{A}_{3,\kappa}$. For simplicity, we only write the left $\mathcal{G}_{x}$-action:
    \begin{align*}
        (\mathcal{G}_{x})_{\kappa}\times_{\kappa}(\overline{\mathcal{G}_{x}})_{\kappa}&\longrightarrow (\overline{\mathcal{G}_{x}})_{\kappa}\\
        ((u^-, t,u^+),(v^-, \overline{t}, v^+))&\longmapsto (t^{-1}v^-+u^-, t^{-1}\overline{t}, \overline{t}u^++v^+).
    \end{align*}
    
    \item For the group scheme $\mathcal{G}_{1/2}=\PGL_{2,\mathfrak{o}}$ together with the open immersion $$\PGL_{2,k}\xlongrightarrow{\Ad(J^{-1})}\PGL_{2,k}\hookrightarrow\PGL_{2,\mathfrak{o}}\;, \text{where}\; J=\begin{bmatrix}
        \pi & 0\\
        0 & 1
    \end{bmatrix},$$
    we have $\overline{\mathcal{G}_{1/2}}=\mathbb{P}_{3,\mathfrak{o}}$ together with the generic fiber $\mathbb{P}_{3,k}\xlongrightarrow{\Ad(J^{-1})}\mathbb{P}_{3,k}\hookrightarrow\mathbb{P}_{3,\mathfrak{o}}.$

\end{itemize} 
\end{example}


\subsection{Boundary divisor}
We shall keep the notations in \Cref{subsectionspecialfiber}. For each $a\in\Delta$,
the $\mathfrak{o}$-basis of $\mathfrak{o}_{\widetilde{a}}$ $\{1, \pi_{\widetilde{a}},\pi_{\widetilde{a}}^2,\cdots,\pi_{\widetilde{a}}^{e_{\widetilde{a}}-1} \}$ gives an isomorphism
\begin{align*}
\mathbb{A}_{e_{\widetilde{a}},\mathfrak{o}}\;\;\;\;\;\;\; &\xlongrightarrow{\cong} \;\;\;\;\;\;\;\;\Res_{\mathfrak{o}_{\widetilde{a}}/\mathfrak{o}}(\mathbb{G}_{a,\mathfrak{o}_{\widetilde{a}}})\\
   (r_1,r_2,...,r_{e_{\widetilde{a}}}) &\longmapsto  r_1\otimes 1+ r_2\otimes\pi_{\widetilde{a}} +...+r_{e_{\widetilde{a}}}\otimes \pi_{\widetilde{a}}^{e_{\widetilde{a}}-1}
\end{align*}
Let $\mathbb{X}_a$ be the first canonical coordinate of $\Res_{\mathfrak{o}_{\widetilde{a}}/\mathfrak{o}}(\mathbb{G}_{a,\mathfrak{o}_{\widetilde{a}}})$ via the above isomorphism. It is clear that the principal open subscheme $D_{\Res_{\mathfrak{o}_{\widetilde{a}}/\mathfrak{o}}(\mathbb{G}_{a,\mathfrak{o}_{\widetilde{a}}})}(\mathbb{X}_a)$ can be identified with $\Res_{\mathfrak{o}_{\widetilde{a}}/\mathfrak{o}}(\mathbb{G}_{m,\mathfrak{o}_{\widetilde{a}}})$.

\begin{proposition}\label{boundarydivisor}
    The complement $\overline{\mathcal{G}_{x,f}}\backslash \mathcal{G}_{x,f}$ is covered by $(\mathcal{G}_{x,f}\times_{\mathfrak{o}}\mathcal{G}_{x,f})$-stable smooth $\mathfrak{o}$-relative effective Cartier divisor $S_{\alpha}$ for $\alpha\in \Delta$ with $\mathfrak{o}$-relative normal crossings.
\end{proposition}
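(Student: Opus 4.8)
The plan is to reduce the assertion to the local structure of the big cell $\overline{\Omega_{x,f}}$ together with the surjectivity of the $(\mathcal{G}_{x,f}\times_{\mathfrak{o}}\mathcal{G}_{x,f})$-action on $\overline{\mathcal{G}_{x,f}}$ established in \Cref{quasiprojectivityandsmoothness}(1). First I would work inside $\overline{\Omega_{x,f}}\cong\prod_{a\in\Phi^{-,\red}}\mathcal{U}_{a,x,f}\times_{\mathfrak{o}}\overline{\mathcal{T}^{(f(0))}}\times_{\mathfrak{o}}\prod_{a\in\Phi^{+,\red}}\mathcal{U}_{a,x,f}$ and observe that $\Omega_{x,f}\cap\overline{\Omega_{x,f}}$ is exactly the locus where the ``toroidal'' middle factor $\overline{\mathcal{T}^{(f(0))}}=\prod_{a\in\Delta}\Res_{\mathfrak{o}_{\widetilde a}/\mathfrak{o}}(\mathbb{A}^{(f(0))}_{1,\mathfrak{o}_{\widetilde a}})$ lands in the open torus locus; concretely, via \Cref{eqembeddingofbigcell}, this is the non-vanishing locus of the $\Delta$-many coordinate functions $\mathbb{X}_{\alpha}$ (the first canonical coordinate of the $\alpha$-factor, in the notation introduced just before the statement). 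Thus on $\overline{\Omega_{x,f}}$ the complement of $\Omega_{x,f}$ is $\bigcup_{\alpha\in\Delta}V(\mathbb{X}_{\alpha})$, and since each $\mathbb{X}_{\alpha}$ cuts out a principal divisor in the smooth $\mathfrak{o}$-scheme $\overline{\Omega_{x,f}}$ that is itself a product of smooth $\mathfrak{o}$-schemes (the factor $\{\mathbb{X}_{\alpha}=0\}\subset\Res_{\mathfrak{o}_{\widetilde a}/\mathfrak{o}}(\mathbb{A}^{(f(0))}_{1,\mathfrak{o}_{\widetilde a}})$ being smooth over $\mathfrak{o}$ of codimension one), the divisors $V(\mathbb{X}_{\alpha})$ are smooth $\mathfrak{o}$-relative effective Cartier divisors meeting with $\mathfrak{o}$-relative normal crossings inside $\overline{\Omega_{x,f}}$ — this is the definition recalled in the conventions, with the family $(\mathbb{X}_{\alpha})_{\alpha\in\Delta}$.

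Next I would globalize. Define $S_{\alpha}$ to be the schematic closure in $\overline{\mathcal{G}_{x,f}}$ of the divisor $V(\mathbb{X}_{\alpha})\subset\overline{\Omega_{x,f}}$ (equivalently, using the $(\mathcal{G}_{x,f}\times_{\mathfrak{o}}\mathcal{G}_{x,f})$-action, the union of all translates $(c_1,c_2)\cdot V(\mathbb{X}_{\alpha})$). To see that this is well defined and $(\mathcal{G}_{x,f}\times_{\mathfrak{o}}\mathcal{G}_{x,f})$-stable I would argue generic-fiber-first: on $(\overline{\mathcal{G}_{x,f}})_k\cong\overline{G}$ the closure of $V(\mathbb{X}_{\alpha})_k$ is precisely the boundary divisor of $\overline{G}$ indexed by $\alpha$, which is $(G\times_k G)$-stable by the classical theory (\Cref{threlativewonderfulcompsplitcase}, \Cref{propquasisplitwonderfulcomp}); hence $S_{\alpha}$, being the schematic closure of a $k$-dense, $(G\times_k G)$-stable subscheme, is stable under $\mathcal{G}_{x,f}\times_{\mathfrak{o}}\mathcal{G}_{x,f}$ because the action morphism is flat and $\overline{\mathcal{G}_{x,f}}$ is $\mathfrak{o}$-flat and reduced, so closures are preserved. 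The fact that $\bigcup_{\alpha}S_{\alpha}=\overline{\mathcal{G}_{x,f}}\backslash\mathcal{G}_{x,f}$ then follows because $\overline{\mathcal{G}_{x,f}}=(\mathcal{G}_{x,f}\times_{\mathfrak{o}}\mathcal{G}_{x,f})\cdot\overline{\Omega_{x,f}}$ (\Cref{quasiprojectivityandsmoothness}(1)) and $\mathcal{G}_{x,f}$ is $(\mathcal{G}_{x,f}\times_{\mathfrak{o}}\mathcal{G}_{x,f})$-stable, so the complement of $\mathcal{G}_{x,f}$ is covered by the translates of the complement of $\Omega_{x,f}$ in $\overline{\Omega_{x,f}}$, i.e. by the translates of the $V(\mathbb{X}_{\alpha})$.

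Finally, the smoothness and $\mathfrak{o}$-relative normal crossings of the $S_{\alpha}$ are checked étale-locally, and here translation-invariance does the work: every point of $\bigcup_{\alpha}S_{\alpha}$ lies in some translate $(c_1,c_2)\cdot\overline{\Omega_{x,f}}$ by \Cref{quasiprojectivityandsmoothness}(1), and translation by a section of $(\mathcal{G}_{x,f}\times_{\mathfrak{o}}\mathcal{G}_{x,f})(\mathfrak{o})$ is an $\mathfrak{o}$-automorphism of $\overline{\mathcal{G}_{x,f}}$ carrying $S_{\alpha}$ to itself, so the local picture is the one already verified in $\overline{\Omega_{x,f}}$. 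The main obstacle I anticipate is the globalization step: making sure that the schematic closures $S_{\alpha}$ are \emph{individually} stable (not merely their union) and that the normal-crossings family $(\mathbb{X}_{\alpha})$ glues correctly across translates — i.e. that on an overlap $(c_1,c_2)\cdot\overline{\Omega_{x,f}}\cap(c_1',c_2')\cdot\overline{\Omega_{x,f}}$ the divisor $S_{\alpha}$ is still cut out by (a unit times) a single coordinate function. This requires knowing that the $(B\times B^-)$-action permutes the boundary strata of $\overline{G}$ through the standard labeling by $\Delta$, which is exactly the content of the classical description of the wonderful compactification recalled in \Cref{sectionwonderfulcompactification}; once that input is in hand, the $\mathfrak{o}$-flatness of $\overline{\mathcal{G}_{x,f}}$ propagates it to the integral model, and the remaining verifications are the routine local computations in \Cref{constructionofunipotentintegralmodel} and the explicit form of $\overline{\mathcal{T}^{(f(0))}}$.
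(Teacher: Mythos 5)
Your proposal is correct in substance, but it takes a noticeably different route from the paper's, so a comparison is worthwhile. The paper's proof argues top--down: it first invokes that the open immersion $\mathcal{G}_{x,f}\hookrightarrow\overline{\mathcal{G}_{x,f}}$ is affine (affine target into separated scheme) and cites \cite[corollaire~21.12.7]{EGAIV4} to get that every irreducible component $C$ of $\overline{\mathcal{G}_{x,f}}\setminus\mathcal{G}_{x,f}$ is automatically of pure codimension one; then, because $\mathfrak{o}[\overline{\Omega_{x,f}}]$ is a polynomial ring (hence a UFD), $C\cap\overline{\Omega_{x,f}}$ is principal, and equivariance under the subgroup preserving $\overline{\Omega_{x,f}}$ forces the defining function to be (a unit times) some $\mathbb{X}_a$; the translate covering from \Cref{quasiprojectivityandsmoothness}(1) then promotes this to the statement about $C$. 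Your proof instead goes bottom--up: identify $\overline{\Omega_{x,f}}\setminus\Omega_{x,f}=\bigcup_\alpha V(\mathbb{X}_\alpha)$ directly from the toric middle factor, define the $S_\alpha$ as closures, and use $\mathfrak{o}$-flatness plus the classical equivariance of the boundary divisors of $\overline{G}$ to verify stability, disjointness from $\mathcal{G}_{x,f}$, and the normal-crossings picture on overlaps. Both work. What the paper's purity input buys is that it is never forced to check by hand that the boundary has no components of higher codimension or that the covering family is exactly indexed by $\Delta$; your version must (and you do) explicitly track the $S_\alpha$ across translates and argue individual stability via the generic fiber, which you correctly flag as the delicate point. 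Minor notational slip: in the first paragraph, ``$\Omega_{x,f}\cap\overline{\Omega_{x,f}}$'' should read ``$\mathcal{G}_{x,f}\cap\overline{\Omega_{x,f}}$'' (the former is tautologically $\Omega_{x,f}$); the identification $\mathcal{G}_{x,f}\cap\overline{\Omega_{x,f}}=\Omega_{x,f}$ is exactly what makes your covering claim for the complement go through, and it should be stated as such.
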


\begin{proof}
    Since $\overline{\mathcal{G}_{x,f}}$ is separated over $\mathfrak{o}$ and $\mathcal{G}_{x,f}$ is affine over $\mathfrak{o}$, by \cite[01SG]{stacks-project}, the open immersion $i: \mathcal{G}_{x,f}\hookrightarrow \overline{\mathcal{G}_{x,f}}$ (\Cref{groupopenimmersion}) is affine. By \cite[corollaire~21.12.7]{EGAIV4}, each irreducible component of the complement $\overline{\mathcal{G}_{x,f}}\backslash \mathcal{G}_{x,f}$ is of pure codimension 1. We endow $\overline{\mathcal{G}_{x,f}}\backslash \mathcal{G}_{x,f}$ with the reduced closed subscheme structure. Let $C$ be an irreducible component of $\overline{\mathcal{G}_{x,f}}\backslash \mathcal{G}_{x,f}$. Since by construction $\overline{\mathcal{G}_{x,f}}=(\mathcal{G}_{x,f}\times_{\mathfrak{o}}\mathcal{G}_{x,f})\cdot \overline{\Omega_{x,f}}$, $C\bigcap \overline{\Omega_{x,f}}$ is also of codimension 1 in the open big cell $\overline{\Omega_{x,f}}$. By \Cref{constructionofunipotentintegralmodel} and the definition of $\overline{\mathcal{T}}$ and \cite[Chapter~III,\S~6,Proposition~12]{localfieldsserre}, the coordinate ring of $\overline{\Omega_{x,f}}$ is (non canonically) isomorphic to a polynomial ring over $\mathfrak{o}$, which is a unique factorization domain. Hence, by, for instance, \cite[Proposition~1.12A]{HartshorneGTM52}, $C\bigcap \overline{\Omega_{x,f}}$ is defined by a single element of the coordinate ring of $\overline{\Omega_{x,f}}$. Since $C\bigcap \overline{\Omega_{x,f}}$ is irreducible and is $((\mathcal{U}^+\cdot \mathcal{T})\times_{\mathfrak{o}}\mathcal{U}^-)$-stable, it is nothing else but the hyperplane cut out by $\mathbb{X}_a$ for a relative simple root $a\in \Delta$. Since, by \Cref{quasiprojectivityandsmoothness} (2), $\overline{\mathcal{G}_{x,f}}$ is covered by the translates of $\overline{\Omega_{x,f}}$ by the sections in $(\mathcal{G}_{x,f}\times_{\mathfrak{o}}\mathcal{G}_{x,f})(\mathfrak{o})$, we conclude that $C$ is a $(\mathcal{G}_{x,f}\times_{\mathfrak{o}}\mathcal{G}_{x,f})$-stable smooth $\mathfrak{o}$-relative effective Cartier divisor. These divisor are with $\mathfrak{o}$-relative normal crossings because it is the case over $\overline{\Omega_{x,f}}$.
\end{proof}

\subsection{Uniqueness of wonderful embedding $\overline{\mathcal{G}_{x,f}}$}\label{uniqueness}

Our proof of the uniqueness part of \Cref{introtheorem1} comes essentially from the proof of \cite[Chapter~5, Proposition~3]{BLR}.
Suppose that we have two $\mathfrak{o}$-schemes $\overline{\mathcal{G}_1}$ and $\overline{\mathcal{G}_2}$ whose generic fibers are both isomorphic to $\overline{G}$ such that
\begin{itemize}
    \item[(i)] the $(G\times_k G)$-equivariant open immersion $G\hookrightarrow \overline{G}$ extends to a $(\mathcal{G}_{x,f}\times_{\mathfrak{o}} \mathcal{G}_{x,f})$-equivariant open immersion $\mathcal{G}_{x,f}\hookrightarrow \overline{\mathcal{G}_{i}}$, $i=1,2$; 
    
    \item[(ii)] the canonical $k$-open immersion $\overline{\Omega}\hookrightarrow \overline{G}$ (see \Cref{propquasisplitwonderfulcomp}) extends to an $\mathfrak{o}$-open immersion 
    $\tau_i:\overline{\Omega_{x,f}}\longhookrightarrow\overline{\mathcal{G}_{i}}$, $i=1,2$;

    \item[(iii)]  $(\mathcal{G}_{x,f}\times_{\mathfrak{o}}\mathcal{G}_{x,f})\cdot \overline{\Omega_{x,f}}= \overline{\mathcal{G}_{i}}$, $i=1,2$.
\end{itemize}
We denote by $A_i$ the action of $(\mathcal{G}_{x,f}\times_{\mathfrak{o}} \mathcal{G}_{x,f})$ on $\overline{\mathcal{G}_{i}}$ for $i=1,2$.
We define an $\mathfrak{o}$-rational morphism $\tau\coloneq \tau_2\circ (\tau_1)^{-1}:\overline{\mathcal{G}_1}\dashrightarrow\overline{\mathcal{G}_2}$. Now, by the condition (iii), we have two surjective morphisms
$$\mathcal{G}_{x,f}\times_{\mathfrak{o}}\overline{\Omega_{x,f}}\times_{\mathfrak{o}}\mathcal{G}_{x,f}\xlongrightarrow{A_1\circ (\Id, \tau_1,\Id)} \overline{\mathcal{G}_1} \;\;\; \text{and}\;\;\;\mathcal{G}_{x,f}\times_{\mathfrak{o}}\overline{\Omega_{x,f}}\times_{\mathfrak{o}}\mathcal{G}_{x,f}\xlongrightarrow{A_2\circ (\Id, \tau_2,\Id)} \overline{\mathcal{G}_2} $$
which are both smooth morphisms (this follows from the smoothness of $\mathcal{G}_{x,f}$ over $\mathfrak{o}$). Moreover, by (i) and (ii), $\tau$ is compatible with the above two morphisms, i.e., 
$$\tau\circ (A_1\circ (\Id, \tau_1,\Id))=A_2\circ (\Id, \tau_2,\Id).$$
\noindent Then, by faithfully flat descent \cite[\S~2.5, Proposition~5]{BLR}, we conclude that $\tau$ is defined over the whole $\overline{\mathcal{G}_1}$. Similarly, we can show that the $\mathfrak{o}$-rational morphism $\tau'\coloneq \tau_1\circ (\tau_2)^{-1}$ is also defined everywhere. We conclude by observing that $\tau$ and $\tau'$ are inverse to each other.

\subsection{Compatibility with dilatation}\label{sectioncompatiblity}
Let us consider two concave functions $f,g:\hat{\Phi}\rightarrow \mathbb{R}$ such that $g\leq f\leq g+1$. Then, by the definition (see \Cref{filtrationsubgroup}), we have the inclusion $U_{a,x,f}\subset U_{a,x,g}$ for any $a\in \hat{\Phi}^{\red}$, where $U_{0,x,f}$ means $T(k)_{f(0)}$. The inclusion induces the morphism of integral models $\mathcal{U}_{a,x,f}\subset \mathcal{U}_{a,x,g}$. Let $V_a$ be the image of $\mathcal{U}_{a,x,f}(\kappa)$ in $\mathcal{G}_{x,g}(\kappa)$, and let $H(\kappa)\subset \mathcal{G}_{x,g}(\kappa)$ be the subgroup generated by $\{V_a\}_{a\in\hat{\Phi}(S)^{\red}}$. By for instance \cite[Lemma~8.1.2]{Yusmoothmodel}, $H(\kappa)$ is Zariski closed. Let $H\subset(\mathcal{G}_{x,g})_{\kappa} $ be the underlying algebraic closed subgroup of $H(\kappa)$. By \cite[Lemma~7.3.1]{Yusmoothmodel} (and its proof), $\mathcal{G}_{x,f}$ is the dilatation of $H$ on $\mathcal{G}_{x,g}$. 

It is natural to consider the dilatation of $H$ on $\overline{\mathcal{G}_{x,g}}$ which will be denoted by $(\overline{\mathcal{G}_{x,g}})_H$. By \Cref{dilitationproperty}, $(\overline{\mathcal{G}_{x,g}})_H$ contains, as an open subscheme, the dilatation of $H\bigcap (\overline{\Omega_{x,g}})_{\kappa}$ on $\overline{\Omega_{x,g}}$ which is identified with $\overline{\Omega_{x,f}}$ (this can be seen from $\mathfrak{o}$-points). Moreover, by \Cref{dilitationproperty} again, $(\overline{\mathcal{G}_{x,g}})_H$ is equipped with the action of $\mathcal{G}_{x,f}\times_{\mathfrak{o}}\mathcal{G}_{x,f}$ so that $(\mathcal{G}_{x,f}\times_{\mathfrak{o}}\mathcal{G}_{x,f})\cdot \overline{\Omega_{x,f}}= (\overline{\mathcal{G}_{x,g}})_H$. Thus we conclude that the dilatation $(\overline{\mathcal{G}_{x,g}})_H$ is actually isomorphic to $\overline{\mathcal{G}_{x,f}}$ by the uniqueness result in \Cref{uniqueness}.

\subsection{Condition for being proper}

\begin{proposition}
      The scheme $\overline{\mathcal{G}_{x,f}}$ is projective over $\Spec(\mathfrak{o})$ if and only if $\mathcal{G}_{x,f}$ is a reductive group scheme over $\mathfrak{o}$. 
\end{proposition}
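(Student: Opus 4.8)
The plan is to prove both implications by exploiting the big cell structure together with the fact (Proposition~\ref{unipotentradicalspecialfiber}) that $\mathcal{G}_{x,f}$ is reductive over $\mathfrak{o}$ precisely when $f(0)=0$ and the special fiber has trivial unipotent radical, which in turn happens iff $\Phi_{x,f}=\Phi$, i.e.\ $f(a)+f(-a)=0$ and $f(a)\in\Gamma'_a$ for all $a\in\Phi$, and moreover $\overline{\mathscr{T}^+}$ is trivial. Concretely I would first observe that if $f(0)>0$, then by \Cref{introtheorem1}(2) the scheme $\overline{\mathcal{G}_{x,f}}$ is the Zariski gluing of $\overline{G}$ and $\mathcal{G}_{x,f}$ along $G$; its special fiber contains $(\mathcal{G}_{x,f})_\kappa$ as a dense open which is affine and non-proper (it is a nontrivial unipotent group times a torus-like piece), so $\overline{\mathcal{G}_{x,f}}$ is not proper over $\mathfrak{o}$, and correspondingly $\mathcal{G}_{x,f}$ is not reductive. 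So we may assume $f(0)=0$.

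For the case $f(0)=0$: the ``if'' direction is the easier one. If $\mathcal{G}_{x,f}$ is reductive, I would invoke the relative wonderful compactification of \Cref{threlativewonderfulcompsplitcase} (in the quasi-split case, \Cref{propquasisplitwonderfulcomp}) applied to the reductive $\mathfrak{o}$-group scheme $\mathcal{G}_{x,f}$: this produces a \emph{projective} smooth $\mathfrak{o}$-scheme $\overline{\mathcal{G}_{x,f}}'$ satisfying exactly conditions (i)--(iii) of \Cref{introtheorem1}, with big cell $\mathcal{U}^-\times_\mathfrak{o}\overline{\mathcal{T}}\times_\mathfrak{o}\mathcal{U}^+$ (here $f(0)=0$ so $\overline{\mathcal{T}^{(0)}}=\overline{\mathcal{T}}$, and $\mathcal{T}^{(0)}=\mathcal{T}$ since the torus piece of a reductive parahoric model is the connected Néron model, which for an induced torus is $\prod\Res\mathbb{G}_m$). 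By the uniqueness statement proved in \Cref{uniqueness}, $\overline{\mathcal{G}_{x,f}}\cong\overline{\mathcal{G}_{x,f}}'$, hence $\overline{\mathcal{G}_{x,f}}$ is projective over $\mathfrak{o}$.

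For the ``only if'' direction with $f(0)=0$: suppose $\mathcal{G}_{x,f}$ is not reductive, so $\mathscr{R}_u((\mathcal{G}_{x,f})_\kappa)$ is nontrivial; I want to show $\overline{\mathcal{G}_{x,f}}$ is not proper over $\mathfrak{o}$, equivalently (by the valuative criterion, using that $\mathfrak{o}$ is a DVR and $\overline{\mathcal{G}_{x,f}}$ is of finite type) that $\overline{\mathcal{G}_{x,f}}(\mathfrak{o})\to\overline{G}(k)=\overline{\mathcal{G}_{x,f}}(k)$ is not surjective, or that some $k$-point of $\overline{G}$ fails to extend. The cleanest route is via the special fiber: by \Cref{toroidalspecialfiber}, the quotient $(\overline{\mathcal{G}_{x,f}})_\kappa/(\mathscr{R}_u\times_\kappa\mathscr{R}_u)$ is the toroidal embedding $X_{\mathfrak{C}}$ of $\mathsf{G}_{x,f}$ attached to the cone $\mathfrak{C}\subset X_*(\mathrm{S})_\mathbb{R}$. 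A toroidal embedding $X_\sigma$ is proper iff the defining fan is complete, i.e.\ its support is all of the negative Weyl chamber $\mathfrak{C}_{x,f}$; here the fan is the single cone $\mathfrak{C}$ (the negative Weyl chamber of the \emph{smaller} root system $\Delta$), and $\mathfrak{C}\subsetneq\mathfrak{C}_{x,f}$ whenever $\Delta_{x,f}\subsetneq\Delta$, i.e.\ whenever $\Phi_{x,f}\subsetneq\Phi$. One then checks that $\overline{\mathcal{G}_{x,f}}$ proper over $\mathfrak{o}$ forces $(\overline{\mathcal{G}_{x,f}})_\kappa$ proper over $\kappa$, hence its quotient $X_\mathfrak{C}$ proper over $\kappa$; but $X_{\mathfrak{C}}$ is proper only if $\mathfrak{C}=\mathfrak{C}_{x,f}$, forcing $\Phi_{x,f}=\Phi$ and (via \Cref{unipotentradicalspecialfiber}, together with triviality of $\overline{\mathscr{T}^+}$, which is automatic once one additionally tracks that properness of $(\overline{\mathcal{T}})_\kappa$-related pieces forces $\mathcal{T}^{(0)}=\mathcal{T}$ reductive) $\mathcal{G}_{x,f}$ reductive, a contradiction.

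\textbf{Main obstacle.} The delicate point is the ``only if'' direction, specifically making precise that non-properness of the reductive-quotient toroidal embedding $X_{\mathfrak{C}}$ of the special fiber genuinely obstructs properness of the whole $\mathfrak{o}$-scheme $\overline{\mathcal{G}_{x,f}}$: one must argue that a $\kappa$-point of the special fiber not lifting, or a missing orbit in $(\overline{\mathcal{G}_{x,f}})_\kappa$, cannot be ``repaired'' by the $\mathfrak{o}$-structure — equivalently, that a properness failure detected on $X_\mathfrak{C}$ pulls back along the (surjective, smooth, but non-proper-fibered) quotient $(\overline{\mathcal{G}_{x,f}})_\kappa\to X_\mathfrak{C}$ and along specialization. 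Cleanly, I expect to handle this by exhibiting an explicit one-parameter subgroup: pick $\lambda\in\mathfrak{C}_{x,f}\setminus\mathfrak{C}$ (a cocharacter of $\mathrm{S}$, hence of $S$) and consider the curve $t\mapsto \lambda(t)$ in $G(k)$ and its limit in $\overline{G}$ as $t\to 0$ in the torus compactification direction; the limit exists in $\overline{G}$ but the corresponding morphism $\Spec\mathfrak{o}\to\overline{\mathcal{G}_{x,f}}$ fails to extend over the closed point because $\lambda\notin\mathfrak{C}$ means the relevant boundary orbit is not present in the big cell $\overline{\mathcal{T}}=\prod_\Delta\Res\mathbb{A}_1$ of $\overline{\mathcal{G}_{x,f}}$, nor (by $(\mathcal{G}_{x,f}\times\mathcal{G}_{x,f})$-translation and \Cref{quasiprojectivityandsmoothness}(1)) anywhere in $\overline{\mathcal{G}_{x,f}}$. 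The analogous $f(0)>0$ subtlety — ruling out properness when $\mathcal{T}^{(f(0))}$ alone is non-reductive even if all $\Phi_{x,f}=\Phi$ — is subsumed in the initial reduction since $f(0)>0$ already forces non-reductiveness on both sides.
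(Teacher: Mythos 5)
Your ``if'' direction is correct and takes a genuinely different route from the paper's. You observe that a reductive $\mathcal{G}_{x,f}$ over the strictly Henselian $\mathfrak{o}$ is necessarily split, apply the relative wonderful compactification of \Cref{threlativewonderfulcompsplitcase} to produce a projective competitor satisfying conditions (i)--(iii) of \Cref{introtheorem1}, and conclude by the uniqueness proved in \Cref{uniqueness}. The paper instead argues on the special fiber: $(\mathcal{G}_{x,f})_\kappa$ is then adjoint reductive, so $(\overline{\mathcal{G}_{x,f}})_\kappa$ is its classical wonderful compactification by \Cref{toroidalspecialfiber} and Brion--Kumar's classification, hence $\kappa$-projective, and $\mathfrak{o}$-properness follows by the fibral criterion. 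Both work. Also, the worry you flag as your ``main obstacle'' is unfounded: since $X_\mathfrak{C}$ is separated of finite type over $\kappa$ and the quotient morphism $(\overline{\mathcal{G}_{x,f}})_\kappa\to X_\mathfrak{C}$ is surjective, properness of $(\overline{\mathcal{G}_{x,f}})_\kappa$ passes to $X_\mathfrak{C}$ automatically, with no need to examine the fibers of the quotient.

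The genuine gap is in your ``only if'' direction and lies precisely where you wave your hands. From $X_\mathfrak{C}$ proper you correctly deduce $\mathfrak{C}=\mathfrak{C}_{x,f}$, hence $\Delta_{x,f}=\Delta$ and $\Phi_{x,f}=\Phi$, so all $R_u^{\pm}$ vanish. But by \Cref{unipotentradicalspecialfiber} the unipotent radical of $(\mathcal{G}_{x,f})_\kappa$ also contains the factor $\overline{\mathscr{T}^+}=\mathscr{R}_u((\mathcal{T})_\kappa)$, and when $f(0)=0$ this is nontrivial exactly when $T$ is a ramified induced torus --- i.e.\ whenever the quasi-split $G$ is not split over the strictly Henselian $k$, independently of whether $\Phi_{x,f}=\Phi$. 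Your remark that this possibility ``is subsumed in the initial reduction since $f(0)>0$ already forces non-reductiveness'' is simply wrong: $\overline{\mathscr{T}^+}\neq 1$ can occur with $f(0)=0$. Both your toroidal-embedding argument and the one-parameter-subgroup fix you sketch are formulated entirely in $X_*(\mathrm{S})_{\mathbb{R}}$, which sees only the root-system contribution to $\mathscr{R}_u((\mathcal{G}_{x,f})_\kappa)$ and is blind to $\overline{\mathscr{T}^+}$. The paper avoids this split entirely: it applies Borel's fixed point theorem to the connected solvable group $\mathscr{R}_u((\mathcal{G}_{x,f})_\kappa)\times_\kappa\mathscr{R}_u((\mathcal{G}_{x,f})_\kappa)$ acting on the (assumed) proper $(\overline{\mathcal{G}_{x,f}})_\kappa$, translates the resulting fixed point into $\overline{\mathscr{S}}$ inside the explicit product decomposition of $(\overline{\Omega_{x,f}})_\kappa$, and reads off directly that fixedness forces $R_u^{\pm}$ \emph{and} $\overline{\mathscr{T}^+}$ to be trivial simultaneously.
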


\begin{proof}
    If $\mathcal{G}_{x,f}$ is a reductive group scheme over $\mathfrak{o}$, i.e., $\mathscr{R}_u((\mathcal{G}_{x,f})_{\kappa})$ is trivial, then by \cite[exposé~XXII, proposition~2.8]{SGA3III}, we have that $\Delta=\Delta_{x,f}$ and $(\mathcal{G}_{x,f})_{\kappa}$ is also adjoint. By the classification of toroidal embedding of $(\mathcal{G}_{x,f})_{\kappa}$ (\cite[Proposition~6.2.4 (iii)]{BrionKumar}), the special fiber $(\overline{\mathcal{G}_{x,f}})_{\kappa}$ is isomorphic to the classical wonderful compactification of $(\mathcal{G}_{x,f})_{\kappa}$ which is projective over $\kappa$ by definition. Then the $\mathfrak{o}$-properness of $\overline{\mathcal{G}_{x,f}}$ follows from the fibral criterion for properness \cite[corollaire~15.7.11]{EGAIV3}.

    For the converse direction, if $\overline{\mathcal{G}_{x,f}}$ is projective over $\Spec(\mathfrak{o})$, then the special fiber $(\overline{\mathcal{G}_{x,f}})_{\kappa}$ is projective over $\Spec(\kappa)$. Since $\mathscr{R}_u((\mathcal{G}_{x,f})_{\kappa})$ is by definition connected and unipotent (hence is solvable), by the \emph{Borel's fixed point theorem} (see, for instance, \cite[Theorem~6.2.6]{linearalgrpSpringer}), there exists a point $\overline{t}\in(\overline{\mathcal{G}_{x,f}})_{\kappa}$ which is fixed by the action of $\mathscr{R}_u((\mathcal{G}_{x,f})_{\kappa})\times_{\kappa}\mathscr{R}_u((\mathcal{G}_{x,f})_{\kappa})$. Since by construction $(\overline{\mathcal{G}_{x,f}})_{\kappa}$ is covered by the $((\mathcal{G}_{x,f})_{\kappa}\times_{\kappa}(\mathcal{G}_{x,f})_{\kappa})$-translates of the open subscheme 
    $$(\overline{\Omega_{x,f}})_{\kappa}\cong (\mathcal{U}_{\Phi^-,x,f})_{\kappa}\times_{\kappa}(\overline{\mathscr{S}}\times_{\kappa}\overline{\mathscr{T}^+})\times_{\kappa}(\mathcal{U}_{\Phi^+,x,f})_{\kappa}$$
    and $\mathscr{R}_u((\mathcal{G}_{x,f})_{\kappa})$ is normal in $(\mathcal{G}_{x,f})_{\kappa}$, we can assume that $\overline{t}\in \overline{\mathscr{S}}$. Then we can see that, inside the open subscheme $(\overline{\Omega_{x,f}})_{\kappa}$, the point $\overline{t}$ is fixed by $R_u^-\times_{\kappa} (\overline{\mathscr{T}^+})_{\kappa}\times_{\kappa}R_u^+$ only if $R_u^-$ and $(\overline{\mathscr{T}^+})_{\kappa}\times_{\kappa}R_u^+$ are both trivial. Then we can conclude $\mathscr{R}_u((\mathcal{G}_{x,f})_{\kappa})$ is trivial by appealing to \Cref{unipotentradicalspecialfiber}. 
\end{proof}

\subsection{Picard group}\label{subsectionpicardgroup}

In the study of wonderful varieties or more general spherical varieties, a certain kind of divisors which are called \emph{colors} plays an important role, see, for instance, the seminal paper \cite{LunatypeA}. For wonderful compactification $\overline{G}$, the colors are the closures $\overline{B\cdot s_i\cdot B^-}\subset \overline{G}$, and these colors freely generates the Picard group of $\overline{G}$, see, for instance, \cite[Lemma~6.1.9]{BrionKumar}. We now show the following analogue result for $\overline{\mathcal{G}_{x,f}}$ in which the divisors should be viewed as affine version of colors. 

\begin{theorem}\label{theoremcolors}
    The boundary $\overline{\mathcal{G}_{x,f}}\backslash \overline{\Omega_{x,f}}$ is covered by prime effective Cartier divisors 
    $$\mathbf{D}_{\alpha}\coloneq \overline{B\cdot \dot{s}_{\alpha}\cdot B^-},\alpha\in \Delta$$
    where the bar indicates taking the schematic closure in $\overline{\mathcal{G}_{x,f}}$ and $\dot{s}_{\alpha}\in N_G(S)(k)$ is a representative of the simple reflection $s_{\alpha}$ along $\alpha$ in the relative Weyl group $W$ of the couple $(G,S)$. Moreover the Picard group $\Pic(\overline{\mathcal{G}_{x,f}})$ is freely generated by $\{\mathbf{D}_{\alpha}\vert \alpha\in \Delta\}$.
\end{theorem}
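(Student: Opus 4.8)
The plan is to reduce everything to the generic fiber $\overline{G}$, where the analogous statement is known, and then argue that the boundary divisor of the big cell behaves well under the $(\mathcal{G}_{x,f}\times_\mathfrak{o}\mathcal{G}_{x,f})$-action. First I would show that each $\mathbf{D}_\alpha$ is indeed a prime effective Cartier divisor. Since $\overline{\mathcal{G}_{x,f}}$ is smooth over $\mathfrak{o}$ and quasi-projective by \Cref{quasiprojectivityandsmoothness}, and since $\overline{\mathcal{G}_{x,f}}=(\mathcal{G}_{x,f}\times_\mathfrak{o}\mathcal{G}_{x,f})\cdot\overline{\Omega_{x,f}}$, it suffices to analyze $\mathbf{D}_\alpha\cap\overline{\Omega_{x,f}}$ inside the big cell. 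As in the proof of \Cref{boundarydivisor}, the coordinate ring of $\overline{\Omega_{x,f}}$ is (non-canonically) a polynomial ring over $\mathfrak{o}$, hence a UFD; the intersection $\mathbf{D}_\alpha\cap\overline{\Omega_{x,f}}$ is the vanishing locus of the coordinate function $\mathbb{X}_\alpha$ (the first canonical coordinate on the factor $\Res_{\mathfrak{o}_{\widetilde\alpha}/\mathfrak{o}}(\mathbb{A}_{1,\mathfrak{o}_{\widetilde\alpha}})$ of $\overline{\mathcal{T}}$), which is the unique boundary divisor of the big cell through which $\overline{\Omega_{x,f}}\setminus\mathcal{G}_{x,f}$ is cut out in that direction — this is exactly the identification in the proof of \Cref{boundarydivisor}. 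Translating by sections in $(\mathcal{G}_{x,f}\times_\mathfrak{o}\mathcal{G}_{x,f})(\mathfrak{o})$ and using that the translates of $\overline{\Omega_{x,f}}$ cover $\overline{\mathcal{G}_{x,f}}$, each $\mathbf{D}_\alpha$ is a smooth $\mathfrak{o}$-relative Cartier divisor, and primeness follows since its trace on the dense open $\overline{\Omega_{x,f}}$ is irreducible. That the $\mathbf{D}_\alpha$ cover $\overline{\mathcal{G}_{x,f}}\setminus\overline{\Omega_{x,f}}$ is then the same UFD/codimension-one argument: every irreducible component of $\overline{\mathcal{G}_{x,f}}\setminus\overline{\Omega_{x,f}}$ meets $\overline{\Omega_{x,f}}$ in nothing, hence lies over some translate, and its generic point's preimage in a translate of the big cell is one of the coordinate hyperplanes, which is a translate of some $\mathbf{D}_\alpha$; but since $\overline{\Omega_{x,f}}$ is itself open, the component is contained in $\overline{\mathcal{G}_{x,f}}\setminus\overline{\Omega_{x,f}}$ only if it is $B\times_k B^-$-stable, forcing it to be $\mathbf{D}_\alpha$ itself.

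For the Picard group computation, I would use the excision sequence for Cartier divisors: since $\overline{\Omega_{x,f}}$ has trivial Picard group (its coordinate ring is a polynomial ring over the local ring $\mathfrak{o}$, hence a UFD, so $\Pic(\overline{\Omega_{x,f}})=0$) and since $\overline{\mathcal{G}_{x,f}}\setminus\overline{\Omega_{x,f}}=\bigcup_{\alpha\in\Delta}\mathbf{D}_\alpha$ with the $\mathbf{D}_\alpha$ irreducible of codimension one, the standard exact sequence
\begin{equation*}
\bigoplus_{\alpha\in\Delta}\mathbb{Z}\cdot\mathbf{D}_\alpha\longrightarrow\Pic(\overline{\mathcal{G}_{x,f}})\longrightarrow\Pic(\overline{\Omega_{x,f}})=0
\end{equation*}
shows the $\mathbf{D}_\alpha$ generate $\Pic(\overline{\mathcal{G}_{x,f}})$. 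For freeness I must show that no nontrivial $\mathbb{Z}$-combination $\sum n_\alpha\mathbf{D}_\alpha$ is principal. Suppose $\mathcal{O}(\sum n_\alpha\mathbf{D}_\alpha)$ has a rational section, i.e., a rational function $h$ on $\overline{\mathcal{G}_{x,f}}$ with $\mathrm{div}(h)=\sum n_\alpha\mathbf{D}_\alpha$. Restricting to the dense generic fiber $\overline{G}$, the divisors $\mathbf{D}_\alpha$ restrict to the colors $\overline{B\cdot s_\alpha\cdot B^-}$ of the wonderful compactification, which freely generate $\Pic(\overline{G})$ by \Cref{splitwonderfulcomp} and the cited \cite[Lemma~6.1.9]{BrionKumar} (or its quasi-split analogue via \Cref{propquasisplitwonderfulcomp}); hence all $n_\alpha=0$. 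The key point making this rigorous is that restriction to the generic fiber $\Pic(\overline{\mathcal{G}_{x,f}})\to\Pic(\overline{G})$ sends $\mathbf{D}_\alpha\mapsto$ the $\alpha$-th color, because $\mathbf{D}_\alpha$ is by definition the schematic closure of $B\cdot\dot{s}_\alpha\cdot B^-$ and taking schematic closure commutes with generic fiber.

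The step I expect to be the main obstacle is verifying that the trace $\mathbf{D}_\alpha\cap\overline{\Omega_{x,f}}$ is precisely the coordinate hyperplane $\{\mathbb{X}_\alpha=0\}$ — that is, matching the group-theoretic description $\overline{B\cdot\dot{s}_\alpha\cdot B^-}$ of each color with the explicit coordinate on the Weil-restricted affine line inside $\overline{\mathcal{T}}$. On the generic fiber this is the classical fact that the $\alpha$-th color of $\overline{G}$ meets the big cell $\overline{\Omega}=U^-\times\overline{T}\times U^+$ in $U^-\times\{t_\alpha=0\}\times U^+$ (recall from \Cref{threlativewonderfulcompsplitcase}(iv) that $\overline{\Omega}$ is glued to $\Omega_G$ via $(\alpha(t)^{-1})_{\alpha\in\Delta}$, so the boundary $t_\alpha=0$ corresponds to the "$\alpha\to\infty$" wall), and this is what produces the color as its closure. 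Over $\mathfrak{o}$ one then needs that the schematic closure of this generic-fiber statement inside $\overline{\mathcal{G}_{x,f}}$ is flat over $\mathfrak{o}$ and agrees with $\{\mathbb{X}_\alpha=0\}$ fiberwise; flatness follows because $\{\mathbb{X}_\alpha=0\}$ is already an $\mathfrak{o}$-relative Cartier divisor in $\overline{\Omega_{x,f}}$ with irreducible generic fiber, and a flat closed subscheme with irreducible dense generic fiber is the schematic closure of that fiber. Once this identification is in place, everything else is the formal Picard-group/UFD bookkeeping sketched above, together with the descent from the split case to the quasi-split case carried out exactly as in \Cref{propquasisplitwonderfulcomp}.
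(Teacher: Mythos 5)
Your proposal contains a fundamental confusion between the two different kinds of divisors that appear in this part of the paper, and this invalidates the covering argument.

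You claim that "$\mathbf{D}_\alpha\cap\overline{\Omega_{x,f}}$ is the vanishing locus of the coordinate function $\mathbb{X}_\alpha$ ... this is exactly the identification in the proof of \Cref{boundarydivisor}." This is false: that intersection is empty. The set $B\cdot\dot{s}_\alpha\cdot B^-$ lies in $G\setminus\Omega_G$, and since $\overline{\Omega_{x,f}}\cap\mathcal{G}_{x,f}=\Omega_{x,f}$ (indeed $\overline{\Omega}\cap G=\Omega_G$ on the generic fiber), $B\cdot\dot{s}_\alpha\cdot B^-$ is disjoint from the \emph{open} set $\overline{\Omega_{x,f}}$, hence so is its closure $\mathbf{D}_\alpha$. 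The hyperplanes $\{\mathbb{X}_\alpha=0\}$ in the big cell are the traces of the boundary divisors $S_\alpha$ of \Cref{boundarydivisor} (the closures of the $(G\times G)$-orbits in $\overline{G}\setminus G$), not of the colors $\mathbf{D}_\alpha$. In the wonderful compactification these are genuinely different objects: $\Pic(\overline{G})$ is free on the colors $\overline{B s_\alpha B^-}$, while the $(G\times G)$-stable boundary divisors $X_\alpha$ are expressed as integer combinations of the colors via the Cartan matrix (see \cite[Lemma~6.1.9]{BrionKumar}). Your entire covering argument, built on the identification $\mathbf{D}_\alpha\cap\overline{\Omega_{x,f}}=\{\mathbb{X}_\alpha=0\}$, therefore collapses, and the "classical fact" you invoke on the generic fiber (that the $\alpha$-th color meets $\overline{\Omega}$ in $U^-\times\{t_\alpha=0\}\times U^+$) is not a classical fact — it is false.

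What is actually needed, and what the paper does, is a direct computation of $\mathcal{G}_{x,f}\setminus\Omega_{x,f}$ as a topological space, checked fiber by fiber. On the generic fiber this is the Bruhat decomposition of $G$: $G\setminus\Omega_G=\bigcup_\alpha\overline{B\dot{s}_\alpha B^-}$. On the special fiber one uses the structure theory of the maximal reductive quotient $\mathsf{G}_{x,f}$ (\Cref{theoremstructureofreductivequotient}), representatives $m(a)\in\mathcal{G}_{x,f}(\mathfrak{o})$ of the simple reflections in $\Phi_{x,f}$, and the fppf-torsor structure of $(\mathcal{G}_{x,f})_\kappa$ over $\mathsf{G}_{x,f}$ to transfer the Bruhat decomposition of the reductive quotient back to $(\mathcal{G}_{x,f})_\kappa$. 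Once this fiberwise Bruhat-cell description is established, density of $\mathcal{G}_{x,f}\setminus\Omega_{x,f}$ in $\overline{\mathcal{G}_{x,f}}\setminus\overline{\Omega_{x,f}}$ gives the covering statement, local factoriality of $\overline{\mathcal{G}_{x,f}}$ gives that each $\mathbf{D}_\alpha$ is Cartier, and the UFD argument for $\Pic$ that you sketch in your second paragraph (the excision sequence, triviality of $\Pic(\overline{\Omega_{x,f}})$) goes through. Your suggestion to prove freeness by restricting to the generic fiber is a legitimate alternative to the paper's direct citation of \cite[Proposition~11.40]{GortzWedhorn}, and is worth keeping — but only after the covering claim is established by the Bruhat-cell computation, not by the hyperplane identification.
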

\emph{We remark that $B\cdot \dot{s}_{\alpha}\cdot B^-\subset G$ dose not depend on the choice of the representative $\dot{s}_{\alpha}$.}

Before we prove this result, we recall the following structural result about the special fiber $\mathcal{G}_{x,f}$ as a preparation. 

\begin{theorem}\label{theoremstructureofreductivequotient}
    (\cite[Theorem~8.5.14]{Bruhattitsnewapproach}, \cite[\S~3.4 Proposition]{Yusmoothmodel}, \cite[corollaire~4.6.4, corollaire~4.6.12]{Bruhattits2}, \cite[Proposition~6.9]{Landvogtcompactification}) 
    If $f(0)=0$, then 
    \begin{itemize}
        \item[(1)] the root datum of $\mathsf{G}_{x,f}$ with respect to the maximal subtorus $\mathsf{S}$ is 
        $(X^*(\mathsf{S}),\Phi_{x,f},X_*(\mathsf{S}),\Phi_{x,f}^\vee);$
        \item[(2)] for a root $b\in \Phi_{x,f}$, the $b$-root subgroup is isomorphic to $(\mathcal{U}_{\overline{b},x,f})_{\kappa}/(\mathcal{U}_{\overline{b},x,f})_{\kappa}^+$, where $\overline{b}$ is the unique non-divisible root in $\mathbb{R}_{\textgreater0}b$.
    \end{itemize}
\end{theorem}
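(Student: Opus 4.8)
The statement is classical — it is essentially \cite[corollaire~4.6.4, corollaire~4.6.12]{Bruhattits2}, reproved in \cite{Yusmoothmodel} and \cite{Landvogtcompactification} — so the plan is to recall the argument, organized around the big cell of \Cref{Thparahoricgrpsche} and the description of $\mathscr{R}_u((\mathcal{G}_{x,f})_{\kappa})$ in \Cref{unipotentradicalspecialfiber}. First I would pass to the special fiber. Since $f(0)=0$, the torus factor of $\Omega_{x,f}$ is the connected lft-Néron model $\mathcal{T}$, with $(\mathcal{T})_{\kappa}=\mathscr{S}_{\kappa}\times_{\kappa}\overline{\mathscr{T}^+}$ and $\overline{\mathscr{T}^+}=\mathscr{R}_u((\mathcal{T})_{\kappa})$. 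By \Cref{unipotentradicalspecialfiber}(1), $\mathscr{R}_u((\mathcal{G}_{x,f})_{\kappa})$ lies inside the open cell $(\Omega_{x,f})_{\kappa}$ and equals the product of the $(\mathcal{U}_{a,x,f})_{\kappa}^{+}$ and $\overline{\mathscr{T}^+}$. As $\kappa$ is perfect, $\mathsf{G}_{x,f}=(\mathcal{G}_{x,f})_{\kappa}/\mathscr{R}_u((\mathcal{G}_{x,f})_{\kappa})$ is connected reductive, and the quotient morphism restricted to the cell is coordinatewise; hence one obtains an open big cell
$$\prod_{a\in\Phi^{-,\red}}\mathsf{U}_a\times_{\kappa}\mathsf{S}\times_{\kappa}\prod_{a\in\Phi^{+,\red}}\mathsf{U}_a\hookrightarrow \mathsf{G}_{x,f},\qquad \mathsf{U}_a\coloneq (\mathcal{U}_{a,x,f})_{\kappa}/(\mathcal{U}_{a,x,f})_{\kappa}^{+},$$
with $\mathsf{S}$ — the isomorphic image of $\mathscr{S}_{\kappa}$ — a maximal torus of $\mathsf{G}_{x,f}$ (\cite[Theorem~8.5.14~(1)]{Bruhattitsnewapproach}), under the identifications $X^{\ast}(\mathsf{S})=X^{\ast}(S)$, $X_{\ast}(\mathsf{S})=X_{\ast}(S)$.

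Next I would determine which $\mathsf{U}_a$ are non-trivial and with what $\mathsf{S}$-weight. Using the explicit Weil-restriction models of \Cref{constructionofunipotentintegralmodel}, the transition morphism $(\mathcal{U}_{a,x,f^{+}})_{\kappa}\to(\mathcal{U}_{a,x,f})_{\kappa}$ is computed directly: it is an isomorphism unless both $f(a)+f(-a)=0$ and $f(a)$ is an actual value of the root subgroup, i.e. $f(a)\in\Gamma'_a$ — precisely the two conditions defining $\Phi_{x,f}$ — in which case its image has codimension one and $\mathsf{U}_a\cong\mathbb{G}_{a,\kappa}$; the divisible and multipliable cases are handled by the same computation using the models of $R^{0}_{L'/L}(\mathbb{G}_a)$ and $U_{L'/L}$. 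Thus $\mathsf{U}_a\ne 1$ exactly when $a\in\Phi_{x,f}$, and since in a reductive group a root group occurs iff its opposite does, $\Phi(\mathsf{G}_{x,f},\mathsf{S})=\Phi_{x,f}$. Conjugating the cell by $\mathsf{S}$ and using that $\mathscr{S}$ acts on $\mathcal{U}_{a,x,f}$ through the character $a$ (through $\overline{b}$ after passing to the quotient group for a divisible $b=2\overline{b}$), one reads off that $\mathsf{U}_b$ is the $b$-root subgroup, which is assertion (2), and that the character side of the root datum is $(X^{\ast}(\mathsf{S}),\Phi_{x,f})$.

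For the coroots I would invoke the rank-one analysis already recalled in the proof of \Cref{suslcase}: the $\SL_2$- and $\SU_3$-crutches of \cite[4.1.5, 4.1.9]{Bruhattits2} give a morphism from $\SL_{2,\mathfrak{o}}$ (resp. from the relevant $\mathfrak{o}$-model of $\SU_3$) into $\mathcal{G}_{x,f}$ carrying the standard root groups to $\mathcal{U}_{\pm\overline{b},x,f}$ and the diagonal torus into $\mathscr{S}$; reducing modulo $\mathfrak{m}$ and dividing by the unipotent radical produces $\SL_{2,\kappa}\to\mathsf{G}_{x,f}$ whose image contains $\mathsf{U}_{\pm b}$ and whose restriction to the diagonal torus is the cocharacter $b^{\vee}$ of $\mathsf{S}$. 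Hence the coroot of $b$ is $b^{\vee}\in X_{\ast}(\mathsf{S})=X_{\ast}(S)$, the coroot set is $\Phi_{x,f}^{\vee}$, and the root datum of $(\mathsf{G}_{x,f},\mathsf{S})$ is $(X^{\ast}(\mathsf{S}),\Phi_{x,f},X_{\ast}(\mathsf{S}),\Phi_{x,f}^{\vee})$; since $\mathsf{G}_{x,f}$ is reductive it is determined by this datum.

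The main obstacle is the bookkeeping in the second step when $a$ is multipliable (the $\SU_3$ case): there $\mathcal{U}_{a,x,f}$ is a two-step unipotent group with coordinates $(u,v)$, the $v$-coordinate lying in $\mathcal{U}_{2a,x,2f(a)}$, and one must pin down exactly which of the two graded pieces survives in $(\mathcal{U}_{a,x,f})_{\kappa}/(\mathcal{U}_{a,x,f})_{\kappa}^{+}$ — hence exactly which of $a$, $2a$ lies in $\Phi_{x,f}$ for given $x$ and $f$, and why the survivor is one-dimensional — so that the uniform description of the $b$-root group through $\overline{b}$ in (2) is consistent. This is precisely the ramification computation of \cite[\S~4.3, \S~4.6]{Bruhattits2} (see also \cite[\S~4]{Landvogtcompactification}); granting it together with \Cref{unipotentradicalspecialfiber}, the remaining steps are formal.
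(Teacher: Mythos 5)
The paper never proves \Cref{theoremstructureofreductivequotient}: it is quoted as a known result with citations to \cite{Bruhattitsnewapproach}, \cite{Yusmoothmodel}, \cite{Bruhattits2} and \cite{Landvogtcompactification}, so there is no internal argument to compare yours against. Your outline is precisely the standard proof underlying those citations, organized the way the paper itself uses the ingredients elsewhere: the big cell of \Cref{Thparahoricgrpsche} together with \Cref{unipotentradicalspecialfiber} to produce an open cell of $\mathsf{G}_{x,f}$ with torus $\mathsf{S}$ and unipotent pieces $(\mathcal{U}_{a,x,f})_{\kappa}/(\mathcal{U}_{a,x,f})_{\kappa}^{+}$, the explicit models of \Cref{constructionofunipotentintegralmodel} to decide which pieces survive, and the rank-one $\SL_2$/$\SU_3$ reduction (the same crutches as in \Cref{suslcase}) to identify the coroots; deferring the ramification bookkeeping to \cite[\S~4.3, \S~4.6]{Bruhattits2} is legitimate here, since that is exactly where the cited corollaire~4.6.12 lives. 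Two small imprecisions in your multipliable/divisible discussion should be fixed so the sketch matches the statement: the quotient $(\mathcal{U}_{a,x,f})_{\kappa}/(\mathcal{U}_{a,x,f})_{\kappa}^{+}$ for nondivisible $a$ is nontrivial exactly when $\{a,2a\}\cap\Phi_{x,f}\neq\emptyset$ (not exactly when $a\in\Phi_{x,f}$), and the $\mathsf{S}$-weight of the surviving one-dimensional piece is the unique element of that intersection, so for a divisible root $b=2\overline{b}$ the torus acts on $(\mathcal{U}_{\overline{b},x,f})_{\kappa}/(\mathcal{U}_{\overline{b},x,f})_{\kappa}^{+}$ through $b=2\overline{b}$, not through $\overline{b}$ as your parenthetical suggests; stated this way it is precisely what makes that quotient the $b$-root group in part (2). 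You flag this bookkeeping yourself as the main obstacle, so nothing essential is missing, but the corrected weight statement is what one should actually read off.
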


By \Cref{theoremstructureofreductivequotient}, we see that the quotient scheme $(\mathcal{U}_{a,x,f})_{\kappa}/(\mathcal{U}_{a,x,f^+})_{\kappa}$ is nontrivial if and only if $\{a,2a\}\bigcap \Phi_{x,f}\neq \emptyset$. If so, $\{a,2a\}\bigcap \Phi_{x,f}$ contains only one element. Hence, by\cite[Lemma~8.4.1]{Bruhattitsnewapproach}, for $a\in \Phi_{x,f}$, we have $U_{a,x,f^+} \subsetneqq U_{a,x,f}$. Thus we can choose $u\in U_{a,x,f}\backslash U_{a,x,f^+}$, and, by \cite[Theorem~8.2.9]{Bruhattitsnewapproach} combined with \Cref{unipotentradicalspecialfiber} (1), there exist $u', u''\in U_{-a,x,f}$ such that $m(a)\coloneq u'uu''\in \mathcal{G}_{x,f}(\mathfrak{o})$ such that $m(a)$ acts on $X^*(S)$ as a reflection along the root $a$ and $m(a)$ normalizes the maximal $k$-split torus $S\subset G$. Hence, by \Cref{extensionprinciple}, it also normalizes $\mathscr{S}\subset \mathcal{G}_{x,f}$. For each $a\in \Delta_{x,f}$, we choose such a $m(a)$. Then the element $\mathsf{m}(a)\subset \mathsf{G}_{x,f}(\kappa)$ (which depends on the choice of $u$) induced by the special fiber $m(a)_{\kappa}$ normalizes $\mathsf{S}$ and is a lifting of the simple reflection along $a$ in the Weyl group of $\mathsf{G}_{x,f}$. For $r\in \Delta\backslash \Delta_{x,f}$, we choose a representative $\dot{s_r}$ of the simple reflection corresponding to $r$ in $N_G(S)(k)$.

\begin{proof}[Proof of \Cref{theoremcolors}]
    Since $\overline{\mathcal{G}_{x,f}}$ is Noetherian, the boundary $\overline{\mathcal{G}_{x,f}}\backslash \overline{\Omega_{x,f}}$ has finitely many irreducible components $\mathbf{D}_{i}$. By \cite[0BCV]{stacks-project}, each component has codimension $1$.
   \begin{claim}\label{claimcomplementbigcell}
       $\mathcal{G}_{x,f}\backslash \Omega_{x,f}=(\bigcup_{a\in \Delta_{x,f}} \overline{B\cdot m(a)\cdot B^-}) \bigcup (\bigcup_{a\in\Delta\backslash\Delta_{x,f}}\overline{B\cdot \dot{s_a}\cdot B^-})$ holds as an equality of topological spaces, where the bar indicates taking the schematic closure in $\mathcal{G}_{x,f}$.
   \end{claim}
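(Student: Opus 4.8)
The plan is to compare both sides as reduced closed subsets of $\mathcal{G}_{x,f}$ and to control everything through a dimension count over the discrete valuation ring $\mathfrak{o}$. Put $Z\coloneq\mathcal{G}_{x,f}\backslash\Omega_{x,f}$, and for $\alpha\in\Delta$ put $C_{\alpha}\coloneq B\cdot\dot{s}_{\alpha}\cdot B^{-}\subset G\subset\mathcal{G}_{x,f}$. Since $m(\alpha)$ and $\dot{s}_{\alpha}$ both represent the simple reflection $s_{\alpha}$, and the double coset $BwB^{-}$ depends only on $w$ in the relative Weyl group, we have $C_{\alpha}=B\cdot m(\alpha)\cdot B^{-}$ for $\alpha\in\Delta_{x,f}$, so that the right-hand side of the asserted identity is the finite union $\bigcup_{\alpha\in\Delta}\overline{C_{\alpha}}$ of topological closures in $\mathcal{G}_{x,f}$. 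The splitting of $\Delta$ into $\Delta_{x,f}$ and its complement is immaterial for this topological statement; it is recorded because the integrality $m(\alpha)\in\mathcal{G}_{x,f}(\mathfrak{o})$ (established just above, using \Cref{extensionprinciple}, \Cref{theoremstructureofreductivequotient} and \Cref{unipotentradicalspecialfiber}) is what will matter in the proof of \Cref{theoremcolors}.

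The input from the generic fibre is the relative Bruhat decomposition of the quasi-split group $G$ relative to the pair of opposite Borel subgroups $B,B^{-}$ (see \cite[\S~6]{boreltits}): $\Omega_{G}$ is the open big cell, and $G\backslash\Omega_{G}$ is the union of the $|\Delta|$ pairwise distinct irreducible closed subsets $\overline{C_{\alpha}}^{\,G}$ ($\alpha\in\Delta$), each of codimension $1$, namely the closures of the codimension-one cells adjacent to the big cell. In particular $C_{\alpha}\subseteq G\backslash\Omega_{G}$; since $\Omega_{x,f}\cap G=\Omega_{G}$ and $Z$ is closed in $\mathcal{G}_{x,f}$, this already yields $\bigcup_{\alpha}\overline{C_{\alpha}}\subseteq Z$.

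For the reverse inclusion — the substantive half — I would first observe that $\Omega_{x,f}\hookrightarrow\mathcal{G}_{x,f}$ is an affine open immersion, since both schemes are affine over $\mathfrak{o}$, hence separated (\cite[01SG]{stacks-project}). By \cite[corollaire~21.12.7]{EGAIV4} every irreducible component $D$ of $Z$ then has codimension $1$ in $\mathcal{G}_{x,f}$, and since $\mathcal{G}_{x,f}$ is catenary and irreducible of dimension $\dim G+1$ this forces $\dim D=\dim G$. Next I would rule out that $D$ is contained in the special fibre: otherwise, being an irreducible closed subset of the irreducible $\dim G$-dimensional group $(\mathcal{G}_{x,f})_{\kappa}$ of the same dimension, $D$ would equal $(\mathcal{G}_{x,f})_{\kappa}$, contradicting the fact that the nonempty big cell $(\Omega_{x,f})_{\kappa}$ lies in $(\mathcal{G}_{x,f})_{\kappa}\backslash Z$. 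Hence $D$ dominates $\Spec\mathfrak{o}$, is $\mathfrak{o}$-flat, and equals the schematic closure in $\mathcal{G}_{x,f}$ of its generic fibre $D_{k}$; and $D_{k}$ is a codimension-one irreducible closed subset of $G$ contained in $Z_{k}=G\backslash\Omega_{G}$, hence one of the components $\overline{C_{\alpha}}^{\,G}$ listed above. Therefore $D=\overline{C_{\alpha}}$ for some $\alpha$, and letting $D$ range over the components of $Z$ gives $Z\subseteq\bigcup_{\alpha\in\Delta}\overline{C_{\alpha}}$, which together with the previous paragraph finishes the proof.

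The main obstacle is bookkeeping rather than conceptual: one must pin down the relative Bruhat decomposition in the conventions of \Cref{quasisplitsetup} so that the codimension-one cells bordering the big cell $\Omega_{G}$ are exactly the $B\dot{s}_{\alpha}B^{-}$, $\alpha\in\Delta$, and one must apply the purity and dimension facts over the base $\mathfrak{o}$ with care — catenarity and irreducibility of $\mathcal{G}_{x,f}$, irreducibility of the smooth connected group $(\mathcal{G}_{x,f})_{\kappa}$, and purity of the complement of an affine open. It is worth noting, although the argument above avoids needing it, that $Z$ is genuinely $\mathfrak{o}$-flat, so $Z_{\kappa}$ is strictly larger than the preimage in $(\mathcal{G}_{x,f})_{\kappa}$ of the big-cell complement of the reductive quotient $\mathsf{G}_{x,f}$: the divisors $\overline{C_{\alpha}}$ with $\alpha\in\Delta\backslash\Delta_{x,f}$ specialise to divisors of $(\mathcal{G}_{x,f})_{\kappa}$ that are not pulled back from $\mathsf{G}_{x,f}$.
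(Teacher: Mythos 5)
Your route is genuinely different from the paper's proof, which verifies the equality fibre by fibre: on the generic fibre by the relative Bruhat decomposition of $G$, and on the special fibre by the Bruhat decomposition of the reductive quotient $\mathsf{G}_{x,f}$ pulled back along the flat $\mathscr{R}_u((\mathcal{G}_{x,f})_{\kappa})$-torsor $(\mathcal{G}_{x,f})_{\kappa}\to\mathsf{G}_{x,f}$. You instead reduce everything to the generic fibre via purity. But the pivotal dimension count is wrong: from $\mathrm{codim}(D,\mathcal{G}_{x,f})=1$ and catenarity one \emph{cannot} conclude $\dim D=\dim G$, because the regular affine $\mathfrak{o}$-scheme $\mathcal{G}_{x,f}$ is not biequidimensional. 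The failure occurs exactly when $D$ is closed inside the open generic fibre $G\subset\mathcal{G}_{x,f}$: then $D$ is a $k$-scheme, $\dim D=\dim G-1$ while $\mathrm{codim}(D,\mathcal{G}_{x,f})=1$. This really happens: for $G=\PGL_2$ with $x$ an Iwahori point and $f=0$ (the paper's own $\PGL_2$ example), $\Delta_{x,0}=\emptyset$, $(\Omega_{x,0})_{\kappa}=(\mathcal{G}_{x,0})_{\kappa}$, and $\mathcal{G}_{x,0}\backslash\Omega_{x,0}=\overline{B\dot{s}_{\alpha}B^{-}}$ is wholly contained in $G$ with dimension $2\neq 3$. (Compare the height-one prime $(\pi t-1)\subset\mathfrak{o}[t]$, which has $\dim\mathfrak{o}[t]/(\pi t-1)=0$ inside a two-dimensional regular ring.)

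The error propagates into both of your subsequent deductions — excluding $D\subset(\mathcal{G}_{x,f})_{\kappa}$ and asserting $\mathrm{codim}(D_k,G)=1$ — though your conclusion is salvageable. For the first: if $D\subsetneq(\mathcal{G}_{x,f})_{\kappa}$ then catenarity gives $\mathrm{codim}(D,\mathcal{G}_{x,f})\geq 2$, and $D=(\mathcal{G}_{x,f})_{\kappa}$ is impossible since $(\Omega_{x,f})_{\kappa}\neq\emptyset$. For the second, you do not need the codimension of $D_k$ at all: $D_k$ is irreducible closed in $G\backslash\Omega_G=\bigcup_{\alpha\in\Delta}\overline{B\dot{s}_{\alpha}B^{-}}^{\,G}$, hence contained in some $\overline{B\dot{s}_{\beta}B^{-}}^{\,G}$, so $D=\overline{D_k}\subseteq\overline{B\dot{s}_{\beta}B^{-}}\subseteq\mathcal{G}_{x,f}\backslash\Omega_{x,f}$, and maximality of the component $D$ among irreducible closed subsets of $\mathcal{G}_{x,f}\backslash\Omega_{x,f}$ forces equality. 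Finally, your closing remark is incorrect: the special fibre of $\mathcal{G}_{x,f}\backslash\Omega_{x,f}$ is \emph{not} strictly larger than the preimage of the big-cell complement of $\mathsf{G}_{x,f}$ — the paper's computation exhibits them as equal, and in the Iwahori $\PGL_2$ case that special fibre is actually empty — so the $\overline{B\dot{s}_{\alpha}B^{-}}$ with $\alpha\in\Delta\backslash\Delta_{x,f}$ may have empty special fibre, and when it is nonempty it \emph{is} pulled back from $\mathsf{G}_{x,f}$.
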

   \begin{proof}[Proof of \Cref{claimcomplementbigcell}]
       It suffices to check the equality on each fiber. For the generic fiber, it follows from the Bruhat order on $G$ (\cite[théorème~5.15, corollaire~5.18]{boreltits}). Let 
       $$\mathsf{U}^+_{x,f}\coloneq  \prod_{r\in\Phi_{x,f}^+} (\mathcal{U}_{\overline{r},x,f})_{\kappa}/(\mathcal{U}_{\overline{r},x,f})_{\kappa}^+\subset \mathsf{G}_{x,f} \;\;\;\;\text{and}\;\;\;\;\mathsf{U}^-_{x,f}\coloneq  \prod_{r\in\Phi_{x,f}^-} (\mathcal{U}_{\overline{r},x,f})_{\kappa}/(\mathcal{U}_{\overline{r},x,f})_{\kappa}^+ \subset \mathsf{G}_{x,f} $$
       Applying \emph{loc. cit.} to the maximal reductive quotient $\mathsf{G}_{x,f}$, we obtain 
       $$\mathsf{G}_{x,f}\backslash \mathsf{U}^-_{x,f}\mathsf{S}\mathsf{U}^+_{x,f}=\bigcup_{a\in\Delta_{x,f}}\overline{\mathsf{U}^-_{x,f}\mathsf{S} \cdot   \mathsf{m}(a)  \cdot  \mathsf{S}\mathsf{U}^+_{x,f}}.$$
       Since $(\mathcal{G}_{x,f})_{\kappa}$ is a $\mathscr{R}_u((\mathcal{G}_{x,f})_{\kappa}$-torsor over $\mathsf{G}_{x,f}$ (equipped with the fppf topology) \cite[exposé~VI, théorème~3.2(iv)]{SGA3I}, the quotient morphism $(\mathcal{G}_{x,f})_{\kappa}\longrightarrow\mathsf{G}_{x,f}$ is flat. Moreover, since $\mathscr{R}_u((\mathcal{G}_{x,f})_{\kappa})\subset(\mathcal{G}_{x,f})_{\kappa}$ is normal, by spelling out the quotients and flat base change, we have
       $$(\mathcal{G}_{x,f})_{\kappa}\backslash (\Omega_{x,f})_{\kappa}=\bigcup_{a\in\Delta_{x,f}}\overline{(\mathcal{U}_{\Phi^-,x,f})_{\kappa} \mathscr{S}_{\kappa} \cdot m(a)_{\kappa} \cdot\mathscr{S}_{\kappa} (\mathcal{U}_{\Phi^+,x,f})_{\kappa}},$$
       where the bar indicates taking closure in the special fiber $(\mathcal{G}_{x,f})_{\kappa}$. We conclude by observing that $\mathscr{S}, \mathcal{U}_{\Phi^-,x,f}, \mathcal{U}_{\Phi^+,x,f}$ are the schematic closures of $S, U^-,U^+$ in $\mathcal{G}_{x,f}$ respectively and $m(a)\in \mathcal{G}_{x,f}(\mathfrak{o})$ viewed as a closed subscheme has the discrete valuation ring $\mathfrak{o}$ as its coordinate ring.
   \end{proof}
   Now since $\mathcal{G}_{x,f}\backslash \Omega_{x,f}$ is dense in $\overline{\mathcal{G}_{x,f}}\backslash \overline{\Omega_{x,f}}$, by \Cref{claimcomplementbigcell}, we have 
   $$\overline{\mathcal{G}_{x,f}}\backslash \overline{\Omega_{x,f}}=(\bigcup_{a\in \Delta_{x,f}} \overline{B\cdot m(a)\cdot B^-}) \bigcup (\bigcup_{a\in\Delta\backslash\Delta_{x,f}}\overline{B\cdot \dot{s_a}\cdot B^-}),$$
   where the bar on the right side now indicates taking schematic closure in $\overline{\mathcal{G}_{x,f}}$. Since each $\overline{B\cdot \dot{s_a}\cdot B^-}$ and $\overline{B\cdot m(a)\cdot B^-}$ is irreducible, they are simply $\mathbf{D}_i$. As $\overline{\mathcal{G}_{x,f}}$ is locally factorial (\Cref{quasiprojectivityandsmoothness}(1) and \Cref{constructionofunipotentintegralmodel}), by \cite[Chapter~II, Proposition~6.11]{HartshorneGTM52}, $\overline{B\cdot \dot{s_a}\cdot B^-}$ and $\overline{B\cdot m(a)\cdot B^-}$ are effective Cartier divisors.
   Since the coordinate ring of $\overline{\Omega_{x,f}}$ is a unique factorization domain, the last result about the Picard group follows from, for instance, \cite[Proposition~11.40]{GortzWedhorn}.
\end{proof}

\section{\'{E}tale descent}\label{sectionetaledescent}
The main goal of this section is to prove \Cref{introtheorem2}. The most involved case is when $f(0)=0$.

\subsection{Setup}

We consider an adjoint reductive (not necessarily quasi-split) group $G$ over a Henselian discretely valued field $k$ with perfect residue field $\kappa$ and ring of integers $\mathfrak{o}$. Let $K$ be the maximal unramified extension of $k$ which is in a separable closure $k_s$ of $k$. Let $\overline{\kappa}$ be the residue field of $K$, then $\overline{\kappa}$ is an algebraic closure of $\kappa$. Let $\mathcal{O}\subset K$ and $\mathcal{O}_s\subset k_s$ be the rings of integers. By a theorem of Steinberg \cite[Theorem~1.9]{Steinbergregularelements}, $G_{K}$ is quasi-split over $K$, see \cite[Proposition~10.1]{Landvogtcompactification} or \cite[Corollary~2.3.8]{Bruhattitsnewapproach} for details. For notational simplicity, we denote the Galois group $\Gal(K/k)$ by $\Gamma$.

Let $\mathscr{B}$ be the Bruhat--Tits building of $G(k)$. Recall that $\mathscr{B}$ is defined to be the fixed point set $\mathcal{B}(G_K)^{\Gamma}$ equipped with a canonical polysimplicial structure (cf. \cite[9.2.4]{Bruhattitsnewapproach}), where $\mathcal{B}(G_K)$ is the Bruhat--Tits building of $G(K)$.

We choose a maximal $k$-split subtorus $S\subset G$ with $\Phi(S)\coloneq \Phi(S,G)$ and $\hat{\Phi}(S)\coloneq \Phi(S)\bigcup \{0\}$. Let $T$ be a special $k$-torus containing $S$ in the sense that $T_K$ is a maximal $K$-split torus of $G_K$. We fix a Borel subgroup $B\subset G_K$ containing $T_K$. 

Let $\Phi(T_K)\coloneq \Phi(T_K, G_K)$ and $\hat{\Phi}(T_K)\coloneq \Phi(T_K)\bigcup\{0\}$. Let $f:\hat{\Phi}(S)\rightarrow \widetilde{\mathbb{R}}$ be a concave function. We denote by $\widetilde{f}:\hat{\Phi}(T_K)\rightarrow \widetilde{\mathbb{R}}$ the concave function obtained by compositing $f$ with the restriction map $X^*(T_K)\rightarrow X^*(S_K)=X^*(S)$. 

Consider a point $x$ in the apartment $\mathcal{A}(S)$ of $\mathcal{B}(G)$ corresponding to $S$. Since $x$ is fixed by $\Gamma$, the natural action of $\Gamma$ on $G_K$ extends to an action of $\Gamma$ on the group scheme $\mathcal{G}_{x,\widetilde{f}}$ over $\mathcal{O}$ (whose definition is recalled in \Cref{subsectionintegralmodelofG}). By Galois descent, we can define the $\mathfrak{o}$-group scheme $\mathcal{G}_{x,f}$ to be the resulting scheme. A different construction of $\mathcal{G}_{x,f}$ is to describe its $\mathfrak{o}$-point by using the valuation of the root datum $(G,S)$ given by $x$, see \cite[\S~9.6, \S~9.8]{Bruhattitsnewapproach}.

Let $\widetilde{T}\coloneq Z_{G_K}(T_K) \subset B$ be the centralizer of $T_K$ in $G_K$, and let $N\subset G_K$ be the normalizer of $T_K$. Since $G_K$ is quasi-split over $K$, $\widetilde{T}$ is a maximal torus in $G_K$. Let $B^-\subset G_K$ be the opposite Borel subgroup such that $B\bigcap B^-=\widetilde{T}$. Let $U^+$ and $U^-$ be the unipotent radicals of $B$ and $B^-$ respectively.

Let $\mathcal{R}\coloneq(X,\widetilde{\Psi},\widetilde{\Delta},X^\vee,\widetilde{\Psi}^\vee,\widetilde{\Delta}^\vee)$ be the based root datum of $G_{k_s}$ defined by $\widetilde{T}\subset B$. Let $\Delta$ be the pullback of $\widetilde{\Delta}$ to $T_K$. 

When $f(0)\textgreater 0$, by \Cref{f0biggerthat0}, $\overline{\mathcal{G}_{x,\widetilde{f}}}$ is the gluing of $\overline{G_K}$ and $\mathcal{G}_{x,\widetilde{f}}$ along the open subscheme $G_K$. To see \Cref{introtheorem2} in this case, it suffices to let $\overline{\mathcal{G}_{x,f}}$ be the gluing of $\overline{G}$ with $\mathcal{G}_{x,f}$ along $G$, where $\overline{G}$ is descended from $\overline{G_{k_s}}$ by \cite[\S~6]{li2023equivariant}.

\emph{In the following, we assume that $f(0)=0$.}

\subsection{Descent datum of group schemes}\label{subsectiondescentdatumgrp}

By \cite[Theorem~7.1.9]{redctiveconrad}, we have a short exact sequence of schematic representable sheaves over the category $\Sch/K$:
\begin{equation}\label{automexactsequence}
    1\longrightarrow G_K\longrightarrow \Aut_{G_K/K}\longrightarrow \Out_{G_K/K}\longrightarrow 1,
\end{equation}
where, with the choices of the maximal torus $\widetilde{T}$ and of the Borel subgroup $B$ (and of a pinning), the outer automorphism scheme $(\Out_{G_K/K})_{k_s}\cong \Out_{G_{k_s}/k_s}$ is identified with the finite constant group scheme $\underline{\Aut(\mathcal{R})}_{k_s}$. Since $T_K$ is defined over $k$, the action of $\Gamma$ on $\mathcal{B}(G_K)$ preserves $\mathcal{A}(T_K)$. An element $\sigma\in \Gamma$ gives rise to an element of $\Aut_K(G_K)$ also denoted by $\sigma$. By \Cref{automexactsequence}, $\sigma$ is the composition of an inner automorphism given by an element $g\in G(K)$ and an automorphism $\xi$ of $G_K$ whose base change $\xi_{k_s}$ is an outer automorphism of $G_{k_s}$. Since, by the definition of outer automorphism, $\xi_{k_s}$ stabilizes the maximal torus $\widetilde{T}_{k_s}$, the $K$-automorphism $\xi$ also preserves the maximal $K$-split torus $T_K$. Hence $\xi$ stabilizes the apartment $\mathcal{A}(T_K)$. Then $g$ must also preserve $\mathcal{A}(T_K)$. Since the stabilizer of $\mathcal{A}(T_K)$ in $G(K)$ is $N(K)$ (\cite[Proposition~7.6.8]{Bruhattitsnewapproach}), we know that $g\in N(K)$.

\subsection{Extension of descent data}

Let us now extend the action of $\Gamma$ on $\mathcal{G}_{x,\widetilde{f}}$ to an action of $\Gamma$ on $\overline{\mathcal{G}_{x,\widetilde{f}}}$ which is constructed in \Cref{sectionparahoriccompactification}. Let $\overline{\mathcal{T}}$ be the connected lft-Néron model of the induced $K$-torus $\widetilde{T}$. Let $\mathcal{U}^-_{x,\widetilde{f}}$ and $\mathcal{U}^+_{x,\widetilde{f}}$ be the schematic closures of $U^-$ and $U^+$ in $\mathcal{G}_{x,\widetilde{f}}$ respectively.

By \Cref{subsectiondescentdatumgrp}, we see that the action of an element $\sigma\in \Gamma$ on $G_K$ is a combination of the conjugation by element $g\in N(K)$ and an automorphism $\xi$ descended from an outer automorphism of $G_{k_s}$. 

\subsubsection{Extension of outer part}
Recall that, since $G_{k_s}$ is adjoint, the lattice $X^*(\widetilde{T})\coloneq \Hom_{k_s-\text{grp}}(\widetilde{T}_{k_s}, \mathbb{G}_{m,k_s})$, as an abelian group, is generated by the (absolute) simple roots $\widetilde{\Delta}$. The outer automorphism $\xi_{k_s}$ acts on $\widetilde{T}_{k_s}\cong \Spec(k_s[X^*(\widetilde{T})])$ via a permutation $\mu$ of $\widetilde{\Delta}$. Since $\xi_{k_s}$ is the base change of the $K$-automorphism $\xi$, we have that the following commutative diagram
$$\xymatrix{
   &\widetilde{\Delta} \ar[r]^{\mu}\ar[d]^{\gamma}   &\widetilde{\Delta} \ar[d]^{\gamma}  \\
   &\widetilde{\Delta} \ar[r]^{\mu}   &\widetilde{\Delta} 
}$$
holds for any $\gamma\in \Gal(k_s/K)$. Then $\mu$ gives rise to an automorphism of $\prod_{\widetilde{\Delta}}\mathbb{A}_{1,\mathcal{O}_s}$ by permuting the index set $\widetilde{\Delta}$, and, by the above commutative diagram, this automorphism is compatible with the action of $\Gal(k_s/K)$ on $\prod_{\widetilde{\Delta}}\mathbb{A}_{1,\mathcal{O}_s}$ (which is naturally defined by the action of $\Gal(k_s/K)$ on $\widetilde{\Delta}$). Hence, by descent, $\mu$ gives a $\mathcal{O}$-automorphism of $\overline{\mathcal{T}}$.

\subsubsection{Extension of inner part}

For the element $g\in N(K)$, we denote by $w$ its image in the relative Weyl group $W\coloneq N(K)/S(K)$. Since now $f(0)=0$, by \cite[Theorem~8.2.9, Theorem~8.5.12]{Bruhattitsnewapproach}, we can find a lifting $\dot{g}\in\mathcal{G}_{x,\widetilde{f}}(\mathcal{O})\bigcap N(K)$ of $w$. Let $\mathcal{T}$ be the schematic closure of $\widetilde{T}$ in $\mathcal{G}_{x,\widetilde{f}}$. Then the conjugates by $g$ and by $\dot{g}$ on $\mathcal{T}$ coincide because they differ by an element in $Z(K)$.

\hspace*{\fill} 

The composition $\overline{\sigma} \coloneq\Ad(\dot{g})\circ \mu\in \Aut_{\mathcal{O}}(\overline{\mathcal{T}})$ is then an extension of the automorphism of $\mathcal{T}$ induced by $\sigma$. Now we consider the following morphism
\begin{align}\label{quotientequation}
    \mathcal{G}_{x,\widetilde{f}}\times_{\mathcal{O}} \overline{\Omega_{x,\widetilde{f}}}\times_{\mathcal{O}} \mathcal{G}_{x,\widetilde{f}}=\mathcal{G}_{x,\widetilde{f}}\times_{\mathcal{O}}(\mathcal{U}^-_{x,\widetilde{f}}\times_{\mathcal{O}}\overline{\mathcal{T}}\times_{\mathcal{O}}\mathcal{U}^+_{x,\widetilde{f}})\times_{\mathcal{O}} \mathcal{G}_{x,\widetilde{f}}&\longrightarrow \overline{\mathcal{G}_{x,\widetilde{f}}}\\
    (p_1, u^-, \overline{t},u^+, p_2)&\longmapsto \sigma(p_1u^-)\cdot\overline{\sigma}(\overline{t})\cdot\sigma(u^+p_2),
\end{align}
where $(p_1, u^-, \overline{t},u^+, p_2)\in( \mathcal{G}_{x,\widetilde{f}}\times_{\mathcal{O}} \overline{\Omega_{x,\widetilde{f}}}\times_{\mathcal{O}} \mathcal{G}_{x,\widetilde{f}})(S)$ for a test $\mathcal{O}$-scheme $S$. By the construction of $\overline{\mathcal{G}_{x,\widetilde{f}}}$ (\Cref{f0equalto0}), we have that \Cref{quotientequation} actually induces an $\mathcal{O}$-automorphism of $\overline{\mathcal{G}_{x,\widetilde{f}}}$. In this way, we get an action of $\Gamma$ on $\overline{\mathcal{G}_{x,\widetilde{f}}}$ which extends the $\Gamma$-action on $\mathcal{G}_{x,\widetilde{f}}$.

\begin{theorem}\label{theoremdescent}
    The $(\mathcal{G}_{x,\widetilde{f}}\times_{\mathcal{O}}\mathcal{G}_{x,\widetilde{f}})$-scheme $\overline{\mathcal{G}_{x,\widetilde{f}}}$ descends to a smooth quasi-projective $(\mathcal{G}_{x,f}\times_{\mathfrak{o}}\mathcal{G}_{x,f})$-scheme $\overline{\mathcal{G}_{x,f}}$ over $\mathfrak{o}$ which contains $\mathcal{G}_{x,f}$ as an open dense subscheme, and the boundary $\overline{\mathcal{G}_{x,f}}\backslash \mathcal{G}_{x,f}$ is an $\mathfrak{o}$-relative Cartier divisor with $\mathfrak{o}$-relative normal crossings. Moreover, $\overline{\mathcal{G}_{x,f}}$ is $\mathfrak{o}$-projective if and only if $\mathcal{G}_{x,f}$ is a reductive group scheme over $\mathfrak{o}$.
\end{theorem}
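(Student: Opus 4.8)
The plan is to show that the $\Gamma$-action on $\overline{\mathcal{G}_{x,\widetilde{f}}}$ built above through \Cref{quotientequation} is an effective descent datum for $\Spec\mathcal{O}\to\Spec\mathfrak{o}$, and then to transport the properties of \Cref{introtheorem1} for $\overline{\mathcal{G}_{x,\widetilde{f}}}$ along this descent. As a preliminary I would reduce to a finite unramified Galois extension $\mathcal{O}_n/\mathfrak{o}$: since $\overline{\mathcal{G}_{x,\widetilde{f}}}$ together with its $\Gamma$-action is of finite presentation over $\mathcal{O}=\varinjlim\mathcal{O}_n$, a limit argument descends everything to finite level, on which $\Gamma$ acts through the finite group $\Gal(\mathcal{O}_n/\mathfrak{o})$. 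Next I would check that $(\overline{\sigma})_{\sigma\in\Gamma}$ is a genuine (semilinear) action and that the relevant structures are $\Gamma$-equivariant; the mechanism here is rigidity: $\mathcal{G}_{x,\widetilde{f}}$ is schematically dense in the reduced, separated scheme $\overline{\mathcal{G}_{x,\widetilde{f}}}$, so any morphism $\overline{\mathcal{G}_{x,\widetilde{f}}}\to\overline{\mathcal{G}_{x,\widetilde{f}}}$ over a fixed automorphism of $\Spec\mathcal{O}$ is determined by its restriction to $\mathcal{G}_{x,\widetilde{f}}$. Since $\overline{\sigma}|_{\mathcal{G}_{x,\widetilde{f}}}$ is the automorphism $\sigma$ of the given descent datum on $\mathcal{G}_{x,\widetilde{f}}$, the cocycle identity $\overline{\sigma}\circ\overline{\tau}=\overline{\sigma\tau}$ follows from the one for $\mathcal{G}_{x,\widetilde{f}}$; by the same token the open immersion $\mathcal{G}_{x,\widetilde{f}}\hookrightarrow\overline{\mathcal{G}_{x,\widetilde{f}}}$ and the $(\mathcal{G}_{x,\widetilde{f}}\times_{\mathcal{O}}\mathcal{G}_{x,\widetilde{f}})$-action are $\Gamma$-equivariant --- indeed \Cref{quotientequation} is engineered so that $\overline{\sigma}(p_1\cdot\overline{x}\cdot p_2)=\sigma(p_1)\cdot\overline{\sigma}(\overline{x})\cdot\sigma(p_2)$ --- and the boundary $\overline{\mathcal{G}_{x,\widetilde{f}}}\backslash\mathcal{G}_{x,\widetilde{f}}$ is $\Gamma$-stable.

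The core of the argument is effectivity of this descent datum. Since $\overline{\mathcal{G}_{x,\widetilde{f}}}$ is of finite type over $\mathcal{O}$, the $\Gamma$-action factors through a finite quotient, so every $\Gamma$-orbit of points is finite; and since $\overline{\mathcal{G}_{x,\widetilde{f}}}$ is quasi-projective, hence separated, over the Noetherian ring $\mathcal{O}$ by \Cref{introtheorem1}, any finite set of points is contained in an affine open, whence intersecting its finitely many $\Gamma$-translates (still affine, by separatedness) produces a $\Gamma$-stable affine open containing the orbit. Thus $\overline{\mathcal{G}_{x,\widetilde{f}}}$ is covered by $\Gamma$-stable affine opens, and exactly as in the proof of \Cref{quasiprojectivityandsmoothness}(2) the quotient $\overline{\mathcal{G}_{x,f}}\coloneq\overline{\mathcal{G}_{x,\widetilde{f}}}/\Gamma$ exists and is quasi-projective over $\mathfrak{o}$ by \cite[\S~6.6, Theorem~2]{BLR}, with $\overline{\mathcal{G}_{x,f}}\times_{\mathfrak{o}}\mathcal{O}\cong\overline{\mathcal{G}_{x,\widetilde{f}}}$ $\Gamma$-equivariantly, i.e. the descent is effective. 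I expect this to be the main obstacle: $\overline{\mathcal{G}_{x,f}}$ is in general not $\mathfrak{o}$-proper, so one cannot simply invoke effectivity of descent for projective (or proper) schemes, and it is precisely the invariant-affine-cover argument combined with the quotient construction of \emph{loc.\ cit.}\ that handles the non-proper quasi-projective case.

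Granting effectivity, most of the assertions follow by faithfully flat descent along $\Spec\mathcal{O}\to\Spec\mathfrak{o}$ from the corresponding facts for $\overline{\mathcal{G}_{x,\widetilde{f}}}$: smoothness over the base and quasi-projectivity are fpqc-local on the base; the $\Gamma$-equivariant open immersion and the $\Gamma$-equivariant group action descend to an open immersion $\mathcal{G}_{x,f}\hookrightarrow\overline{\mathcal{G}_{x,f}}$ and to a $(\mathcal{G}_{x,f}\times_{\mathfrak{o}}\mathcal{G}_{x,f})$-action, with $\mathcal{G}_{x,f}$ schematically dense since density is fpqc-local; and the $\Gamma$-stable boundary $\overline{\mathcal{G}_{x,\widetilde{f}}}\backslash\mathcal{G}_{x,\widetilde{f}}$, which is an $\mathcal{O}$-relative Cartier divisor with relative normal crossings by \Cref{boundarydivisor}, descends to such a divisor on $\overline{\mathcal{G}_{x,f}}$ over $\mathfrak{o}$, the relative normal crossings condition being étale-local by definition (so one only obtains relative, not strictly relative, normal crossings, in accordance with the statement, since $\Gamma$ may permute the components $S_{\alpha}$).

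Finally, for the properness criterion: $\mathcal{G}_{x,f}$ is a reductive $\mathfrak{o}$-group scheme if and only if $\mathcal{G}_{x,\widetilde{f}}=\mathcal{G}_{x,f}\otimes_{\mathfrak{o}}\mathcal{O}$ is reductive over $\mathcal{O}$ --- reductivity of a smooth affine group scheme is fpqc-local on the base, or alternatively, since $\kappa$ is perfect, $\mathscr{R}_u((\mathcal{G}_{x,f})_{\kappa})\otimes_{\kappa}\overline{\kappa}=\mathscr{R}_u((\mathcal{G}_{x,\widetilde{f}})_{\overline{\kappa}})$, so one is trivial iff the other is. By the properness criterion of \Cref{introtheorem1}(3) applied over $\mathcal{O}$ this is equivalent to $\overline{\mathcal{G}_{x,\widetilde{f}}}$ being $\mathcal{O}$-projective, hence $\mathcal{O}$-proper, hence --- properness being fpqc-local on the base --- to $\overline{\mathcal{G}_{x,f}}$ being $\mathfrak{o}$-proper; and since $\overline{\mathcal{G}_{x,f}}$ is quasi-projective over $\mathfrak{o}$, being $\mathfrak{o}$-proper is the same as being $\mathfrak{o}$-projective. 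This completes the plan.
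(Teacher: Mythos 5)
Your proposal follows the same route as the paper's proof, which is entirely a string of citations: \cite[\S~6.2, Example~B]{BLR} for effectivity, \cite[\S~6.6, Theorem~2(d)]{BLR} for quasi-projectivity, and the EGA/SGA descent references for the remaining properties; what you have done is unpack those citations, most usefully the $\Gamma$-stable affine-cover argument (finite orbits in a separated quasi-projective $\mathcal{O}$-scheme sit inside affine opens, whose $\Gamma$-translates intersect to $\Gamma$-stable affine opens), which is indeed the mechanism behind BLR's Example~B. One incidental slip in your third paragraph: quasi-projectivity is \emph{not} fpqc-local on the base in general (this is precisely why \cite[\S~6.2, Example~B]{BLR} needs an ample invertible sheaf adapted to the descent datum), but the remark is harmless because you have already obtained quasi-projectivity of $\overline{\mathcal{G}_{x,f}}$ in the preceding paragraph from the invariant-affine-cover quotient construction together with \cite[\S~6.6, Theorem~2]{BLR}, which is exactly the route the paper takes.
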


\begin{proof}
    The effectivity of the descent datum follows from \cite[\S~6.2, Example~B]{BLR}. The quasi-projectivity follows from \cite[\S~6.6, Theorem~2(d)]{BLR}. The other properties of $\overline{\mathcal{G}_{x,f}}$ follow from those of $\overline{\mathcal{G}_{x,\widetilde{f}}}$ (\Cref{introtheorem1}) by descent \cite[proposition~2.7.1]{EGAIV2}, \cite[corollaire~17.7.3 (ii)]{EGAIV4}, \cite[exposé~XIII,~2.1.0]{SGA1} and \cite[définition~2.7]{SGA3III}. 
\end{proof}

\appendix

\section{Dilatation}\label{appendixdilatation}
For the convenience of the reader, we recollect the basics of dilatation which we use in the main body of this paper. 

Let $k$ be a discrete valuation field with valuation ring $\mathfrak{o}$, with uniformizer $\pi$, with $\mathfrak{m}$ the maximal ideal, and with residue field $\kappa$. Let $X$ be a scheme of finite type over $\mathfrak{o}$, and let $Y\subset X_{\kappa}$ be a closed subscheme. 

\begin{definition-proposition}
\cite[\S~3.2, Proposition~1]{BLR}
There is a unique (up to canonical isomorphism) pair $$(X_Y,u: X_Y\rightarrow X),$$ 
where $X_Y$ is a flat scheme over $\mathfrak{o}$ and $u$ is a homomorphism of $\mathfrak{o}$-schemes, satisfying the following universal property:
for any flat scheme $Z$ over $\mathfrak{o}$ and any $\mathfrak{o}$-morphism $v:Z\rightarrow X$ such that the special fiber $v_{\kappa}: Z_{\kappa}\rightarrow X_{\kappa}$ factor through $Y\subset X_{\kappa}$, then there exists a unique $\mathfrak{o}$-morphism $v':Z\rightarrow X_Y$ such that $v=u \circ v'$. 

We call the pair $(X_Y,u\colon X_Y\rightarrow X)$ the \emph{dilatation} of $Y$ on $X$.
\end{definition-proposition}

Actually, $X_Y$ is the open locus of the blowing-up of $\Bl_Y(X)$ at which the ideal sheaf of the exceptional divisor is generated by the uniformizer $\pi$, see \emph{loc. cit} or \cite[Remark~A.5.9]{Bruhattitsnewapproach}.

\begin{proposition}\label{dilitationproperty}
    (Special case of \cite[Corollary~2.8]{Nerondilatation})
    Let $X'\rightarrow X$ be a \emph{flat} morphism of $\mathfrak{o}$-schemes, and let $Y'\subset X'_{\kappa}$ be the preimage of $Y$. If the morphism $X'\rightarrow X$ satisfies property $\mathcal{P}$ which is stable under base change, then the natural morphism $X'_{Y'}\rightarrow X_{Y}$ also satisfies $\mathcal{P}$ and is flat.
\end{proposition}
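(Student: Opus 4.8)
The plan is to reduce the proposition to the single fact that the formation of the dilatation commutes with flat base change, after which everything follows from the standard base-change formalism. Concretely, I would first prove that the canonical morphism $X'_{Y'}\to X_Y$ exhibits $X'_{Y'}$ as the fibre product $X_Y\times_X X'$. This is checked against the universal property of the dilatation recalled above. The scheme $X_Y\times_X X'$ is flat over $\mathfrak{o}$: indeed $X_Y$ is $\mathfrak{o}$-flat by construction, the projection $X_Y\times_X X'\to X_Y$ is flat as a base change of the flat morphism $X'\to X$, and flatness is stable under composition. Its special fibre $(X_Y\times_X X')_\kappa=(X_Y)_\kappa\times_{X_\kappa}X'_\kappa\to X'_\kappa$ factors through $Y'$, because the special fibre $(X_Y)_\kappa\to X_\kappa$ factors through $Y$---a defining property of the dilatation---and $Y'$ is by definition the preimage $Y\times_{X_\kappa}X'_\kappa$. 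Finally, if $Z$ is a flat $\mathfrak{o}$-scheme and $v\colon Z\to X'$ is a morphism whose special fibre factors through $Y'$, then the composite $Z\to X'\to X$ has special fibre factoring through $Y$, hence lifts uniquely to $Z\to X_Y$ over $X$ by the universal property of $X_Y$; together with $v$ this yields the required unique morphism $Z\to X_Y\times_X X'$ over $X'$. Thus $X_Y\times_X X'$ satisfies the universal property defining $X'_{Y'}$, giving a canonical isomorphism $X'_{Y'}\cong X_Y\times_X X'$ over $X_Y$ (and over $X$). Alternatively, one can argue through the blow-up description recalled above, using that both the formation of $\Bl_Y(X)$ and the passage to the open locus where the exceptional ideal is generated by $\pi$ are compatible with the flat base change $X'\to X$.

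Granting this identification, the natural morphism $X'_{Y'}\to X_Y$ is exactly the base change of $X'\to X$ along $X_Y\to X$. Since $X'\to X$ is flat, so is $X'_{Y'}\to X_Y$; since $\mathcal{P}$ holds for $X'\to X$ and is by hypothesis stable under base change, $\mathcal{P}$ also holds for $X'_{Y'}\to X_Y$. This is precisely the assertion of the proposition.

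The only step with genuine content is the base-change compatibility of the dilatation, and within it the delicate point is matching the special-fibre conditions: one must be careful that the closed subscheme cut out on $(X_Y\times_X X')_\kappa$ is exactly the preimage $Y'$, which uses both the base-change compatibility of the closed immersion $Y\hookrightarrow X_\kappa$ and the fact that $(X_Y)_\kappa$ maps into $Y$. All remaining verifications are formal manipulations with universal properties together with the stability of flatness---and of $\mathcal{P}$---under base change and composition.
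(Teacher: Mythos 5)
Your proof is correct and complete. The paper itself gives no argument here but simply cites Corollary~2.8 of the external reference as the source; you have in effect reproved the key ingredient of that result from scratch.

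Your reduction is the right one: the entire proposition follows once one knows that dilatation commutes with flat base change, i.e.\ that the natural map induces an isomorphism $X'_{Y'}\cong X_Y\times_X X'$. Your verification of this via the universal property in Definition-Proposition~\ref{dilitationproperty} (flatness of $X_Y\times_X X'$ over $\mathfrak{o}$, the special-fibre factorisation through $Y'=Y\times_{X_\kappa}X'_\kappa$, and the lifting of a test object $Z\to X'$ through the lifted map $Z\to X_Y$ coupled with $v$) is sound, and you correctly identify the only delicate point, namely that $(X_Y)_\kappa\times_{X_\kappa}X'_\kappa$ lands in the preimage $Y'$ precisely because $(X_Y)_\kappa$ lands in $Y$ and $Y'$ is a fibre product. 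Granting that identification, $X'_{Y'}\to X_Y$ is the base change of $X'\to X$ along $X_Y\to X$, so it inherits flatness and $\mathcal{P}$ automatically. This is exactly what the cited corollary provides; your argument is a self-contained substitute that buys independence from the external reference, at the cost of repeating a short but non-trivial universal-property check that the paper chose to outsource. Your remark that one could alternatively argue through the blow-up description (flat base change of $\Bl_Y(X)$ together with the fact that the locus where the exceptional ideal is $\pi$-generated is compatible with flat pullback) is also valid and gives a second, more computational route to the same base-change isomorphism.
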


We now assume that $X$ is affine and flat over $\mathfrak{o}$.

\begin{lemma}\label{dilitationcordinatering}
\cite[Construction~A.5.2, Proposition~A.5.3]{Bruhattitsnewapproach}
    Let $I\subset \mathfrak{o}[X]$ be the ideal defining $Y$. Then $\mathfrak{o}[X][\pi^{-1}I]\subset \mathfrak{o}[X]\otimes_{\mathfrak{o}} k$ is isomorphic to the coordinate ring of $X_Y$. In particular, 
    $X_Y$ is an integral model of $X_{k}$, i.e., $(X_Y)_{k}=X_{k}$.
\end{lemma}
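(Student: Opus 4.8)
The plan is to write down the candidate scheme explicitly and check that it solves the universal problem defining the dilatation, then conclude by uniqueness. Put $A := \mathfrak{o}[X]$ and let $A' := A[\pi^{-1}I] \subseteq A[\pi^{-1}] = A \otimes_\mathfrak{o} k$ denote the $A$-subalgebra generated by the elements $a/\pi$ with $a \in I$, and let $u' : \Spec A' \to X$ be the morphism induced by the inclusion $A \hookrightarrow A'$. (Here $\pi A \subseteq I$, since $Y$ is a closed subscheme of $X_\kappa = \Spec(A/\pi A)$.) I claim $(\Spec A', u')$ is the dilatation $(X_Y, u)$.

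First I would record two elementary facts. Since $A'$ is an $\mathfrak{o}$-submodule of $A[\pi^{-1}]$, it has no $\pi$-torsion, hence is flat over the discrete valuation ring $\mathfrak{o}$. And the special fibre morphism $(\Spec A')_\kappa \to X_\kappa$ factors through $Y$: for $a \in I$ one has $a = \pi\cdot(a/\pi)$ with $a/\pi \in A'$, so $I$ maps to zero in $A'/\pi A'$, which says exactly that $A/\pi A \to A'/\pi A'$ factors through $A/I = \kappa[Y]$.

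Next I would verify the universal property. Let $Z$ be a flat $\mathfrak{o}$-scheme together with a morphism $v : Z \to X$ whose special fibre factors through $Y$; it suffices to treat $Z = \Spec C$ affine, the general case following by gluing. Then $v$ is a ring map $v^\sharp : A \to C$ with $C$ flat --- hence $\pi$-torsion-free --- over $\mathfrak{o}$ and $v^\sharp(I) \subseteq \pi C$. The $\pi$-torsion-freeness of $C$ is precisely what permits unambiguous division by $\pi$: for each $a \in I$ there is a unique $c_a \in C$ with $v^\sharp(a) = \pi c_a$. Under $v^\sharp \otimes \operatorname{id} : A[\pi^{-1}] \to C[\pi^{-1}]$ the element $a/\pi$ maps to $c_a \in C$; since $A'$ is generated over $A$ by these $a/\pi$ and $C \subseteq C[\pi^{-1}]$ is a subring, we get $(v^\sharp \otimes \operatorname{id})(A') \subseteq C$. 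Restriction therefore yields a ring homomorphism $A' \to C$ extending $v^\sharp$, and it is the unique such map, because $A' \hookrightarrow A[\pi^{-1}]$ forces any $\mathfrak{o}$-algebra extension of $v^\sharp$ into a torsion-free $C$ to agree with $v^\sharp \otimes \operatorname{id}$ on $A'$. Hence $(\Spec A', u')$ satisfies the defining property of $X_Y$, and uniqueness of the dilatation gives $X_Y \cong \Spec A'$.

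For the final assertion, inverting $\pi$ in $A' = A[\pi^{-1}I]$ gives back $A[\pi^{-1}] = A \otimes_\mathfrak{o} k$, since the adjoined elements $a/\pi$ already lie in $A[\pi^{-1}]$; thus $(X_Y)_k = X_k$, i.e. $X_Y$ is an integral model of $X_k$. I do not expect a real obstacle here: everything rests on the single fact that a flat module over a discrete valuation ring is $\pi$-torsion-free, so that division by $\pi$ is well-defined and uniqueness is automatic; the only other minor point is the routine reduction of the universal property to affine test schemes.
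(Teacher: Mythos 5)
Your proof is correct. The paper itself offers no argument for this lemma — it merely cites \cite[Construction~A.5.2, Proposition~A.5.3]{Bruhattitsnewapproach} — and your write-out is precisely the standard construction-and-verification that the referenced result encapsulates: exhibit $\Spec A[\pi^{-1}I]$ explicitly, note that $\pi$-torsion-freeness over a discrete valuation ring gives flatness and makes division by $\pi$ into a torsion-free target both well-defined and unique, check that the special fibre factors through $Y$, verify the universal property (affine case plus a routine gluing reduction), and invert $\pi$ for the last assertion. All steps are sound.
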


Let $k'/k$ be a finite separable field extension of ramification degree $e$, and let $\mathfrak{o}'$ be the ring of integers of $k'$. Let $\pi'\in \mathfrak{o}'$ be a uniformizer.
Let $\mathcal{A}_{(0)}$ (resp., $\mathcal{G}_{(0)}$) be $\Res_{\mathfrak{o}'/\mathfrak{o}}(\mathbb{A}_{1,\mathfrak{o}'})$ (resp., $\Res_{\mathfrak{o}'/\mathfrak{o}}(\mathbb{G}_{m,\mathfrak{o}'})$), and let $\mathfrak{e}\in(\mathcal{G}_{(0)})_{\kappa}(\kappa)$ be the identity section. By \Cref{dilitationproperty}, the canonical open immersion $\mathcal{G}_{(0)}\hookrightarrow \mathcal{A}_{(0)} $ induces an open immersion $\mathcal{G}_{(1)}\hookrightarrow \mathcal{A}_{(1)}$, where $\mathcal{G}_{(1)}$ (resp., $\mathcal{A}_{(1)}$) is defined as the dilatation of $\mathfrak{e}$ in $\mathcal{G}_{(0)}$ (resp., $\mathcal{A}_{(0)}$). Note that, by \cite[Lemma~A.5.7]{Bruhattitsnewapproach}, $\mathcal{G}_{(1)}$ is a group scheme, while $\mathcal{A}_{(1)}$ is not. Then, by induction and \Cref{dilitationproperty}, for $n\in \mathbb{Z}_{>0}$, we have the open immersion $\mathcal{G}_{(n)}\hookrightarrow \mathcal{A}_{(n)}.$
The group scheme $\mathcal{G}_{(n)}$ is called the $n^{th}$ congruence group scheme of $\Res_{\mathfrak{o}'/\mathfrak{o}}(\mathbb{G}_{m,\mathfrak{o}'})$, cf., \cite[Definition~A.5.12]{Bruhattitsnewapproach}.

\begin{lemma}\label{dilatationofaffineline}
    The open immersion $\mathcal{G}_{(n)}\hookrightarrow \mathcal{A}_{(n)}$ induces an $\kappa$-isomorphism of the special fibers 
    $$(\mathcal{G}_{(n)})_{\kappa}\xlongrightarrow{\cong} (\mathcal{A}_{(n)})_{\kappa}\;,\text{when} \; n\textgreater 0.$$
\end{lemma}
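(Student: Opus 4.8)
The plan is to deduce the statement from the single fact that the universal section of $\mathcal{A}_{(n)}$ is congruent to $1$ modulo $\pi$ once $n\geq 1$, which forces the complement of $\mathcal{G}_{(n)}$ to miss the whole special fiber. Concretely: the morphism $(\mathcal{G}_{(n)})_{\kappa}\hookrightarrow(\mathcal{A}_{(n)})_{\kappa}$ in question is the base change along $\mathfrak{o}\to\kappa$ of the open immersion $\mathcal{G}_{(n)}\hookrightarrow\mathcal{A}_{(n)}$ produced in the preceding paragraph, hence is again an open immersion; an open immersion of schemes that is surjective is an isomorphism, so it suffices to prove surjectivity on the special fibre. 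To get a grip on the inclusion $\mathcal{G}_{(n)}\subset\mathcal{A}_{(n)}$, set $m\coloneq[k':k]$ and let $u_0\in\mathfrak{o}'\otimes_{\mathfrak{o}}\Gamma(\mathcal{A}_{(0)},\mathcal{O})$ be the tautological section (so that $\mathcal{A}_{(0)}=\Res_{\mathfrak{o}'/\mathfrak{o}}(\mathbb{A}_{1,\mathfrak{o}'})$ represents $R\mapsto\mathfrak{o}'\otimes_{\mathfrak{o}}R$, with $u_0$ the universal element). Since $\mathfrak{o}'$ is finite free over $\mathfrak{o}$, an element of $\mathfrak{o}'\otimes_{\mathfrak{o}}R$ is a unit exactly when its norm $N\coloneq\det(\text{multiplication})\in R$ is a unit, so by \cite[Proposition~A.5.2~(4)]{CGP} the open subscheme $\mathcal{G}_{(0)}=\Res_{\mathfrak{o}'/\mathfrak{o}}(\mathbb{G}_{m,\mathfrak{o}'})\subset\mathcal{A}_{(0)}$ is the non-vanishing locus $D_{\mathcal{A}_{(0)}}(N(u_0))$. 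Writing $q_n\colon\mathcal{A}_{(n)}\to\mathcal{A}_{(0)}$ for the canonical morphism and $u_n\coloneq q_n^{*}u_0$, I would check by induction on $n$, using \Cref{dilitationproperty} together with the universal property of the dilatation, that $\mathcal{G}_{(n)}=q_n^{-1}(\mathcal{G}_{(0)})=D_{\mathcal{A}_{(n)}}(N(u_n))$: if $\mathcal{G}_{(n-1)}=q_{n-1}^{-1}(\mathcal{G}_{(0)})$, then the dilatation of the center $\mathfrak{e}$ on $\mathcal{G}_{(n-1)}$ is, by the universal property, precisely the open preimage of $\mathcal{G}_{(n-1)}$ in $\mathcal{A}_{(n)}$, which is $q_n^{-1}(\mathcal{G}_{(0)})$.

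The heart of the matter is then the congruence $u_n\equiv 1\pmod{\mathfrak{m}}$ for $n\geq 1$, i.e. that the image of $u_n$ in $(\mathfrak{o}'/\mathfrak{m}\mathfrak{o}')\otimes_{\kappa}\Gamma((\mathcal{A}_{(n)})_\kappa)$ is the constant section $1$. By the basic property of dilatation, the special fibre of $\mathcal{A}_{(j)}$ maps into the center of the dilatation, so each $(\mathcal{A}_{(j)})_\kappa\to(\mathcal{A}_{(j-1)})_\kappa$ factors through the point $\mathfrak{e}$ (the image of the identity section of $(\mathcal{G}_{(j-1)})_\kappa$). Since the dilatation morphisms restrict to the group homomorphisms $(\mathcal{G}_{(j)})_\kappa\to(\mathcal{G}_{(j-1)})_\kappa$, which carry identity to identity, and $\mathfrak{e}\in(\mathcal{G}_{(0)})_\kappa\subset(\mathcal{A}_{(0)})_\kappa=\Res_{(\mathfrak{o}'/\mathfrak{m}\mathfrak{o}')/\kappa}(\mathbb{A}_{1})$ is the point where the universal coordinate takes the value $1\in\mathfrak{o}'/\mathfrak{m}\mathfrak{o}'$, the composite $(\mathcal{A}_{(n)})_\kappa\to(\mathcal{A}_{(0)})_\kappa$ factors through $\mathfrak{e}$ for every $n\geq 1$; pulling back $u_0$ along this constant map gives $u_n\equiv 1\pmod{\mathfrak{m}}$. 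Consequently $N(u_n)\equiv N(1)=1\pmod{\mathfrak{m}}$, so $\overline{N(u_n)}$ is a unit in $\Gamma((\mathcal{A}_{(n)})_\kappa)$ (in fact equal to $1$), whence $(\mathcal{G}_{(n)})_\kappa=D_{(\mathcal{A}_{(n)})_\kappa}(\overline{N(u_n)})=(\mathcal{A}_{(n)})_\kappa$, as wanted.

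I expect the only real obstacle to lie in the bookkeeping of the first step — verifying cleanly, via \Cref{dilitationproperty} and the universal property, that $\mathcal{G}_{(n)}$ is genuinely the open preimage $q_n^{-1}(\mathcal{G}_{(0)})$ and that the centers of the successive dilatations match up compatibly under the open immersions $\mathcal{G}_{(j)}\hookrightarrow\mathcal{A}_{(j)}$; everything after that is formal. As a sanity check, and as an alternative that bypasses the norm entirely, one can argue by explicit coordinates: an $\mathfrak{o}$-basis $1=e_1,\dots,e_m$ of $\mathfrak{o}'$ identifies $\mathcal{A}_{(0)}$ with $\Spec\mathfrak{o}[x_1,\dots,x_m]$, $u_0=\sum x_ie_i$, and the center $\mathfrak{e}$ is cut out by $(x_1-1,x_2,\dots,x_m)$; the $\pi^{-1}I$-description of \Cref{dilitationcordinatering} then gives $\mathcal{A}_{(1)}\cong\Spec\mathfrak{o}[y_1,\dots,y_m]$ with $u_1=1+\pi\sum y_ie_i$, and iterating yields $\mathcal{A}_{(n)}\cong\mathbb{A}_{m,\mathfrak{o}}$ with $u_n=1+\pi^{n}\sum z_ie_i$, so $u_n\equiv 1\pmod{\pi^{n}}$, which again makes the vanishing-locus statement transparent.
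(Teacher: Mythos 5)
Your proof is correct, and it takes a genuinely different route from the paper's. The paper first identifies $\mathcal{A}_{(n)}$ and $\mathcal{G}_{(n)}$ as the Weil restrictions $\Res_{\mathfrak{o}'/\mathfrak{o}}\bigl(\mathbb{A}_{1,\mathfrak{o}'}^{(en)}\bigr)$ and $\Res_{\mathfrak{o}'/\mathfrak{o}}\bigl(\mathbb{G}_{m,\mathfrak{o}'}^{(en)}\bigr)$ of the $en$-th congruence schemes over $\mathfrak{o}'$ (this uses that $\pi'^{e}$ and $\pi$ generate the same ideal of $\mathfrak{o}'$, together with the uniqueness of smooth affine integral models by their $\mathfrak{o}$-points), which collapses the problem to a one-variable coordinate-ring computation over $\mathfrak{o}'$: after the substitution $u=\pi'^{-en}(T-1)$ one sees directly that $1+\pi'^{en}u$ is invertible mod $\pi'^{e}$ once $en\geq e$. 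You instead stay over $\mathfrak{o}$ and argue conceptually: $\mathcal{G}_{(n)}$ is the preimage of $\mathcal{G}_{(0)}=D_{\mathcal{A}_{(0)}}(N(u_0))$, hence the non-vanishing locus of $N(u_n)$, and the general principle that the special fiber of a dilatation maps into the center forces $u_n\equiv 1$ on the special fiber, so $\overline{N(u_n)}=1$. The bookkeeping step you flag — that $\mathcal{G}_{(n)}=q_n^{-1}(\mathcal{G}_{(0)})$ and that the centers match up — does go through: the center of the $(j{+}1)$-st dilatation is by construction the image in $(\mathcal{A}_{(j)})_\kappa$ of the identity of $(\mathcal{G}_{(j)})_\kappa$, and the open-preimage description follows from the universal property exactly as you sketch; moreover for the congruence argument you only actually need the first factorization $(\mathcal{A}_{(1)})_\kappa\to\{\mathfrak{e}\}\subset(\mathcal{A}_{(0)})_\kappa$, since the whole chain $(\mathcal{A}_{(n)})_\kappa\to(\mathcal{A}_{(0)})_\kappa$ then automatically lands in $\mathfrak{e}$. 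Your closing coordinate sanity check is essentially the paper's computation transplanted from $\mathfrak{o}'$ to $\mathfrak{o}$ (replacing one indeterminate over $\mathfrak{o}'$ by $m=[k':k]$ of them over $\mathfrak{o}$). On balance: the paper's proof is shorter once the Weil-restriction identification is in hand, while yours requires no such identification, at the price of invoking the norm description of $\Res(\mathbb{G}_m)\subset\Res(\mathbb{A}_1)$ and the universal property of dilatation. As the paper itself notes, the statement is also a special case of \cite[Theorem~3.5(3)]{Nerondilatation}, so all of these are alternative concrete verifications.
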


\begin{proof}
   This is a special case of \cite[Theorem~3.5 (3)]{Nerondilatation}. Here we present a concrete computation.
   We have the following isomorphisms:
   $$\Res_{\mathfrak{o}'/\mathfrak{o}}(\mathbb{A}_{1,\mathfrak{o}'})^{(n)}_{\kappa}\cong \Res_{\mathfrak{o}'/\mathfrak{o}}(\mathbb{A}_{1,\mathfrak{o}'}^{(en)})_{\kappa}\cong \Res_{(\mathfrak{o}'/\pi'^e)/\kappa}(\mathbb{A}_{1,\mathfrak{o}'}^{(en)}\times_{\mathfrak{o}'}\mathfrak{o}'/\pi'^e),$$
   $$\Res_{\mathfrak{o}'/\mathfrak{o}}(\mathbb{G}_{m,\mathfrak{o}'})^{(n)}_{\kappa}\cong \Res_{\mathfrak{o}'/\mathfrak{o}}(\mathbb{G}_{m,\mathfrak{o}'}^{(en)})_{\kappa}\cong \Res_{(\mathfrak{o}'/\pi'^e)/\kappa}(\mathbb{G}_{m,\mathfrak{o}'}^{(en)}\times_{\mathfrak{o}'}\mathfrak{o}'/\pi'^e)$$
   By \Cref{dilitationcordinatering} and induction, the coordinate ring $\mathfrak{o}'[\mathbb{A}_{1,\mathfrak{o}'}^{(en)}]$ (resp., $\mathfrak{o}'[\mathbb{G}_{m,\mathfrak{o}'}^{(en)}]$) is $\mathfrak{o}'[T,\pi'^{-en}(T-1)]\subset k'[T]$ (resp., $\mathfrak{o}'[T^{-1},\pi'^{-en}(T-1)]\subset k'[T, T^{-1}]$). Now we introduce the new variable $u\coloneq \pi'^{-en}(T-1)$. Then $\mathfrak{o}'[\mathbb{A}_{1,\mathfrak{o}'}^{(en)}]$ (resp., $\mathfrak{o}'[\mathbb{G}_{m,\mathfrak{o}'}^{(en)}]$) becomes $\mathfrak{o}'[u, 1+\pi'^{en}u]=\mathfrak{o}'[u]$ (resp., $\mathfrak{o}'[u, 1+\pi'^{en}u,v]/((1+\pi'^{en}u)v-1)$). Hence we have that $\mathbb{G}_{m,\mathfrak{o}'}^{(en)}\times_{\mathfrak{o}'}\mathfrak{o}'/\pi'^e\cong \mathbb{A}_{1,\mathfrak{o}'}^{(en)}\times_{\mathfrak{o}'}\mathfrak{o}'/\pi'^e$, as desired.
\end{proof}

Thus, since the smooth affine integral models of $\mathbb{A}_{1,k}$ are determined by their $\mathfrak{o}$-rational points (see, for instance, \cite[Corollary~2.10.11]{Bruhattitsnewapproach}), $\mathcal{A}_{(n)}$ can be characterized as the unique smooth affine integral model of $\mathbb{A}_{1,k}$ over $\mathfrak{o}$ such that $\mathcal{A}_{(n)}(\mathfrak{o})= 1+\mathfrak{m}^n$.

\section{Distribution}\label{appendixdistribution}

Consider a ring $R$, an $R$-affine scheme $X$ and $x\in X(R)$. Let $I_x\subset R[X]$ be the ideal which defines the closed immersion $x$. The distribution on $X$ with support in $x$ is defined to be the $R$-module:
$$\Dist(X,x)\coloneq \{\;\mu\in \Hom_{R-\Mod}(R[X], R)\;\vert\; \mu(I_x^n)=0 \;\text{for some}\; n\in \mathbb{Z}_{\geq 0}\;\}.$$

The construction of distribution is functorial in the sense that, if $f: X\rightarrow Y$ is a morphism of affine $R$-schemes and $x\in X(R)$, then we have a natural homomorphism of $R$-modules $$d(f)_x:\Dist(X,x)\rightarrow \Dist(Y,f(x)).$$

\begin{lemma}\label{distributionlocalproperty}
  (\cite[1.3.2]{Bruhattits2}) If $f$ is an immersion (resp., an open immersion), then $d(f)_x$ is an injection (resp., an bijection).  
\end{lemma}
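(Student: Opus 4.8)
The plan is to reinterpret $\Dist(X,x)$ as a filtered colimit of $R$-linear duals of Artinian quotients, reduce the whole statement to two elementary assertions about the comorphism $f^{\#}$, and then verify those assertions, treating the closed-immersion and open-immersion cases first and deducing the general immersion case by a localization argument. First I would unwind the definition: the condition $\mu(I_x^{\,n})=0$ says exactly that $\mu$ factors through $R[X]/I_x^{\,n}$, so there is a canonical identification
\[
\Dist(X,x)\;=\;\varinjlim_{n}\,\Hom_{R}\!\bigl(R[X]/I_x^{\,n},\,R\bigr),
\]
with transition maps the $R$-duals of the surjections $R[X]/I_x^{\,n+1}\twoheadrightarrow R[X]/I_x^{\,n}$. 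For $f\colon X\to Y$ a morphism of affine $R$-schemes with comorphism $f^{\#}\colon R[Y]\to R[X]$, the relation $f(x)^{\#}=x^{\#}\circ f^{\#}$ gives $f^{\#}(I_{f(x)})\subseteq I_{x}$, hence $f^{\#}(I_{f(x)}^{\,n})\subseteq I_{x}^{\,n}$, so $f^{\#}$ descends to $R$-algebra maps $\bar f_{n}\colon R[Y]/I_{f(x)}^{\,n}\to R[X]/I_{x}^{\,n}$, and $d(f)_x$ is the colimit of the dual maps $\bar f_{n}^{\vee}$. Thus it suffices to prove: (a) if $f$ is an immersion then each $\bar f_{n}$ is surjective (whence $\bar f_n^{\vee}$ is injective and $d(f)_x$ is injective); (b) if $f$ is an open immersion then each $\bar f_{n}$ is an isomorphism (whence $\bar f_n^{\vee}$, and then $d(f)_x$, is an isomorphism).

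If $f$ is a closed immersion, $f^{\#}$ is surjective, so every $\bar f_{n}$ is surjective, settling (a) in this case. For an open immersion the crucial observation is that $\Dist$ only sees an infinitesimal neighbourhood of the section: the closed subscheme $V(I_{f(x)}^{\,n})\subseteq Y$ has underlying topological space equal to $V(I_{f(x)})$, namely the image of the section $f(x)$; since $f(x)=f\circ x$ and $f$ is an open immersion, this image coincides with the image of $x$ and so is contained in the open $X\subseteq Y$. An open subscheme whose underlying space is all of a given scheme equals that scheme, so $V(I_{f(x)}^{\,n})\cap X=V(I_{f(x)}^{\,n})$, and a short computation with the retraction $x^{\#}$ identifies this subscheme of $X$ with $V(I_{x}^{\,n})$; equivalently $I_{f(x)}^{\,n}R[X]=I_{x}^{\,n}$ and $\bar f_{n}$ is an isomorphism. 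This proves (b).

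For a general immersion $f$ I would factor it as $X\xrightarrow{i}U\xrightarrow{j}Y$ with $i$ a closed immersion and $U\subseteq Y$ open, choose a basic affine open $W\subseteq U$ of $Y$ containing the (quasi-compact, closed in $Y$) image of $f(x)$, and set $X'=f^{-1}(W)$. Then $x$ factors through $X'$, and $X'$ is affine because it is a closed subscheme of the affine $W$ (being $i^{-1}(W)$); moreover $f$ restricts to a closed immersion $f'\colon X'\to W$ of affine schemes. Since the open immersions $X'\hookrightarrow X$ and $W\hookrightarrow Y$ induce isomorphisms on $\Dist$ by part (b), functoriality of $d(-)_x$ reduces the injectivity of $d(f)_x$ to that of $d(f')_x$, which holds by the closed-immersion case. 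The routine commutative algebra is essentially all of it; the one place demanding care is the open-immersion case, where one must upgrade the topological containment of $V(I_{f(x)}^{\,n})$ into $X$ to the scheme-theoretic identity $\bar f_n$ is an isomorphism, and the bookkeeping in the last step ensuring the intermediate scheme in the factorization can be taken affine — neither is a genuine difficulty.
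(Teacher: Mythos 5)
The paper does not actually prove this lemma---it is stated as a quotation of \cite[1.3.2]{Bruhattits2}---so there is no in-paper argument to compare against; judged on its own, your proof is correct and follows the standard line of reasoning. Realizing $\Dist(X,x)$ as the filtered colimit of the $R$-duals $\Hom_R(R[X]/I_x^n, R)$ and observing that $d(f)_x$ is the colimit of the induced maps $\bar f_n^{\vee}$ is exactly the right setup, and exactness of filtered colimits then makes the closed-immersion case immediate. Two spots that you compress with phrases like ``a short computation'' and ``the bookkeeping \ldots is not a genuine difficulty'' do deserve a line each when you write this up. First, in the open-immersion case, the identity $I_{f(x)}^n R[X] = I_x^n$ is obtained by noting that the closed immersion $V(I_{f(x)})\hookrightarrow Y$ has image inside the open $X$ and therefore factors through $X$; since $f$ is a monomorphism and $f(x)=f\circ x$, this factorization \emph{is} the section $x$, so $I_{f(x)}R[X]=I_x$, and powers follow because extension of ideals along a ring map commutes with taking powers; simultaneously the flat base change $R[Y]/I_{f(x)}^n\to R[X]/I_{f(x)}^n R[X]$ is an isomorphism because $V(I_{f(x)}^n)$ lies topologically (hence scheme-theoretically, $X$ being open) inside $X$. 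Second, the existence of a basic affine $W = D(g)\subseteq U$ containing $V(I_{f(x)})$ is not automatic for an arbitrary ring $R$, but it does hold here: the closed sets $V(I_{f(x)})$ and $Z := Y\setminus U$ are disjoint, so $I_{f(x)}$ and the radical ideal of $Z$ are comaximal, and one can pick $g$ vanishing on $Z$ whose image in $R[Y]/I_{f(x)}\cong R$ is a unit; then $D(g)\subseteq U$ and $V(I_{f(x)})\subseteq D(g)$. With those two details made explicit the argument is complete.
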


\begin{lemma}\label{distributiongenericfiber}
    (\cite[1.3.3]{Bruhattits2}) Assume that $X$ is of finite type over $R$ and that $R$ is integral. Let $K$ be the field of fractions of $R$. For a section $x\in X(R)$, let $x'\in X_{K}(K)$ be the base change of $x$. Then, the base change along $R\rightarrow K$ gives an embedding
    \begin{equation}\label{distributiongenricfiber}
        \Dist(X,s) \hookrightarrow \Dist(X_K,s').
    \end{equation}
\end{lemma}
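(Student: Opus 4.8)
The plan is to make the base change map completely explicit and then to reduce its injectivity to the tautological inclusion $R\hookrightarrow K$.

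First I would fix notation: write $A$ for the coordinate ring $R[X]$, let $\mathrm{ev}_x\colon A\to R$ be the $R$-algebra homomorphism corresponding to $x\in X(R)$, and set $I_x:=\ker(\mathrm{ev}_x)$, so that $\Dist(X,x)$ consists of those $R$-linear forms $\mu\colon A\to R$ vanishing on some power $I_x^{\,n}$. Since $R$ is integral, $K=\Frac(R)$ is the localization of $R$ at $R\setminus\{0\}$; in particular $K$ is flat over $R$ and $R\hookrightarrow K$. Base change along $R\to K$ identifies $X_K$ with $\Spec(A\otimes_R K)$ and the section $x'$ with the $K$-algebra homomorphism $\mathrm{ev}_{x'}=\mathrm{ev}_x\otimes_R\Id_K\colon A\otimes_R K\to R\otimes_R K=K$, and the base change map of the statement sends $\mu$ to the $K$-linear form $\mu_K:=\mu\otimes_R\Id_K\colon A\otimes_R K\to K$.

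Next I would check that $\mu_K$ indeed lies in $\Dist(X_K,x')$, i.e.\ that the assignment is well defined. Applying the exact functor $-\otimes_R K$ to the short exact sequence $0\to I_x\to A\xrightarrow{\mathrm{ev}_x}R\to 0$ shows that $I_{x'}:=\ker(\mathrm{ev}_{x'})$ is the image of $I_x\otimes_R K$ in $A\otimes_R K$, that is, $I_{x'}=I_x\cdot(A\otimes_R K)$, whence $I_{x'}^{\,n}=I_x^{\,n}\cdot(A\otimes_R K)$ for every $n$. Consequently $\mu(I_x^{\,n})=0$ forces $\mu_K$ to annihilate the generators $a\otimes\lambda$ (with $a\in I_x^{\,n}$, $\lambda\in K$) of $I_{x'}^{\,n}$, hence $\mu_K(I_{x'}^{\,n})=0$.

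Finally, for injectivity I would note that $\mu_K(a\otimes 1)=\mu(a)$ for all $a\in A$ under the identification $R\otimes_R K=K$. Thus if $\mu_K=0$, then $\mu(a)=0$ in $K$ for every $a\in A$, and since $\mu(a)\in R$ and $R\hookrightarrow K$ this gives $\mu(a)=0$, i.e.\ $\mu=0$. There is no serious obstacle in this argument; the only points requiring (routine) care are the identification of $\mathrm{ev}_{x'}$ with $\mathrm{ev}_x\otimes_R\Id_K$ and the systematic use of flatness of $K$ over $R$ to pass between $I_x\otimes_R K$, the ideal $I_x\cdot(A\otimes_R K)$, and their powers.
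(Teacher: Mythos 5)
Your proof is correct. The paper itself does not prove this lemma but defers to the cited reference \cite[1.3.3]{Bruhattits2}; your argument is exactly the expected one there: identify $\mathrm{ev}_{x'}$ with $\mathrm{ev}_x\otimes_R\Id_K$, use flatness of $K$ over $R$ to get $I_{x'}^{\,n}=I_x^{\,n}\otimes_R K$ so that $\mu_K$ genuinely lies in $\Dist(X_K,x')$, and deduce injectivity from $\mu_K(a\otimes 1)=\mu(a)$ together with $R\hookrightarrow K$.
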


Assume that $X$ is smooth over $R$ and that $R$ is an integral ring with fraction field $K$. In our context, the following result is very useful, which roughly says that an integral model over a DVR can be probed by its distribution from its generic fiber. 

\begin{proposition}\label{propextension}
    (\cite[Proposition~0.4]{Landvogtcompactification}) Suppose that $X$ is an irreducible smooth affine scheme over a discrete valuation ring $R$ with irreducible generic fiber and $x\in X(R)$. Let $K$ be the fraction field of $R$. Then (under \Cref{distributiongenricfiber})
    $$R[X]=\{f\in K[X]\vert \mu(f)\in R \;\text{for any}\; \mu\in \Dist(X,x)\}.$$  
\end{proposition}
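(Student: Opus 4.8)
\textbf{Proof proposal for \Cref{propextension}.}

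The plan is to prove the two inclusions defining the displayed equality of subrings of $K[X]$. The inclusion $R[X] \subset \{f \in K[X] \mid \mu(f) \in R \text{ for all } \mu \in \Dist(X,x)\}$ is immediate: if $f \in R[X]$ and $\mu \in \Dist(X,x)$, then $\mu$ is by definition an $R$-linear functional $R[X] \to R$, so $\mu(f) \in R$. The content of the statement is the reverse inclusion. So fix $f \in K[X]$ with $\mu(f) \in R$ for every $\mu \in \Dist(X,x)$; I must show $f \in R[X]$.

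First I would reduce to a local problem along the special fiber. Since $X$ is affine and flat (indeed smooth) of finite type over the DVR $R$, the ring $R[X]$ is the intersection, inside $K[X]$, of $K[X]$ with the localizations of $R[X]$ at the height-one primes; equivalently, using that $R[X]$ is normal (being smooth over a normal ring) and $R$ is a DVR with uniformizer $\pi$, an element $f \in K[X]$ lies in $R[X]$ if and only if it is $\pi$-integral, i.e., $\pi^N f \in R[X]$ for some $N$ and, writing $f = g/\pi^n$ with $g \in R[X]$ and $n$ minimal, one has $n = 0$. Suppose for contradiction $n \geq 1$. Then $\bar g \coloneqq g \bmod \pi \in \kappa[X_\kappa]$ is nonzero (by minimality of $n$). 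The key geometric input is that $X_\kappa$ is irreducible (this is where irreducibility of $X$ and of its generic fiber, together with flatness, are used — via \cite[corollaire~21.12.7]{EGAIV4} or a direct specialization argument the special fiber of an irreducible flat scheme with irreducible generic fiber over a DVR is nonempty and, after noting it is smooth, one isolates an irreducible component; here one uses the hypothesis that $X$ itself is irreducible to pin it down), so $\bar g$ does not vanish on a dense open of $X_\kappa$.

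Next I would produce a distribution detecting the pole. The section $x \in X(R)$ specializes to $x_\kappa \in X_\kappa(\kappa)$, and since $X_\kappa$ is smooth and irreducible, the formal neighborhood of $x_\kappa$ in $X_\kappa$ is a power series ring; the functionals "take a coefficient of the Taylor expansion at $x_\kappa$" span, over $\kappa$, all of $\Dist(X_\kappa, x_\kappa)$ after base change. Because $\bar g \neq 0$ and $X_\kappa$ is integral, $\bar g$ has a nonzero Taylor coefficient at $x_\kappa$ — here it is essential that $x_\kappa$ be chosen so that this holds, which is why the hypothesis is that $x$ is an arbitrary but fixed $R$-point and one genuinely needs $X_\kappa$ irreducible so that no nonzero regular function is "invisible" at the single chosen point; concretely one invokes that on an integral scheme a regular function is determined by its restriction to any affine open, and picks the distribution accordingly. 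Lift such a coefficient functional to $\mu_0 \in \Dist(X,x)$ (possible since, by smoothness of $X$ over $R$, $\Dist(X,x)$ is a free $R$-module reducing modulo $\pi$ onto $\Dist(X_\kappa, x_\kappa)$ — this is the standard structure theory of distributions, cf. \cite[1.3]{Bruhattits2}). Then $\mu_0(g) \in R^\times \cdot(\text{unit}) + \pi R$, in particular $\mu_0(g) \notin \pi R$, whence $\mu_0(f) = \mu_0(g)/\pi^n \notin R$, contradicting the hypothesis on $f$. Therefore $n = 0$ and $f \in R[X]$.

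The main obstacle I anticipate is the bookkeeping in the last step: making precise that $\Dist(X,x)$ is $R$-free with $\Dist(X,x) \otimes_R \kappa \xrightarrow{\sim} \Dist(X_\kappa, x_\kappa)$ and that the "Taylor coefficient" functionals at the smooth point $x_\kappa$ span $\Dist(X_\kappa, x_\kappa)$, so that one can both lift a well-chosen functional and control its value on $g$ modulo $\pi$. This rests on the smoothness of $X$ over $R$ (giving the PBW-type basis of distributions adapted to local coordinates at $x$) and on the irreducibility of $X_\kappa$ (guaranteeing $\bar g$ is genuinely detected at the chosen point). Everything else — normality of $R[X]$, the $\pi$-integrality criterion, functoriality of $\Dist$ under the closed immersion $x$ and under base change $R \to \kappa$ (\Cref{distributionlocalproperty}, \Cref{distributiongenericfiber}) — is routine once these two structural facts are in place.
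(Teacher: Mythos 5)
Your overall strategy — reduce to showing that any nonzero $\bar g\in\kappa[X_\kappa]$ is detected by some distribution at $x_\kappa$, then conclude via Krull intersection in the smooth local ring that $\bar g=0$ if all Taylor coefficients vanish — is the right one and is essentially the proof of \cite[Proposition~0.4]{Landvogtcompactification}. The easy inclusion, the reduction to $f=g/\pi^n$ with $n$ minimal, and the identification of $\Dist(X,x)$ with Taylor-coefficient functionals via formal smoothness, with $\Dist(X,x)\otimes_R\kappa\cong\Dist(X_\kappa,x_\kappa)$, are all handled correctly.

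The gap is in the parenthetical where you claim $X_\kappa$ is irreducible. You assert this follows from irreducibility of $X$ and of $X_K$, citing \cite[corollaire~21.12.7]{EGAIV4} — but that result concerns the codimension of the complement of an affine open immersion, not irreducibility of fibers, and the claimed implication is false. Take any DVR $R$ with $\mathrm{char}\,\kappa\neq 2$ and set $X=\Spec R[t,s]/(s^2-s-\pi t)$. This is étale over $\mathbb{A}^1_R$ (the ideal $(2s-1,\pi)$ is the unit ideal on $X$), hence smooth affine over $R$; it is integral with integral generic fiber (since $1+4\pi t$ is not a square in $K[t]$); and $x=(0,0)\in X(R)$. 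Yet $X_\kappa=\Spec\kappa[t,s]/(s(s-1))\cong\mathbb{A}^1_\kappa\sqcup\mathbb{A}^1_\kappa$ is disconnected. The conclusion of the proposition in fact fails for this $X$: the element $g=s$ has $\bar g\neq 0$, so $g\notin\pi R[X]$, but $\bar g$ vanishes identically on the component of $X_\kappa$ through $x_\kappa$, so every $\mu\in\Dist(X,x)$ has $\mu(g)\in\pi R$; concretely the Taylor expansion of $s$ at $x$ in $\widehat{\mathcal{O}_{X,x}}\cong R[[t]]$ is $-\pi t+\pi^2 t^2-\cdots$, with all coefficients in $\pi R$. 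Thus $g/\pi$ lies in the right-hand side but not in $R[X]$.

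So the hypothesis that makes both the statement and your argument work is irreducibility (equivalently, since $X_\kappa$ is smooth, connectedness) of the \emph{special} fiber $X_\kappa$, not of the generic fiber — this is what Landvogt's Proposition~0.4 assumes and is what holds for the schemes $\mathcal{U}_{a,x,f}$, $\overline{\mathcal{T}}$, $\overline{\Omega_{x,f}}$, etc., to which the paper applies it. With that hypothesis in place of the attempted derivation, the rest of your argument is sound and matches the standard proof.
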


\begin{corollary}\label{corollaryextension}
    Suppose that $X$ and $Y$ are irreducible smooth affine schemes over a discrete valuation ring $R$ with irreducible generic fiber and $x\in X(R)$. Let $f: X_K\longrightarrow Y_K$ be a $K$-morphism. If $d(f)_x(\Dist(X,x))\subset \Dist(Y,f(x))$, then $f$ is the generic fiber of an $R$-morphism from $X$ to $Y$.
\end{corollary}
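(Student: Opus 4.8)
\textbf{Proof proposal for \Cref{corollaryextension}.}

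The plan is to deduce this from \Cref{propextension} by a coordinate-ring argument. Since $Y$ is affine over $R$, to give an $R$-morphism $X \to Y$ extending $f\colon X_K \to Y_K$ it suffices to produce a ring homomorphism $R[Y] \to R[X]$ compatible (after inverting the uniformizer) with the $K$-algebra homomorphism $f^{\sharp}\colon K[Y] \to K[X]$ that defines $f$. So the real content is: for every $g \in R[Y]$, the element $f^{\sharp}(g) \in K[X]$ actually lies in $R[X]$.

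First I would fix $g \in R[Y] \subset K[Y]$ and apply the criterion of \Cref{propextension} to $X$: it says $R[X] = \{h \in K[X] \mid \mu(h) \in R \text{ for all } \mu \in \Dist(X,x)\}$, where we use the embedding $\Dist(X,x) \hookrightarrow \Dist(X_K, x')$ of \Cref{distributiongenericfiber}. So I must check $\mu(f^{\sharp}(g)) \in R$ for every $\mu \in \Dist(X,x)$. Now the key input is the hypothesis $d(f)_x(\Dist(X,x)) \subset \Dist(Y, f(x))$: given $\mu \in \Dist(X,x)$, the distribution $d(f)_{x'}(\mu)$ on $Y_K$ with support in $f(x')$ is, by functoriality of the distribution construction under the base change $X_K \to Y_K$, simply $\mu \circ f^{\sharp}$; and by hypothesis (together with compatibility of $d(f)$ with passing to generic fibers, which follows from the naturality of \Cref{distributiongenricfiber}) this element lies in the image of $\Dist(Y, f(x)) \hookrightarrow \Dist(Y_K, f(x'))$. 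Applying \Cref{propextension} to $Y$ — or rather its defining property $R[Y] = \{g' \in K[Y] \mid \nu(g') \in R \text{ for all } \nu \in \Dist(Y, f(x))\}$ — to the element $g \in R[Y]$ gives $\bigl(d(f)_{x'}(\mu)\bigr)(g) = \mu(f^{\sharp}(g)) \in R$. Since $\mu$ was arbitrary, \Cref{propextension} for $X$ yields $f^{\sharp}(g) \in R[X]$, as desired.

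Once $f^{\sharp}$ restricts to a ring homomorphism $R[Y] \to R[X]$, it automatically determines an $R$-morphism $\widetilde{f}\colon X \to Y$; and $\widetilde{f}$ has generic fiber $f$ because $R[X] \hookrightarrow K[X]$ (as $X$ is flat, indeed integral, over the domain $R$), so the two $K$-algebra maps $K[Y] \to K[X]$ agree. That finishes the construction. The only genuinely delicate point is the bookkeeping in the second paragraph: one must make sure the hypothesis $d(f)_x(\Dist(X,x)) \subset \Dist(Y,f(x))$ — a statement about distributions on the $R$-schemes — matches up correctly with the identity $d(f)_{x'}(\mu) = \mu \circ f^{\sharp}$ on the $K$-schemes, i.e.\ that the square relating $d(f)_x$, $d(f)_{x'}$ and the two vertical embeddings of \Cref{distributiongenericfiber} commutes. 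This is straightforward from the functoriality of $\Dist$ and of base change, but it is where the argument must be written carefully.
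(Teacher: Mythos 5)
Your proof is correct and follows essentially the same route as the paper's: both reduce to showing $f^{\#}(R[Y])\subset R[X]$ and then test membership via distributions using \Cref{propextension} applied to $X$ together with the hypothesis (the paper phrases this as a contradiction, you argue directly). One small simplification you could make: to conclude that $\nu(g)\in R$ for $\nu\in\Dist(Y,f(x))$ and $g\in R[Y]$ you don't need \Cref{propextension} for $Y$ at all — it is immediate from the definition of $\Dist(Y,f(x))$ as a set of $R$-module homomorphisms $R[Y]\to R$.
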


\begin{proof}
    Let $f^\#: K[Y_K]\longrightarrow K[X_K]$ be the corresponding morphism of coordinate rings. 
    Then it suffices to show that $f^\#(R[Y])\subset R[X]$. If there exists $g\in R[Y]$ such that $f^\#(g)\in K[X_K]\backslash R[X]$, then, by \Cref{propextension}, there exists $\mu\in \Dist(X,x)$ with $\mu(f^\#(g))\in K\backslash R$. This contradicts with the assumption $d(f)_x(\Dist(X,x))\subset \Dist(Y,f(x))$.
\end{proof}

\renewcommand{\bibname}{References}

\printbibliography

\end{document}